\documentclass[notitlepage,11pt,reqno]{amsart}  
\usepackage[foot]{amsaddr}
\usepackage[numbers,sort]{natbib}
\usepackage{amssymb,nicefrac,bm,upgreek,mathtools,verbatim}
\usepackage[final]{hyperref}
\usepackage[mathscr]{eucal}
\usepackage{dsfont} 
\usepackage{graphicx}
\usepackage[margin=10pt,font=small,labelfont=bf]{caption}

\usepackage[margin=1in]{geometry} 
\allowdisplaybreaks  
\usepackage{color}  
\definecolor{dmagenta}{rgb}{.4,.1,.5}       
\definecolor{dblue}{rgb}{.0,.0,.5}     
\definecolor{mblue}{rgb}{.0,.0,.8}     
\definecolor{ddblue}{rgb}{.0,.0,.4}            
\definecolor{dred}{rgb}{.6,.0,.0}   
\definecolor{dgreen}{rgb}{.0,.5,.0}  
\definecolor{Eeom}{rgb}{.0,.0,.5}

\usepackage[normalem]{ulem}

\newtheorem{lemma}{Lemma}[section]
\newtheorem{theorem}{Theorem}[section]
\newtheorem{proposition}{Proposition}[section]
\newtheorem{corollary}{Corollary}[section]

\theoremstyle{definition}
\newtheorem{definition}{Definition}[section]
\newtheorem{assumption}{Assumption}[section]

\newtheorem{example}{Example}[section]

\theoremstyle{remark}
\newtheorem{remark}{Remark}[section]
\numberwithin{equation}{section}
\newcommand{\eps}{\epsilon}
\hypersetup{
  colorlinks=true,
  citecolor=dblue,
  linkcolor=dblue,
  frenchlinks=false,
  pdfborder={0 0 0},
  naturalnames=false, 
  hypertexnames=false,
  breaklinks}
\usepackage[capitalize,nameinlink]{cleveref}

\crefname{section}{Section}{Sections}
\crefname{subsection}{Section}{Sections}
\crefname{condition}{Condition}{Conditions}
\crefname{hypothesis}{Hypothesis}{Conditions}
\crefname{assumption}{Assumption}{Assumptions} 
\crefname{lemma}{Lemma}{Lemmas} 
  
\Crefname{figure}{Figure}{Figures}

\crefformat{equation}{\textup{#2(#1)#3}}
\crefrangeformat{equation}{\textup{#3(#1)#4--#5(#2)#6}}
\crefmultiformat{equation}{\textup{#2(#1)#3}}{ and \textup{#2(#1)#3}}
{, \textup{#2(#1)#3}}{, and \textup{#2(#1)#3}}
\crefrangemultiformat{equation}{\textup{#3(#1)#4--#5(#2)#6}}%
{ and \textup{#3(#1)#4--#5(#2)#6}}{, \textup{#3(#1)#4--#5(#2)#6}}%
{, and \textup{#3(#1)#4--#5(#2)#6}}

\Crefformat{equation}{#2Equation~\textup{(#1)}#3}
\Crefrangeformat{equation}{Equations~\textup{#3(#1)#4--#5(#2)#6}}
\Crefmultiformat{equation}{Equations~\textup{#2(#1)#3}}{ and \textup{#2(#1)#3}}
{, \textup{#2(#1)#3}}{, and \textup{#2(#1)#3}}
\Crefrangemultiformat{equation}{Equations~\textup{#3(#1)#4--#5(#2)#6}}%
{ and \textup{#3(#1)#4--#5(#2)#6}}{, \textup{#3(#1)#4--#5(#2)#6}}%
{, and \textup{#3(#1)#4--#5(#2)#6}}

\crefdefaultlabelformat{#2\textup{#1}#3}
%

\newcommand{\upu}{\Upupsilon}


\newcommand{\cG}{{\mathcal{G}}}  
\newcommand{\cH}{{\mathcal{H}}}  
\newcommand{\cK}{{\mathcal{K}}}  
\newcommand{\Lg}{\mathcal{L}}    

\newcommand{\cP}{{\mathcal{P}}}  
\newcommand{\sV}{{\mathcal{V}}}  
\newcommand{\cX}{{\mathcal{X}}}  
\newcommand{\sC}{\mathscr{C}}
\newcommand{\veps}{\varepsilon}
\newcommand{\frB}{\mathfrak{B}}
\newcommand{\frG}{\mathfrak{G}}
\newcommand{\frK}{\mathfrak{K}}

\newcommand{\beql}[1]{\begin{equation}\label{#1}}

\newcommand{\beq}{\begin{displaymath}}
\newcommand{\eeqno}{\end{displaymath}}
\newcommand{\eeq}{\end{equation}}

\newcommand{\E}{\mathbb{E}}
\newcommand{\PP}{\mathbb{P}}

\newcommand{\RR}{\mathds{R}}
\newcommand{\NN}{\mathds{N}}
\newcommand{\ZZ}{\mathds{Z}}

\newcommand{\Rd}{\mathds{R}^{d}}

\newcommand{\D}{\mathrm{d}}

\newcommand{\Act}{{\mathbb{U}}}
\newcommand{\Uadm}{\mathfrak{U}}
\newcommand{\Usm}{\mathfrak{U}_{\mathrm{SM}}}
\newcommand{\Ussm}{\mathfrak{U}_{\mathrm{SSM}}}

\newcommand{\Ind}{\mathds{1}}   
\newcommand{\Cc}{\mathcal{C}}   

\newcommand{\transp}{^{\mathsf{T}}}

\DeclareMathOperator*{\diag}{diag}

\newcommand{\grad}{\nabla}

\newcommand{\calP}{\mathcal{P}}

\newcommand{\cC}{\mathcal{C}}
\newcommand{\cV}{\mathcal{V}}

\newcommand{\cF}{\mathcal{F}}
\newcommand{\calO}{\mathcal{O}}

\newcommand{\calC}{\mathcal{C}}
\newcommand{\lsm}{\Lambda_{\text{SM}}}

\newcommand{\frC}{\mathfrak{C}}

\newcommand{\calK}{\mathcal{K}}
\newcommand{\frV}{\mathfrak{V}}
\newcommand{\cZ}{\mathcal{Z}}
\newcommand{\srO}{\mathscr{O}}
\newcommand{\fro}{\mathfrak{o}}


\makeatletter
\DeclareRobustCommand\widecheck[1]{{\mathpalette\@widecheck{#1}}}
\def\@widecheck#1#2{%
    \setbox\z@\hbox{\m@th$#1#2$}%
    \setbox\tw@\hbox{\m@th$#1%
       \widehat{%
          \vrule\@width\z@\@height\ht\z@
          \vrule\@height\z@\@width\wd\z@}$}%
    \dp\tw@-\ht\z@
    \@tempdima\ht\z@ \advance\@tempdima2\ht\tw@ \divide\@tempdima\thr@@
    \setbox\tw@\hbox{%
       \raise\@tempdima\hbox{\scalebox{1}[-1]{\lower\@tempdima\box
\tw@}}}%
    {\ooalign{\box\tw@ \cr \box\z@}}}
\makeatother

\usepackage{accents}
\newlength{\dhatheight}

\newcommand{\cA}{\mathcal{A}}
\newcommand{\bU}{\mathbb{U}}
\newcommand{\Wadm}{\mathfrak{W}}

\newcommand{\Wsm}{\mathfrak{W}_{\mathrm{SM}}}
\newcommand{\sW}{\mathscr{W}}
\newcommand{\wtau}{\widecheck\tau}



\setcounter{tocdepth}{2}
\let\oldtocsection=\tocsection
\let\oldtocsubsection=\tocsubsection
\let\oldtocsubsubsection=\tocsubsubsection
\renewcommand{\tocsection}[2]{\hspace{0em}\oldtocsection{#1}{#2}}
\renewcommand{\tocsubsection}[2]{\hspace{1em}\oldtocsubsection{#1}{#2}}
\renewcommand{\tocsubsubsection}[2]{\hspace{2em}\oldtocsubsubsection{#1}{#2}}

\newcommand{\ttl}{\Large Ergodic Risk Sensitive Control of Diffusions \\[5pt] 
under a General Structural Hypothesis}

\begin{document}

\title[]{\ttl}

\author{Sumith Reddy Anugu$^\dagger$}
\author{Guodong Pang$^\ddag$}

\address{$^\dagger$Institut F\"ur Mathematik, Technische Universit\"at Ilmenau, Ilmenau, Germany 98693}
\email{sumith-reddy.anugu@tu-ilmenau.de}
\address{$^\ddag$Department of Computational Applied Mathematics and Operations Research,
George R. Brown School of Engineering,
Rice University,
Houston, TX 77005}
\email{gdpang@rice.edu}

\begin{abstract} 
We study the infinite-horizon average (ergodic) risk sensitive control problem for diffusion processes under a general structural hypothesis: there is a partition of state space into two subsets, where the controlled diffusion process satisfies a Foster-Lyapunov type drift condition in one subset, under any stationary Markov control, while the near-monotonicity condition is satisfied with the running cost function being inf-compact in its complement. In particular, under these conditions we show that among all admissible controls, the optimal ergodic risk sensitive cost is attained for stationary Markov controls which are characterized as minimizers to the corresponding Hamilton-Jacobi-Bellman equation.  The proof involves considering an inf-compact perturbation to the running cost over the entire space such that the resulting ergodic risk sensitive control problem is well-defined. We then  use the existing results in the case of inf-compact running cost to characterize the optimal Markov controls among all the admissible controls and also show that the limit of  the optimal values of the perturbed problems coincides with the optimal value of the original problem. The heart of the analysis lies in exploiting the variational formula of exponential functionals of Brownian motion and applying it to the objective exponential cost function of the controlled diffusion. This representation facilitates us to view the risk sensitive cost for any stationary Markov control as the optimal value of a control problem of an extended diffusion involving a new auxiliary control where the optimal criterion is to maximize the associated long-run average cost criterion that is a difference of the original running cost and an extra term that is quadratic in the auxiliary control. The main difficulty in using this approach lies in the fact that  tightness of mean empirical measures of the extended diffusion is not a priori implied by the analogous tightness property of the original diffusion.  We overcome this by establishing a priori estimates for the extended diffusion associated with the nearly optimal auxiliary controls. 
\end{abstract}

\keywords{Ergodic risk sensitive control of diffusions, general structural hypothesis, uniform stability, near-monotonicity, inf-compact perturbation, variational representation, extended diffusion, ground diffusion, characterization of optimal control}

\date{\today}

\maketitle

\allowdisplaybreaks

\section{Introduction} \label{sec-intro}
For a running cost $r$, a control $U$, and a controlled diffusion $X$, the problem of ergodic risk sensitive control (ERSC)  minimizes 
$$ \limsup_{T\to\infty} \frac{1}{T} \log \E\Big[\exp\Big(\int_0^T r(X_t,U_t) \D t\Big)\Big]$$
over a set of admissible controls $U$- 
unlike  in the case of conventional ergodic control (CEC) problem where we minimize
$$ \limsup_{T\to\infty} \frac{1}{T} \E\Big[\int_0^T r(X_t,U_t) \D t\Big]\,.$$
The origin of  ERSC problems dates back to \cite{howard1972risk}, where the authors studied the problem in the setup of controlled Markov chains with finite state and control sets. 
ERSC problems for Markov processes ({both in} discrete and continuous time settings) have been extensively studied since then. 
We refer the reader to 
\cite{biswas2022survey,bauerle2023markov} for more extensive and recent survey of the results on both continuous and discrete time Markov chains. { Here, we only focus on  ERSC problems for diffusions.} 
In addition to ergodic cost problems, other objective criteria have been considered in the literature, such as finite horizon problems (see, e.g., \cite{bensoussan1985optimal,nagai1996bellman,da2002finite, fleming1991risk,james1992asymptotic}) and  infinite horizon discounted cost problems (see, e.g., \cite{FM95,nagai1996bellman,runolfsson94}). Risk sensitive controls have found applications in many areas such as portfolio optimization \cite{fleming2000risk,fleming2002risk,bielecki2005risk,fleming1999optimal,nagai2003optimal}, insurance \cite{grandits2007optimal,eisenberg2023measuring},
and  robust control theory \cite{dupuis2000robust,whittle1990risksensitive,whittle1981risk}.

As was the case in the context of CEC problems (see \cite[Chapter 3]{arapostathis2012ergodic}), the ERSC problems in the case of diffusions have been studied under the assumptions that can be broadly divided into two categories: (i) Blanket stability: the diffusion is assumed to be uniformly stable for all stationary Markov controls. See  \cite{FM95,runolfsson90,runolfsson94,biswas2011risk,biswas-eigenvalue,biswas2010risk,ari2018strict,AB20,ABBK20}  for the relevant literature using this assumption. ERSC problem is also studied under the blanket stability conditions in the context of switching diffusions \cite{biswas2022ergodic}, jump diffusions \cite{ari2022risk,pradhan2021riskjump}, reflecting diffusions \cite{pradhan2021risk,ghosh2018risk,ghosh2022nonzero}, and stochastic differential games \cite{biswas2020zero,ghosh2018risk,ghosh2022nonzero,ghosh2023nonzero,ghosh2021ergodic,basu2012zero}. 
(ii) Near-monotonicity: in addition to a well-posedness condition, the running cost is assumed to be strictly greater than the optimal value outside a compact set, which includes inf-compact functions on the entire space. 
See~\cite{AB18}, where the authors additionally assume that controlled diffusion is recurrent.

In this paper, we study the ERSC problem under the assumption that the underlying diffusion satisfies a more general structural hypothesis \emph{viz.,} the state space is partitioned into two subsets, where the running cost is inf-compact in one subset and the controlled diffusion satisfies a Foster-Lyapunov drift condition in its complement.  Under this assumption, we show that the associated Hamilton-Jacobi-Bellman (HJB) equation is well-posed and also characterize the optimal stationary Markov controls. 
A similar study in the context of CEC problems was carried out in  \cite{ABP15} (in the case of continuous diffusions) and in \cite{APZ20} (in the case of jump diffusions). Such a general structural assumption is motivated from optimal control under ergodic cost criteria of parallel server networks in the Halfin-Whitt asymptotic regime, where the controlled diffusion may not satisfy either the blanket stability or the near-monotonicity condition \cite{ABP15,AP2016,AHP21,hmedi2022uniform}. It is worth highlighting that the structural assumption in the ERSC setting is properly adapted for the multiplicative HJB equation, from that in  \cite{ABP15,AP2016} under the CEC setting, see Assumption \ref{a-main}. 

 In the CEC setting, the authors in  \cite{ABP15,AP2016,APZ20} construct an inf-compact function on the entire space that satisfies the following two properties 1.) it is appropriately comparable to the running cost function and 2.) the CEC cost associated with this function is finite whenever the CEC cost associated with the running cost is finite. The aforementioned inf-compact function is then used to set up a family of CEC problems with a perturbed running cost that is the sum of the original running cost and a small perturbation of the inf-compact function. This family of CEC problems clearly satisfies the near-monotone condition (in fact, the perturbed running cost is inf-compact). From here, the authors study the limit of this family of CEC problems as the perturbation parameter goes to zero which is then shown to reduce to the original CEC problem. 
The techniques in these works, extensively use the notion of mean empirical measure and the convex analytic approach \cite[Section 3.2]{arapostathis2012ergodic} as the CEC cost can be represented as a linear functional of mean empirical measure and hence convex.  In contrast, the ERSC cost can neither be represented as a linear functional of mean empirical measure nor is it convex. Therefore, none of the techniques in the aforementioned works have any immediate applicability in the ERSC case and hence, we introduce a new set of techniques to overcome this difficulty.  

We now briefly discuss the methodology and the technical challenges that we shall encounter in studying the ERSC problem in our setup.  We proceed {with} the following keys steps: 

{\bf Step 1.} Defining a perturbed ERSC problem: We achieve this by constructing an inf-compact function over the entire space with the desired property that the associated ERSC cost is finite whenever the ERSC {cost} associated with the original running cost $r$ is finite and is appropriately comparable to the running cost (see Lemma~\ref{lem-fin-cost}).  Then, define the perturbed running cost $r^\veps$ (with $\veps$ being the perturbation parameter) as an appropriate linear combination of the original running cost and the constructed inf-compact function (see~\eqref{def-r-cost-pert}).  Our goal is then to take $\veps\to 0$ and show that the corresponding optimal ERSC cost converges to the original optimal ERSC cost and then characterize the optimal Markov controls of the original ERSC problem. This is in a similar spirit as in  \cite{ABP15, AP2016, APZ20} for the CEC problems, however, 
 after this, the techniques and the methodology used in this paper differ significantly.  {One of the} two important consequences of this construction is that whenever ERSC cost for a stationary Markov control is finite,  such a Markov control is stable (see Corollary~\ref{cor-stable}) and the running cost $r^\veps$ is exponentially `uniformly' integrable (see Lemma~\ref{lem-ui}). This `uniform' integrability of the running cost $r^\veps$ is fundamental in making the analysis of  the perturbed ERSC problem \emph{via} variational representation (the next step) useful - in contrast, it is not clear if such a property is satisfied by the original running cost. 
 
{\bf Step 2.}  Application of variational representation:  Here,
 we exploit a well-known variational formula of exponential functions of  Brownian motion $W$ (see formula in \eqref{eq-var-rep}, e.g., \cite[Theorem 5.1]{boue1998} and also \cite{budhiraja2019analysis}), and  write
 \begin{align}\label{1} \limsup_{T\to\infty} \frac{1}{T}\log \E\Big[\exp\Big(\int_0^T r^\veps (X_t,U_t) \D t \Big)\Big]=  \limsup_{T\to\infty} \sup_{w\in \cA}\E\Big[\frac{1}{T}\int_0^T \Big( r^\veps(Z_t, \widetilde U_t)-\frac{1}{2}\|w_t\|^2 \Big) \D t\Big],\end{align}
 where $\cA$ is an appropriately defined set, $Z$ is an ``extended" process (see~\eqref{X-control} for its definition) with a properly modified control process $\widetilde U$, and an auxiliary control $w$.  See the details in Section \ref{sec-var-BM}. 
A significant amount of work in this paper will involve showing that the limit superior and supremum can be interchanged and moreover, one can replace $\cA$ by a set of stationary Markov controls, \emph{i.e.,} we will show that 
$$ \limsup_{T\to\infty} \frac{1}{T} \log\E\Big[\exp\Big(\int_0^T r^\veps(X_t,U_t) \D t \Big)\Big]=   \sup_{w\in \widetilde \cA}\limsup_{T\to\infty}\E\Big[\frac{1}{T}\int_0^T \Big( r^\veps(Z_t, \widetilde U_t)-\frac{1}{2}\|w_t\|^2 \Big)\D t\Big],$$
where $\widetilde \cA$ is an appropriate set of stationary Markov controls. 

The main difficulty is  that it turns out that  tightness of the mean empirical measures ({MEMs}) of the `extended' process $Z$ associated with nearly optimal $w$ is necessary to obtain the above interchangeability. However, {such a} tightness property of the {MEMs} of $Z$ is not at all immediate from the analogous tightness property corresponding to $X$.  
 We prove  tightness of the {MEMs} of $Z$ by considering a truncated version of the running cost $r^\veps$ owing to the fact that $r^\veps$ exponentially `uniformly' integrable as mentioned above (see Lemma~\ref{lem-comp-X-pert}).
 Using this truncated version, one immediately arrives at a uniform in $T$ estimate of the second term on the right hand side of~\eqref{1} in Lemma~\ref{lem-comp-w-pert}. 
  This consequently, gives us the desired  tightness property in Lemma~\ref{lem-comp-X-pert}  with the help of the constructed inf-compact function.
From here, we proceed to show that whenever $U_t$ happens to be a stationary Markov control $v(\cdot)$, the ERSC cost can be written as the CEC cost where the optimal criterion is to maximize the running cost function $r(x,v(x))-\frac{1}{2}\|w\|^2$ with an ``extended" diffusion.  
As a consequence, we can represent the ERSC cost in a form which is linear in $r$ and more importantly, the optimal ERSC value can be written as an optimal value of an inf-sup problem with the above long-run average cost criterion (see Lemma~\ref{lem-sup-pert}). However, it is not a priori clear if the inf-sup operations can be interchanged as the usefulness of the variational representation relies heavily on this fact. It turns out that such an interchange is indeed possible. This is proved by studying a family of  ergodic two-person zero-sum (TP-ZS) stochastic games and borrowing the existing results from \cite{borkar1992stochastic}. An important property of this family of ergodic TP-ZS stochastic games is that the maximizing strategies vanish outside a large compact set which is later on used frequently in the proof of Theorem~\ref{thm-diffusion}.
 In Theorem~\ref{thm-lin-rep-pert}, we show that the values of this family of ergodic TP-ZS stochastic games converge to that of the optimal value of ERSC problem associated with $r^\veps$.  This representation of the ERSC optimal cost associated with $r^\veps$ is extensively used subsequently in showing that the ERSC optimal {costs associated} with both $r$ and $r^\veps$ over all admissible controls is  achieved by stationary Markov controls.  This step can easily be regarded as the major novelty of the paper. 

{\bf Step 3.} Analyzing the limiting behavior as $\veps\to 0$: The starting point of this step is to establish certain uniform (in $\veps$) estimates in Lemma~\ref{lem-comp-X-w-pert} that will then be used to show that the optimal ERSC values associated with $r^\veps$ converge to the optimal value of the original ERSC problem (which is associated with $r$) in Theorem~\ref{thm-pert-limit}.  
 Moreover, we show that the ERSC cost associated with $r^\veps$ under any stationary Markov control also converges to its original counterpart (see Theorem~\ref{thm-pert-limit}).

 The analysis of the limiting behavior, particularly, the proof of the main result - Theorem~\ref{thm-diffusion},  is further divided into four parts. 
\begin{itemize}
\item[(i)]  {\it Well-posedness of the HJB equation}: Existence of solutions to the multiplicative  HJB equation involves a direct application of the standard elliptic regularity theory. For uniqueness, we exploit the stability of the so-called ``ground" diffusion (a particular case of the ``extended" diffusion), which turns out to be difficult to prove directly under our hypothesis. We overcome this difficulty by examining tightness of the {MEMs} of the ``extended" process under nearly optimal auxiliary controls.
\item[(ii)] {\it Characterization of optimal Markov controls:}   To prove that the minimizers of HJB are optimal, the main difficulty is that it is not a priori  clear if the Markov controls given as the minimizers to the HJB equation result in a finite ERSC cost.
 We prove this claim is true by again using the variational formulation and proving  tightness of the {MEMs} associated with nearly optimal auxiliary controls.  To prove that optimal stationary Markov controls are minimizers to HJB equation, we prove this by contradiction and using the fact that the ERSC problem associated with $r$ can be written as a limit of a family of ergodic TP-ZS stochastic games. 
 \item[(iii)] {\it Stochastic representation of the HJB solution:}   Because of the work in \cite{ari2018strict}, it is well known that existence of a stochastic representation for the solutions of a multiplicative Poisson equation is very closely related to the stability of the ``ground" diffusion. We use this close connection and the stability of the ``ground" diffusion from step (i) to infer the existence of a stochastic representation of the HJB solution. 
  \item[(iv)] {\it Minimum over admissible controls is achieved by Markov controls:}  We again exploit the CEC problem from the variational formulation associated with stationary Markov controls.   We first establish that just like in the perturbed case, the original optimal ERSC cost over stationary Markov controls can also be written as the limit of a family of ergodic TP-ZS stochastic games mentioned earlier, but with $r^\veps$ replaced by $r$. This in conjunction with variational formulation (associated with admissible controls) is used to prove the result. 
  \end{itemize}

We highlight that the last part of the main result is another novel part of the paper. 
 It is well known that under the assumption of uniform stability (see \cite[Theorem 4.1]{ari2018strict}), the infimum of ERSC cost over admissible controls is the same as the infimum over stationary Markov controls. However, it is not known if this is the case under other conditions like {near-monotonicity} (see \cite[Remark 1.3]{AB18}). In contrast, it is easy to show the analogous result in the case of CEC problem (see \cite[Theorem 3.4.7]{arapostathis2012ergodic}, \cite[Theorem 3.1(b)]{ABP15}). 
 As mentioned already, the convex analytic approach cannot be directly applied to the ERSC problem, but it can be applied to the family of ergodic TP-ZS stochastic games {that arise} from {the} variational formulation - in particular, we use the results from \cite{borkar1992stochastic} where authors use the convex analytic approach to analyze TP-ZS  stochastic games under ergodic cost criterion.

We remark that relating the ERSC cost (for a particular control) to the CEC problem that is mentioned above, has been studied in the existing literature.  In the case of a finite-horizon risk sensitive control problem, such a variational formulation was derived using the theory of large deviations in \cite{whittle91}. One of the first works studying the variational formulation of the ERSC problem for diffusions is in \cite{FM95,runolfsson90,runolfsson94,boue2001}, which was then followed by \cite{biswas2010risk,AB20,ABBK20,ari2018strict}. 
In \cite{runolfsson94}, the variational formulation was derived under restricted conditions of Markov controls that are continuous in their arguments 
 and  inf-compactness of the running cost function, whereas, in \cite{runolfsson90} the case of a linear-quadratic control  problem is studied.  
 However, the above works derived the aforementioned relation in more restrictive settings and are not useful immediately in our setting of ergodic criteria. On the contrary, we derive this relation using the variational formula for exponential functions of Brownian motion 
 (see~\eqref{eq-var-rep})
 which holds under very mild assumptions on the functional - it is only required to be a non-negative Borel measurable functional.

\subsection{Organization of the paper}

The rest of the paper is organized as follows: We conclude this section by introducing the necessary notation used through out the paper. In Section~\ref{sec-model}, we set up the model and give the assumptions, and state the main result of the paper. 
In Section~\ref{sec-est}, we prove various estimates that help us construct an inf-compact perturbation to the original running cost. This section also contains the existing results in the ERSC problems under near-monotonicity condition which are stated in the case of an inf-compact running cost. 
Section~\ref{sec-var-BM} develops the variational formulations in the context of both the perturbed and original ERSC problems. This section contains certain crucial uniform in time estimates that are used extensively from thereon.  
Section~\ref{sec-proof-pert} then develops the relevant results for the perturbed ERSC problem using the variational formulation. 
This section in particular, re-casts the already existing results using this formulation in a way that is amenable to certain techniques in \cite{ABP15}. 
Finally, in Section~\ref{sec-proof-main}, we take the limit as the perturbation goes to zero and analyze the limiting behavior of optimal values of the perturbed ERSC cost and the solutions of the associated HJB equations. The proof of the main result is given in this section.

\subsection{Notation} We use $(\Omega, \cF, \PP)$ to denote the underlying abstract probability space with $\E$ as the associated expectation. $\E_x$ denotes the expectation when the underlying process starts at $x$. The standard Euclidean norm in $\RR^d$ is denoted by $\| \cdot \|$, $ x\cdot y $  denotes the inner
product of $x,y\in \RR^d$, and $x\transp$ denotes the transpose of $x \in \RR^d$. The set of nonnegative real numbers (integers) is denoted by $\RR_+$ ($\ZZ_+$), $\NN$ stands for the set of natural numbers, and $\Ind_{ A}(\cdot)$ denotes the indicator function corresponding to set $A$. The minimum (maximum) of two real numbers $a$ and $b$ is denoted by $a \wedge b$ ($a \vee b$), respectively, and
$a^{\pm} \doteq  (\pm a) \vee 0$. The closure, boundary, and complement of a set $A \subset \RR^d$ are denoted by $\bar  A$,
$\partial A$, and $A^c$, respectively. 
The term domain in $\RR^d$ refers to a nonempty, connected open subset of $\RR^d$. For a domain $D \subset \RR^d$, the space $\cC^k (D)$ ($\cC^\infty (D)$, respectively), $k \geq 0$, refers to the class of all real-valued
functions on $D$ whose partial derivatives up to order $k$ (any order, respectively) exist and are continuous.  {$\cC_b(\RR^d)$} and  $\cC^\infty_c(\RR^d)\subset \cC(\RR^d)$ denote the set of  bounded continuous functions and set of compactly supported smooth functions, respectively. By
$\cC^{ k,\alpha}(\RR^d)$,  we denote the set of functions that are $k$-times continuously differentiable and whose $k$-th
derivatives are locally H\"{o}lder continuous with exponent $\alpha$. The space $L^p (D)$, $p \in [1, \infty)$, stands for
the Banach space of (equivalence classes of) measurable functions $f$ satisfying $\int_D |f (x)|^p \D x < \infty$,
and $L^\infty (D)$ is the Banach space of functions that are essentially bounded in $D$. The standard
Sobolev space of functions on $D$ whose generalized derivatives up to order $k$ are in $L^p (D)$, equipped
with its natural norm, is denoted by $W^{k,p} (D)$, $k \geq 0, p \geq 1$. In general, if $\cX$ is a space of real-valued
functions on a set $Q$, $\cX_\text{loc}$ consists of all functions $f$ such that $f \phi \in \cX$  for every $\phi $ that is compactly supported smooth function on $Q$.  Here, $f\phi$ is simply the scalar multiplication of the functions $f$ and $\phi$. {For $T>0$, $\frC_T^d$ denotes the set of $\RR^d$--valued continuous functions on $[0,T]$ equipped with uniform topology.}

 For a Polish space $\cX$, $\cP(\cX)$ is the set of Borel probability measures on $\cX$ equipped with the topology of weak convergence. 
Let $\sC_0$ denote the set of all non-negative bounded functions on $\RR^d$ that vanish at infinity and not identically equal to zero. For a positive function $g\in \cC(\RR^k)$, $\srO(g)$ denotes the set of all functions $f\in\cC(\RR^k)$ which have the property
$$ \limsup_{\|x\|\to\infty} \frac{|f(x)|}{g(x)}<\infty$$
and $\fro(g)$ denotes the set of all functions $f\in\cC(\RR^k)$ that have the property
$$ \limsup_{\|x\|\to\infty} \frac{|f(x)|}{g(x)}=0\,.$$ 
For a Borel set $A\subset \RR^d$, $\tau(A)$ and $\widecheck \tau(A)$ are first exit and hitting times of $A$, respectively of the underlying process. The underlying process will be evident from the context. For short, we write $\tau_R$ and $\widecheck \tau_R$ for $\tau(B_R)$ and $\widecheck \tau(B_R)$, respectively. We refer to a measure $\pi_T\in \cP(\cX)$ as the mean empirical measure (MEM) of an $\cX$--valued process $Y$ (starting at $y\in \cX$) on $[0,T]$, if it is defined as  
$$ \pi_T(A)\doteq \frac{1}{T}\E_y\Big[\int_0^T \Ind_{A}(Y_t) \D t\Big], \text{ for a Borel set $A\subset \cX$}\,.$$  
{We say a family $\{\pi_n \}_{n\in \NN}\subset \calP(\cX)$ is tight, if for every $\epsilon>0$, there exists a compact set $K_\epsilon\subset \cX$ such that $\pi_n(K_\epsilon^c)<\epsilon$, for every $n$. }

\medskip

\section{Model and Results}\label{sec-model}
We consider a $\RR^d$--valued controlled diffusion $X=\{X_t: t\ge 0\}$ given as the solution to  
\begin{equation} \label{eqn-X}
X_t = {X}_0 + \int_0^t b(X_s, U_s)\D s + \int_0^t\Sigma(X_s) \D W_s\,.
\end{equation}
{Here, the process  $U$ (referred to as control) is assumed to take values in a compact metric space $\bU$ and the coefficients $b$ and $\Sigma$ satisfy the following conditions:
\begin{enumerate}
\item[(i)] (Local Lipschitz continuity) $b:\RR^d\times \bU\rightarrow \RR^d$ and $\Sigma:\RR^d\rightarrow\RR^{d\times d}$ is continuous and for every $R>0$, there exists $C_R>0$ such that 
\begin{align}\label{eq-cond-loc-lip}
\|b(x,u)-b(y,u)\|+ \|\Sigma(x)-\Sigma(y)\|\leq C_R\|x-y\|, \text{ for  $x,y\in B_R$\,.}
\end{align}
\item[(ii)] (Linear growth) There exists a constant $C_{b,\Sigma}>0$ such that 
\begin{align}\label{eq-cond-lin-growth} \|b(x,u)\|^2 + \|\Sigma(x)\|^2 \leq C_{b,\Sigma}(1+\|x\|^2), \text{ for $x\in \RR^d$\,.}\end{align}
\item [(iii)](Non-degeneracy) There exists $\sigma>0$ such that 
\begin{align}\label{eq-cond-non-deg} z\transp\Sigma(x)\big(\Sigma(x)\big)\transp z\geq \sigma \|z\|^2, \text{ for $z\in \RR^d$\,.}\end{align}
\end{enumerate}}
For simplicity, assume that  ${X}_0={x}$ is a deterministic constant. 

\begin{definition}\label{def-adm-cont}
A $\bU$--valued process $U$ is said to be admissible if it satisfies the following: if $U_t=U_t(\omega)$ is jointly measurable in $(t,\omega)\in \RR^+\times \Omega$ and  for every $0\leq s< t$, $W_t-W_s$ is independent of the completed filtration (with respect to $(\cF,\PP)$) generated by $\{X_0,U_r, W_r: r\leq s\} $. The set of all such controls is denoted by $\Uadm$.
\end{definition}

Let $\Usm\subset \Uadm$ denote the set of stationary Markov controls. In order to study the convergence of stationary Markov controls or existence of optimal stationary Markov controls, it is useful to consider a weaker notion of a stationary Markov control, \emph{viz.,} relaxed control - the control is defined in the sense of distribution. To be more precise, a {stationary Markov control} $v$ is said to be a relaxed Markov control if $v=v(\cdot)$ is a Borel measurable map from $\RR^d$ to $\cP(\bU)$. In this case, we write $v(\D u|x)$ to distinguish the relaxed Markov control $v$ from other {stationary Markov controls} which are referred to as precise Markov controls. Clearly, the set of relaxed Markov controls contains $\Usm$. But with slight abuse of notation, we represent the set of relaxed Markov controls also by $\Usm.$  
{ Under $U\in\Uadm$}, the controlled diffusion ${X}$ in \eqref{eqn-X} has a unique { strong} solution \cite[Theorem 2.2.4]{arapostathis2012ergodic}. 
Moreover, under $v\in\Usm$, ${X}$ is strong Markov { (from \cite[Theorem 2.2.12]{arapostathis2012ergodic})} and the transition probabilities are locally H{\"o}lder continuous {(see \cite[Theorem 4.1]{bogachev2001})}.  
For every $u\in\bU$, we denote the generator $\Lg^{u}:\Cc^{2}(\RR^{d})\mapsto\Cc(\RR^{d})$ of the controlled diffusion ${X}$ as 
\begin{align*}
\Lg^{u} f(x) & \doteq   \sum_{i=1}^d b_i(x,u) \frac{\partial}{\partial x_i} f(x) + \frac{1}{2}\sum_{i,j=1}^d A_{ij}(x) \frac{\partial^2}{\partial x_i \partial x_j} f(x)\,, 
\end{align*}
where $A(x)\doteq \Sigma(x)\Sigma(x)\transp$.
It is the generator of a strongly-continuous
semigroup on $\Cc_{b}(\RR^{d})$, which is strong Feller.
We denote by $\Ussm$ the subset of $\Usm$ that consists
of \emph{stable controls}, i.e.,
under which the controlled process $X$ is positive recurrent.
In the following, whenever we are dealing with a generic admissible control, we denote it by $U$ and a generic stationary Markov control is denoted by $v$.

Let $r:\RR^d\times\bU\rightarrow \RR_+$ be a continuous function that is locally Lipschitz in the first argument (uniformly in the second). The ergodic risk-sensitive cost function is given by
\begin{equation*} 
J(x, U) \doteq \limsup_{T\to\infty} \frac{1}{T} \log \E_x^U\left[ \exp\left( \int_0^T r({X}_t, U_t )\D t \right)\right] \text{ with $X_0=x\,.$}
\end{equation*}
In the above and in what follows, we emphasize that the underlying controls are $U\in \Uadm$ and $v\in \Usm$ by writing $\E_x^{U}$ and $\E_x^v$, respectively. 
The associated ERSC cost minimization problem is given by
\begin{equation*}
{\Lambda}({x}) \doteq  \inf_{U\in \Uadm}  J({x}, U)\, \quad \text{ and } \quad \Lambda\doteq \inf_{x\in \RR^d} \Lambda(x)\,.
\end{equation*}
$\Lambda(x)$ is the optimal value function for the ERSC problem given the initial state ${x}$. 
In addition, let 
\begin{equation*}
{\Lambda}_{\text{SM}}({x}) \doteq \inf_{v\in \Usm}  J({x}, v) \quad \text{ and } \quad  \Lambda_{\text{SM}}\doteq \inf_{x\in \RR^d}\Lambda_{\text{SM}}(x)\,.
\end{equation*}
 It is easy to see that $\Lambda\leq \Lambda_{\text{SM}}$.
For notational convenience, let 
\[
{\Lambda}_v(x) \doteq  J(x, v), \quad \text{for} \,\, v\in \Usm\,. 
\]
{To keep the expressions concise, we let 
$$ r^v(x)\doteq r(x,v(x)) \,\,\text{ and }\,\, \Lg^{v} f(x) \doteq   \sum_{i=1}^d b_i(x,v(x)) \frac{\partial}{\partial x_i} f(x) + \frac{1}{2}\sum_{i,j=1}^d A_{ij}(x) \frac{\partial^2}{\partial x_i \partial x_j} f(x)\,, $$
whenever the underlying control is $v\in \Usm$. When $v\in \Usm$ is a relaxed Markov control, we replace $r(x,v(x))$ and $b(x,v(x))$ by $\int_{\bU}r(x,u)v(du|x)$ and $\int_{\bU}b(x,u)v(du|x)$, respectively.}

In what follows, we encounter ERSC problems {associated with} various running costs. So to emphasize the dependence  on the running cost $r$, we write $J(x,U)[r]$ (for $U\in \Uadm$) and $\Lambda_{x,v}[r]$ (for $v\in \Usm$) when we are referring to the ERSC cost under $U\in \Uadm$ and under $v\in \Usm$, respectively. It turns out that under our setup,  the initial condition $x$ is irrelevant in the case of stationary Markov controls. Hence, for $v\in \Usm$, we simply $\Lambda_v[r]$.  Also,  we write $\Lambda[r]$ and $\Lambda_{\text{SM}}[r]$ if we are referring to  
the optimal values of the ERSC problem corresponding to the running cost $r$ over $\Uadm$ and $\Usm$, respectively over all initial conditions.   We remark that for $v\in \Usm$, even though the running cost function depends on $v\in \Usm$ as $r^v(\cdot)=r(\cdot, v(\cdot))$, we drop this dependence when we write $\Lambda_v[r]$ (and not write $\Lambda_v[r^v]$), as it is clear from the subscript that the underlying control is $v$. 

Also, define
$$ \Usm^o\doteq \left\{v\in \Usm: \Lambda _v[r]=\Lambda_{\text{SM}}[r]\right\}.$$ 
In other words, $\Usm^o$ is the set of optimal stationary Markov controls. We remark that a priori this set may be empty.
\begin{definition} For a $\delta>0$, we say $U\in \Uadm$ is $\delta$--optimal RS control for $\Lambda[r]$, if  $$ J(x,U)[r]\leq \Lambda[r] +\delta\,.$$
Similarly, for a $\delta>0$, we say $v\in \Usm$ is $\delta$--optimal RS control for $\lsm[r]$, if  $$ \Lambda_v[r]\leq \lsm[r]+\delta\,.$$
\end{definition}

\begin{definition} \label{def-nm}A continuous function $f:\RR^d\times\bU\rightarrow \RR$ is said to be near-monotone relative to  $\lambda \in \RR$, if there exists $\eps>0$ such that $K_\eps\doteq \{x\in \RR^d: \min_{u\in \bU} f(x,u)\leq \lambda +\eps\}$ is either compact or empty. Also, we say $f$ is inf-compact on an open set $\calO\subset\RR^d$ if $\{x: \min_{u\in \bU} f(x,u)\leq l\}\cap \overline \calO$ is compact (or empty) set of $\RR^d$, for every $l\in\RR$.  If $\calO=\RR^d$, then we simply say $f$ is inf-compact.
\end{definition} 
{
\begin{remark}\label{rem-nm} Any inf-compact function $f:\RR^d\times \bU\rightarrow \RR_+$ is near-monotone relative to every $\lambda\in \RR$.
\end{remark}
}
Let $f:\RR^d\rightarrow \RR$  be a locally bounded function {that is uniformly} bounded from below. 
The principle eigenvalue $\lambda_v^*[f]$ for $v\in \Usm$ is defined  as 
\begin{align}\label{def-princp-eig} \lambda_v^*[f]\doteq \inf\Big\{\lambda\in \RR: \exists \psi\in W^{2,d}_{\text{loc}} (\RR^d) \text{ such that } \psi>0, \Lg^v\psi + (f-\lambda) \psi\leq 0 \text{ a.e. } x\in \RR^d\Big\}\,.\end{align}
The associated $\psi_v^f\in W^{2,d}_{\text{loc}} (\RR^d) $ that satisfies 
$$\Lg^v\psi_v^f(x) + f(x)\psi_v^f(x)=\lambda_v^*[f] \psi_v^f(x)\,,\text{ for a.e. } x\in \RR^d\,.  $$
is referred to as the principle eigenfunction. The pair $(\psi_v^f,\lambda_v^*[f])$ is referred to as the principle eigenpair of the operator $\Lg^v+f$. We remark that such a pair may not be necessarily unique. See~\cite[Lemma 2.4]{ari2018strict} for sufficient conditions that imply uniqueness of the principle eigenpair. From \cite[Lemma 2.2 and 2.3]{AB18}, the ERSC cost of the function $f$ is related to $\lambda^{*}_v[f]$ as follows
$$ \lambda_v^*[f]\leq \inf_{x\in \RR^d}\limsup_{T\to\infty} \frac{1}{T} \log\E_x^v\Big[\exp\Big(\int_0^T f(X_t)\D t\Big)\Big].$$
\cite[Theorem 1.4]{AB18} and \cite[Theorem 3.2]{ari2018strict} give sufficient conditions for the equality to hold in the above display. To be more precise, under the assumption of near-monotonicity of $f$ {relative to $\Lambda_v[f]$} or  $X$ being appropriately exponentially ergodic, the equality holds above.

We now state the assumptions made in this paper.
\begin{assumption}\label{a-main}
For some open set $\mathcal{K} \subset \RR^d$, the following hold:
\begin{itemize}
\item[(i)] the running cost $r$ is inf-compact on $\mathcal{K}$;
\item[(ii)] there exist constants $C_i>0$, $i=1,2,3$ with $C_3<1$,  and inf-compact functions 
$\sV\in \Cc^2(\RR^d)$ and ${\bar h}\in \cC(\RR^d\times \bU)$ 
such that  { $\cV\geq 1$ and}
\begin{equation}\label{eq-assump-1}
\begin{split}
\Lg^u \sV (x) &\,\le\, \bigl(C_1 - {\bar h}(x,u)\bigr)\sV (x)
\quad \forall (x,u) \in \mathcal{K}^c \times \Act\,,\\[5pt]
\Lg^u \sV(x) &\,\le\, \bigl(C_2 + C_3r(x,u)\bigr)\sV (x)
\quad \forall (x,u) \in \mathcal{K} \times \Act\,.
\end{split}
\end{equation}
\end{itemize}
\end{assumption}

\begin{remark}We remark that the assumption that $C_3<1$ is important in constructing an inf-compact function $h$ such that $r\in \srO(h)$ and the ERSC {cost} associated with $h$ (whenever the ERSC {cost} associated with $r$ is finite) is well-defined and also constructing the inf-compact perturbation to the running cost function using $h$. Since we are dealing with the ERSC problem, multiplicative constants play a crucial role, which is not the case with the CEC problem. \end{remark}

\begin{remark} Observe that when $\calK=\emptyset$, Assumption~\ref{a-main} reduces to the uniform stability assumption and when $\calK=\RR^d$, it reduces to the near-monotonicity assumption with inf-compactness on the running cost. 
 Therefore, Assumption~\ref{a-main} should be considered as a mixed condition that is ``in between" 
 the two frameworks, (a) blanket (uniform) stability of the controlled diffusion, {and} (b) near monotonicity in the case of an inf-compact running cost.
 \end{remark}
 {
 \begin{remark}Assumption~\ref{a-main} is analogous to the general structural hypothesis for CEC of diffusions and jump diffusions 
 studied in  \cite{ABP15,AP2016,APZ20}
 In the CEC case, 
 using the same notation as in Assumption~\ref{a-main}, the structural hypothesis is the following:  there exists an open set $\mathcal {K}\subset \RR^d$ such that 
\begin{itemize}
\item[(i)] the running cost $r$ is inf-compact on $\mathcal{K}$;
\item[(ii)] there exist  inf-compact functions 
$\sV\in \Cc^2(\RR^d)$ and ${\bar h}\in \cC(\RR^d\times \bU)$ 
such that 
\begin{equation}\label{eq-rem-assump-1}
\begin{split}
\Lg^u \sV (x) &\,\le\, 1 - {\bar h}(x,u)
\quad \forall (x,u) \in \mathcal{K}^c \times \Act\,,\\[5pt]
\Lg^u \sV(x) &\,\le\, 1 + r(x,u)
\quad \forall (x,u) \in \mathcal{K} \times \Act\,.
\end{split}
\end{equation}
\end{itemize}
The constants $1$ are chosen without compromising the generality as we can always scale the functions $\bar h$ and $\sV$ accordingly. Comparing with \eqref{eq-rem-assump-1}, the conditions in \eqref{eq-assump-1} are adapted to take into account the multiplicative nature of the HJB equation for the ERSC problem. 

 \end{remark}
 }

\begin{assumption}\label{a-well-defined}
There exists a $v^*\in \Usm$  such that $\Lambda_{v^*}[r]<\infty.$
\end{assumption}
\begin{remark} The above assumption implies  that the ERSC problem is well-defined. We remark that this is a necessary assumption 
even in either of two frameworks mentioned above. 
To see this, consider the case of uniform stability ($\calK=\emptyset$).
 If ${\bar h}(x,u)- r(x,u)$ is not inf-compact,  then we still cannot guarantee that Assumption~\ref{a-well-defined} holds. 
 In the case of near monotonicity with inf-compact running cost,
  it is obvious to see why Assumption~\ref{a-well-defined} is still necessary. In \cite{ABP15} where a similar problem in the context of the CEC problem is considered (see Assumption 3.2 of that paper), the CEC cost for an admissible control is assumed to be finite. However, in the our case, we have assumed that ERSC cost for a stationary Markov control is finite. This is needed because the existing results that we will be using \emph{viz.,} \cite[Proposition 1.3]{AB18} assume finiteness of the ERSC cost for some stationary Markov control. Therefore, such a requirement is enforced. 
\end{remark}

Since not every admissible control $U$ a priori gives rise to a finite ERSC cost, \emph{i.e.,} $J(x,U)[r]<\infty$, we define the various classes of controls $U$ for which $J(x,U)[r]<\infty$. From now on, we fix $\beta^*\doteq \Lambda_{v^*}[r]$ with $v^*\in\Usm$ given by Assumption~\ref{a-well-defined}. { For $\beta>\beta^*$, let
\begin{align}\label{def-u*}
\Uadm^{*,\beta}&\doteq \{U\in \Uadm: J(x,U)[r]\leq \beta, \text{ for some $x\in \RR^d$ }\}\end{align}
and $\Usm^{*,\beta}\doteq \Usm\cap \Uadm^{*,\beta}$.  From Assumption~\ref{a-well-defined}, $\Usm^{*,\beta}$  and $\Uadm^{*,\beta}$ are non-empty for $\beta>\beta^*$. 
The following relations are then evident: 
\begin{align*}
\Lambda[r]=\inf_{U\in \Uadm^{*,\beta}} J(x,U)[r]\leq\lsm[r]=\inf_{v\in \Usm^{*,\beta}} \Lambda_v[r]\,.
\end{align*}
Due to this, without loss of generality we only confine ourselves to $\Uadm^{*,\beta}$ or $\Usm^{*,\beta}$,} instead of $\Uadm$ or $\Usm$, respectively. Later, we will show that $\Lambda[r]=\lsm[r]$ (see Theorem~\ref{thm-diffusion}(iv)).

\begin{example}
Here we will give an example of the limiting controlled diffusion of a particular parallel server network, the ``W" network. 
{The ``W" network has three classes (denoted by $1,2,3$) of jobs and two server pools (denoted by $1,2$) with $\mu_{ij} $ for $i=1,2$ and $j=1,2,3$ being the limiting service rate of servers from pool $j$ when they serve customers from class $i$, and  $\lambda_i$ is the limit of appropriately scaled arrival rate of customers from class $i$.   In }the Halfin-Whitt regime, the limiting controlled diffusion $X$ in~\eqref{eqn-X}  has the following drift and diffusion coefficients (see a derivation in \cite{AP2016}):
$$ b(x,u)\doteq l-M_1\big(x-(e\cdot x)^+ u^c\big) + (e\cdot x)^- M_2u^s,\quad {\Sigma(x)=\diag(\sqrt{2\lambda_1},\sqrt{2\lambda_2},\sqrt{2\lambda_3})}$$
where $l\in \RR^3$, 
$$ M_1=\begin{pmatrix}\mu_{11}&0&0\\
\mu_{22}-\mu_{21}&\mu_{22}&0\\
0&0&\mu_{32}\end{pmatrix},\, M_2=\begin{pmatrix} 0&0\\\mu_{21}-\mu_{22}&0\\ 0&0\end{pmatrix}$$
and $$ u\in \bU=\Big\{u=(u^c,u^s)\in \RR^3_+\times \RR^2_+: \langle e\cdot u^c\rangle=\langle e\cdot u^s\rangle =1\Big\}\,.$$ 

The running cost function is given by
$ r(x,u) = \sum_{i=1}^3c_i\big[( e\cdot x)^+ u_i^c\big]$ with $c_i>0$ (penalizing the queueing cost, but can also include idling cost $\sum_{j=1}^2 d_j \big[( e\cdot x)^- u_j^s\big]$ with $d_i>0$). 
Uniform stability of this controlled diffusion is an open question (see \cite{hmedi2022uniform} for the recent overview on the recent development on uniform stability for parallel server networks without abandonment).
However, if we define $ \cK=\cK_\delta\doteq \{x\in \RR^3: |(e\cdot x)|>\delta\|x\|\} \text{ with $\delta>0$}$, then it is clear that $r(\cdot,\cdot)$ restricted to $\cK$ is inf-compact. For a positive definite matrix $Q$, choose $g:\RR^3\rightarrow \RR$ to be a smooth function that agrees with $x\transp Qx$ on $B_1^c$. Finally, define $\cV_{Q}(x)=\exp\big(g(x)\big)$.  Following \cite[Lemma 3.1]{APZ20}, we can conclude that there exists a diagonal matrix $Q$, a small enough $\delta>0$, and a constant $C$ such that Assumption~\ref{a-main} is satisfied for $\cV=\cV_Q$, {$\bar h= C\|x\|^2$} with $\cK$ and $r$ chosen as above. 
To show that Assumption~\ref{a-well-defined} is satisfied, following \cite[Proposition 3.1]{hmedi2022uniform} we choose $v^*$ to be a constant control such that $u_3^c=1$ and $u^s_2=1$.
The theory can be applied to more general multiclass multi-pool networks, which we will study in a followup paper together with asymptotic optimality. 
\end{example}

\subsection{The main result}

In this section, we present the main results of the paper which include showing that the optimal cost $\Lambda[r]$ {is} attained by stationary Markov controls,
 establishing the well-posedness of the associated Hamilton-Jacobi-Bellman (HJB) equation and then characterizing the stationary optimal Markov controls.

\begin{theorem} \label{thm-diffusion}Under Assumptions~\ref{a-main} and~\ref{a-well-defined}, we have the following.
	\begin{enumerate}
	
		\item[(i)] The HJB equation
		\begin{equation} \label{eqn-HJB}
		\min_{u \in \Act} \bigl[\Lg^{u} V(x) + r(x,u)\,V(x)\bigr] \;=\; \lsm[r]\,V(x)
		\qquad\forall\,x\in\Rd
		\end{equation}
		has a unique positive solution  $V\in\Cc^{2}(\RR^{d})$,
		satisfying $V(0)=1$.
		\item[(ii)] A stationary Markov control $v$ is optimal \emph{i.e.,} $v\in \Usm^o$  if and only if $v$ satisfies 
		\begin{equation}\label{eqn-optimality1}
		\Lg^v V(x) + r^v(x)\,V(x)\;=\;
		\min_{u\in\Act}\; \bigl[\Lg^{u} V(x) + r(x,u)\,V(x)\bigr]
		\quad \text{a.e.\ }x\in\Rd\,. 
		\end{equation}
		
		\item[(iii)] 
		The function $V$  has the following stochastic representation
		\begin{equation}\label{eq-V*rep}
		V(x) =\E_{x}^{v}\Bigl[\exp\Big(\int_{0}^{\widecheck\tau_R}
		(r^v(X_{t})-\lsm[r])\,\D{t}\Big)\,V(X_{\widecheck\tau_R})\Bigr]
		\qquad\forall\, x\in \Bar{B}_R^c\,,
		\end{equation}
		for all $R>0$, and $v\in \Usm^o[r]$. Additionally, if $v\in \Usm^{o}$ satisfies~\eqref{eq-V*rep}, for some $R>0$, then $\Lambda_v[r+f]>\Lambda_v[f]$, for all $f\in \sC_0$. 
		\item [(iv)] $\Lambda [r]=\lsm[r]$.
	\end{enumerate}

\end{theorem}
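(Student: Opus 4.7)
The plan is to deduce all four parts from the perturbed problems of Section~\ref{sec-proof-pert} by passing to the limit $\veps\downarrow 0$, with the variational representation from Section~\ref{sec-var-BM} and the resulting ergodic TP-ZS game reformulation as the main tools. For each $\veps>0$, the perturbed cost $r^\veps$ is inf-compact on $\RR^d$, so \cite{AB18} yields a positive pair $(V^\veps,\lambda^\veps)\in\cC^2(\RR^d)\times\RR$ with $\min_u[\Lg^u V^\veps + r^\veps V^\veps]=\lambda^\veps V^\veps$, $V^\veps(0)=1$, and $\lambda^\veps=\lsm[r^\veps]=\Lambda[r^\veps]$; moreover Theorem~\ref{thm-pert-limit} gives $\lambda^\veps\to\lsm[r]$. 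The first step is to extract a limit $V$ of $V^\veps$: uniform Harnack bounds on compacts (interior elliptic regularity together with the uniform bound $r^\veps\in\srO(h)$) and $W^{2,p}_{\mathrm{loc}}$ estimates supply a subsequence converging in $\cC^{1,\alpha}_{\mathrm{loc}}\cap W^{2,p}_{\mathrm{loc}}$ to a positive $V\in\cC^2(\RR^d)$ solving \eqref{eqn-HJB} with eigenvalue $\lsm[r]$. This settles existence in (i).

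For the ``if'' direction of (ii), fix a measurable $v$ attaining the minimum in \eqref{eqn-HJB}. Then $(V,\lsm[r])$ is a principal eigenpair of $\Lg^v+r^v$, so $\lambda_v^*[r^v]\le\lsm[r]$. To upgrade this to $\Lambda_v[r]=\lsm[r]$, apply the variational representation: $\Lambda_v[r]$ equals the supremum over auxiliary controls $w$ of an ergodic functional of the extended diffusion $Z$ with cost $r^v-\tfrac12\|w\|^2$, and the tightness of the MEMs of $Z$ under nearly optimal $w$ (Lemma~\ref{lem-comp-X-pert}) combined with an It\^o--Krylov application to $\log V$ gives the matching upper bound, hence $v\in\Usm^o$. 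The ``only if'' direction is by contradiction using Lemma~\ref{lem-sup-pert}: an optimal $v$ failing \eqref{eqn-optimality1} on a set of positive measure admits a measurable selection strictly lowering the integrand in the TP-ZS game, which contradicts the optimality of $v$ via the representation of $\lsm[r]$ as the game value.

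For (iii), the stochastic representation \eqref{eq-V*rep} and the uniqueness of $V$ in (i) both reduce, by~\cite[Theorem 3.2]{ari2018strict}, to positive recurrence of the ``ground diffusion'' with drift $b(\cdot,v(\cdot))+A\nabla\log V$ under $v\in\Usm^o$. Assumption~\ref{a-main} does not imply this stability directly; instead I would derive it as a byproduct of the tightness of MEMs of the extended diffusion under the nearly optimal auxiliary controls $w^\veps$, exploiting that the optimal $w^*$ equals $\Sigma\transp\nabla\log V$ so that the ground diffusion coincides with the extended diffusion under $(v,w^*)$. The strict inequality $\Lambda_v[r+f]>\Lambda_v[f]$ for $f\in\sC_0$ then follows because the ground diffusion for $f$ admits a unique invariant law $\mu_v$ with $\mu_v(\{r>0\})>0$, so adding $r$ strictly raises the principal eigenvalue.

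The main obstacle is (iv). I would extend the TP-ZS representation of Theorem~\ref{thm-lin-rep-pert} from $\lsm[r^\veps]$ to $\Lambda[r^\veps]$: for each $U\in\Uadm^{*,\beta}$, the variational formula writes $J(x,U)[r^\veps]$ as a supremum over \emph{admissible} auxiliary controls $w$ of an ergodic functional of an extended diffusion driven by $(\widetilde U,w)$, and the crucial tightness of the corresponding MEMs follows from the uniform exponential integrability of $r^\veps$ (Lemma~\ref{lem-ui}), which does not rely on the Markov structure of $U$. Interchanging $\inf_U\sup_w$ with $\limsup_T$ reduces $\Lambda[r^\veps]$ to the value of a family of ergodic TP-ZS games whose inner infima over $\Uadm$ can be replaced by infima over $\Usm$ using the convex-analytic results of~\cite{borkar1992stochastic}. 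This yields $\Lambda[r^\veps]=\lsm[r^\veps]$; passing to $\veps\downarrow 0$ and invoking the analogue of Theorem~\ref{thm-pert-limit} for admissible controls delivers $\Lambda[r]=\lsm[r]$. The decisive technical step is the MEM tightness without the Markov structure, secured by combining the inf-compact perturbation $h$ with the exponential uniform integrability of $r^\veps$.
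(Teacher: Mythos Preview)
Your outline for (i) and (iii) matches the paper's route closely: existence of $V$ via Harnack/elliptic compactness on $\{V^\veps\}$, and both uniqueness and the stochastic representation via recurrence of the ground diffusion, the latter obtained as a limit of tightness estimates for the extended diffusion under nearly optimal auxiliary controls.

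There are, however, two genuine gaps.

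\textbf{Part (ii), ``if'' direction.} You invoke Lemma~\ref{lem-comp-X-pert} for a minimizer $v$, but that lemma (and all the variational machinery of Section~\ref{sec-var-BM}) presupposes $U\in\Uadm^{*,\beta}$, i.e.\ finite ERSC cost. For a minimizer $v$ of \eqref{eqn-HJB} this is \emph{not known a priori}; the paper devotes Lemma~\ref{lem-minimizer-stable} to proving $\Lambda_v[r]<\infty$ via a separate Lyapunov argument with $\mathscr{F}=\frV+\widetilde V$. Moreover, your It\^o--Krylov application to $\widetilde V=\log V$ tacitly needs $\liminf_{T}T^{-1}\E[\widetilde V(Z_T)]\ge 0$, but unlike $V^\veps$ the limit $V$ is not known to satisfy $\inf_x V(x)>0$, so $\widetilde V$ may go to $-\infty$. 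The paper handles this with Lemma~\ref{lem-V-} ($\widetilde V^-\in\mathfrak{o}(\frV)$), which is what makes the It\^o argument in Proposition~\ref{prop-minimizers-optimal} close. You have not addressed either point.

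\textbf{Part (iv).} Your route is: prove $\Lambda[r^\veps]=\lsm[r^\veps]$ (this is fine; it is Lemma~\ref{lem-cost-equal-pert}) and then pass to the limit via an ``analogue of Theorem~\ref{thm-pert-limit} for admissible controls''. That analogue is the problem. The proof of Theorem~\ref{thm-pert-limit} rests on Lemma~\ref{lem-comp-X-w-pert}, whose constant $M_4$ is \emph{independent of $\veps$}; this uniformity comes from the Foster--Lyapunov function $\sW_v$ of Corollary~\ref{cor-stable}, which is specific to a stationary Markov control $v$. For $U\in\Uadm^{*,\beta}$ the only available tightness is Lemmas~\ref{lem-comp-w-pert}--\ref{lem-comp-X-pert}, and their constants $M_1,M_2$ depend on $\veps$ through $\eta_\veps\to 0$ in Lemma~\ref{lem-ui} (hence $L(\veps,\delta,\beta)\to\infty$). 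So you cannot conclude $J(x,U)[r^\veps]\to J(x,U)[r]$ uniformly over $\Uadm^{*,\beta}$, and the limit $\Lambda[r^\veps]\to\Lambda[r]$ is unjustified. The paper avoids this entirely (Proposition~\ref{prop-cost-equal-0}): it works directly with $J(x,U^*)[r]$ for a $\delta$-optimal $U^*$, plugs in the bounded TP-ZS maximizer $w^*\in\Wsm(l)$ as a \emph{suboptimal} auxiliary control (so $J(x,U^*)[r]\ge J(x,U^*,w^*)[r]$), and uses only that $w^*$ is bounded to get tightness of the MEMs of $(Z,U^*)$ --- no uniform-in-$\veps$ estimate for admissible controls is needed.
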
 
\begin{remark}\label{rem-limit-lambda} Theorem~\ref{thm-diffusion}(iv) is new in the literature even under the assumptions of inf-compact running cost (when $\cK=\RR^d$). See \cite[Remark 1.3]{AB18}. In the case of uniform stability (when $\cK=\emptyset)$, using \cite[Theorem 4.1]{ari2018strict}, it is clear that $\Lambda[r]=\Lambda_{\text{SM}}[r]$. 
\end{remark}

{\begin{remark} We briefly discuss how the optimal ERSC cost is related to the optimal CEC cost associated with the controlled diffusion $X$ and the running cost $r$ with optimization over $U\in \Uadm$. To that end, for $0<\kappa\leq 1$, let $$\Lambda^\kappa[r] \doteq \inf_{x\in \RR^d}\inf_{U\in \Uadm} \limsup_{T\to\infty} \frac{1}{\kappa T} \log \E_x^U\left[ \exp\left( \kappa\int_0^T r({X}_t, U_t )\D t \right)\right]\,.$$
It is clear that the case of $\kappa=1$ corresponds to the  ERSC problem studied in this section. For $0<\kappa\leq 1$,  in the appendix we show that  for open set $\mathcal{K} \subset \RR^d$, inf-compact function $\bar h$ and positive constants $C_i$, $i=1,2,3$ (which are taken from Assumption~\ref{a-main}), and  $\cV^\kappa\doteq \exp\big(\kappa \log \sV(x)\big)$, we have that (a) the running cost $\kappa r$ is inf-compact on $\mathcal{K}$ and  (b) the following conditions hold:
\begin{equation}\label{eq-mixed-condition-rs}
\begin{split}
\Lg^u \sV^\kappa (x) &\,\le\, \bigl(\kappa C_1 - {\kappa\bar h}(x,u)\bigr)\sV^\kappa (x)
\quad \forall (x,u) \in \mathcal{K}^c \times \Act\,,\\[5pt]
\Lg^u \sV^\kappa(x) &\,\le\, \bigl(\kappa C_2 + C_3\kappa r(x,u)\bigr)\sV^\kappa (x)
\quad \forall (x,u) \in \mathcal{K} \times \Act\,.
\end{split}
\end{equation}
Namely, a version analogous to Assumption~\ref{a-main} holds. Now suppose that for every $\delta>0$, there exists $0<\kappa_\delta\leq 1$, $U^\delta\in \Uadm$ and $x^\delta\in \RR^d$ such that 
\begin{align}\label{eq-mixed-1}J^0(x^\delta,U^\delta)-\delta\leq \Lambda^0[r]\quad \text{ and } \quad  \limsup_{T\to\infty} \frac{1}{\kappa_\delta T} \log \E_{x^\delta}^{U^\delta}\left[ \exp\left( \kappa_\delta\int_0^T r({X}_t, U^\delta_t )\D t \right)\right]<\infty\,.\end{align}
Then,  we have \begin{align}\label{eq-limit-lambda}\lim_{\kappa\to 0} \Lambda^\kappa[r]= \Lambda^0[r]\end{align} 
with 
$$ \Lambda^0[r]\doteq \inf_{x\in \RR^d}\inf_{U\in \Uadm}  J^0(x,U)[r] \quad \text{ and } \quad J^0(x,U)[r] \doteq  \limsup_{T\to\infty} \frac{1}{ T}  \E_x^U\left[\int_0^T r({X}_t,U_t )\D t \right]\,.$$
The proof of this result is provided in the appendix, for the sake of completeness. To the best of authors' knowledge, the result analogous to~\eqref{eq-limit-lambda} in the context of the finite horizon risk-sensitive case is well-known in the literature; see \cite{bauerle2023markov}. 
\end{remark}
}

The proof of this result is given in Section~\ref{sec-proof-main}. Below, we give an overview of the proof which includes the key ideas to be used. In a broad sense, we first construct and study ERSC problems associated with a certain  perturbed running cost $r^\veps$ such that $r^\veps$ is inf-compact and for any $U\in \Uadm$, the perturbed ERSC cost is finite whenever the original ERSC cost is finite. From here, using Assumption~\ref{a-well-defined}, we then proceed to show using the results of \cite{AB18} (in particular, Proposition 1.3 of that paper) that the original ERSC problem can be  completely solved in the sense that we have the well-posedness of the associated HJB equation and characterization of the optimal stationary Markov controls. 

Next, we move on to show that the limit of the optimal ERSC cost associated with $r^\veps$ is in fact, $\lsm[r]$ which is proved in Theorem~\ref{thm-pert-limit}. The proof  relies heavily on variational formulation of the ERSC cost for any admissible control which is introduced in Section~\ref{sec-var-BM}. The implication of using this formulation is that we can write the ERSC cost for any $v\in\Usm^*$  as the optimal cost of a new CEC problem associated with an extended process that involves an auxiliary control and an extended running cost that is the difference of the original running cost and a term that is quadratic in the auxiliary control.  After this, we prove the existence of solution to HJB by proving the convergence of solutions to the HJB equation associated with the perturbed ERSC problem. We then move on to prove the characterizations of optimal stationary Markov controls.

\medskip

\section{ERSC problem associated with perturbed running cost}\label{sec-est}
As mentioned earlier, the key ingredient of the proof of Theorem~\ref{thm-diffusion} involves studying the ERSC problem associated with a  perturbed running cost. In this section, we construct this perturbation, define the associated ERSC problem and state the existing results from literature. To that end,  define
\begin{align*}
 \cH&\doteq (\cK\times  \bU)\cup \left\{(x,u)\in \RR^d\times \bU: r(x,u)> {\bar h}(x,u) \right\}\,.\end{align*}
Then,   \cite[Lemma 3.3]{ABP15} gives us the  following. 
\begin{lemma}\label{lem-inf-comp}
There exists an inf-compact function $h:\RR^d\times \bU\rightarrow \RR_+$ such that 
\begin{align}\label{eq-inf-comp-1}   r(x,u)\leq &h(x,u)\leq 2+ 2{\bar h}(x,u)\Ind_{\cH^c}(x,u) +2  r(x,u) \Ind_{\cH}(x,u)
\,,\end{align}
and 
\begin{align}\label{eq-inf-comp-2}\Lg^u \sV(x)\leq \Big(C_1\wedge C_2-{\bar h}(x,u)\Ind_{\cH^c}(x,u)+C_3 r(x,u)\Ind_{\cH}(x,u)\Big)\sV(x)\,.
\end{align}
\end{lemma}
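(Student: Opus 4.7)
\emph{Plan.} The partition of the state space by $\cH$ is set up precisely so that on $\cH^c$ the function $\bar h$ dominates $r$, while on $\cH$ either $r>\bar h$ or $x\in\cK$ (and on $\cK$ the running cost $r$ is inf-compact by Assumption~\ref{a-main}(i)). Since $\bar h$ is inf-compact globally and $r\vert_\cK$ is inf-compact on $\cK$, the natural construction is to glue $\bar h$ and $r$ across the interface $\partial\cH$. My first ansatz is the piecewise definition
$$h_0(x,u)\;\doteq\;\bigl(\bar h(x,u)+1\bigr)\Ind_{\cH^c}(x,u)\;+\;\bigl(r(x,u)+1\bigr)\Ind_{\cH}(x,u),$$
and then mollify the switch so that the resulting $h$ is continuous. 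One may also assume without loss of generality, after replacing both of $C_1,C_2$ in~\eqref{eq-assump-1} by their maximum, that $C_1=C_2$, so the factor $C_1\wedge C_2$ in~\eqref{eq-inf-comp-2} is simply this common constant.

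\emph{Bounds and inf-compactness.} The sandwich~\eqref{eq-inf-comp-1} holds for $h_0$ by inspection on each piece: on $\cH^c$ one uses $r\le\bar h$ to get $r\le h_0=\bar h+1\le 2+2\bar h$; on $\cH$ one has $h_0=r+1\le 2+2r$. For inf-compactness, I would observe that $h_0(x,u)\ge\bar h(x,u)$ whenever $x\in\cK^c$, since if $(x,u)\in\cH^c$ then $h_0=\bar h+1$, while if $(x,u)\in\cH\cap(\cK^c\times\bU)$ then the defining condition of $\cH$ forces $r>\bar h$ and so $h_0=r+1>\bar h$. Combined with $h_0=r+1$ on $\cK\times\bU$, any level set $\{x:\min_{u}h_0(x,u)\le\ell\}$ splits into a subset of $\cK^c$ controlled by the inf-compactness of $\bar h$ and a subset of $\bar\cK$ controlled by the inf-compactness of $r\vert_\cK$, and is therefore compact.

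\emph{Drift inequality.} I would verify~\eqref{eq-inf-comp-2} by a three-way case split using Assumption~\ref{a-main}(ii). On $\cH^c\subset\cK^c\times\bU$, the first line of~\eqref{eq-assump-1} yields $\Lg^u\sV\le(C_1-\bar h)\sV$, which is the desired bound. On $\cH\cap(\cK\times\bU)$, the second line yields $\Lg^u\sV\le(C_2+C_3 r)\sV$, again matching the right-hand side of~\eqref{eq-inf-comp-2}. On $\cH\cap(\cK^c\times\bU)$, where $r>\bar h$ by the definition of $\cH$, the first line still gives $\Lg^u\sV\le(C_1-\bar h)\sV\le(C_1+C_3 r)\sV$ because $\bar h\ge 0$; this again fits under the common majorant in~\eqref{eq-inf-comp-2}.

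\emph{Main obstacle.} The delicate step is \emph{continuity}: $h_0$ jumps across $\partial\cH$ (where $r=\bar h$) and across $\partial\cK$, whereas inf-compactness in the sense of Definition~\ref{def-nm} and the later uses of $h$ are phrased for continuous functions. I would repair this with a smooth partition of unity adapted to $\cH$: fix $\eta\in\cC^\infty(\RR;[0,1])$ with $\eta=0$ on $(-\infty,0]$ and $\eta=1$ on $[1,\infty)$, and $\xi\in\cC^\infty(\RR^d;[0,1])$ with $\xi=1$ on a neighborhood of $\bar\cK$ and vanishing outside a slightly larger open enlargement; set $\chi(x,u)\doteq\max\{\xi(x),\eta(r(x,u)-\bar h(x,u))\}$, and
$$h(x,u)\;\doteq\;\bigl(1-\chi(x,u)\bigr)\bigl(\bar h(x,u)+1\bigr)\;+\;\chi(x,u)\bigl(r(x,u)+1\bigr).$$
On the thin transition layer $\{0<\chi<1\}$ one has $|r-\bar h|\le 1$ and $x$ close to $\partial\cK$, so the perturbation from $h_0$ is uniformly bounded. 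Consequently, both~\eqref{eq-inf-comp-1} (after possibly enlarging the constant $2$ on the right) and~\eqref{eq-inf-comp-2} (after possibly enlarging the common constant $C_1\wedge C_2$) persist, while inf-compactness is preserved since the bounds from below by $\bar h$ on $\cK^c$ and by $r$ on $\cK$ are only shifted by $\mathcal{O}(1)$. This smoothing step is precisely the one carried out in~\cite[Lemma 3.3]{ABP15}, which the lemma statement invokes.
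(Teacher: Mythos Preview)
Your proposal is correct and follows the same route as the paper. The paper's proof is a two-liner: \eqref{eq-inf-comp-1} is quoted verbatim from \cite[Lemma~3.3]{ABP15} (whose gluing-and-mollification construction you essentially reconstruct), and \eqref{eq-inf-comp-2} is obtained from the single observation $\cH\supset\cK\times\bU$, which is exactly the inclusion driving your three-way case split.
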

\begin{proof}
~\eqref{eq-inf-comp-1} follows directly from \cite[Lemma 3.3]{ABP15}.  To finish the proof,~\eqref{eq-inf-comp-2}  follows from the fact that $\cH\supset \cK\times \bU$.
\end{proof}
In the following, we use $h$ to construct a running cost function {that is inf-compact} over the entire space $\RR^d\times \bU$ (see~\eqref{def-r-cost-pert}). In the rest of the section, we use the inf-compact function $h$ from Lemma~\ref{lem-inf-comp} to perform the aforementioned construction of an inf-compact running cost whose associated ERSC problem is well defined. Before we state the next lemma, we give the well-known Young's inequality for the product of two non-negative real numbers: for $a,b\geq 0$ and $p,q>1$ such that $p^{-1}+q^{-1}=1$, we have
\begin{align}\label{eq-young-ineq}
ab\leq \frac{a^p}{p}+\frac{b^q}{q}
\end{align}
and equality holds if and only if $a^p=b^q$.
\begin{lemma}\label{lem-fin-cost} For $0<\veps<\frac{1-C_3}{4 }$ and {$U\in \Uadm^{*,\beta}$},  the following hold: 
\begin{align}\label{eq-b0}  \limsup_{T\to\infty} \frac{1}{T} \log \E_x^{U}\left[\exp\left(\int_0^T r(X_t,U_t)\Ind_{\calK\times \bU}(X_t,U_t)\D t\right)\right]\leq {\beta}\,,\end{align}
\item
\begin{align}\label{eq-b00} \limsup_{T\to\infty} \frac{1}{T} \log \E_x^{U}\left[\exp\left(\veps \int_0^T h(X_t,U_t)\D t\right)\right]\leq 2+\max\big\{C_1\wedge C_2, {\beta}\big\}\,.\end{align}
\end{lemma}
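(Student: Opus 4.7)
Part (i) is immediate from monotonicity of the exponential: since $r \geq 0$ and $r \Ind_{\calK \times \bU} \le r$, we have
\[
\E_x^U\Big[\exp\Big(\int_0^T r(X_t,U_t)\Ind_{\calK \times \bU}(X_t,U_t)\,\D t\Big)\Big] \le \E_x^U\Big[\exp\Big(\int_0^T r(X_t,U_t)\,\D t\Big)\Big],
\]
so taking $\tfrac{1}{T}\log$ and $\limsup$ yields at most $J(x,U)[r]$, which is $\le \beta$ by definition of $\Uadm^{*,\beta}$.

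For part (ii), my plan is to combine an exponential Lyapunov estimate extracted from Lemma~\ref{lem-inf-comp} with the ERSC bound for $r$, joined together by H\"older's and Jensen's inequalities. The Lyapunov piece: defining
\[
M_t \doteq \sV(X_t)\exp\Big(\int_0^t\bigl[\bar h(X_s,U_s)\Ind_{\calH^c}(X_s,U_s)-C_3 r(X_s,U_s)\Ind_\calH(X_s,U_s)-C_1\wedge C_2\bigr]\D s\Big),
\]
It\^o's formula combined with the drift inequality~\eqref{eq-inf-comp-2} shows that the drift of $M_t$ is non-positive, so $M_t$ is a non-negative local supermartingale. I would then localize with $\tau_n \doteq \inf\{t: \|X_t\| > n\}$ (which satisfy $\tau_n \uparrow \infty$ a.s.\ by non-explosion, a consequence of the linear growth~\eqref{eq-cond-lin-growth}); since $\sV, \bar h, r$ are locally bounded, each $M_{t \wedge \tau_n}$ is a bounded true supermartingale, and Fatou's lemma yields $\E_x^U[M_T] \le \sV(x)$. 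Combined with $\sV \ge 1$, this gives
\[
\E_x^U\Big[\exp\Big(\int_0^T\bigl(\bar h(X_s,U_s)\Ind_{\calH^c} - C_3 r(X_s,U_s)\Ind_\calH\bigr)\D s\Big)\Big] \le \sV(x)\,e^{(C_1\wedge C_2) T}.
\]

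Next, using~\eqref{eq-inf-comp-1}, $\veps h \le 2\veps + 2\veps \bar h\Ind_{\calH^c} + 2\veps r\Ind_\calH$. The key algebraic rearrangement is
\[
2\veps(\bar h\Ind_{\calH^c} + r\Ind_\calH) = 2\veps(\bar h\Ind_{\calH^c} - C_3 r\Ind_\calH) + 2\veps(1+C_3)\,r\Ind_\calH,
\]
designed so that H\"older's inequality with conjugate pair $p = 1/(2\veps)$, $q = 1/(1-2\veps)$ (both $>1$ since $\veps < 1/4$) splits the expectation into two manageable factors. The first factor equals the Lyapunov estimate of the preceding paragraph raised to $1/p = 2\veps$, hence is bounded by $\sV(x)^{2\veps}e^{2\veps(C_1\wedge C_2) T}$. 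For the second factor, the window $\veps < (1-C_3)/4$ is exactly what forces $2\veps(2+C_3) < 1$, whence $c \doteq 2\veps(1+C_3)q < 1$; Jensen's inequality applied to the concave map $x \mapsto x^c$, together with $r\Ind_\calH \le r$, bounds this factor by $\E_x^U[\exp(\int_0^T r(X_t,U_t)\,\D t)]^{2\veps(1+C_3)}$.

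Combining the two H\"older factors with the offset $e^{2\veps T}$, taking $\limsup\tfrac{1}{T}\log$, and invoking $J(x,U)[r] \le \beta$ produces
\[
\limsup_{T\to\infty}\frac{1}{T}\log\E_x^U\Big[\exp\Big(\veps \int_0^T h(X_t,U_t)\,\D t\Big)\Big] \le 2\veps + 2\veps(C_1\wedge C_2) + 2\veps(1+C_3)\beta.
\]
Since $\veps \le (1-C_3)/4$ and $C_3 \in (0,1)$, one has $2\veps \le \tfrac{1}{2} \le 2$, $2\veps(C_1\wedge C_2) \le \tfrac{1}{2}(C_1\wedge C_2)$, and $2\veps(1+C_3)\beta \le \tfrac{1}{2}(1-C_3^2)\beta \le \tfrac{\beta}{2}$, so the right-hand side is at most $2 + \max\{C_1\wedge C_2, \beta\}$, matching~\eqref{eq-b00}. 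The main obstacle in this plan is the supermartingale step---specifically converting the local supermartingale property of $M_t$ into a genuine exponential-moment estimate---for which the $\tau_n$-localization plus Fatou is precisely the right tool. After that, the H\"older--Jensen computation is essentially forced, with the threshold $\veps < (1-C_3)/4$ emerging exactly as the joint requirement $2\veps p = 1$ and $2\veps(1+C_3)q < 1$ keeping both Jensen applications in the admissible regime $c \in (0,1]$.
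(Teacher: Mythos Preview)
Your proof is correct and follows the same overall strategy as the paper: obtain an exponential Lyapunov estimate from the drift inequality (you cite \eqref{eq-inf-comp-2}, the paper uses Assumption~\ref{a-main} directly) via It\^o's formula, localization with $\tau_R$, and Fatou's lemma, and then split the exponent controlling $\veps h$ into a Lyapunov-controlled factor and a factor bounded via $J(x,U)[r]\le\beta$. The only difference is in the splitting step: the paper applies Young's inequality~\eqref{eq-young-ineq} with $p=q=2$, which produces a sum of two exponentials and hence the $\max\{C_1\wedge C_2,\beta\}$ structure directly, whereas you use H\"older with the tailored pair $\bigl(1/(2\veps),\,1/(1-2\veps)\bigr)$ followed by Jensen's inequality for the concave map $y\mapsto y^c$, giving the weighted sum $2\veps + 2\veps(C_1\wedge C_2) + 2\veps(1+C_3)\beta$ that you then dominate by $2+\max\{C_1\wedge C_2,\beta\}$---these are equivalent minor variations of the same argument.
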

\begin{proof}Fix $U\in {\Uadm^{*,\beta}}$.~\eqref{eq-b0} follows immediately from { the definition of $\Uadm^{*,\beta}$ in~\eqref{def-u*}} and the fact that $$r(x,u)\Ind_{\calK\times \bU}(x,u)\leq r(x,u)$$
{which follows from the non-negativity of $r(\cdot,\cdot)$.} 

To prove~\eqref{eq-b00}, we apply  It{\^o}'s formula to 
$$ \exp\Big(\int_0^{{T\wedge\tau_R}} \Big({\bar h}(X_t,U_t)\Ind_{\calK^c\times \bU}(X_t,U_t)-C_3r(X_t,U_t)\Ind_{\calK\times \bU}(X_t,U_t)-C_1\wedge C_2\Big)\D t\Big) \sV(X_{T\wedge \tau_R})$$
to get
\begin{align*}
\sV(x)\geq \E_x^{U}&\Big[ \exp\Big(\int_0^{T\wedge \tau_R} \Big({\bar h}(X_t,U_t)\Ind_{\calK^c\times \bU}(X_t,U_t)\\
&-C_3r(X_t,U_t)\Ind_{\calK\times \bU}(X_t,U_t)-C_1\wedge C_2\Big)\D t\Big) \sV(X_{T\wedge \tau_R}) \Big]
\end{align*}
Upon taking $R\to\infty$, using the fact that $\sV\geq 1$ and applying Fatou's lemma, we have
\begin{align*} \limsup_{T\to\infty} \frac{1}{T}\log \E_x^{U}\left[\exp\left(\int_0^{T} \Big({\bar h}(X_t,U_t)\Ind_{\calK^c\times \bU}(X_t,U_t)-C_3r(X_t,U_t)\Ind_{\calK\times \bU}(X_t,U_t)\Big)\D t\right)\right]\leq C_1\wedge C_2\,.\end{align*}

For $0<\veps<\frac{1-C_3}{4}$ and large enough $T$, consider 
\begin{align*} \E_x^{U}&\left[\exp\left(\int_0^{T} \left(2\veps  {\bar h}(X_t,U_t)\Ind_{\calK^c\times \bU}(X_t,U_t)+2\veps  r(X_t,U_t)\Ind_{\calK\times \bU}(X_t,U_t)\right)\D t\right)\right]\\
&=  \E_x^{U}\Bigg[\exp\Big(\int_0^{T} \Big(2\veps {\bar h}(X_t,U_t)\Ind_{\calK^c\times \bU}(X_t,U_t)+2\veps r(X_t,U_t)\Ind_{\calK\times \bU}(X_t,U_t)\\
&\qquad\qquad-\frac{C_3}{2} r(X_t,U_t)\Ind_{\calK\times \bU}(X_t,U_t)+\frac{C_3}{2} r(X_t,U_t)\Ind_{\calK\times \bU}(X_t,U_t)\Big)\D t\Big)\Bigg]\,.
\end{align*}
Applying~\eqref{eq-young-ineq} with $$a= \exp\Big(\int_0^{T} \Big(2\veps  {\bar h}(X_t,U_t)\Ind_{\calK^c\times \bU}(X_t,U_t)-\frac{C_3}{2}r(X_t,U_t)\Ind_{\calK\times \bU}(X_t,U_t)\Big)\D t\Big)\,,$$ $$b=\exp\Big(\int_0^{T} \Big(\frac{C_3+4\veps }{2}r(X_t,U_t)\Ind_{\calK\times \bU}(X_t,U_t)\Big)\D t\Big)\,,  $$
and $p=q=2$, we get
\begin{align*}
 \E_x^{U}&\left[\exp\left(\int_0^{T} \left(2\veps {\bar h}(X_t,U_t)\Ind_{\calK^c\times \bU}(X_t,U_t)+2\veps  r(X_t,U_t)\Ind_{\calK\times \bU}(X_t,U_t)\right)\D t\right)\right]\\
 &\leq \frac{1}{2}\E_x^{U}\Bigg[\exp\Big(\int_0^{T} \Big(4\veps {\bar h}(X_t,U_t)\Ind_{\calK^c\times \bU}(X_t,U_t)-C_3r(X_t,U_t)\Ind_{\calK\times \bU}(X_t,U_t)\Big)\D t\Big)\Bigg]\\
 &\qquad+ \frac{1}{2}\E_x^{U}\Bigg[ \exp\Big(\int_0^{T} \Big((C_3+4\veps )r(X_t,U_t)\Ind_{\calK\times \bU}(X_t,U_t)\Big)\D t\Big) \Bigg]\,.
\end{align*}
Since $0<\veps < \frac{1-C_3}{4 }$ and $0<C_3<1$, it is also clear that $4\veps<1$. Therefore, we have
\begin{align}\nonumber
&\limsup_{T\to\infty}\frac{1}{T}\log \E_x^{U}\left[\exp\left(\int_0^{T} \left(2\veps {\bar h}(X_t,U_t)\Ind_{\calK^c\times \bU}(X_t,U_t)+2\veps  r(X_t,U_t)\Ind_{\calK\times \bU}(X_t,U_t)\right)\D t\right)\right]\\\nonumber
&\leq \max\Bigg\{ \limsup_{T\to\infty}\frac{1}{T} \log \E_x^{U}\Bigg[\exp\Big(\int_0^{T} \Big( {\bar h}(X_t,U_t)\Ind_{\calK^c\times \bU}(X_t,U_t)-C_3r(X_t,U_t)\Ind_{\calK\times \bU}(X_t,U_t)\Big)\D t\Big)\Bigg], \\\nonumber
&\qquad\qquad\qquad \limsup_{T\to\infty}\frac{1}{T}\log \E_x^{U}\Bigg[ \exp\Big(\int_0^{T} \Big((C_3+4\veps)r(X_t,U_t)\Ind_{\calK\times \bU}(X_t,U_t)\Big)\D t\Big) \Bigg]\Bigg\}\\\nonumber
&\leq \max\Bigg\{ C_1\wedge C_2, \limsup_{T\to\infty} \frac{1}{T}\log \E_x^{U}\Bigg[ \exp\Big(\int_0^{T} \Big(r(X_t,U_t)\Ind_{\calK\times \bU}(X_t,U_t)\Big)\D t\Big) \Bigg]\Bigg\}\\\label{eq-b1}
&\leq \max\big\{ C_1\wedge C_2,{\beta}\big\}\,.
\end{align}
To arrive at the final equation, we use {the} fact that $C_3+4\veps<1$ and~\eqref{eq-b0}. 
From the right hand side of~\eqref{eq-inf-comp-1}, we have~\eqref{eq-b00}. Finally, using~\eqref{eq-inf-comp-1}, this gives us
\begin{align*}
\limsup_{T\to\infty}\frac{1}{T}\log \E_x^{U}\left[\exp\left(\veps \int_0^{T} h(X_t,U_t)\D t\right)\right]\leq 2+ \max\big\{ C_1\wedge C_2,{\beta}\big\}\,.
\end{align*}
\end{proof}
\begin{corollary}\label{cor-inf-comp-finite}
For  $v\in {\Usm^{*,\beta}}$ and $0<\veps<\frac{1-C_3}{4}$, the following holds:
\begin{align}\label{eq-inf-comp-cost-markov} \limsup_{T\to\infty} \frac{1}{T} \log \E_x^{v}\left[\exp\left(\veps \int_0^T h(X_t,v(X_t))\D t\right)\right]\leq 2+ \max\big\{ C_1\wedge C_2, {\beta} \big\}\,. \end{align}
\end{corollary}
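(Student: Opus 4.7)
The plan is to obtain Corollary~\ref{cor-inf-comp-finite} as an immediate specialization of Lemma~\ref{lem-fin-cost}\eqref{eq-b00}. By construction, $\Usm^{*,\beta}=\Usm\cap\Uadm^{*,\beta}\subset\Uadm^{*,\beta}$, so for any $v\in\Usm^{*,\beta}$ the process $U_t\df v(X_t)$ is admissible and lies in $\Uadm^{*,\beta}$. Plugging this choice of $U$ into the left-hand side of \eqref{eq-b00} gives
$$
\limsup_{T\to\infty} \frac{1}{T} \log \E_x^{v}\!\left[\exp\!\left(\veps \int_0^T h(X_t,v(X_t))\,\D t\right)\right]\leq 2+\max\{C_1\wedge C_2,\beta\},
$$
which is exactly \eqref{eq-inf-comp-cost-markov}. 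So in the precise Markov case there is nothing to prove beyond this direct substitution.

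The only point that deserves a brief comment is the relaxed Markov control case, where $h(X_t,v(X_t))$ is interpreted as $\int_{\bU}h(X_t,u)\,v(\D u|X_t)$. Here I would re-run the proof of Lemma~\ref{lem-fin-cost} verbatim under $\PP_x^v$, with the integrands $\bar h(X_t,U_t)$, $r(X_t,U_t)$, $h(X_t,U_t)$ replaced by their averages against $v(\D u|X_t)$. The It\^o expansion of
$$
\exp\!\Big(\!\int_0^{T\wedge\tau_R}\!\!\bigl({\textstyle\int_{\bU}}\bar h(X_t,u)v(\D u|X_t)\Ind_{\calK^c}(X_t)-C_3{\textstyle\int_{\bU}}r(X_t,u)v(\D u|X_t)\Ind_{\calK}(X_t)-C_1\wedge C_2\bigr)\D t\Big)\sV(X_{T\wedge\tau_R})
$$
still yields a nonnegative local supermartingale because Assumption~\ref{a-main} is pointwise in $u$ and the drift inequalities are preserved under linear averaging in $u$. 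Young's inequality~\eqref{eq-young-ineq} is applied pathwise exactly as in the proof of Lemma~\ref{lem-fin-cost}, so the bound \eqref{eq-b1} and hence \eqref{eq-b00} carry over without change.

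I do not anticipate any genuine obstacle: the corollary is essentially a restatement of~\eqref{eq-b00} after noting the inclusion $\Usm^{*,\beta}\subset\Uadm^{*,\beta}$, and the only bookkeeping item is to confirm that the It\^o/Young argument used to establish \eqref{eq-b00} is unaffected by passing from a general $\bU$-valued admissible control to a (possibly relaxed) stationary Markov control. Since this passage only replaces pointwise evaluations in $u$ by convex combinations thereof, and all the inequalities in Assumption~\ref{a-main} and Lemma~\ref{lem-inf-comp} are stated pointwise in $u$, the proof of Lemma~\ref{lem-fin-cost} transfers verbatim.
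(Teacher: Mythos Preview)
Your proposal is correct and matches the paper's own proof essentially verbatim: the paper simply states that the argument is identical to that of \eqref{eq-b00} with $U_t$ replaced by $v(X_t)$. Your additional remark on the relaxed Markov case (averaging the pointwise-in-$u$ inequalities of Assumption~\ref{a-main} against $v(\D u|x)$) is a welcome clarification that the paper leaves implicit.
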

\begin{proof}
The proof is exactly along the same lines as that of~\eqref{eq-b00}. The only change is that we replace $U_t$ by $v(X_t)$.
\end{proof}

Let $0< \veps<\veps_0\doteq \frac{1-C_3}{8}$  and define
\begin{align}\label{def-r-cost-pert}r^\veps\doteq \Big(1-\frac{\veps}{\veps_0}\Big)r+ \veps h\,.\end{align} In the following, we study the ERSC problem associated with $r^\veps$, for $\veps\in (0,\veps_0)$. To that end, define 
$$J (x,U)[r^\veps]\doteq \limsup_{T\to\infty} \frac{1}{T} \log \E_x^U\left[ \exp\left( \int_0^T r^\veps({X}_t, U_t )\D t \right)\right]\,.$$
The lemma below shows that the ERSC problem associated with $r^\veps$ is well-defined, whenever $0<\veps<\veps_0.$

\begin{lemma}\label{lem-pert-well-defined} The following holds.
 \begin{align}\label{eq-pert-well-defined-1}\sup_{0\leq \veps<\veps_0}\sup_{U\in {\Uadm^{*,\beta}}}J(x,U)[r^\veps]&\leq 2+ \max\big\{C_1\wedge C_2,\beta \big\}\,.\end{align}
\end{lemma}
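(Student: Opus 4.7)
\textbf{Proof plan for Lemma~\ref{lem-pert-well-defined}.} The plan is to exploit the convex-combination structure of $r^\veps$ together with H\"older's inequality, reducing the claim to the two exponential bounds already established in Lemma~\ref{lem-fin-cost}. Writing $\alpha \df 1-\veps/\veps_0\in(0,1]$, I would first rearrange the perturbed running cost as
\begin{equation*}
\int_0^T r^\veps(X_t,U_t)\,\D t \;=\; \alpha\int_0^T r(X_t,U_t)\,\D t \;+\; (1-\alpha)\,\veps_0\int_0^T h(X_t,U_t)\,\D t,
\end{equation*}
since $\veps/(1-\alpha)=\veps_0$. The crucial observation for applying Lemma~\ref{lem-fin-cost} with coefficient $\veps_0$ on $h$ is that $\veps_0=\tfrac{1-C_3}{8}<\tfrac{1-C_3}{4}$, so the estimate \eqref{eq-b00} is valid with $\veps$ replaced by $\veps_0$.

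Next I would apply H\"older's inequality with conjugate exponents $p=1/\alpha$ and $q=1/(1-\alpha)$ (trivially adjusted when $\alpha=1$, i.e.\ $\veps=0$) to obtain
\begin{equation*}
\E_x^U\!\Bigl[\exp\!\Bigl(\int_0^T r^\veps(X_t,U_t)\,\D t\Bigr)\Bigr]
\le
\Bigl(\E_x^U\!\bigl[\exp(\textstyle\int_0^T r(X_t,U_t)\,\D t)\bigr]\Bigr)^{\alpha}
\Bigl(\E_x^U\!\bigl[\exp(\veps_0\textstyle\int_0^T h(X_t,U_t)\,\D t)\bigr]\Bigr)^{1-\alpha}.
\end{equation*}
Taking logarithms, dividing by $T$, and passing to $\limsup_{T\to\infty}$ (and using that the $\limsup$ of a convex combination is bounded by the corresponding convex combination of $\limsup$'s), I would then invoke the definition of $\Uadm^{*,\beta}$ on the first factor (giving the bound $\beta$) and \eqref{eq-b00} with $\veps=\veps_0$ on the second factor to get
\begin{equation*}
J(x,U)[r^\veps]\;\le\;\alpha\,\beta \;+\;(1-\alpha)\bigl(2+\max\{C_1\wedge C_2,\beta\}\bigr).
\end{equation*}

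Finally, since $\beta\le 2+\max\{C_1\wedge C_2,\beta\}$, the right-hand side is itself bounded by $2+\max\{C_1\wedge C_2,\beta\}$ for every $\alpha\in(0,1]$, uniformly in $\veps\in[0,\veps_0)$ and $U\in\Uadm^{*,\beta}$. The edge case $\veps=0$ gives $r^0=r$ and reduces to $J(x,U)[r]\le\beta\le 2+\max\{C_1\wedge C_2,\beta\}$ directly. I do not expect any real obstacle in this argument: the only subtlety is to verify that the choice $\veps_0=(1-C_3)/8$ is small enough to keep $\veps_0$ strictly below the threshold $(1-C_3)/4$ required by Lemma~\ref{lem-fin-cost}, and to express $r^\veps$ as exactly the convex combination that makes the H\"older exponents conjugate.
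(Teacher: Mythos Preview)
Your proof is correct and follows essentially the same route as the paper: both recognize $r^\veps=\alpha\,r+(1-\alpha)\,\veps_0 h$ with $\alpha=1-\veps/\veps_0$ and split the exponential with the conjugate pair $(1/\alpha,\,1/(1-\alpha))$, then invoke Lemma~\ref{lem-fin-cost} with $\veps=\veps_0$. The only cosmetic difference is that the paper applies Young's inequality \eqref{eq-young-ineq} to the exponentials (yielding an arithmetic mean and hence a $\max$ after taking $\limsup$), whereas you apply H\"older's inequality (yielding a geometric mean and hence the convex combination $\alpha\beta+(1-\alpha)(2+\max\{C_1\wedge C_2,\beta\})$, which you then dominate by the larger endpoint); both collapse to the same final bound.
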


\begin{proof} 
Fix $U\in {\Uadm^{*,\beta}}$ and consider
\begin{align*}
\E_x^U\left[ \exp\left( \int_0^T r^\veps({X}_t, U_t )\D t \right)\right]= \E_x^U\left[ \exp\left( \int_0^T \left(\big(1-\frac{\veps}{\veps_0}\big)r({X}_t, U_t ) + \veps h(X_t,U_t)\right)\D t \right)\right]\,. 
\end{align*}
Applying~\eqref{eq-young-ineq} with
\begin{align*}
a&= \exp\left(\big(1-\frac{\veps}{\veps_0}\big) \int_0^T r({X}_t, U_t )\D t \right), \\
 b&= \exp\left(\veps \int_0^T h(X_t,U_t)\D t \right),\\
p&= \frac{1}{1-\frac{\veps}{\veps_0}} \quad \text{ and } \quad q= \frac{p}{p-1}= \frac{\veps_0}{\veps}\,,
\end{align*}
we get
\begin{align*}
J(x,U)[r^\veps]&\leq \max\Bigg\{ \limsup_{T\to\infty} \frac{1}{T} \log \E_x^U\left[ \exp\left( \int_0^T r({X}_t, U_t ) \D t \right)\right],\\
&\qquad \qquad\qquad  \limsup_{T\to\infty} \frac{1}{T}\log \E_x^U\left[ \exp\left(\veps_0\int_0^T  h(X_t,U_t)\D t \right)\right] \Bigg\}\\
&= \max\Bigg\{J(x,U)[r], \,\,\, \limsup_{T\to\infty} \frac{1}{T}\log \E_x^U\left[ \exp\left(\veps_0 \int_0^T   h(X_t,U_t)\D t \right)\right] \Bigg\}\,. 
\end{align*} 
From Lemma~\ref{lem-fin-cost}, taking supremum over $U\in {\Uadm^{*,\beta}}$ and then over $0<\veps<\veps_0$, we get the desired result.  \end{proof}

In light of Lemma~\ref{lem-pert-well-defined} {(which implies that  $  \Lambda[r^\veps]\leq \lsm[r^\veps]<\infty$)},   it is clear that $r^\veps$ is inf-compact and in particular, near-monotone relative to  
$ \Lambda[r^\veps] \text{ and } \lsm[r^\veps]$; {recall the definition of near-monotonicity in Definition~\ref{def-nm} and see Remark~\ref{rem-nm}.}
Note that Assumption~\ref{a-well-defined} and Lemma~\ref{lem-pert-well-defined} imply that $\Lambda[r^\veps]$ and $\lsm[r^\veps]$ are  finite for $0<\veps<\veps_0$.
We use  \cite[Proposition 1.3]{AB18} extensively which we now state below in the context of the ERSC problem associated with $r^\veps$.   Define 
$$\Usm^{o,\veps}\doteq \{v\in \Usm: \Lambda_v[r^\veps]=\lsm[r^\veps]\}\,.
$$
{From hereon, we always consider $\beta>2+ \max\big\{C_1\wedge C_2,\beta^* \big\}$. From the above lemma, this means that for $0\leq \veps<\veps_0$, $\Usm^{o,\veps},\Usm^{o}\subset \Usm^{*,\beta}$. } 
\begin{remark}
In the rest of the paper, we always assume that $0<\veps< \veps_0$. Also, to keep the expressions simple, we write $r^{\veps,v}(\cdot)=r^\veps(\cdot,v(\cdot)) $ for every $v\in {\Usm^{*,\beta}}$.
\end{remark}

Since  we are interested in studying the ERSC problem associated with $r^\veps$ for $0<\veps<\veps_0$ and then taking $\veps\to 0$, we state {relevant results in this case below}. 
\begin{theorem}\label{thm-HJB-pert} Suppose Assumptions~\ref{a-main} and~\ref{a-well-defined} hold. 
Then, there exists a {unique}  pair $(V^\veps, \widetilde \Lambda^\veps)\in \calC^2(\RR^d)\times \RR_+$  such that $V^\veps(0)=1$, $\inf_{x\in \RR^d}V^\veps(x)>0$ and 
\begin{align}\label{eq-HJB-pert}
\min_{u\in \bU} \Big\{\Lg^uV^\veps(x)+	r^\veps(x,u) V^\veps(x)\Big\}=\widetilde \Lambda^\veps V^\veps(x), \text{ for every $x\in \RR^d$ and $\lsm[r^\veps]=\widetilde \Lambda^\veps$.}
\end{align}
Moreover, the following hold. 
\begin{enumerate} \item [(i)]A stationary Markov control $v$  is optimal \emph{i.e.,} $v\in \Usm^{o,\veps}$ if and only if    it satisfies 
		\begin{equation}\label{eqn-optimality-pert}
		\Lg^v V^\veps(x) + r^{\veps,v}(x)\,V^\veps(x)\;=\;
		\min_{u\in\Act}\; \bigl[\Lg^{u} V^\veps(x) + r^\veps(x,u)\,V^\veps (x)\bigr]
		\quad \text{a.e.\ }x\in\Rd\,.
		\end{equation}
		\item[(ii)] 
		The function $V^\veps$ has the following stochastic representation
		\begin{equation}\label{eq-V*rep-pert}
		V^\veps(x) =\E_{x}^{v}\Bigl[\exp\Big(\int_{0}^{\widecheck\tau_R}
		(r^{\veps,v}(X_{t})-\lsm[r^\veps])\,\D{t}\Big)\,V^\veps(X_{\widecheck\tau_R})\Bigr]
		\qquad\forall\, x\in \Bar{B}_R^c\,,
		\end{equation}
		for all $R>0$, and $v\in \Usm^{o,\veps}$.
		Additionally, if $v\in\Usm^{o,\veps}$ {satisfies~\eqref{eq-V*rep-pert}} for some $R>0$,
		then $\Lambda_v[r^\veps+f] > \Lambda_v[r^\veps]=\lsm[r^\veps]$ for all $f\in\sC_0 $.
		\end{enumerate}
\end{theorem}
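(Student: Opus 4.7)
The approach is to reduce the claim to a direct application of \cite[Proposition 1.3]{AB18} for the ERSC problem with running cost $r^\veps$ in place of $r$. The key observation is that the two essential ingredients required by that proposition are (a) near-monotonicity of the running cost relative to the optimal value, and (b) finiteness of the ERSC cost under some stationary Markov control. By Lemma \ref{lem-inf-comp}, the function $h$ is inf-compact on $\RR^d\times\bU$; since $r\geq 0$ and $\veps>0$, we have $r^\veps(x,u)\geq \veps h(x,u)$, so $r^\veps$ itself is inf-compact on $\RR^d\times\bU$. By Remark \ref{rem-nm}, inf-compactness implies near-monotonicity relative to every real $\lambda$, and in particular relative to both $\Lambda[r^\veps]$ and $\lsm[r^\veps]$. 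Finiteness of these quantities follows from Lemma \ref{lem-pert-well-defined} combined with Assumption \ref{a-well-defined}, which guarantees that there is a control $v^*\in\Usm$ with $J(x,v^*)[r^\veps]\leq 2+\max\{C_1\wedge C_2,\beta\}<\infty$.

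With these two ingredients in hand, I would invoke \cite[Proposition 1.3]{AB18} to obtain a pair $(V^\veps,\widetilde\Lambda^\veps)$ with $V^\veps>0$ and $\widetilde\Lambda^\veps=\lsm[r^\veps]$ solving the multiplicative HJB equation \eqref{eq-HJB-pert}, together with statements (i) and (ii) of the theorem (the verification/optimality characterization and the stochastic representation on complements of balls). Elliptic regularity applied to~\eqref{eq-HJB-pert} (whose coefficients and right-hand side are locally H{\"o}lder) upgrades $V^\veps$ to $\mathcal{C}^2(\RR^d)$; the normalization $V^\veps(0)=1$ is obtained by rescaling, which is permissible since the equation is linear in $V^\veps$ for each fixed minimizer.

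For uniqueness, I would use the standard argument for principal-eigenpair uniqueness in the near-monotone setting: given two positive solutions $V_1^\veps,V_2^\veps$ with the same normalization and the same eigenvalue $\lsm[r^\veps]$, apply the stochastic representation \eqref{eq-V*rep-pert} under the same optimal Markov control $v\in\Usm^{o,\veps}$; taking $R\to\infty$ and using near-monotonicity together with the finiteness of $\lsm[r^\veps]$ (which controls the expectation outside large balls via the supermartingale structure furnished by the HJB inequality) forces $V_1^\veps\equiv V_2^\veps$. The strict positivity $\inf_{x\in\RR^d}V^\veps(x)>0$ needed here follows from the Harnack inequality (which bounds $V^\veps$ from below on every ball by a positive multiple of $V^\veps(0)=1$) combined with the stochastic representation~\eqref{eq-V*rep-pert}, since the integrand inside the exponential is bounded below whenever $r^{\veps,v}$ is bounded below, ensuring a uniform lower bound on $V^\veps$ on $\Bar B_R^c$ for large $R$.

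The main obstacle is verifying that \cite[Proposition 1.3]{AB18} applies verbatim under our Assumption \ref{a-main} rather than under a blanket stability or pure near-monotonicity assumption. However, once we observe that $r^\veps$ is globally inf-compact and that $\lsm[r^\veps]<\infty$, the hypotheses of that proposition are met without any reference to Assumption \ref{a-main}(ii): the drift inequality in \eqref{eq-assump-1} is subsumed by inf-compactness of $r^\veps$ for the purposes of establishing existence, optimality, and the stochastic representation. The deeper content of Assumption \ref{a-main} -- namely, the Foster-Lyapunov structure away from $\calK$ -- will only be needed later, when we pass to the limit $\veps\to 0$ in Section \ref{sec-proof-main}, since the limiting cost $r$ need not be inf-compact.
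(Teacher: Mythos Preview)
Your reduction to \cite{AB18} via inf-compactness of $r^\veps$ and finiteness of $\lsm[r^\veps]$ is the right starting point, and both of these ingredients are established exactly as you say. However, the paper's proof does not invoke \cite[Proposition~1.3]{AB18} alone: it invokes \cite[Proposition~1.4]{AB18} \emph{together with} Lemma~\ref{lem-monotonicity}, which asserts the strict monotonicity
\[
\lsm[r^\veps]<\lsm[r^\veps+\delta\Ind_{\frB}]\quad\text{for every open ball }\frB\text{ and }\delta>0.
\]
This condition is an additional hypothesis in \cite{AB18} beyond near-monotonicity and finiteness; it is what yields uniqueness of the eigenpair, the ``only if'' direction of (i), and the ``Additionally'' clause in (ii). Near-monotonicity (even inf-compactness) by itself does not guarantee it: one needs to know that every optimal control actually visits $\frB$ with positive frequency, and in the present setting this is not available a priori from Assumption~\ref{a-main}. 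The paper proves Lemma~\ref{lem-monotonicity} separately in Section~\ref{sec-proof-pert}, and the proof is not elementary---it relies on the variational representation of Proposition~\ref{prop-var-cost} and the tightness of mean empirical measures of the extended process $Z$ under nearly optimal auxiliary controls (Lemma~\ref{lem-comp-X-pert}).

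Your independent sketch for uniqueness has a circularity: you invoke the stochastic representation~\eqref{eq-V*rep-pert} (with equality) to compare two candidate solutions, but the \emph{equality} in that representation---as opposed to the inequality obtained from It\^o's formula and Fatou's lemma---is precisely one of the conclusions that hinges on strict monotonicity (equivalently, on recurrence of the ground diffusion). Likewise, your argument for $\inf_{x}V^\veps(x)>0$ on $\bar B_R^c$ via~\eqref{eq-V*rep-pert} presupposes that representation. So the missing piece is genuinely Lemma~\ref{lem-monotonicity}; once you supply it (or an equivalent route to it), the rest of your outline matches the paper's.
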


\begin{lemma}\label{lem-monotonicity}
Suppose Assumptions~\ref{a-main} and~\ref{a-well-defined} hold. Then, for any open ball $\frB$ and  $\delta>0$, we have
\begin{align}\label{eq-monotonicity}
\Lambda_{\text{SM}}[r^\veps]< \Lambda_{\text{SM}}[r^\veps + \delta \Ind_\frB]\,.
\end{align}
\end{lemma}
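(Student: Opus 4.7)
My plan is to argue by contradiction, using the strict inequality in the ``additionally'' clause of \cref{thm-HJB-pert}(ii). First I verify that the perturbation lies in the class $\sC_0$ introduced in \cref{sec-intro}: $\delta\Ind_{\frB}$ is non-negative, bounded by $\delta$, identically zero outside the open ball $\frB$ (and hence vanishes at infinity), and non-zero on $\frB$. Since $\delta\Ind_{\frB}\ge 0$, the inequality $\lsm[r^\veps+\delta\Ind_{\frB}]\ge\lsm[r^\veps]$ is immediate, so the task reduces to excluding equality.

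Suppose, toward a contradiction, that $\lsm[r^\veps+\delta\Ind_{\frB}]=\lsm[r^\veps]$. The running cost $r^\veps+\delta\Ind_{\frB}$ dominates the inf-compact function $r^\veps$ and is therefore itself inf-compact, hence near-monotone relative to its optimal value. Invoking the general existence result \cite[Proposition~1.3]{AB18}---which underlies \cref{thm-HJB-pert} and accommodates bounded measurable perturbations of the running cost---yields an optimal stationary Markov control $\tilde v$ satisfying
\[
\Lambda_{\tilde v}[r^\veps+\delta\Ind_{\frB}]\;=\;\lsm[r^\veps+\delta\Ind_{\frB}]\;=\;\lsm[r^\veps].
\]
Non-negativity of $\delta\Ind_{\frB}$ gives $\Lambda_{\tilde v}[r^\veps]\le\Lambda_{\tilde v}[r^\veps+\delta\Ind_{\frB}]=\lsm[r^\veps]$, and combined with the trivial reverse bound $\Lambda_{\tilde v}[r^\veps]\ge\lsm[r^\veps]$ we conclude $\Lambda_{\tilde v}[r^\veps]=\lsm[r^\veps]$, that is, $\tilde v\in\Usm^{o,\veps}$.

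I now apply \cref{thm-HJB-pert}(ii) to $\tilde v\in\Usm^{o,\veps}$: the stochastic representation \eqref{eq-V*rep-pert} holds for every $R>0$, so the hypothesis of the ``additionally'' clause is satisfied. Taking $f=\delta\Ind_{\frB}\in\sC_0$, that clause delivers
\[
\Lambda_{\tilde v}[r^\veps+\delta\Ind_{\frB}]\;>\;\Lambda_{\tilde v}[r^\veps]\;=\;\lsm[r^\veps],
\]
in direct contradiction with the identity $\Lambda_{\tilde v}[r^\veps+\delta\Ind_{\frB}]=\lsm[r^\veps]$ established above.

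The subtle step is the existence of $\tilde v$: since $\Ind_{\frB}$ is discontinuous on $\partial\frB$, \cref{thm-HJB-pert} is not \emph{literally} applicable to $r^\veps+\delta\Ind_{\frB}$. I would address this either by appealing to the general form of \cite[Proposition~1.3]{AB18}, whose hypotheses only require near-monotonicity of the running cost (rather than continuity), or---if that is unavailable---by working with a sequence $\{v_\eta\}$ of $\eta$-optimal controls for $r^\veps+\delta\Ind_{\frB}$. Such $v_\eta$ are automatically $\eta$-optimal for $r^\veps$ (since $\delta\Ind_{\frB}\ge 0$), and compactness of the space of relaxed stationary Markov controls together with lower semicontinuity of the ERSC cost functional would produce a limit point $\tilde v\in\Usm^{o,\veps}$ to which the contradiction argument above applies.
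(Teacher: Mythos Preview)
Your argument is circular. In the paper, \cref{thm-HJB-pert} is \emph{deduced from} \cref{lem-monotonicity}: see the one-line proof immediately following the statement of \cref{lem-monotonicity}, which reads ``In light of \cref{lem-monotonicity}, the result follows from \cite[Proposition~1.4]{AB18}.'' The strict monotonicity in \eqref{eq-monotonicity} is precisely the hypothesis that \cite[Proposition~1.4]{AB18} needs in order to deliver the full characterization in \cref{thm-HJB-pert}, including the stochastic representation \eqref{eq-V*rep-pert} and the ``additionally'' clause you invoke. So you cannot use \cref{thm-HJB-pert}(ii) to prove \cref{lem-monotonicity}.

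The paper's actual proof (deferred to \cref{sec-proof-pert}) avoids this circularity by relying only on the variational representation of \cref{prop-var-cost} and the tightness estimate of \cref{lem-comp-X-pert}, both of which are established independently of \cref{thm-HJB-pert}. The idea is to take an optimal control $v^*$ for $\lsm[r^\veps+\delta\Ind_{\frB}]$, pick nearly optimal auxiliary controls $w^n$ for the variational problem associated with $r^\veps$, and use the linearity of $\Lambda_{v^*,w^n}[\cdot]$ to split off the term $\delta\,\pi^n_T(\frB)$. Tightness of the mean empirical measures $\pi^n_T$ (from \cref{lem-comp-X-pert}) and the fact that any limit invariant measure has strictly positive density (\cite[Theorem~2.6.16]{arapostathis2012ergodic}) then give a uniform lower bound $\pi^*(\frB)>0$, producing the strict gap. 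Your contradiction strategy is natural in spirit, but the tools you reach for are downstream of the lemma itself.
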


\begin{proof} [Proof of Theorem~\ref{thm-HJB-pert}] In light of Lemma~\ref{lem-monotonicity}, the result {follows} from  \cite[Proposition 1.4]{AB18}.
\end{proof}

The proof of Lemma~\ref{lem-monotonicity}  (which is deferred to Section~\ref{sec-proof-pert}) involves an extensive use of variational formulation of ERSC problem. This formulation is the content of the next section. We end this section by giving an important uniform integrability {and a positive recurrence result} in the context of the ERSC problem corresponding to $r^\veps$.
\begin{lemma}\label{lem-ui}
For every $0<\veps<\veps_0$, there exists $\eta_\veps>0$ (depending only on $\veps$) such that 
\begin{align*}
\sup_{U\in {\Uadm^{*,\beta}}}\limsup_{T\to\infty} \frac{1}{T}\log \E_x^U\Big[\exp\Big((1+\eta_\veps)\int_0^T r^\veps (X_t,U_t)\D t\Big)\Big] \leq 2 + \max\{C_1\wedge C_2, {\beta}\}\,.
\end{align*}
\end{lemma}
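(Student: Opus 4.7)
The plan is to sharpen the Young's--inequality argument from the proof of Lemma~\ref{lem-pert-well-defined} by pushing the exponent of $r^\veps$ from $1$ up to $1+\eta_\veps$. The choice $\veps_0=(1-C_3)/8$ in~\eqref{def-r-cost-pert} is exactly half of the threshold $(1-C_3)/4$ admitted by Lemma~\ref{lem-fin-cost}\eqref{eq-b00}, and this margin is what leaves room for the extra integrability.

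I would fix $U\in\Uadm^{*,\beta}$ (together with an initial point $x$ witnessing $J(x,U)[r]\leq\beta$ in the sense of~\eqref{def-u*}) and a parameter $\eta_\veps>0$ to be chosen below. Splitting
\[
(1+\eta_\veps)\,r^\veps=(1+\eta_\veps)\bigl(1-\veps/\veps_0\bigr)\,r+(1+\eta_\veps)\veps\,h
\]
and applying Young's inequality~\eqref{eq-young-ineq} with conjugate exponents $p=\bigl[(1+\eta_\veps)(1-\veps/\veps_0)\bigr]^{-1}$ and $q=p/(p-1)$ to the exponentials $a=\exp\!\bigl((1+\eta_\veps)(1-\veps/\veps_0)\int_0^T r\,\D t\bigr)$ and $b=\exp\!\bigl((1+\eta_\veps)\veps\int_0^T h\,\D t\bigr)$, one arrives at
\[
\E_x^U\!\Big[\exp\!\Big((1+\eta_\veps)\!\int_0^T r^\veps\,\D t\Big)\Big]\leq \tfrac{1}{p}\E_x^U\!\Big[\exp\!\Big(\int_0^T r\,\D t\Big)\Big]+\tfrac{1}{q}\E_x^U\!\Big[\exp\!\Big(s\!\int_0^T h\,\D t\Big)\Big],
\]
where $s\doteq (1+\eta_\veps)\veps q=(1+\eta_\veps)\veps\big/\bigl[1-(1+\eta_\veps)(1-\veps/\veps_0)\bigr]$. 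For the argument to close, I need $p>1$ (so that the conjugate exponents are admissible) and $s<(1-C_3)/4$ (so that Lemma~\ref{lem-fin-cost}\eqref{eq-b00} can be invoked on the second summand).

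A direct calculation gives $s\big|_{\eta_\veps=0}=\veps_0<2\veps_0=(1-C_3)/4$, and solving $s<2\veps_0$ explicitly yields the admissible range $\eta_\veps\in\bigl(0,\,\veps/(2\veps_0-\veps)\bigr)$, which automatically enforces $p>1$ and depends only on $\veps$. Fix any such $\eta_\veps$, take $\tfrac{1}{T}\log$, send $T\to\infty$, and use $\log(x+y)\leq\log 2+\max\{\log x,\log y\}$ to replace the sum by a maximum of two limsups. The first is at most $J(x,U)[r]\leq\beta$ by~\eqref{def-u*}; the second is at most $2+\max\{C_1\wedge C_2,\beta\}$ by Lemma~\ref{lem-fin-cost}\eqref{eq-b00}. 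Since $\beta\leq 2+\max\{C_1\wedge C_2,\beta\}$, the maximum equals the latter quantity, and because the bound is uniform over $U\in\Uadm^{*,\beta}$, taking the supremum completes the proof. The only subtlety---that the allowed interval for $\eta_\veps$ could shrink with $U$---does not arise, since $p$, $q$, and $s$ involve only the deterministic constants $\veps,\eta_\veps,C_3$.
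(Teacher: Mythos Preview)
Your proof is correct and follows essentially the same strategy as the paper: both exploit the gap $\veps_0<\tfrac{1-C_3}{4}$ through a Young--inequality split against Lemma~\ref{lem-fin-cost}\eqref{eq-b00}. The only cosmetic difference is that the paper first bounds the ERSC cost of $r^\veps+\tfrac{\veps}{2}h$ and then uses $r\le h$ to deduce $(1+\eta_\veps)r^\veps\le r^\veps+\tfrac{\veps}{2}h$ with the explicit value $\eta_\veps=\tfrac{\veps}{2}(1-\veps/\veps_0+\veps)^{-1}$, whereas you parametrize by $\eta_\veps$ from the outset and solve for the admissible range $\eta_\veps\in(0,\veps/(2\veps_0-\veps))$; the two constructions are equivalent.
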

\begin{proof}Fixing $U\in {\Uadm^{*,\beta}}$ and following exactly the same proof of Lemma~\ref{lem-pert-well-defined} with $r^\veps + \frac{\veps}{2} h$, we have 
\begin{align*}
\E_x^U&\left[ \exp\left( \int_0^T (r^\veps({X}_t, U_t )+\frac{\veps}{2} h(X_t,U_t))\D t \right)\right]\\
&= \E_x^U\left[ \exp\left( \int_0^T \left((1-\frac{\veps}{\veps_0})r({X}_t, U_t ) + \veps h(X_t,U_t) +\frac{\veps}{2} h(X_t,U_t)\right)\D t \right)\right]\,. 
\end{align*}
Applying~\eqref{eq-young-ineq} for 
\begin{align*}
 a= \exp\left((1-\frac{\veps}{\veps_0}) \int_0^T r({X}_t, U_t )\D t \right),&\, 
 b= \exp\left(\frac{3\veps}{2}\int_0^T h(X_t,U_t)\D t \right),\\
p= \frac{1}{1-\frac{\veps}{\veps_0}} \quad &\text{ and } \quad q= \frac{p}{p-1}= \frac{\veps_0}{\veps}\,,
\end{align*}
we get
\begin{align*}
J(x,U)[r^\veps+\frac{\veps}{2}h]&\leq \max\Bigg\{ \limsup_{T\to\infty} \frac{1}{T} \log \E_x^U\left[ \exp\left( \int_0^T r({X}_t, U_t ) \D t \right)\right]\,,\\
&\qquad \qquad  \limsup_{T\to\infty} \frac{1}{T}\log \E_x^U\left[ \exp\left(\frac{3\veps_0}{2} \int_0^T  h(X_t,U_t)\D t \right)\right] \Bigg\}\\
&= \max\Bigg\{J(x,U)[r],\,\,\,
 \limsup_{T\to\infty} \frac{1}{T}\log \E_x^U\left[ \exp\left(\frac{3\veps_0}{2} \int_0^T   h(X_t,U_t)\D t \right)\right] \Bigg\}\\
& \leq 2+ \max\{C_1\wedge C_2,{\beta}\}\,. 
\end{align*} 
To arrive at the last inequality, we use the fact that $U\in {\Uadm^{*,\beta}}$ and Lemma~\ref{lem-fin-cost}. We now claim that for some $\eta_\veps>0$, $(1+\eta_\veps)r^\veps\leq r^\veps +\frac{\veps}{2} h$. {Then, t}ogether with the above display, taking supremum over $U\in {\Uadm^{*,\beta}}$ will prove the result. The claim immediately follows as shown below:
\begin{align*}
 \frac{h}{r^\veps}= {\Big((1-\frac{\veps}{\veps_0}) \frac{r}{h}+ \veps\Big) ^{-1}}\geq {\Big(1-\frac{ \veps}{\veps_0} +\veps\Big)^{-1}}>0\,.
\end{align*}
In the above, we use the fact that $r\leq h$ (see~\eqref{eq-inf-comp-1}). The desired $\eta_\veps$ is $\frac{\veps }{2}{\big(1-\frac{ \veps}{\veps_0} +\veps\big)^{-1}}$, which completes the proof.
\end{proof}

From Corollary~\ref{cor-inf-comp-finite}, we can conclude that  $ {\Usm^{*,\beta}}\subset \Ussm$ (see the corollary below). 
We will make extensive use of the associated Foster-Lyapunov function. 

\begin{corollary}\label{cor-stable}
For  $v\in {\Usm^{*,\beta}}$, there exists an inf-compact $\sW_v\in W^{2,p}_{\text{loc}}(\RR^d)$, $p\geq d$  satisfying $\sW_v(0)=1$, $\inf_{x\in\RR^d}\sW_v(x)>0$ such that 
\begin{align}\label{eq-h-wv} \Lg^{v}\sW_v(x) +\veps_0 h^v(x) \sW_v(x)= \lambda^*_v [\veps_0 h^v]\sW_v(x)\,, \text{ for a.e. } x\in \RR^d\end{align}
with $h^v(\cdot)\doteq h(\cdot,v(\cdot))$ and {$\lambda^*_v[\veps_0 h(\cdot,v(\cdot))]= \Lambda_v[\veps_0 h(\cdot,v(\cdot))]<\infty$}. Moreover, $v\in \Ussm$ and $X$ is positive recurrent.
\end{corollary}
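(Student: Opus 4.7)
The plan is to assemble the corollary from Corollary~\ref{cor-inf-comp-finite} together with the principal eigenvalue theory of~\cite{AB18} applied to the near-monotone cost $\veps_0 h^v$, and then extract a Foster--Lyapunov conclusion from the resulting eigenequation. Fix $v\in \Usm^{*,\beta}$. Taking $\veps=\veps_0$ in Corollary~\ref{cor-inf-comp-finite} (which is admissible since $\veps_0=\frac{1-C_3}{8}<\frac{1-C_3}{4}$) immediately gives $\Lambda_v[\veps_0 h^v]\le 2+\max\{C_1\wedge C_2,\beta\}<\infty$. Because $h$ is inf-compact on $\RR^d$ by Lemma~\ref{lem-inf-comp}, so is $\veps_0 h^v$, and by Remark~\ref{rem-nm} the cost $\veps_0 h^v$ is near-monotone relative to $\Lambda_v[\veps_0 h^v]$.

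Next, I would invoke the principal eigenvalue results for near-monotone costs from~\cite[Proposition 1.3 and Theorem 1.4]{AB18} (which are stated, in particular, for a single Markov control $v$, and so apply directly to the linear operator $\Lg^v + \veps_0 h^v$). These yield the equality $\lambda_v^*[\veps_0 h^v]=\Lambda_v[\veps_0 h^v]$ and the existence of a strictly positive $\sW_v\in W^{2,p}_{\text{loc}}(\RR^d)$, for all $p\ge d$, satisfying
\begin{equation*}
\Lg^v \sW_v(x) + \veps_0 h^v(x)\sW_v(x) \;=\; \lambda^*_v[\veps_0 h^v]\,\sW_v(x),\qquad \text{a.e. } x\in\RR^d,
\end{equation*}
which is~\eqref{eq-h-wv}. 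Normalize by scaling so that $\sW_v(0)=1$.

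I then need to show inf-compactness of $\sW_v$, the positive lower bound $\inf_{\RR^d}\sW_v>0$, and positive recurrence. For inf-compactness, choose $R_0$ large enough so that $\veps_0 h^v(x)\ge \lambda^*_v[\veps_0 h^v]+1$ for all $\|x\|\ge R_0$; this is possible because $\veps_0 h^v$ is inf-compact. On $B_{R_0}^c$, the eigenequation rewrites as
\begin{equation*}
\Lg^v \sW_v(x) \;=\; \bigl(\lambda^*_v[\veps_0 h^v]-\veps_0 h^v(x)\bigr)\sW_v(x) \;\le\; -\sW_v(x).
\end{equation*}
Applying It\^o's formula to $e^{t\wedge \tc_{R_0}}\sW_v(X_{t\wedge \tc_{R_0}})$ under $v$ with $\tc_{R_0}=\widecheck\tau_{R_0}$ and taking expectations yields, for $x\in B_{R_0}^c$,
\begin{equation*}
\sW_v(x)\;\ge\; \E_x^v\!\bigl[e^{\tc_{R_0}\wedge n}\,\sW_v(X_{\tc_{R_0}\wedge n})\bigr].
\end{equation*}
By Harnack's inequality applied to the positive solution $\sW_v$ of the linear elliptic equation, $\sW_v$ is bounded below on $\partial B_{R_0}$ by a positive constant; combined with the above estimate and Fatou's lemma this forces $\sW_v(x)\to\infty$ as $\|x\|\to\infty$, proving inf-compactness. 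Continuity and strict positivity of $\sW_v$ then imply $\inf_{\RR^d}\sW_v>0$. Finally, the inequality $\Lg^v \sW_v\le -\sW_v$ on $B_{R_0}^c$ together with inf-compactness of $\sW_v$ is a Foster--Lyapunov drift condition, so by the standard criterion (e.g.~\cite[Theorem~2.6.10]{arapostathis2012ergodic}) the diffusion $X$ under $v$ is positive recurrent, whence $v\in\Ussm$.

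The only delicate step is the passage from the eigenequation to genuine inf-compactness of $\sW_v$; everything else is a direct invocation of~\cite{AB18} or of standard Foster--Lyapunov theory. The Harnack bound on $\partial B_{R_0}$ and the use of the exponential multiplier $e^t$ are the key ingredients, and one should keep $R_0$ fixed while sending $n\to\infty$ so that the tightness of exit times under the drift estimate does the work.
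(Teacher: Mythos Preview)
Your overall strategy coincides with the paper's: obtain the principal eigenpair for the inf-compact potential $\veps_0 h^v$ from the Arapostathis--Biswas theory, then read off a Foster--Lyapunov drift condition from the eigenequation and conclude positive recurrence via \cite[Theorem~2.6.10]{arapostathis2012ergodic}. The paper's proof is shorter only because it cites \cite[Lemma~3.1]{ari2018strict} directly, and that lemma already packages \emph{all} the listed properties of $\sW_v$ --- in particular both $\inf_{\RR^d}\sW_v>0$ and inf-compactness --- so nothing further needs to be argued.

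Your attempt to derive inf-compactness by hand (step~4) has a gap, however. From $\sW_v(x)\ge \E_x^v\bigl[e^{\tc_{R_0}\wedge n}\sW_v(X_{\tc_{R_0}\wedge n})\bigr]$ together with only the \emph{local} Harnack lower bound on $\partial B_{R_0}$, Fatou gives at best $\sW_v(x)\ge c\,\E_x^v\bigl[e^{\tc_{R_0}}\Ind_{\{\tc_{R_0}<\infty\}}\bigr]$; on the event $\{\tc_{R_0}=\infty\}$ the liminf inside Fatou can be zero since you have no \emph{global} lower bound on $\sW_v$ at that stage, and it is not clear why the remaining term diverges as $\|x\|\to\infty$. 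The fix is to reverse your steps (4) and (5): the near-monotone results you cite from \cite{AB18} already yield $\inf_{\RR^d}\sW_v>0$ directly (cf.\ the paper's Theorem~\ref{thm-HJB-pert}, where this is part of the conclusion). With $m:=\inf_{\RR^d}\sW_v>0$ in hand, your estimate becomes $\sW_v(x)\ge m\,\E_x^v[e^{\tc_{R_0}\wedge n}]$, which first forces $\tc_{R_0}<\infty$ a.s., and then $\sW_v(x)\ge m\,e^{\E_x^v[\tc_{R_0}]}$ by Jensen; a standard test-function bound (e.g.\ $\phi(x)=\log(1+\|x\|^2)$, for which $|\Lg^v\phi|$ is bounded under the linear-growth assumption \eqref{eq-cond-lin-growth}) gives $\E_x^v[\tc_{R_0}]\to\infty$, and inf-compactness follows.
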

\begin{proof} Fix $v\in {\Usm^{*,\beta}}$.
 The existence of $\sW_v$ immediately follows from \cite[Lemma 3.1]{ari2018strict}.
  From \cite[Lemma 3.1]{ari2018strict}, we have $\lambda^*_v[\veps_0 h(\cdot,v(\cdot))]= \Lambda_v[\veps_0 h(\cdot,v(\cdot))]<\infty$. { Since we have 
  $$ \Lg^{v}\sW_v(x) = -\big(\veps_0 h^v(x)- \lambda^*_v [\veps_0 h^v]\big)\sW_v(x)\,, \text{ for a.e. } x\in \RR^d,$$
 inf-compactness} of $\veps_0 h^v-\lambda^*_v[\veps_0h^v]$ 
 and the fact that $\inf_{x\in\RR^d}\sW_v(x)>0$ implies that {$\sW_v$} is a Foster-Lyapunov function for $X$ under control $v\in {\Usm^{*,\beta}}$.  To infer that $X$ is positive recurrent, we use the above  display in conjunction with \cite[Theorem 2.6.10]{arapostathis2012ergodic}.   This completes the proof of the corollary. 
\end{proof} 
\begin{remark} 
Clearly, the above Foster-Lyapunov function is dependent on  $v\in \Usm^*$. For our purposes, this is sufficient as will be seen later.
\end{remark}
From hereon, Assumptions~\ref{a-main} and~\ref{a-well-defined} are enforced without further mention.

\medskip

\section{Variational formulation of ERSC problem}\label{sec-var-BM}
We now develop a variational formulation of the ERSC problem. 
The fundamental result we use is the  variational representation of exponential functionals of Brownian motion (\cite[Theorem 5.1]{boue1998}; see also \cite{budhiraja2019analysis} for its extensive application in the context of the theory of large deviations) given below.  To that end, define $\cA$ as the set of all $\cG_t$--progressively measurable functions $w:\RR_+\rightarrow \RR^d$ such that 
\begin{align}\label{def-A} \frac{1}{T}\E\Big[\int_0^T\|w_t\|^2\D t\Big]<\infty, \text{ for every $T>0$}\,.\end{align}
 Here, $\cG_t$ is the filtration generated by $\{W_s:0\leq s\leq t\}$ such that $\cG_0$ includes all the $\PP$--null sets. { Recall that  $\frC_T^d$ denotes the set of $\RR^d$--valued continuous functions on $[0,T]$ equipped with uniform topology.}
\begin{lemma}\label{lem-var-rep-BM-gen} For $T>0$, suppose that $G:\frC^d_T\rightarrow \RR$ is a non-negative Borel measurable function. Then the following holds: 
	\begin{align}\label{eq-var-rep}
	\frac{1}{T}\log \E[e^{TG(W)}]= \sup_{w\in\cA} \E\bigg[ G\bigg(W+\int_0^{\cdot}w_t \D t\bigg) - \frac{1}{2T}\int_0^T\|w_t\|^2\D t\bigg]\,.
	\end{align}
	\end{lemma}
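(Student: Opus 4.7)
The plan is to prove the two directions of \eqref{eq-var-rep} separately, both via Girsanov's theorem, with the upper and lower bounds being dual in spirit. It is convenient to let $f \df TG$ and reformulate the claim as $\log \E[e^{f(W)}] = \sup_{w\in\cA} \E\bigl[f\bigl(W+\int_0^\cdot w_s\,\D s\bigr) - \tfrac{1}{2}\int_0^T\|w_s\|^2\,\D s\bigr]$. Conceptually, the identity is the pathwise incarnation of the Donsker--Varadhan variational formula $\log \E[e^{f(W)}] = \sup_Q\{\E_Q[f(W)] - H(Q\,\|\,P)\}$; the supremum over $w \in \cA$ just re-parametrises the supremum over measures $Q \ll P$ on $\frC^d_T$ via Girsanov, since for the drift-transformed law $Q^w$ the relative entropy computes to $H(Q^w\|P) = \tfrac{1}{2}\E_{Q^w}\bigl[\int_0^T\|w_s\|^2\,\D s\bigr]$.

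For the lower bound (RHS $\leq$ LHS), first assume $G$ is bounded and that $w\in\cA$ satisfies Novikov's condition. Let $M_t \df \exp\bigl(\int_0^t w_s\cdot \D W_s - \tfrac12\int_0^t\|w_s\|^2 \D s\bigr)$ and define $Q^w$ by $\D Q^w/\D P = M_T$, so $\tilde W_t \df W_t - \int_0^t w_s\,\D s$ is a $Q^w$-Brownian motion. Writing $\E_P[e^{f(W)}] = \E_{Q^w}[e^{f(W)} M_T^{-1}]$ and applying Jensen's inequality to $\log$ under $Q^w$ yields
\begin{equation*}
\log \E_P[e^{f(W)}] \ge \E_{Q^w}\Bigl[f(W) - \log M_T\Bigr] = \E_{Q^w}\Bigl[f\bigl(\tilde W + \textstyle\int_0^\cdot w_s\,\D s\bigr) - \int_0^T w_s \cdot \D\tilde W_s - \tfrac{1}{2}\int_0^T\|w_s\|^2\,\D s\Bigr],
\end{equation*}
and the stochastic integral against $\tilde W$ has zero $Q^w$-expectation, so the right-hand side equals $\E_P\bigl[f(W+\int_0^\cdot w_s\,\D s) - \tfrac{1}{2}\int_0^T\|w_s\|^2\,\D s\bigr]$ after identifying laws.

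For the upper bound (LHS $\leq$ RHS), the idea is to produce the optimizing drift via the martingale representation theorem. With $G$ bounded, $N_t \df \E[e^{f(W)} \mid \cG_t]$ is a strictly positive martingale, so by martingale representation there exists a $\cG_t$-predictable $\xi$ with $N_t = N_0 + \int_0^t \xi_s\cdot \D W_s$; setting $v_s \df \xi_s / N_s$ gives the exponential representation $N_t = N_0 \exp\bigl(\int_0^t v_s\cdot\D W_s - \tfrac12\int_0^t\|v_s\|^2\D s\bigr)$. Taking logarithms at $t=T$ and taking expectation under $Q^v$ (where $v$ is inserted into Girsanov), the stochastic integral produces an extra $\int_0^T\|v_s\|^2\D s$ term, and a short computation gives $\log N_0 = \E_{Q^v}\bigl[f(W)\bigr] - \tfrac{1}{2}\E_{Q^v}\bigl[\int_0^T\|v_s\|^2 \D s\bigr]$, which upon translating back to $P$ is exactly the quantity on the right-hand side of \eqref{eq-var-rep} evaluated at the drift $v$, establishing equality for this particular $v$ and hence $\leq$.

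The main obstacle will be the technical extension from bounded $G$ to general nonnegative Borel $G$, and the associated integrability of the $v$ produced by martingale representation. To handle this, I would truncate by $G_n \df G \wedge n$, apply the bounded case to get equality with an optimizer $v^n$, and then pass to the limit: monotone convergence gives $\log \E[e^{TG_n(W)}] \uparrow \log \E[e^{TG(W)}]$ on the left, while on the right a lower semicontinuity/uniform-integrability argument, together with the fact that $\tfrac{1}{2}\E\bigl[\int_0^T\|v^n\|^2\D s\bigr]$ is controlled by $\log \E[e^{TG_n(W)}]$ (from the equality itself), allows one to extract a limiting near-optimal drift. The need to justify that stochastic integrals are true martingales (not merely local ones) is secondary but must be handled via stopping and uniform bounds derived from the truncation; these are the only non-trivial steps.
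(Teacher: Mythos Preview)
The paper does not prove this lemma at all: it is stated as a known result and attributed to \cite[Theorem~5.1]{boue1998} (see also the remark in the introduction referring to \cite{budhiraja2019analysis}), and the text moves directly to Remark~\ref{rem-any-bm} and Proposition~\ref{prop-var-cost}. Your sketch is essentially the Bou\'e--Dupuis proof itself: the lower bound via Girsanov plus Jensen, the upper bound by producing the optimal drift from the martingale representation of $t\mapsto\E[e^{f(W)}\mid\cG_t]$, and then a truncation $G_n=G\wedge n$ to pass from bounded to general nonnegative $G$. So there is no divergence of approach to compare---you have supplied exactly the argument the paper outsources to the literature.

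One minor comment on the extension step: you propose extracting a limiting near-optimal drift from the sequence $v^n$, but this is more than you need. Once the bounded case is in hand, the passage to nonnegative $G$ is purely by monotonicity: writing $\mathrm{LHS}_n$ and $\mathrm{RHS}_n$ for the two sides with $G_n$ in place of $G$, one has $\mathrm{LHS}_n\uparrow\mathrm{LHS}$ by monotone convergence, $\mathrm{RHS}_n\le\mathrm{RHS}$ trivially since $G_n\le G$, and for any fixed $w\in\cA$ the bounded-case inequality $\E[G_n(W+\int_0^\cdot w)-\tfrac{1}{2T}\int_0^T\|w\|^2]\le\mathrm{LHS}_n\le\mathrm{LHS}$ passes to the limit by monotone convergence to give $\mathrm{RHS}\le\mathrm{LHS}$. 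No compactness or limit extraction on the drifts is required.
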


\begin{remark}\label{rem-any-bm} We note that Brownian motion $W$ in the right hand side of~\eqref{eq-var-rep} can be replaced by any $d$--dimensional Brownian motion $\widetilde W$ as long as the set $\cA$ (in~\eqref{eq-var-rep}) is defined with respect to the filtration of $\widetilde W$. However to avoid introducing extra notation, we restrict ourselves to the Brownian motion $W$ in the right hand side of~\eqref{eq-var-rep}.
 \end{remark}
 
 Lemma~\ref{lem-var-rep-BM-gen} has the most important consequence in the context of our ERSC problem. Before we state it, we introduce a new process $Z$ which from hereon referred to as the `extended'  process:  for every $U\in \Usm^{*,\beta}$ and $w\in \cA$ (which we refer to as auxiliary control from hereon), the process $Z$ satisfies   
 \begin{align}\label{X-control} d{Z}_t=  b\big({Z}_t, U_t(W_{[0,t]}+ \widetilde w_{[0,t]})\big)\D t +\Sigma(Z_t) w_t\D t + \Sigma(Z_t) \D W_t,\,\, Z_0=x \end{align}
 with $\widetilde w_t\doteq \int_0^t w_s\D s$. 
 
 This extended diffusion has been also investigated in earlier works. See  for instance \cite{FM95} where this extended diffusion is used for both discounted and ergodic risk-sensitive costs, and  \cite{boue2001}, where a robust risk-sensitive escape problem is studied. See also  \cite{FM95,AB18, biswas2010risk,ABBK20,ari2018strict}, where 
 ERSC problem is studied but the so-called ``ground" diffusion is used (a special case of the extended diffusion with $U=v\in \Usm$ and  a particular choice of $w$ as a Markov control, obtained from the principal eigenfunction of the operator $\Lg^v$). 
  In addition, the authors of \cite{AB18, ari2018strict} investigate the problem of well-posedness for the ``ground" diffusion.     We however could not find a reference where the explicit analysis of the well-posedness of~\eqref{X-control}  is addressed at the level of generality that is needed in our analysis, albeit the analysis being standard. We therefore analyze this well-posedness of~\eqref{X-control} in the next two lemmas, for the sake of completeness.

 In the following, we first  address the question of existence of the process $Z$  under a certain moment condition on the process $w$.
\begin{lemma}\label{lem-u-e-ext} For $T>0$, $U\in \Uadm^{*,\beta}$ and $w\in \cA$,  of the form $w_t=w_t(W_{[0,t]})$, ~\eqref{X-control} admits a unique weak solution on $[0,T]$.
\end{lemma}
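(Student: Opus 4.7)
The plan is to invoke Girsanov's theorem, combined with a localization argument to overcome the absence of global Novikov integrability, thereby reducing~\eqref{X-control} to the standard controlled SDE~\eqref{eqn-X}, whose well-posedness is already established in \cite[Theorem 2.2.4]{arapostathis2012ergodic}.

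First I would introduce the stopping times $\tau_n\doteq\inf\{t\in[0,T]:\int_0^t\|w_s\|^2\D s\geq n\}\wedge T$, which satisfy $\tau_n\nearrow T$ almost surely since $w\in\cA$ guarantees $\int_0^T\|w_s\|^2\D s<\infty$ a.s. For the truncated auxiliary control $w^{(n)}_s\doteq w_s\Ind_{\{s\leq\tau_n\}}$, the exponential martingale $M^{(n)}_t\doteq\exp\bigl(-\int_0^t w^{(n)}_s\cdot \D W_s-\tfrac12\int_0^t\|w^{(n)}_s\|^2\D s\bigr)$ is a genuine $(\cG_t,\PP)$-martingale on $[0,T]$ because the Novikov exponent is bounded deterministically by $n/2$. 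Defining the probability measure $\widetilde\PP^{(n)}$ by $\D\widetilde\PP^{(n)}/\D\PP\doteq M^{(n)}_T$, Girsanov's theorem asserts that $\widetilde W^{(n)}_t\doteq W_t+\int_0^t w^{(n)}_s\D s$ is a Brownian motion under $\widetilde\PP^{(n)}$.

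On $[0,\tau_n]$ one has $\widetilde W^{(n)}_t=W_t+\widetilde w_t$, so~\eqref{X-control} restricted to this interval reads $\D Z_t=b\bigl(Z_t,U_t(\widetilde W^{(n)}_{[0,t]})\bigr)\D t+\Sigma(Z_t)\D\widetilde W^{(n)}_t$, which is precisely~\eqref{eqn-X} driven by the Brownian motion $\widetilde W^{(n)}$ under the admissible control $U$. By \cite[Theorem 2.2.4]{arapostathis2012ergodic} this admits a unique strong solution $Z^{(n)}$ on $[0,T]$ under $\widetilde\PP^{(n)}$. Since $\widetilde\PP^{(n)}$ and $\PP$ are equivalent on $\cG_T$, the restriction $Z^{(n)}|_{[0,\tau_n]}$ is also $\PP$-a.s.\ well defined and satisfies~\eqref{X-control} on $[0,\tau_n]$; pathwise uniqueness on $[0,\tau_n]$ forces $Z^{(n)}|_{[0,\tau_n]}=Z^{(n+1)}|_{[0,\tau_n]}$, and pasting these pieces together yields the desired weak solution $(Z,W)$ of~\eqref{X-control} on $[0,T]$. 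Uniqueness in law then follows by running the Girsanov transformation in reverse on each $[0,\tau_n]$: any competing weak solution of~\eqref{X-control} is converted via $\D\PP/\D\widetilde\PP^{(n)}=1/M^{(n)}_T$ into a weak solution of~\eqref{eqn-X}, whose law is uniquely determined, and sending $n\to\infty$ pins down the law on all of $[0,T]$.

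The main technical obstacle is precisely the absence of a global Novikov-type condition, since $w\in\cA$ only carries the $L^2$-integrability $\E\bigl[\int_0^T\|w_s\|^2\D s\bigr]<\infty$, whereas a naive application of Girsanov would demand exponential integrability of $\tfrac12\int_0^T\|w_s\|^2\D s$. The localization via $\{\tau_n\}$ sidesteps this issue, and the almost-sure convergence $\tau_n\nearrow T$ is precisely what allows the pasted solution to cover the full interval $[0,T]$.
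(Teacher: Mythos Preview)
Your approach is correct and essentially mirrors the paper's: both use Girsanov combined with localization via stopping times $\tau_n=\inf\{t:\int_0^t\|w_s\|^2\,\D s\geq n\}$ to reduce to the standard controlled SDE~\eqref{eqn-X}. The only difference is the direction of the measure change---the paper starts from a solution of~\eqref{eqn-X} on canonical path space and changes measure to obtain a weak solution of~\eqref{X-control}, whereas you start from the given Brownian motion $W$, change measure to turn~\eqref{X-control} into~\eqref{eqn-X}, solve strongly, and paste; your route in fact yields a strong solution, which is slightly more than the lemma asserts.
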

\begin{remark} The proof follows very closely the arguments of the proof of \cite[Theorem 2.2.11]{arapostathis2012ergodic}. 
\end{remark}
\begin{proof} Since $w\in \cA$, we have 
\begin{align}\label{eq-E-fin} \E\Big[\int_0^T\|w_t\|^2\D t\Big]<\infty\,.\end{align}
Let $\overline W$ be a $d$--dimensional Brownian motion. Then, for every $U\in \Uadm^{*,\beta}$ the equation
 \begin{align}\label{X-cont-int} d{\overline Z}_t=  b\big({\overline Z}_t, U_t\big)\D t + \Sigma({\overline Z}_t) \D {\overline W}_t,\quad  {\overline Z}_0=x \end{align}
 admits a unique strong solution $\overline Z$ for an augmented (if needed) probability space $(\overline \Omega,\overline \cF,\overline \PP)$, according to \cite[Theorem 2.2.4]{arapostathis2012ergodic}.  For the sake of the rest of the proof, we set $\Omega= \frC([0,T],\RR^d)$, $\cF$ is the Borel $\sigma$-algebra of $\Omega$ and $\overline \PP$ is the law of $\overline Z$. Due to this construction, it is clear that the canonical process of $\frC([0,T],\RR^d)$ given by $Z_t(\omega)=\omega(t)$, for $\omega\in\frC([0,T],\RR^d)$,  has the same law as that of $\overline Z$.  Let $\cF_t$ be the natural filtration of $Z$ defined above.

Now suppose that $\int_0^T\|w_t\|^2 \D t\leq M$, for some $M>0$ and define a non-negative random variable:
$$ \Pi_t\doteq \exp\Big( \int_0^t \langle w_s,\D W_s\rangle -\frac{1}{2}\int_0^t \|w_s\|^2 \D s\Big)\,.$$ 
Since $\int_0^T\|w_t\|^2 \D t\leq M$, using Novikov's criterion (see \cite[Proposition 3.5.12]{karatzas1988brownian}), we can conclude that $\Pi_t$ is a martingale and $\E\big[\Pi_t\big]=1$. This means that the new measure $\PP $ that is defined through its restrictions $\PP_t$ on $(\Omega,\cF_t)$, for $t\geq 0$, \emph{via.}
$$ \frac{\D \PP_t	}{\D \overline \PP_t}=\Pi_t$$ 
is indeed, a probability measure and defines a consistent family of restrictions $\PP_t$. Here, $\overline \PP_t$ is the restriction of $\overline \PP$ to $(\Omega,\cF_t)$, for $t\geq 0$. From here, using the Girsanov's theorem (see \cite[Theorem 3.5.1]{karatzas1988brownian}), we can conclude that 
$$ W_t\doteq \overline W_t -\int_0^t w_s\D s$$
is $d$--dimensional Brownian motion under $\PP$. In other words, the process $Z$ (which is the canonical process of $(\Omega,\cF)$) satisfies
 \begin{align}\label{X-cont-int-1} d{Z}_t=  b\big({ Z}_t, U_t\big)\D t + \Sigma (Z_t) w_t\D t+ \Sigma({ Z}_t) \D { W}_t, \quad { Z}_0=x \,.\end{align}
This proves that there exists a filtered probability space which is $(\Omega,\cF,\cF_t, \PP)$ and a $d$--dimensional Brownian motion which is $W$ as defined above such that $Z$  satisfies~\eqref{X-cont-int-1} -- the existence of  weak solution (see \cite[Definition 5.3.1]{karatzas1988brownian}). The proof of uniqueness follows exactly the analogous arguments of uniqueness in the proof of \cite[Theorem 2.2.11]{arapostathis2012ergodic} and hence we omit it.  This proves  the lemma when $\int_0^T\|w_t\|^2 \D t$ is bounded.

Now we extend it to the case where $\int_0^T\|w_t\|^2 \D t$ is allowed to be unbounded. For $N\in \NN$, we define $w^N_t\doteq w_t \Ind_{[0,N]}(\int_0^t\|w_s\|^2\D s) $. Clearly, $$ \int_0^T \|w^N_t\|^2\D t\leq N$$ and $w^N$ belongs to the above case. From the above analysis, we already know that there exists a process $Z^N$ which is the unique weak solution to 
 \begin{align}\label{X-cont-int-2} d{Z}^N_t=  b\big({ Z}^N_t, U_t\big)\D t + \Sigma (Z^N_t) w^N_t\D t+ \Sigma({ Z}^N_t) \D { W}_t,\quad { Z}^N_0=x\,. \end{align}
 Note that Brownian motion $W$ can vary with $N$, but we suppress this dependence as it is not important for the analysis below. From the above construction, the law of $Z^N$ is given  by 
 $$ \PP^N(A) =\int_A \Pi^N_t(\omega) \D \overline \PP(\omega), \text{ for $A\in \cF_t$},$$
 where $ \Pi^N_t\doteq  \exp\Big( \int_0^t \langle w^N_s,\D W_s\rangle -\frac{1}{2}\int_0^t \|w^N_s\|^2 \D s\Big)$. For a stopping time $\tau^N\doteq \inf\{t\geq 0: \int_0^t\|w_s\|^2\D s>N\}$, the process $Z^N_{t\wedge \tau^N}$ satisfies 
  \begin{align}\label{X-cont-int-3} {Z}^N_{t\wedge \tau^N}= \int_0^{t\wedge \tau^N} b\big({ Z}^N_s, U_s\big)\D s + \int_0^{t\wedge \tau^N}\Sigma (Z^N_s) w_s\D s+ \int_0^{ t\wedge \tau^N}\Sigma({ Z}^N_s) \D { W}_s,\,\,\, { Z}^N_0=x\,. \end{align}

For $M>0$, define $ O_M(t)\doteq \big\{\xi\in \frC([0,T],\RR^d): \int_0^t\|w_s\|^2\D s\leq M \big\}\in \cF_t$. Then, for $N_1,N_2\in \NN$ such that $N_1\wedge N_2>M$, we have 
$$ \PP^{N_1}\big(A\cap O_M(t)\big)=\PP^{N_2}\big(A\cap O_M(t)\big)\,.$$
As $O_M(t)\in \cF_t$, we have $ \PP^{N_1}\big( O_M(t)\big)=\PP^{N_2}\big( O_M(t)\big)$ and this implies that $\PP^*\doteq \lim_{N\to\infty}\PP^N\big(O_M(t)\big)$ exists for all $M>0$. Moreover, 
$$ \PP^*\big(O_M(t)\big) = \lim_{N\to\infty}\int_{O_M(t)} \Pi^N_t(\omega) \D \overline \PP(\omega)= \int_{O_M(t)} \lim_{N\to\infty}\Pi^N_t(\omega) \D \overline \PP(\omega)= \int_{O_M(t)} \Pi^*_t(\omega) \D \overline \PP(\omega), $$
where $\Pi^*_t\doteq \exp\Big( \int_0^t \langle w_s,\D W_s\rangle -\frac{1}{2}\int_0^t \|w_s\|^2 \D s\Big)$.  From~\eqref{eq-E-fin} and   Markov's inequality, we have 
$$ \PP(O_M(t)^c)\leq \frac{\E\big[\int_0^T \|w_s\|^2\D s\big]}{M}\,.$$
Therefore, from the non-negativity of $\Pi^*_t$, we have 
$$ \PP^*(O_M(t))\geq \PP(O_M(t))=1-\PP(O^c_M(t))\geq 1-\frac{\E\big[\int_0^T \|w_s\|^2\D s\big]}{M}\,.$$
Taking $M\to\infty$, give us $ \liminf_{M\to\infty} \PP^*(O_M(t))\geq 1\,.$ In other words, $\E[\Pi^*_t]=1$. This proves that $\PP^*$ is indeed a probability measure and also proves  the existence of the weak solution. Again, uniqueness of this solution follows exactly along the same lines as the uniqueness in the proof of \cite[Theorem 2.2.11]{arapostathis2012ergodic}.  This completes the proof of the lemma. 
\end{proof}  
 Let $\Wadm$ be the set of admissible $\RR^d$--valued controls (here we use admissibility as in  Definition~\ref{def-adm-cont} with $\bU$ replaced by $\RR^d$) and we denote $\Wsm\subset \Wadm$ as the set of stationary Markov controls (including the relaxed controls). { For a relaxed Markov control $w=\mu(\D w|x)$, we set $\|w(x)\|\doteq  \int_{\RR^d} \|w\| \mu(\D w|x)$; it is clear that when $w$ is a precise Markov control, $\|w(x)\|$ reduces to the usual Euclidean norm.} For $l>0$, let $\Wsm(l)$ be the set of all stationary Markov controls including relaxed controls (which is a subset of $\Wadm$) which are  such that $\sup_{x\in \RR^d}\|w(x)\|\leq l$ and of the form $w=w(\cdot )$ on $B_l$ and $w=0$ on $B_l^c$.
\begin{lemma}\label{lem-u-e-ext-sm} For $U\in \Uadm^{*,\beta}$ and $w\in \Wsm$,  
there exists a unique $\frC([0,T],\RR^d)$--valued process ${ Z}^M$  that satisfies  
\begin{align*}
 {Z}^M_{t\wedge \tau^M} &=  \int_0^{t\wedge \tau^M}b\big({ Z}^M_s, U_s\big)\D s + \int_0^{t\wedge \tau^M}\Sigma({Z}^M_s)  {w}( Z_s)\D s+ \int_0^{t\wedge \tau^M}\Sigma({ Z}^M_s) \D { W}_s,
\end{align*}
where,  $ \tau^M\doteq \inf\{t\geq 0: \int_0^t \|w( Z^M_s)\|^2 \D s>M\}.$ 
Additionally, for $T>0$, if  $$\sup_{M>0}\E\Big[\int_0^T\|w( Z^M_t)\|^2 \D t\Big]<\infty,$$ 
Then, \eqref{X-control} admits a unique strong solution on $[0,T]$.
\end{lemma}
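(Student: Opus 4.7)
My plan is to adapt the Girsanov--truncation argument from the proof of Lemma~\ref{lem-u-e-ext} to the Markov setting, where the auxiliary drift $\Sigma(Z)w(Z)$ is now a functional of the solution itself. The first (stopped) part is essentially a direct analogue of the bounded case treated in Lemma~\ref{lem-u-e-ext}; the extension to the full SDE under the moment condition, and especially the upgrade from a Girsanov-based construction to a genuine strong solution, are where the new content lies.

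\textbf{Existence and uniqueness of $Z^M$.} I would start from the unique strong solution $\overline Z$ of the baseline SDE $\D\overline Z_t = b(\overline Z_t,U_t(\overline W_{[0,t]}))\D t + \Sigma(\overline Z_t)\D\overline W_t$, $\overline Z_0 = x$, on some filtered probability space carrying a $d$-dimensional Brownian motion $\overline W$; this exists by \cite[Theorem 2.2.4]{arapostathis2012ergodic} since $U\in\Uadm^{*,\beta}$. Defining the stopping time $\tau^M$ relative to $\overline Z$, the quadratic integral $\int_0^{t\wedge\tau^M}\|w(\overline Z_s)\|^2\D s$ is bounded by $M$, so Novikov's criterion shows that the exponential
$$\Pi^M_t \doteq \exp\Big(\int_0^{t\wedge\tau^M}\langle w(\overline Z_s),\D\overline W_s\rangle - \tfrac{1}{2}\int_0^{t\wedge\tau^M}\|w(\overline Z_s)\|^2\D s\Big)$$
is a uniformly integrable martingale. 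Changing the underlying measure via $\Pi^M$ and applying Girsanov, $W_t = \overline W_t - \int_0^{t\wedge\tau^M} w(\overline Z_s)\D s$ becomes a Brownian motion under the new measure $\PP^M$, and the law of $\overline Z$ under $\PP^M$ on $\frC_T^d$ is the desired $Z^M$ satisfying the stopped SDE. Uniqueness in law follows by reversing the Radon--Nikodym transformation and invoking the known uniqueness of the baseline SDE.

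\textbf{Passing $M\to\infty$ under the moment condition.} Next, I would pass $M\to\infty$ exactly as in the second half of the proof of Lemma~\ref{lem-u-e-ext}. Define the un-truncated candidate density
$$\Pi^*_t \doteq \exp\Big(\int_0^t\langle w(\overline Z_s),\D\overline W_s\rangle - \tfrac{1}{2}\int_0^t\|w(\overline Z_s)\|^2\D s\Big).$$
Markov's inequality applied to the supremum hypothesis yields $\PP^M(\tau^M\leq t)\leq M^{-1}\sup_M\E[\int_0^t\|w(Z^M_s)\|^2\D s]\to 0$, and on the event $\{\tau^M>t\}$ one has $\Pi^M_t = \Pi^*_t$. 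Consistency of the restrictions $\PP^M$ on $\overline\cF_t$ together with a Fatou/dominated convergence argument then gives $\E[\Pi^*_t]=1$, hence a probability measure $\PP^*$ with $\D\PP^*_t/\D\overline\PP_t=\Pi^*_t$, under which the canonical process $Z$ on $\frC_T^d$ satisfies \eqref{X-control} with $w_t = w(Z_t)$ and $\widetilde w_t = \int_0^t w(Z_s)\D s$ as a weak solution on $[0,T]$.

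\textbf{Strong solution and the main obstacle.} The delicate point, which I expect to be the hardest, is upgrading this Girsanov-based construction to a genuine \emph{strong} solution, i.e., one adapted to the filtration of the driving Brownian motion $W$. My plan is to invoke Yamada--Watanabe, combining the weak existence above with pathwise uniqueness for the SDE $\D Z_t = [b(Z_t,U_t)+\Sigma(Z_t)w(Z_t)]\D t+\Sigma(Z_t)\D W_t$. Pathwise uniqueness is the nontrivial ingredient here because $w$ is only Borel measurable, so the composite drift need not be locally Lipschitz in the state. Under the non-degeneracy~\eqref{eq-cond-non-deg}, one clean route is to apply a Zvonkin/Veretennikov-type transformation, or the Krylov--R\"ockner pathwise uniqueness theorem, on the random interval $[0,\tau^M]$ and then pass $M\to\infty$ using the $L^2$-hypothesis on $w(Z)$. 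An alternative would be to mollify $w$ into Lipschitz approximants $w^n$, obtain strong solutions $Z^n$ and pathwise uniqueness by a localized Gronwall estimate on $[0,\tau^M]$, and then identify the limit with $Z$ using non-degeneracy together with the uniform $L^2$-bound.
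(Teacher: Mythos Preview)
Your Girsanov--truncation--limit strategy is precisely the paper's approach: the paper defines the truncated path functional $w^M(\xi_{[0,t]})\doteq w(\xi_t)\Ind_{[0,M]}\bigl(\int_0^t\|w(\xi_s)\|^2\D s\bigr)$, invokes the proof of Lemma~\ref{lem-u-e-ext} verbatim to get a weak solution of the SDE driven by $w^M$, observes that on $[0,\tau^M]$ this coincides with the stopped equation in the statement, and then for the second part simply writes ``following the arguments in the proof of Lemma~\ref{lem-u-e-ext}, gives us the second part of the lemma.'' Your first two steps match this exactly, only with more detail filled in.

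Where you diverge is the third step. The paper's proof stops after the $M\to\infty$ passage and does not separately address the word ``strong''; the argument it inherits from Lemma~\ref{lem-u-e-ext} produces a unique \emph{weak} solution, and nothing further is said. Your attempt to upgrade via Yamada--Watanabe plus a Zvonkin/Veretennikov or Krylov--R\"ockner pathwise-uniqueness result is therefore going well beyond what the paper actually does. Whether this extra work is needed depends on how literally one reads ``strong'' in the statement: for every subsequent use of this lemma in the paper (the auxiliary controls are always taken from $\Wsm(l)$, hence bounded, or arise as locally bounded gradients of eigenfunctions), weak existence and uniqueness in law suffice, and the Girsanov construction already delivers a solution adapted to the augmented filtration of the original space. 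So your proposed third step is correct in spirit and would close a genuine expository gap, but it is not part of the paper's own proof, which is content with the weak construction inherited from Lemma~\ref{lem-u-e-ext}.
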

\begin{proof} For $M>0$ and $\xi\in \frC([0,T],\RR^d)$, define $  w^M (\xi_{[0,t]})\doteq w(\xi_t)\Ind_{[0,M]}(\int_0^t \|w(\xi_s)\|^2\D s).$ 
For $ w^M$, the equation 
 \begin{align}\label{X-cont-int-4} d{ Z}^M_t=  b\big({ Z}^M_t, U_t\big)\D t + \Sigma({ Z}^M_t)  { w}^M( Z^M_{[0,t]})\D t+ \Sigma({ Z}^M_t) \D { W}_t,\, { Z}^M_0=x \end{align}
 admits a unique weak solution, following the arguments of the proof of Lemma~\ref{lem-u-e-ext}.
  For the stopping time  $\tau^M$,  it is clear that
 \begin{align*}
 { Z}^M_{t\wedge \tau^M}&=  \int_0^{t\wedge \tau^M}b\big({ Z}^M_s, U_s\big)\D s + \int_0^{t\wedge \tau^M}\Sigma({ Z}^M_s)  { w}^M( Z^M_{[0.s]})\D s+ \int_0^{t\wedge \tau^M}\Sigma({ Z}^M_s) \D { W}_s\\
 &=  \int_0^{t\wedge \tau^M}b\big({ Z}^M_s, U_s\big)\D s + \int_0^{t\wedge \tau^M}\Sigma({ Z}^M_s)  {w}( Z^M_s)\D s+ \int_0^{t\wedge \tau^M}\Sigma({ Z}^M_s) \D { W}_s\,.
 \end{align*}
This proves the first part of the lemma. From here, following the arguments in the proof of Lemma~\ref{lem-u-e-ext}, gives us the second part of the lemma.
\end{proof} 

\begin{proposition}\label{prop-var-cost}For $U\in {\Uadm^{*,\beta}}$, 
	\begin{align}\label{eqn-erg-cont-var-rep}
	J(x,U)[r]= \limsup_{T\to\infty}\sup_{w\in\cA} \E_x^{U,w}\Bigg[ \frac{1}{T}\int_0^T \Big( r\big(Z_t, U_t(W_{[0,t]}+ \widetilde w_{[0,t]})\big) - \frac{1}{2}\|w_t\|^2\Big)\D t\Bigg]
	\end{align} 
with $\widetilde w_t\doteq \int_0^t w_s\D s$. 
Here, $Z_t$  is the unique weak solution to the following equation: 	
\begin{align*} d{Z}_t=  b\big({Z}_t, U_t(W_{[0,t]}+ \widetilde w_{[0,t]})\big)\D t +\Sigma(Z_t) w_t\D t + \Sigma(Z_t) \D W_t\,.  \end{align*}
Similarly, for $0<\veps<\veps_0$, 
\begin{align}\label{eqn-erg-cont-var-rep-pert}
	J(x,U)[r^\veps]= \limsup_{T\to\infty}\sup_{w\in\cA} \E_x^{U,w}\Bigg[ \frac{1}{T}\int_0^T \Big( r^\veps\big(Z_t, U_t(W_{[0,t]}+ \widetilde w_{[0,t]})\big) - \frac{1}{2}\|w_t\|^2\Big)\D t\Bigg]\,.
	\end{align} 
\end{proposition}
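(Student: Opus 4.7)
The plan is to apply the Bou\'e--Dupuis variational formula (Lemma~\ref{lem-var-rep-BM-gen}) for each finite horizon $T$ and then pass to $\limsup_{T\to\infty}$; observe that no interchange of the supremum and the $\limsup$ is required here since the supremum already sits inside the limit. Fix $T>0$ and $U\in\Uadm^{*,\beta}$. Since $X$ is the unique strong solution of~\eqref{eqn-X} by \cite[Theorem 2.2.4]{arapostathis2012ergodic} and $U$ is admissible (Definition~\ref{def-adm-cont}), on a possibly enlarged probability space we may write $X_t = F_t(W_{[0,t]},\xi)$ and $U_t = \Phi_t(W_{[0,t]},\xi)$ for measurable functionals $F_t,\Phi_t$ and an auxiliary random element $\xi$ independent of $W$ encoding any extraneous randomness in $U$. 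Consequently, the functional
$$ G_T(\omega,\xi) \doteq \tfrac{1}{T}\int_0^T r\bigl(F_t(\omega_{[0,t]},\xi),\Phi_t(\omega_{[0,t]},\xi)\bigr)\D t, \quad \omega\in\frC^d_T, $$
is Borel measurable and non-negative (recall $r\geq 0$), which is exactly the class required by Lemma~\ref{lem-var-rep-BM-gen}.

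Applying Lemma~\ref{lem-var-rep-BM-gen} to $G_T(\cdot,\xi)$ conditionally on $\xi$ and then integrating out $\xi$ yields
$$ \tfrac{1}{T}\log \E_x^U\Bigl[\exp\Bigl(\int_0^T r(X_t,U_t)\D t\Bigr)\Bigr] \;=\; \sup_{w\in\cA}\E_x^{U,w}\Bigl[\tfrac{1}{T}\int_0^T\Bigl(r\bigl(\widetilde X_t,\widetilde U_t\bigr)-\tfrac{1}{2}\|w_t\|^2\Bigr)\D t\Bigr], $$
where $\widetilde X_t = F_t(W_{[0,t]}+\widetilde w_{[0,t]},\xi)$ and $\widetilde U_t = \Phi_t(W_{[0,t]}+\widetilde w_{[0,t]},\xi)$ are the processes obtained by replacing $W$ by $W+\widetilde w$ in the defining functionals. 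Performing this substitution directly in~\eqref{eqn-X} shows that $\widetilde X$ satisfies
$$ \widetilde X_t = x + \int_0^t b(\widetilde X_s,\widetilde U_s)\D s + \int_0^t \Sigma(\widetilde X_s)w_s\D s + \int_0^t \Sigma(\widetilde X_s)\D W_s, $$
which coincides exactly with~\eqref{X-control}. By the existence and uniqueness of solutions to~\eqref{X-control} established in Lemma~\ref{lem-u-e-ext}, we may identify $\widetilde X = Z$ and $\widetilde U_t = U_t(W_{[0,t]}+\widetilde w_{[0,t]})$. Taking $\limsup_{T\to\infty}$ on both sides of the displayed identity then gives~\eqref{eqn-erg-cont-var-rep}. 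The identical argument applied to the non-negative perturbed cost $r^\veps$ in place of $r$ yields~\eqref{eqn-erg-cont-var-rep-pert}.

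The principal technical obstacle is ensuring that a general admissible $U$ (which may depend on randomness beyond $W$) can be treated as a measurable functional of the Brownian path so that Lemma~\ref{lem-var-rep-BM-gen}, nominally stated for functionals $G:\frC^d_T\to\RR$, is applicable. The conditioning-on-$\xi$ argument above resolves this cleanly; as an alternative, one may invoke Remark~\ref{rem-any-bm} and formulate the variational identity relative to the augmented filtration generated by $\{X_0,U_r,W_r:r\leq t\}$, under which $W$ remains a Brownian motion by admissibility and the set $\cA$ in~\eqref{def-A} is defined analogously. A secondary point worth emphasizing is that the identification $\widetilde X = Z$ is precisely where the weak existence/uniqueness results for the extended diffusion~\eqref{X-control} developed in Lemmas~\ref{lem-u-e-ext}--\ref{lem-u-e-ext-sm} enter; without them, the shifted functional $G_T(W+\widetilde w,\xi)$ cannot be interpreted pathwise as the running cost along the extended process $Z$.
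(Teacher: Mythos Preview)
Your overall strategy---apply the Bou\'e--Dupuis identity (Lemma~\ref{lem-var-rep-BM-gen}) for each fixed $T$, identify the shifted-path process with the extended diffusion $Z$ via Lemma~\ref{lem-u-e-ext}, then pass to $\limsup_{T\to\infty}$---is exactly the paper's argument. The paper, however, does not introduce an auxiliary $\xi$: it simply asserts that for $U\in\Uadm^{*,\beta}$ one has $U_t=U_t(W_{[0,t]})$, writes $X_t=\mathscr{X}_t\bigl(W_{[0,t]},U_t(W_{[0,t]})\bigr)$ from the strong-solution map, and applies Lemma~\ref{lem-var-rep-BM-gen} directly to the resulting functional of $W$.

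Your attempt to cover genuinely non-$W$-adapted admissible controls has a gap in the step ``apply conditionally on $\xi$ and then integrate out $\xi$.'' Conditioning gives, for each fixed $\xi_0$,
\[
\tfrac{1}{T}\log\E_W\bigl[e^{TG_T(W,\xi_0)}\bigr]=\sup_{w\in\cA}\E_W\Bigl[G_T(W+\widetilde w,\xi_0)-\tfrac{1}{2T}\!\int_0^T\!\|w_t\|^2\D t\Bigr],
\]
but the left-hand side of your displayed identity is $\tfrac{1}{T}\log\E_\xi\E_W[e^{TG_T(W,\xi)}]$, and $\log\E_\xi[\,\cdot\,]\neq\E_\xi[\log(\,\cdot\,)]$ in general; taking $\E_\xi$ of the conditional formula therefore does not recover the unconditional one with $\sup_{w\in\cA}$ outside. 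The correct route is your second alternative: apply the variational formula relative to the augmented filtration $\sigma(\xi)\vee\cG_t$ (cf.\ Remark~\ref{rem-any-bm} and the general formulation in \cite{budhiraja2019analysis}). But then the supremum must range over controls adapted to this larger filtration, not over $\cA$ as defined in~\eqref{def-A}. In short, the paper's assertion that $U$ is $W$-measurable is precisely what keeps the class $\cA$ in the statement of the proposition; if you allow $U$ to carry extra randomness, the set $\cA$ must be enlarged accordingly for the identity to hold.
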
 
{\begin{proof} We provide the proof of~\eqref{eqn-erg-cont-var-rep} as the proof of~\eqref{eqn-erg-cont-var-rep-pert} can be argued similarly. For any $U\in \Uadm^{*,\beta}, $ it is clear that $U_t= U_t(W_{[0,t]})$ is a Borel measurable functional of $W$. From the conditions on $b$ and $\Sigma$ in the beginning of Section~\ref{sec-model} and \cite[Theorem 2.2.4]{arapostathis2012ergodic}, we can infer that the process $X$ is the unique strong solution to~\eqref{eqn-X}. This means that for $U_t=u$, we can express the process $X$ as follows: for every $t>0$,
$$ X_{t}= \mathscr{X}_t(W_{[0,t]},u),$$
for some measurable function $\mathscr{X}_t:\frC^d_t\times u\rightarrow \RR^d$. In the case where $U\in \Uadm^{*,\beta}$, the associated process $X$ can be  expressed as 
$$ X_{t}= \mathscr{X}_t\big(W_{[0,t]},U_t(W_{[0,t]})\big),$$
for $t>0$. In other words, the pair $(X,U)$ is a Borel measurable functional of $W$. This subsequently means that for every $T>0$,    $\frac{1}{T}\int_0^T r\big(X_t (W_{[0,t]}),U_t(W_{[0,t]})\big)\D t$
is also a Borel measurable functional of $W$. Hence, applying Lemma~\ref{lem-var-rep-BM-gen} to 
$$G(W)= \frac{1}{T}\int_0^T r\big(\mathscr{X}_t\big(W_{[0,t]},U_t(W_{[0,t]})\big),U_t(W_{[0,t]})\big)\D t,$$
we obtain 
\begin{align}\nonumber
&\frac{1}{T} \log \E\Big[\exp\Big(\int_0^T r(X_t,U_t)\D t\Big)\Big]\\\nonumber
&=\frac{1}{T} \log \E\Big[\exp\Big(\int_0^T r\big(\mathscr{X}_t \big(W_{[0,t]},U_t(W_{[0,t]})\big),U_t(W_{[0,t]})\big)\D t\Big)\Big]\\\label{eq-prop-var-1}
&= \sup_{w\in\cA} \E\bigg[ \frac{1}{T}\int_0^T r\big(\mathscr{X}_t \big(W_{[0,t]}+\widetilde w_{[0,t]},U_t(W_{[0,t]}+\widetilde w_{[0,t]})\big),U_t(W_{[0,t]}+\widetilde w_{[0,t]})\big)\D t - \frac{1}{2T}\int_0^T\|w_t\|^2\D t\bigg],
\end{align}
where $\widetilde w_t\doteq \int_0^t w_s\D s$. We now identify the process $\mathscr{X}_t \big(W_{[0,t]}+\widetilde w_{[0,t]},U_t(W_{[0,t]}+\widetilde w_{[0,t]})\big)$ on the right hand above: since the map $\mathscr{X}_t(W_{[0,t]},u)$ is solution to~\eqref{eqn-X} with $U_t=u$, we can infer that the process $Z_t\doteq \mathscr{X}_t \big(W_{[0,t]}+\widetilde w_{[0,t]},U_t(W_{[0,t]}+\widetilde w_{[0,t]})\big)$ is a solution to
$$ d{Z}_t=  b\big({Z}_t, U_t(W_{[0,t]}+ \widetilde w_{[0,t]})\big)\D t +\Sigma(Z_t) w_t\D t + \Sigma(Z_t) \D W_t\,.$$
Therefore, in terms of the process $Z$,~\eqref{eq-prop-var-1} becomes 
\begin{align*}
\frac{1}{T} \log \E\Big[\exp\Big(\int_0^T r(X_t,U_t)\D t\Big)\Big]= \sup_{w\in\cA} \E\bigg[ \frac{1}{T}\int_0^T r\big(Z_t,U_t(W_{[0,t]}+\widetilde w_{[0,t]})\big)\D t - \frac{1}{2T}\int_0^T\|w_t\|^2\D t\bigg]\,.
\end{align*}
Now taking $T\to\infty$, proves the proposition.\end{proof}

}

We  refer to the process $Z$ defined above as the `extended' process under control $U$ and $w$, to make the distinction from the original process $X$ defined in~\eqref{eqn-X}.
\begin{remark}
In what follows, we always restrict ourselves to auxiliary controls $w$ that satisfy the hypotheses of either of Lemmas~\ref{lem-u-e-ext} or~\ref{lem-u-e-ext-sm}. For instance, see Lemmas~\ref{lem-comp-X-pert} and~\ref{lem-comp-X} of this section; In Section~\ref{sec-proof-pert}, we predominantly work with auxiliary controls $w\in \Wsm(l)$ (from the definition of $\Wsm(l)$ it is clear that $w$ then satisfies the hypothesis of Lemma~\ref{lem-u-e-ext-sm}). This in particular also includes Proposition~\ref{prop-2p-game} where a two-person zero-sum game (see~\eqref{def-rho-min-max} and~\eqref{def-rho-max-min} for its definition) is analyzed; In Section~\ref{sec-proof-main}, since we use the results from this section and Section~\ref{sec-proof-pert}, the auxiliary controls involved in that section automatically satisfy the hypotheses of Lemmas~\ref{lem-u-e-ext} or~\ref{lem-u-e-ext-sm}. Due to this reason, from hereon, we are not concerned with the problem of existence and uniqueness of the process $Z$. Also, in lieu of the above discussion, we simply work with process $Z$ without invoking/mentioning Lemmas~\ref{lem-u-e-ext} or~\ref{lem-u-e-ext-sm}.
\end{remark}

\begin{remark}In the above, the dependence on $U\in {\Uadm^{*,\beta}}$ and $w\in \cA$ is expressed only through $\E^{U,w}_x$ and to avoid confusion, we use $Z$ whenever the controls $w\in \cA$ and $U\in {\Uadm^{*,\beta}}$ are involved. We reserve $X$ whenever only $U\in {\Uadm^{*,\beta}}$ is involved. \end{remark} 
\begin{remark} It is important to note that the process $\big(Z_t,U_t(W_{[0,t]}+\widetilde w_{[0,t]})\big)$ on the right hand side of~\eqref{eqn-erg-cont-var-rep} is very different from the process $\big(X_t,U_t\big)$ involved in the definition of $J(x,U)[r]$. The main and the only difference being that the driving noise which is $W$ in the case of $\big(X_t,U_t\big)$ is replaced by $W+\int_0^\cdot w_s\D s$ in the case of $\big(Z_t,U_t(W_{[0,t]}+\widetilde w_{[0,t]})\big)$. To avoid cumbersome notation and lengthy expressions, we simply write $U_t(W_{[0,t]}+\widetilde w_{[0,t]})$ as $U_t$, whenever there is no confusion. 
\end{remark}

\begin{remark}\label{rem-non-negative} Even though we assumed that $U\in {\Uadm^{*,\beta}}$ in  Proposition~\ref{prop-var-cost}, it is not necessary as Lemma~\ref{lem-var-rep-BM-gen} can always be applied for a non-negative Borel measurable $G$. In particular, we do not require the left hand side of~\eqref{eq-var-rep} to be finite. Consequently, we do not require $J(x,U)[r]$ and $J(x,U)[r^\veps]$ to be finite.   The biggest challenge that lies ahead is to switch
the `limsup' (in $T$) and `supremum' (over $\cA$) in~\eqref{eqn-erg-cont-var-rep} and~\eqref{eqn-erg-cont-var-rep-pert}. One can intuitively see that without certain uniform (in $T$) estimates, this is difficult to show. 
\end{remark}

In the next two subsections, we prove some crucial lemmas that give the aforementioned uniform (in $T$) estimates in the cases of the perturbed ERSC problem and the original ERSC problem. The proofs in the two cases are different - this owes to the fact that in the case of the perturbed ERSC problem, we have Lemma~\ref{lem-ui} and it is not clear if one can show that such an analogous result holds in the case of the original ERSC problem.

For $u\in \bU$ and $w\in \RR^d$, let 
	$$\widehat \Lg^{u,w} f(x)\doteq  \Lg^uf(x)+ (\Sigma(x) w)\cdot \nabla f(x)\,, \text{ for }x\in \RR^d $$
	and for $v\in \Usm$, $\widehat \Lg^{v,w}f(x)\doteq \Lg^vf(x)+ (\Sigma(x) w)\cdot \nabla f(x)$.

\subsection{Key lemmas in  $0<\veps<\veps_0$ case}\label{sec-vepsgeq0}		
		 In the rest of the section, we fix $0<\veps<\veps_0$.  The next lemma can be regarded as the exponential analogue to uniform integrability. This helps us work with a truncation version of the integral of the running cost $r^\veps$. 
		 \begin{lemma}\label{lem-tail-est} The following holds:
	\begin{align*}
	\limsup_{L\to\infty}\sup_{U\in {\Uadm^{*,\beta}}}\limsup_{T\to\infty}\frac{1}{T}\log\E_x^U\left[\exp\left (\int_0^Tr^\veps(X_t,U_t) \D t\right) \Ind_{ [LT,\infty)}\Big(\int_0^T r^\veps(X_t,U_t)\D t\Big)\right]=-\infty\,.
	\end{align*}
	\end{lemma}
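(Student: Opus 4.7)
The plan is to exploit the extra exponential integrability provided by Lemma~\ref{lem-ui} through a Markov-type inequality that trades the indicator for an exponential weight. Specifically, on the event $\{\int_0^T r^\veps(X_t,U_t)\,\D t \geq LT\}$, one has $\exp\bigl(\eta_\veps \int_0^T r^\veps(X_t,U_t)\,\D t - \eta_\veps LT\bigr) \geq 1$, where $\eta_\veps>0$ is the constant supplied by Lemma~\ref{lem-ui}. Multiplying the integrand by this expression (which only increases it on the event) and dropping the indicator yields the bound
\begin{equation*}
\E_x^U\!\left[\exp\!\left(\int_0^T r^\veps(X_t,U_t)\,\D t\right)\Ind_{[LT,\infty)}\Big(\textstyle\int_0^T r^\veps(X_t,U_t)\,\D t\Big)\right] \leq e^{-\eta_\veps L T}\,\E_x^U\!\left[\exp\!\left((1+\eta_\veps)\int_0^T r^\veps(X_t,U_t)\,\D t\right)\right].
\end{equation*}

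Taking $\tfrac{1}{T}\log$ on both sides gives
\begin{equation*}
\frac{1}{T}\log \E_x^U\!\left[\exp\!\left(\int_0^T r^\veps\,\D t\right)\Ind_{[LT,\infty)}\right] \leq -\eta_\veps L + \frac{1}{T}\log \E_x^U\!\left[\exp\!\left((1+\eta_\veps)\int_0^T r^\veps\,\D t\right)\right].
\end{equation*}
Now take $\limsup_{T\to\infty}$ and then $\sup_{U\in \Uadm^{*,\beta}}$. By Lemma~\ref{lem-ui} the right-most term is bounded, uniformly in $U\in \Uadm^{*,\beta}$, by $2 + \max\{C_1\wedge C_2,\beta\}$, so we obtain
\begin{equation*}
\sup_{U\in \Uadm^{*,\beta}}\limsup_{T\to\infty}\frac{1}{T}\log \E_x^U\!\left[\exp\!\left(\int_0^T r^\veps\,\D t\right)\Ind_{[LT,\infty)}\right] \leq -\eta_\veps L + 2 + \max\{C_1\wedge C_2,\beta\}.
\end{equation*}
Letting $L\to\infty$ drives the right-hand side to $-\infty$, which is the desired conclusion.

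There is essentially no obstacle here: the entire argument is a one-line Chernoff/Markov trick, and the heavy lifting has already been done in establishing Lemma~\ref{lem-ui}. The only point to double-check is the order of the three operations $\limsup_L$, $\sup_U$, $\limsup_T$: since the $U$-uniform bound from Lemma~\ref{lem-ui} on the exponential moment of $(1+\eta_\veps) r^\veps$ is already in place, we can pull the $\sup_U$ inside after bounding, so the nested $\limsup_L \sup_U \limsup_T$ structure poses no difficulty.
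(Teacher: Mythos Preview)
Your proof is correct and essentially identical to the paper's: both use the Chernoff/Markov trick of bounding the indicator by $\exp\bigl(\eta_\veps\int_0^T r^\veps\,\D t - \eta_\veps LT\bigr)$ and then invoke Lemma~\ref{lem-ui} to control the $(1+\eta_\veps)$-moment uniformly in $U$. The paper merely packages the same step via the substitution $\cZ^\veps_T=\exp\bigl(\int_0^T r^\veps\,\D t - LT\bigr)$ and the inequality $\cZ^\veps_T\Ind_{[1,\infty)}(\cZ^\veps_T)\le (\cZ^\veps_T)^{1+\eta_\veps}$, arriving at the same bound $-\eta_\veps L + 2 + \max\{C_1\wedge C_2,\beta\}$.
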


\begin{proof} Fix $U\in {\Uadm^{*,\beta}}$. For $L>0$,  define a random variable $$\cZ^\veps_T\doteq \exp\Big(\int_0^T r^\veps (X_t,U_t)\D t - LT\Big)\,.$$ 
From Lemma~\ref{lem-pert-well-defined}, we know that 
$$ \sup_{0\leq \veps<\veps_0}\sup_{U\in {\Uadm^{*,\beta}}} \limsup_{T\to\infty}\frac{1}{T}\log \E_x^U\Big[\exp\Big( \int_0^Tr^\veps (X_t,U_t)\D t\Big)\Big]\leq 2+ \max\{C_1\wedge C_2,\beta\}\,.$$
Using this and Lemma~\ref{lem-ui}, for large enough $T$, we clearly have 
\begin{align*}
e^{-LT} \E_x^U\Big[ \exp\Big( & \int_0^Tr^\veps(X_t,U_t)\D t\Big)\Ind_{[ LT,\infty)}\Big( \int_0^T r^\veps(X_t,U_t)\D t\Big)	\Big]= \E_x^U\Big[\cZ^\veps_T\Ind_{ [1,\infty)}(\cZ^\veps_T)\Big] \\
&\leq  \E_x^U\Big[ (\cZ^\veps_T)^{1+\eta_\veps }\Big]\leq e^{-(1+\eta_\veps)  LT} \E_x^U\Big[ \exp\Big((1+\eta_\veps) \int_0^Tr^\veps(X_t,U_t)\D t\Big)\Big]\,.
\end{align*}
Here, $\eta_\veps>0$ is the constant from Lemma~\ref{lem-ui}. This gives us
\begin{align*}
\sup_{U\in {\Uadm^{*,\beta}}}\limsup_{T\to\infty}\frac{1}{T}\log\E_x^U&\Big[\exp\Big( \int_0^T r^\veps(X_t,U_t)\D t\Big)\Ind_{[ LT,\infty)}\Big( \int_0^T r^\veps(X_t,U_t)\D t\Big)	\Big]\\
&\leq -\eta_\veps L +\sup_{U\in {\Uadm^{*,\beta}}} \limsup_{T\to\infty} \frac{1}{T}\log \E_x^U\left[\exp\left((1+\eta_\veps)\int_0^T r^\veps(X_t,U_t)\D t\right)\right]\\
&\leq-\eta_\veps L+ 2+\max\{C_1\wedge C_2,\beta\}\,.
\end{align*}
In the last inequality, we again use Lemma~\ref{lem-ui}. Now taking $L\uparrow \infty$, we have the desired result.
	\end{proof}
	
From the above lemma, we have the following very important corollary.
\begin{corollary}\label{cor-trunc-limit-L} For $U\in {\Uadm^{*,\beta}}$,  define 
$$J_L(x,U)[r^\veps]\doteq \limsup_{T\to\infty}\frac{1}{T}\log\E_x^U\left[\exp\left(\int_0^T r^\veps (X_t,U_t)\D t\right)\Ind_{[0,LT)}\Big( \int_0^T r^\veps (X_t,U_t)\D t\Big)	\right]\,. $$
Then the following holds: 
	\begin{align*}
	\lim_{L\to\infty} \sup_{U\in {\Uadm^{*,\beta}}}\Big|J(x,U)[r^\veps]- J_L(x,U)[r^\veps]\Big|=0\,.
	\end{align*}
	
\end{corollary}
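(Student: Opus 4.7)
The plan is to exploit the decomposition of the expectation into the two pieces whose logarithms correspond to $J_L(x,U)[r^\veps]$ and to the tail contribution controlled by Lemma~\ref{lem-tail-est}. Setting
\[
A_T \df \E_x^U\!\Big[\exp\!\Big(\!\int_0^T r^\veps(X_t,U_t)\D t\Big)\Ind_{[0,LT)}\Big(\!\int_0^T r^\veps(X_t,U_t)\D t\Big)\Big],
\]
\[
B_T \df \E_x^U\!\Big[\exp\!\Big(\!\int_0^T r^\veps(X_t,U_t)\D t\Big)\Ind_{[LT,\infty)}\Big(\!\int_0^T r^\veps(X_t,U_t)\D t\Big)\Big],
\]
we have $\E_x^U[\exp(\int_0^T r^\veps \D t)] = A_T + B_T$ and $J_L(x,U)[r^\veps] = \limsup_T \tfrac{1}{T}\log A_T$. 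The trivial direction $J_L(x,U)[r^\veps]\le J(x,U)[r^\veps]$ follows from $A_T \le A_T + B_T$.

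For the nontrivial inequality, I would apply the elementary bound $\log(A_T + B_T) \le \log 2 + \max(\log A_T, \log B_T)$, divide by $T$, and take $\limsup_{T\to\infty}$. Using the identity $\limsup_T \max(a_T,b_T) = \max(\limsup_T a_T, \limsup_T b_T)$, this yields
\[
J(x,U)[r^\veps] \le \max\!\Big(J_L(x,U)[r^\veps],\ \limsup_{T\to\infty}\tfrac{1}{T}\log B_T\Big).
\]
Lemma~\ref{lem-tail-est} then provides the key uniform control: for every $\delta>0$, there exists $L_0>0$ such that for all $L\ge L_0$,
\[
\sup_{U\in\Uadm^{*,\beta}}\limsup_{T\to\infty}\tfrac{1}{T}\log B_T \le -\delta.
\]

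To close the argument, I would use that $r^\veps \ge 0$ (since $0<\veps<\veps_0$, both components in the convex-type combination $r^\veps = (1-\veps/\veps_0)r + \veps h$ are non-negative), hence $\exp(\int_0^T r^\veps\D t)\ge 1$ and therefore $J(x,U)[r^\veps] \ge 0$ for every $U\in\Uadm^{*,\beta}$. Choosing $L\ge L_0$ with $\delta=1$ (say), the tail contribution is strictly negative uniformly in $U$, whereas $J(x,U)[r^\veps]\ge 0$, so the maximum on the right-hand side must be attained by $J_L(x,U)[r^\veps]$. Consequently $J(x,U)[r^\veps] \le J_L(x,U)[r^\veps]$ uniformly in $U\in\Uadm^{*,\beta}$, and in fact the two quantities coincide for all sufficiently large $L$, which gives the desired conclusion (trivially stronger than the limit being zero).

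There is no real obstacle here beyond identifying the correct decomposition and recognizing that the a priori lower bound $J(x,U)[r^\veps]\ge 0$ (coming from non-negativity of $r^\veps$) combined with the uniform exponential tail decay from Lemma~\ref{lem-tail-est} forces the tail term to be dominated by $J_L(x,U)[r^\veps]$ for large $L$. The only point requiring a tiny bit of care is to ensure the suprema over $\Uadm^{*,\beta}$ in Lemma~\ref{lem-tail-est} and in the statement being proved line up, which they do since the bound on $\limsup_T \tfrac{1}{T}\log B_T$ is uniform in $U\in\Uadm^{*,\beta}$.
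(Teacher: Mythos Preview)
Your proof is correct and follows essentially the same route as the paper: the same decomposition $A_T+B_T$, the same $\max$-bound $J(x,U)[r^\veps]\le\max\bigl(J_L(x,U)[r^\veps],\limsup_T\tfrac1T\log B_T\bigr)$, and the same appeal to Lemma~\ref{lem-tail-est}. Your explicit use of $r^\veps\ge 0$ (hence $J(x,U)[r^\veps]\ge 0$) to force the maximum onto the $J_L$ branch for large $L$---giving exact equality rather than merely a vanishing limit---is a small sharpening that the paper leaves implicit in its terse ``combining~\eqref{eq-trunc-low-bound} and~\eqref{eq-trunc-up-bound}'' sentence.
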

\begin{proof} Fix $U\in {\Uadm^{*,\beta}}$. It is clear that for $L>0$, 
	\begin{align} \label{eq-trunc-low-bound}
	\limsup_{T\to\infty}\frac{1}{T}\log\E_x^U\left[\exp\left( \int_0^Tr^\veps (X_t,U_t)\D t\right) \Ind_{[ LT,\infty)}\Big( \int_0^T r^\veps(X_t,U_t)\D t\Big)	\right]\leq J(x,U)[ r^\veps]\,.
	\end{align}
	It is also easy to see that 
	\begin{align}\nonumber 
	J(x,U)[ r^\veps]&\leq \max \Bigg\{ 	\limsup_{T\to\infty}\frac{1}{T}\log\E_x^U\left[\exp\left(\int_0^T r^\veps (X_t,U_t)\D t\right) \Ind_{[ 0,LT)}\Big( \int_0^T r^\veps (X_t,U_t)\D t\Big)	\right],\\\label{eq-trunc-up-bound}
	&\qquad\limsup_{T\to\infty}\frac{1}{T}\log\E_x^U\left[\exp\left( \int_0^T r^\veps (X_t,U_t)\D t\right)\Ind_{[ LT,\infty)}\Big(\int_0^T r^\veps (X_t,U_t)\D t\Big)	\right]\Bigg\}\,.
	\end{align}
	From here, taking supremum over $U\in {\Uadm^{*,\beta}}$, then $L\to \infty$ and using Lemma~\ref{lem-tail-est}, we have the desired result by combining~\eqref{eq-trunc-low-bound} and~\eqref{eq-trunc-up-bound}. 
	\end{proof} 
The following lemma states that there are nearly optimal controls whose {family of MEMs} is tight. 
	\begin{lemma}\label{lem-comp-w-pert}
	Suppose $U\in {\Uadm^{*,\beta}}$. Then, for any  $\delta>0$, $T>0$, there exists $w^*= w^*(\delta,T,U)\in \cA$ such that   
	\begin{align}\label{eq-max-U} \sup_{w\in\cA} \E_x^{U,w}\Bigg[ \frac{1}{T}\int_0^T \Big(  r^\veps \big(Z_t, U_t\big) - \frac{1}{2}\|w_t\|^2\Big)\D t\Bigg]\leq  \E_x^{U,w^*}\Bigg[ \frac{1}{T}\int_0^T \Big(  r^\veps \big(Z_t, U_t\big) - \frac{1}{2}\|w^*_t\|^2\Big)\D t\Bigg]+\delta\,,\end{align}
	and  a constant $M_1=M_1(\veps,\delta,{\beta})>0$ such that 
	\begin{align}\label{eq-tight-1}\limsup_{T\to\infty} \frac{1}{T}\E_x^{U,w^*}\Big[\int_0^T \|w^*_t\|^2 \D t\Big]\leq M_1\,.\end{align}
	In particular, $M_1$ is independent of $U\in {\Uadm^{*,\beta}}$. As a consequence,   the family of MEMs of the process $w^*$ is tight. 
	\end{lemma}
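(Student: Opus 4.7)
The existence of $w^* \in \cA$ satisfying~\eqref{eq-max-U} is immediate: for the fixed horizon $T$, simply select a $\delta$--near maximizer of the supremum on the right--hand side of~\eqref{eqn-erg-cont-var-rep-pert}. The real challenge is the uniform $L^2$--bound~\eqref{eq-tight-1} with $M_1$ depending only on $\veps,\delta,\beta$. My strategy is to leverage the enhanced integrability furnished by Lemma~\ref{lem-ui}: I will apply the variational identity of Lemma~\ref{lem-var-rep-BM-gen} twice---once with multiplier $1$ and once with multiplier $1+\eta_\veps$ (where $\eta_\veps>0$ is from Lemma~\ref{lem-ui})---and then difference the two bounds evaluated at $w^*$ so that the awkward $\tfrac{1}{2T}\E[\int_0^T\|w^*_t\|^2\D t]$ term cancels.

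More concretely, the near--optimality~\eqref{eq-max-U}, combined with Proposition~\ref{prop-var-cost} in the form $\sup_{w\in\cA}\E_x^{U,w}\bigl[\tfrac{1}{T}\int_0^T (r^\veps(Z_t,U_t) - \tfrac{1}{2}\|w_t\|^2)\D t\bigr] = \tfrac{1}{T}\log\E_x^U\bigl[\exp(\int_0^T r^\veps(X_t,U_t)\D t)\bigr]$, yields
\begin{align*}
\E_x^{U,w^*}\Big[\tfrac{1}{T}\int_0^T r^\veps(Z_t,U_t)\D t\Big] - \tfrac{1}{2T}\E_x^{U,w^*}\Big[\int_0^T\|w^*_t\|^2\D t\Big] \,\geq\, \tfrac{1}{T}\log\E_x^U\bigl[e^{\int_0^T r^\veps(X_t,U_t)\D t}\bigr] - \delta\,,
\end{align*}
while applying Lemma~\ref{lem-var-rep-BM-gen} to $G(W)=\tfrac{1+\eta_\veps}{T}\int_0^T r^\veps(X_t,U_t)\D t$ and specializing $w=w^*$ produces the matching upper bound
\begin{align*}
(1+\eta_\veps)\E_x^{U,w^*}\Big[\tfrac{1}{T}\int_0^T r^\veps(Z_t,U_t)\D t\Big] - \tfrac{1}{2T}\E_x^{U,w^*}\Big[\int_0^T\|w^*_t\|^2\D t\Big] \,\leq\, \tfrac{1}{T}\log\E_x^U\bigl[e^{(1+\eta_\veps)\int_0^T r^\veps(X_t,U_t)\D t}\bigr]\,.
\end{align*}
Subtracting the first display from the second cancels the $\|w^*\|^2$ contribution and leaves
\begin{align*}
\eta_\veps\,\E_x^{U,w^*}\Big[\tfrac{1}{T}\int_0^T r^\veps(Z_t,U_t)\D t\Big] \,\leq\, \tfrac{1}{T}\log\E_x^U\bigl[e^{(1+\eta_\veps)\int_0^T r^\veps}\bigr] - \tfrac{1}{T}\log\E_x^U\bigl[e^{\int_0^T r^\veps}\bigr] + \delta\,.
\end{align*}
Passing to $\limsup_{T\to\infty}$, Lemma~\ref{lem-ui} controls the first log--expectation on the right by $2+\max\{C_1\wedge C_2,\beta\}$, and non--negativity of $r^\veps$ forces the second $\geq 0$. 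Hence $\limsup_T \E_x^{U,w^*}[\tfrac{1}{T}\int_0^T r^\veps(Z_t,U_t)\D t] \leq \eta_\veps^{-1}\bigl(2+\max\{C_1\wedge C_2,\beta\}+\delta\bigr)$, uniformly in $U\in\Uadm^{*,\beta}$ and in $x$.

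To conclude~\eqref{eq-tight-1}, I return to the lower--bound display and use once more that $\tfrac{1}{T}\log\E_x^U[e^{\int r^\veps}] \geq 0$, obtaining $\tfrac{1}{2T}\E_x^{U,w^*}[\int_0^T\|w^*_t\|^2\D t] \leq \E_x^{U,w^*}[\tfrac{1}{T}\int_0^T r^\veps(Z_t,U_t)\D t] + \delta$; taking $\limsup_T$ then gives~\eqref{eq-tight-1} with $M_1 \df 2\eta_\veps^{-1}\bigl(2+\max\{C_1\wedge C_2,\beta\}+\delta\bigr)+2\delta$, independent of $U$ as required. Tightness of the MEMs $\{\pi_T\}$ of $w^*$ follows at once from Markov's inequality, $\pi_T(\bar B_R^c) \leq R^{-2}\cdot\tfrac{1}{T}\E_x^{U,w^*}[\int_0^T\|w^*_t\|^2\D t]$, so $\limsup_T \pi_T(\bar B_R^c) \leq M_1/R^2 \to 0$ as $R\to\infty$. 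The principal obstacle---and the reason the argument essentially requires Lemma~\ref{lem-ui} rather than just Lemma~\ref{lem-pert-well-defined}---is that comparing~\eqref{eq-max-U} to itself yields only the trivial estimate $0\leq\delta$; the $\eta_\veps$--amplification is precisely what decouples the running cost from the $\|w^*\|^2$ penalty and allows the uniform integrability bound of Lemma~\ref{lem-ui} to close the loop.
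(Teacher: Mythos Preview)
Your proof is correct and takes a genuinely different route from the paper's. The paper first invokes the tail estimate of Lemma~\ref{lem-tail-est} and Corollary~\ref{cor-trunc-limit-L} to pass to the truncated functional $\bigl(\tfrac{1}{T}\int_0^T r^\veps\bigr)\wedge L$ with $L=L(\veps,\delta,\beta)$, and then chooses $w^*$ to be $\tfrac{\delta}{2}$--near optimal for the \emph{truncated} variational problem; since the truncated cost is capped at $L$, the bound $\limsup_T \tfrac{1}{T}\E[\int\|w^*\|^2]\leq 2(L+\delta)$ drops out immediately. You instead take $w^*$ near--optimal for the untruncated problem and invoke Lemma~\ref{lem-ui} directly through the ``double variational'' subtraction at amplitudes $1$ and $1+\eta_\veps$, which isolates $\E_x^{U,w^*}[\tfrac{1}{T}\int r^\veps(Z_t,U_t)\D t]$ and closes the estimate without any truncation. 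Both arguments rest on Lemma~\ref{lem-ui}; the paper routes through it via Lemma~\ref{lem-tail-est} and Corollary~\ref{cor-trunc-limit-L}, whereas you apply it in a single stroke. Your approach has the merit that the $w^*$ you construct literally satisfies~\eqref{eq-max-U} as written, while the paper's $w^*$ is near--optimal only for the truncated problem (a discrepancy the paper does not explicitly address). The paper's truncation device, however, is reused downstream (notably in Lemma~\ref{lem-cec-cost}), so introducing it here also serves a structural role in the overall development.
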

	\begin{proof} Fix $\delta>0$. Using Corollary~\ref{cor-trunc-limit-L} and the fact that $U\in {\Uadm^{*,\beta}}$, we can choose $L=L(\veps,\delta,\beta)>0$ (independent of $U\in {\Uadm^{*,\beta}}$) such that 
	{\begin{align}\nonumber J(x,U)[ r^\veps]&\leq  \limsup_{T\to\infty}\frac{1}{T}\log\E_x^{U}\left[\exp\left(\frac{1}{T} \int_0^T r^\veps(X_t,U_t)\D t\right) \Ind_{[ 0,LT]}\Big(\int_0^T r^\veps (X_t,U_t)\D t\Big)	\right]+\frac{\delta}{2}\\\nonumber
	&= \limsup_{T\to\infty}\frac{1}{T}\log\E_x^{U}\left[\exp\left(\Big( \frac{1}{T}\int_0^T r^\veps(X_t,U_t)\D t\Big)\wedge L\right) \Ind_{[ 0,LT]}\Big(\int_0^T r^\veps (X_t,U_t)\D t\Big)	\right]+\frac{\delta}{2}\\\label{eq-exp-1}
	&\leq \limsup_{T\to\infty}\frac{1}{T}\log\E_x^{U}\left[\exp\left(\Big(\frac{1}{T} \int_0^T r^\veps(X_t,U_t)\D t\Big)\wedge L\right) 	\right]+\frac{\delta}{2} \,.\end{align}}
	{To get the last line, we bound the indicator function by $1$.} Applying Lemma~\ref{lem-var-rep-BM-gen}, we have 
	\begin{align*}
	{\limsup_{T\to\infty}}&{\frac{1}{T}\log\E_x^{U}\left[\exp\left(\Big( \int_0^T r^\veps(X_t,U_t)\D t\Big)\wedge L\right) 	\right]}\\
	&=\limsup_{T\to\infty}\sup_{w\in\cA} \E_x^{U,w}\Bigg[ \Big(\frac{1}{T}\int_0^T    r^\veps \big(Z_t, U_t\big)\D t\Big)\wedge L -\frac{1}{T}\int_0^T \frac{1}{2}\|w_t\|^2\D t\Bigg]\,.
	\end{align*}
	From above equation, for any $T>0$ and $w^*=w^*(\delta,T,U)$ such that
	\begin{align*}\sup_{w\in\cA} &\E_x^{U,w}\Bigg[ \Big(\frac{1}{T}\int_0^T  r^\veps \big(Z_t, U_t\big) \D t\Big)\wedge L- \frac{1}{2}\int_0^T \|w_t\|^2\D t\Bigg]\\
	&\leq  \E_x^{U,w^*}\Bigg[ \Big(\frac{1}{T}\int_0^T  r^\veps \big(Z_t, U_t\big) \D t\Big)\wedge L- \frac{1}{2}\int_0^T \|w^*_t\|^2\D t\Bigg]+\frac{\delta}{2}\,.\end{align*}
	 It is clear that 
	$$ \limsup_{T\to\infty}\E_x^{U,w^*}\Big[\frac{1}{T} \int_0^T \|w^*_t\|^2 \D t\Big]\leq 2(L+\delta)\,.$$ 
	 This completes the proof of~\eqref{eq-tight-1} with {$M_1(\veps,\delta, \beta)\doteq 2(L+\delta)$}. The fact that $M_1$ is independent of $U\in {\Uadm^{*,\beta}}$ follows from the fact that $L$ is independent of $U$. 
	 
	{ Finally, the tightness of the family of MEMs of the process $w^*$ follows from~\eqref{eq-tight-1} and an application of Markov's inequality. This completes the proof.}
	\end{proof}
	
	\begin{lemma}\label{lem-comp-X-pert} Suppose $U\in {\Uadm^{*,\beta}}$ and $w^*$ be as in Lemma~\ref{lem-comp-w-pert}. Then, 	there exists a constant $M_2=M_2(\veps, \delta,{\beta})>0$ such that 
		\begin{align*}
		\limsup_{T\to\infty} \frac{1}{T}\E_x^{U,w^*}\Big[\int_0^T h(Z_t,U_t)\D t\Big]\leq M_2\,.
		\end{align*}
		In particular, $M_2$ is independent of $U\in {\Uadm^{*,\beta}}$. As a consequence, the family of MEMs of the process $Z$ (under $U$ and $w^*$) is tight. 
		\end{lemma}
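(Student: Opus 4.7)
The plan is to apply Itô's formula to $\log\sV(Z_t)$ along the extended diffusion driven by $(U,w^*)$, derive a pointwise Lyapunov inequality for the generator $\widehat\Lg^{u,w}$ that absorbs the $w$-drift via a completing-the-square trick, and then close the loop by bounding the (a priori uncontrolled) term $\E\!\int_0^T r^\veps(Z_t,U_t)\,\D t$ through the variational representation of Proposition~\ref{prop-var-cost}.

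Since $\sV\ge 1$, the function $\log\sV\in\calC^2(\RR^d)$ is nonnegative, and a direct calculation gives
\begin{equation*}
\widehat\Lg^{u,w}\log\sV(x)\;=\;\frac{\Lg^u\sV(x)}{\sV(x)}-\tfrac{1}{2}\bigl|\Sigma(x)\transp\nabla\log\sV(x)\bigr|^2+w\cdot\Sigma(x)\transp\nabla\log\sV(x).
\end{equation*}
Completing the square in the last two terms yields $w\cdot v-\tfrac12|v|^2\le \tfrac12|w|^2$, and combining with the bound $\Lg^u\sV/\sV\le C_1\wedge C_2-\bar h(x,u)\Ind_{\cH^c}(x,u)+C_3 r(x,u)\Ind_\cH(x,u)$ from \eqref{eq-inf-comp-2} gives
\begin{equation*}
\widehat\Lg^{u,w}\log\sV(x)\;\le\;C_1\wedge C_2-\bar h(x,u)\Ind_{\cH^c}(x,u)+C_3 r(x,u)\Ind_\cH(x,u)+\tfrac{1}{2}\|w\|^2.
\end{equation*}
Applying Dynkin's formula (standard localization by $\tau_R$, then $R\to\infty$ with Fatou) and dropping $\E[\log\sV(Z_T)]\ge 0$ yields
\begin{equation*}
\E_x^{U,w^*}\!\Big[\!\int_0^T\!\bar h(Z_t,U_t)\Ind_{\cH^c}(Z_t,U_t)\D t\Big]\le \log\sV(x)+(C_1\wedge C_2)T+\tfrac{1}{2}\E_x^{U,w^*}\!\Big[\!\int_0^T\!\|w^*_t\|^2\D t\Big]+C_3\,\E_x^{U,w^*}\!\Big[\!\int_0^T\!r(Z_t,U_t)\Ind_\cH(Z_t,U_t)\D t\Big].
\end{equation*}

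To close this inequality we must control $\limsup_T\tfrac{1}{T}\E_x^{U,w^*}[\int_0^T r(Z_t,U_t)\Ind_\cH(Z_t,U_t)\D t]$. Since $r^\veps=(1-\veps/\veps_0)r+\veps h\ge \veps h\ge \veps r$, we have $r\Ind_\cH\le h\le r^\veps/\veps$, so it suffices to bound the long-run average of $r^\veps(Z_t,U_t)$ under $(U,w^*)$. Here the variational formula~\eqref{eqn-erg-cont-var-rep-pert} gives the ``easy direction'' $\sup_w\limsup_T\le \limsup_T\sup_w=J(x,U)[r^\veps]$; applying it at $w=w^*$ and combining with Lemma~\ref{lem-pert-well-defined} produces
\begin{equation*}
\limsup_{T\to\infty}\E_x^{U,w^*}\!\Big[\tfrac{1}{T}\!\int_0^T r^\veps(Z_t,U_t)\D t\Big]\le J(x,U)[r^\veps]+\tfrac{1}{2}\limsup_{T\to\infty}\E_x^{U,w^*}\!\Big[\tfrac{1}{T}\!\int_0^T\!\|w^*_t\|^2\D t\Big]\le 2+\max\{C_1\wedge C_2,\beta\}+\tfrac{M_1}{2},
\end{equation*}
where the bound on the quadratic cost of $w^*$ is provided by Lemma~\ref{lem-comp-w-pert}. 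Since $M_1$ depends only on $\veps,\delta,\beta$, so does the resulting constant.

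Feeding this bound back into the Dynkin inequality (divided by $T$, limsup, and using once more Lemma~\ref{lem-comp-w-pert}) yields
\begin{equation*}
\limsup_{T\to\infty}\tfrac{1}{T}\E_x^{U,w^*}\!\Big[\!\int_0^T\!\bar h(Z_t,U_t)\Ind_{\cH^c}(Z_t,U_t)\D t\Big]\le C_1\wedge C_2+\tfrac{M_1}{2}+\tfrac{C_3}{\veps}\Bigl(2+\max\{C_1\wedge C_2,\beta\}+\tfrac{M_1}{2}\Bigr),
\end{equation*}
and finally the inequality $h\le 2+2\bar h\Ind_{\cH^c}+2r\Ind_\cH$ from \eqref{eq-inf-comp-1} converts this and the $r^\veps$-bound above into the sought constant $M_2=M_2(\veps,\delta,\beta)$. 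For the tightness statement, let $\pi_T$ denote the MEM of $Z$ and $H(x)\df\min_{u\in\bU}h(x,u)$; then $\int H\,\D\pi_T\le M_2+o(1)$, so by Markov's inequality and the inf-compactness of $h$ (hence of $H$) the sets $\{H\le K\}$ are compact and $\sup_T\pi_T(\{H>K\})\le M_2/K\to 0$ as $K\to\infty$.

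The main obstacle I expect is the step of controlling $\E[\int_0^T r^\veps(Z_t,U_t)\D t]$ for the specific nearly-optimal $w^*$: the variational identity is formulated as $\limsup_T\sup_w$, so one must be careful that swapping in a particular $w^*$ only costs the easy inequality $\sup_w\limsup_T\le\limsup_T\sup_w$, and that combining it with the Lemma~\ref{lem-comp-w-pert} bound on $\|w^*\|^2$ does not force us to know a \emph{two-sided} bound on the running cost. The rest is routine, although one should also check that the Dynkin localization is legitimate despite $\log\sV$ being unbounded and $\|w^*\|$ only lying in $L^2$ in expectation, which follows from a standard monotone-convergence/Fatou argument at $\tau_R\wedge T$.
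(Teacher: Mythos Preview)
Your argument is correct and follows essentially the same strategy as the paper: apply It\^o's formula to $\frV=\log\sV$ along the extended diffusion, use \eqref{eq-inf-comp-2} together with a quadratic inequality to absorb the $w$-drift, and close the loop by bounding $\tfrac{1}{T}\E\!\int_0^T r^\veps(Z_t,U_t)\D t$ via the easy direction of the variational formula \eqref{eqn-erg-cont-var-rep-pert} and Lemma~\ref{lem-comp-w-pert}. The only cosmetic differences are that the paper uses Young's inequality with $\gamma=\tfrac12$ (yielding $\|w\|^2$ rather than your sharper $\tfrac12\|w\|^2$) and passes from $r$ to $r^\veps$ by the factor $(1-\veps/\veps_0)^{-1}$ after adding $r\Ind_\cH$ to both sides, whereas you use $r\Ind_\cH\le h\le r^\veps/\veps$ directly; both routes produce a valid $M_2(\veps,\delta,\beta)$.
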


		\begin{proof}  
		
		Writing $\sV(x)= e^{ \frV(x)}$, we can easily see that 
		\begin{align*}
		\Lg^u\sV(x)=\Lg^ue^{\frV(x)}= e^{\frV(x)}\Lg^u \frV(x) + \frac{1}{2}e^{\frV(x)}\nabla \frV(x)\transp A(x)\nabla \frV(x)\,.
		\end{align*}
		From~\eqref{eq-inf-comp-2}, we know that 
		\begin{align*}
		\Lg^u \sV(x)\leq \Big(C_1\wedge C_2-{\bar h}(x,u)\Ind_{\cH^c}(x,u)+C_3r(x,u)\Ind_{\cH}(x,u)\Big)\sV(x)\,.
		\end{align*}
		In terms of $\frV$, the above display reduces to 
		\begin{align}\label{eq-lyap-var}
		\Lg^u \frV(x) + \frac{1}{2}\|\Sigma(x)\transp\nabla \frV(x)\|^2 \leq C_1\wedge C_2-{\bar h}(x,u)\Ind_{\cH^c}(x,u)+C_3r(x,u)\Ind_{\cH}(x,u)\,.
		\end{align}
		Since $ \widehat \Lg^{u,w} f= \Lg^uf+ \Sigma w\cdot \nabla f$ and $\Ind_{\cH}(x,u)\leq 1$,~\eqref{eq-lyap-var} becomes
		\begin{align}\label{eq-lyap-var-lin}
		\widehat \Lg^{u,w}\frV(x)\leq C_1\wedge C_2-{\bar h}(x,u)\Ind_{\cH^c}(x,u)+C_3r(x,u)-\frac{1}{2}\|\Sigma(x)\transp\nabla \frV(x)\|^2 + (\Sigma(x) w)\cdot \nabla \frV(x)\,.
		\end{align}
		Using the well-known inequality: $|x\cdot y|\leq \frac{\gamma}{2} \|x\|^2 + \frac{1}{2\gamma} \|y\|^2$, for $x,y\in \RR^d$ and $\gamma>0$ (which will be chosen later), we have
		\begin{align*}
		\widehat \Lg^{u,w}\frV(x) &\leq C_1\wedge C_2-{\bar h}(x,u)\Ind_{\cH^c}(x,u)+C_3r(x,u)-\frac{1}{2}\|\Sigma(x)\transp\nabla \frV(x)\|^2 \\ & \qquad + \frac{1}{ 2\gamma }\|w\|^2 + \frac{\gamma}{2}\|\Sigma(x) \transp\nabla \frV(x)\|^2\,.
		\end{align*}
		 Choosing $\gamma=\frac{1}{2}$, we have
		\begin{align}\label{eq-lyap-extended}
		\widehat \Lg^{u,w}\frV(x)&\leq C_1\wedge C_2-{\bar h}(x,u)\Ind_{\cH^c}(x,u)+C_3r(x,u) - \frac{1}{4}\|\Sigma(x)\transp\nabla \frV(x)\|^2   + { \|w\|^2}\, .
		\end{align}
		{For $U\in\Uadm^{*,\beta}$}, applying It{\^o}'s formula to $\frV(Z_{T\wedge \tau_R})$ with $w\equiv w^{*}_t$ and $u\equiv U_t$, we have 
		\begin{align}\label{eq-comp-1-x}
		\E_x^{U,w^{*}}\Big[\frV(Z_{T\wedge \tau_R})\Big]& \leq  \frV(x) + \E_x^{U,w^{*}}\Big[ \int_0^{T\wedge \tau_R} \Big(C_1\wedge C_2-{\bar h}(Z_{t}, U_t)\Ind_{\cH^c}(Z_{t}, U_t) \nonumber \\
		&+C_3r(Z_{t},U_t)-\frac{1}{4}\|\Sigma(Z_t)\transp\nabla \frV(Z_{t})\|^2   + {\|w^{*}_t\|^2}\Big)\D t\Big]\,.
		\end{align}
		Since $\frV\geq 0$, dividing by $T$ and taking $R\to\infty$, we have
		\begin{align*}
		\frac{1}{T}&\E_x^{U,w^{*}}\Big[\int_0^T{\bar h}(Z_{t}, U_t)\Ind_{\cH^c}(Z_{t}, U_t)\D t\Big]  + \frac{1}{4T} \E_x^{U,w^{*}}\Big[\int_0^T\|\Sigma(Z_t)\transp\nabla \frV(Z_{t})\|^2\D t \Big]\\\
		&\leq C_1\wedge C_2 +\frac{C_3}{T} \E_x^{U,w^{*}}\Big[\int_0^Tr(Z_{t},U_t)\D t\Big] +  {\frac{1}{ T}}
		\E_x^{U,w^{*}}\Big[ \int_0^T \|w^{*}_t\|^2\D t \Big]\,. 
		\end{align*}
		Adding on both sides the term $ T^{-1}\E_x^{U,w^{*}}\Big[\int_0^T r(Z_{t},U_t)\Ind_{\cH}(Z_{t}, U_t)\D t\Big],$
		 we have 
		\begin{align}\nonumber
		&\frac{1}{T}\E_x^{U,w^{*}}\Big[\int_0^T{\bar h}(Z_{t},U_t)\Ind_{\cH^c}(Z_{t}, U_t)\D t\Big]  + \frac{1}{4T}{ \E_x^{U,w^{*}}}\Big[\int_0^T\|\Sigma(Z_t)\transp\nabla \frV(Z_{t})\|^2\D t \Big] \nonumber\\
		& + \frac{1}{T}\E_x^{U,w^{*}}\Big[\int_0^T r(Z_{t},U_t)\Ind_{\cH}(Z_{t}, U_t)\D t\Big] \nonumber\\
		& \leq  \big(C_1\wedge C_2\big)  +\frac{C_3}{T} \E_x^{U,w^{*}}\Big[\int_0^Tr(Z_{t},U_t)\D t\Big] +\frac{1}{T}\E_x^{U,w^{*}}\Big[\int_0^T r(Z_{t},U_t)\Ind_{\cH}(Z_{t}, U_t)\D t\Big] \nonumber\\
		&\qquad+ {\frac{1}{ T}}
		\E_x^{U,w^{*}}\Big[ \int_0^T \|w^{*}_t\|^2\D t \Big] \nonumber \\\label{eq-comp-X-1}
		&\leq   \big(C_1\wedge C_2\big)  +\frac{C_3+1}{T} \E_x^{U,w^{*}}\Big[\int_0^Tr(Z_{t},U_t)\D t\Big]+ {\frac{1}{ T}}
		\E_x^{U,w^{*}}\Big[ \int_0^T \|w^{*}_t\|^2\D t \Big] \,. 
		\end{align}
		It is clear that from Proposition~\ref{prop-var-cost},
		\begin{align}\label{eq-change-1}\limsup_{T\to\infty} \frac{1}{T} \E_x^{U,w^{*}}\Big[\int_0^Tr ^\veps (Z_{t},U_t)\D t\Big]\leq J(x,U)[r^\veps] +\limsup_{T\to\infty} \frac{1}{2T}\E_x^{U,w^*}\Big[ \int_0^T \|w_t^*\|^2 \D t\Big],\end{align}
		which from the definition of $r^\veps$ implies 
		\begin{align*}
		& \limsup_{T\to\infty}\frac{1}{T} \E_x^{U,w^{*}}\Big[\int_0^Tr (Z_{t},U_t)\D t\Big]\\
		 &\leq (1-\frac{\veps}{\veps_0})^{-1}\Bigg(J(x,U)[r^\veps] +\limsup_{T\to\infty}\Big( \frac{1}{2T}\E_x^{U,w^*}\Big[ \int_0^T \|w_t^*\|^2 \D t\Big]-\veps \frac{1}{T}\E_x^{U,w^*}\Big[\int_0^Th(Z_t,U_t)\D t\Big]\Big)\Bigg)\\
		 &\leq  (1-\frac{\veps}{\veps_0})^{-1}\Bigg(J(x,U)[r^\veps] + \limsup_{T\to\infty}\frac{1}{2T}\E_x^{U,w^*}\Big[ \int_0^T \|w_t^*\|^2 \D t\Big]\Bigg)\,.
		\end{align*} 
		From Lemma~\ref{lem-comp-w-pert},~\eqref{eq-inf-comp-1} and the last display,~\eqref{eq-comp-X-1} becomes 
	\begin{align*}
	\limsup_{T\to\infty}&\frac{1}{2T}\E_x^{U,w^{*}}\Big[\int_0^Th(Z_{t},U_t)\D t\Big] \\
	&\leq 1+ \big(C_1\wedge C_2\big) + \limsup_{T\to\infty}{\frac{1}{T}} \E_x^{U,w^{*}}\Big[ \int_0^T \|w^{*}_t\|^2\D t \Big]\\
		& \quad  +(C_3+1)(1-\frac{\veps}{\veps_0})^{-1} \Big(J(x,U)[r^\veps] + \limsup_{T\to\infty}\frac{1}{2T}\E_x^{U,w^*}\Big[ \int_0^T \|w_t^*\|^2 \D t\Big]\Big)\\
		&\leq 1+ \big(C_1\wedge C_2\big)  +(1+C_3)(1-\frac{\veps}{\veps_0})^{-1} \big({\beta} + {\frac{M_1}{2}}\big)+{M_1}\,.
	\end{align*}	
In the last equation, we use Lemma~\ref{lem-comp-w-pert} and the fact that {$J(x,U)[r^\veps]\leq \beta$}. With $M_2\doteq 2\big(1+ \big(C_1\wedge C_2\big)  +({C_3+1})(1-\frac{\veps}{\veps_0})^{-1} \big({\beta} + {\frac{M_1}{2}}\big)+{M_1}\big)$, we have the result.
		\end{proof}

\subsection{ Key lemmas in $\veps=0$ case.}\label{sec-e-0}

	Before we state the next lemma, we make the following simple observation. Since $w\in \cA$ if and only if $kw\in \cA$, for all $k\neq 0$,  we have 
	\begin{align}\label{eq-equiv-cost} \sup_{w\in\cA} \E_x^{U,w}\Bigg[ \frac{1}{T}\int_0^T \Big( r\big(Z_t, U_t\big) - \frac{1}{2}\|w_t\|^2\Big)\D t\Bigg]=\sup_{w\in\cA} \E_x^{U,kw}\Bigg[ \frac{1}{T}\int_0^T \Big( r\big(Z_t, U_t\big) - \frac{k^2}{2}\|w_t\|^2\Big)\D t\Bigg]\,,\end{align}
for every $U\in {\Uadm}$.

	\begin{lemma}\label{lem-comp-X} For every $\delta>0$ and $v\in {\Usm^{*,\beta}}$,  there exists a $\widetilde w=\widetilde w(\delta,T)\in \cA$ such that 
	\begin{align}\label{def-d-opt}\sup_{w\in\cA} \E_x^{v,w}\Bigg[ \frac{1}{T}\int_0^T \Big( r^v\big(Z_t\big) - \frac{1}{2}\|w_t\|^2\Big)\D t\Bigg]\leq \E_x^{v,\widetilde w}\Bigg[ \frac{1}{T}\int_0^T \Big( r^v\big(Z_t\big) - \frac{1}{2}\|\widetilde w_t\|^2\Big)\D t\Bigg]+\delta\end{align}
	 	 and a constant $M_3={ M_3(\delta,\beta)}>0$ such that
		\begin{align}\label{eq-rec}
		\limsup_{T\to\infty} \frac{1}{T}\E_x^{v,\widetilde w}\Big[\int_0^T  h^v(Z_t)\D t\Big]\leq M_3\,.
		\end{align}
		As a consequence,  the family of MEMs of the process $Z$ (under $v$ and $\widetilde w$) is tight.
		\end{lemma}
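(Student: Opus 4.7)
The plan is to adapt the strategy of Lemmas~\ref{lem-comp-w-pert} and~\ref{lem-comp-X-pert} to the $\veps=0$ setting, where the exponential uniform integrability of $r^\veps$ provided by Lemma~\ref{lem-ui} has no counterpart for the bare cost $r$. In its place, the main tool is the Foster--Lyapunov function $\sW_v$ from Corollary~\ref{cor-stable}, available precisely because $v\in\Usm^{*,\beta}$: it satisfies $\Lg^v\sW_v+\veps_0 h^v\sW_v=\lambda^*\sW_v$ with $\lambda^*:=\lambda^*_v[\veps_0 h^v]<\infty$ and $\inf_x\sW_v(x)>0$. Existence of some $\widetilde w=\widetilde w(\delta,T)\in\cA$ satisfying~\eqref{def-d-opt} is immediate from Proposition~\ref{prop-var-cost} applied to the running cost $r^v$ (using $J(x,v)[r]\leq\beta$) together with the definition of supremum, so the substantive content of the lemma lies in the uniform-in-$T$ bound~\eqref{eq-rec}.

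Setting $\phi_v:=\log\sW_v$, which is bounded below by some $m_v>-\infty$ since $\inf_x\sW_v(x)>0$, the eigenequation rewrites as $\Lg^v\phi_v+\tfrac12\|\Sigma\transp\nabla\phi_v\|^2 = \lambda^*-\veps_0 h^v$. For any $w\in\cA$, Young's inequality applied to the cross-term in the extended generator, $|(\Sigma w)\cdot\nabla\phi_v|\leq\tfrac12\|\Sigma\transp\nabla\phi_v\|^2+\tfrac12\|w\|^2$, yields
\[
\widehat\Lg^{v,w}\phi_v(x)\;\leq\; \lambda^*-\veps_0 h^v(x)+\tfrac12\|w\|^2.
\]
Applying It\^o's formula to $\phi_v(Z_{T\wedge\tau_R})$, using $\phi_v\geq m_v$, letting $R\to\infty$ via Fatou's lemma, and dividing by $T$ then gives, for every $w\in\cA$,
\[
\veps_0\cdot\tfrac{1}{T}\E_x^{v,w}\Big[\int_0^T h^v(Z_t)\D t\Big]\;\leq\; \lambda^*+\tfrac{1}{2T}\E_x^{v,w}\Big[\int_0^T\|w_t\|^2\D t\Big]+\tfrac{\phi_v(x)-m_v}{T},
\]
so the task reduces to bounding $B:=\limsup_{T\to\infty}\tfrac{1}{T}\E_x^{v,\widetilde w}[\int_0^T\|\widetilde w_t\|^2\D t]$.

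To control $B$, I would combine three ingredients: the $\delta$-optimality relation $A-B/2=\sup_{w\in\cA}\Phi_T(w)-O(\delta)$, where $A:=\limsup_{T\to\infty}\tfrac1T\E[\int_0^T r^v(Z_t)\D t]$ and $\Phi_T(w)$ denotes the integrand on the right-hand side of~\eqref{eqn-erg-cont-var-rep}; the asymptotic identity $\sup_{w\in\cA}\Phi_T(w)\to\Lambda_v[r]\leq\beta$ from Proposition~\ref{prop-var-cost}; and the $\sV$-based Lyapunov inequality~\eqref{eq-lyap-extended} applied to $Z$ under $(v,\widetilde w)$ exactly as in the proof of Lemma~\ref{lem-comp-X-pert}, which delivers $\tfrac1T\E[\int h^v(Z_t)\D t]\leq O(1)+2(C_3+1)A+2B$. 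Using the pointwise bound $r^v\leq h^v$ from~\eqref{eq-inf-comp-1} to relate $A$ to $\tfrac1T\E[\int h^v(Z)\D t]$ and exploiting the strict inequalities $C_3<1$ and $\veps_0=(1-C_3)/8<1$, the plan is to close the resulting system of linear relations to deduce $B\leq M_1(\delta,\beta)$, whence~\eqref{eq-rec} follows with $M_3=M_3(\delta,\beta)$. The principal obstacle is precisely closing this self-consistent system, as the $\sW_v$-derived inequality alone yields only a lower bound on $B$ once $A$ is eliminated; the complementary upper estimate must come from the $\sV$-based bound combined carefully with the $\delta$-optimality equality. Once~\eqref{eq-rec} is in hand, tightness of the MEMs of $Z$ under $(v,\widetilde w)$ follows at once from Markov's inequality and the inf-compactness of $h^v$ established in Lemma~\ref{lem-inf-comp}: for any $R>0$ the MEM mass outside $B_R$ is bounded by $M_3/\inf_{\|x\|>R}h(x,v(x))$, which vanishes as $R\to\infty$.
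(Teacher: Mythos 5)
Your proposal has a genuine gap at precisely the step you flag as ``the principal obstacle'': the self-consistent system of estimates you set up does \emph{not} close, and the mechanism you propose for closing it cannot work. Write $H$, $A$, $B$ for $\limsup_T \tfrac1T\E_x^{v,\widetilde w}[\int_0^T h^v(Z_t)\D t]$, $\limsup_T \tfrac1T\E_x^{v,\widetilde w}[\int_0^T r^v(Z_t)\D t]$, and $\limsup_T \tfrac1T\E_x^{v,\widetilde w}[\int_0^T\|\widetilde w_t\|^2\D t]$ respectively. Your $\sW_v$--It\^o estimate (Young's with $\gamma=1$) gives $\veps_0 H\leq \lambda^*+\tfrac12 B$; your $\sV$--It\^o estimate from~\eqref{eq-lyap-extended} gives $\tfrac12 H\leq \text{const}+(C_3+1)A+B$; the $\delta$-optimality gives $\tfrac12 B\leq A+\delta$; and $r\leq h$ gives $A\leq H$. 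After eliminating $A$ and $B$, the two Lyapunov relations become $\veps_0 H\leq \lambda^*+H+\delta$ and $\tfrac12 H\leq \text{const}+(C_3+3)H+2\delta$ respectively. Since $\veps_0<1$ and $\tfrac12<C_3+3$, each of these is vacuous, and no nonnegative linear combination $\alpha\,(\sW_v\text{-relation})+\beta\,(\sV\text{-relation})$ can produce an upper bound on $H$, because that would require $\alpha(\veps_0-1)>\beta(C_3+\tfrac52)$, impossible when $\alpha,\beta\geq 0$ are not both zero. So the ``complementary upper estimate'' you expect from the $\sV$-bound simply cannot be extracted.

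The paper's proof avoids this by doing something you do not consider: it does not apply Young's inequality with $\gamma=1$ and then try to shore things up with $\sV$. Instead it (i) invokes the scaling invariance~\eqref{eq-equiv-cost} to parametrize the $\delta$-optimal auxiliary control as $\widetilde w=2\veps_0^{-1}w^*$, which rewrites the $\delta$-optimality relation with a large weight $\tfrac{2}{\veps_0^2}$ in front of $\|w^*\|^2$, and (ii) applies Young's inequality with the small parameter $\gamma=\veps_0/2$ in the $\sW_v$-Lyapunov estimate. The interplay between the $\veps_0$ in the $\sW_v$ eigenequation, the Young's parameter $\veps_0/2$, and the scaling $2\veps_0^{-1}$ is exactly what is tuned so that, after substituting the $\delta$-optimality relation and using $r\leq h$ once, a positive multiple of $H$ survives on the left. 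The $\sV$-based Lyapunov inequality plays no role in the proof of~\eqref{eq-rec}; the whole burden is carried by the $\sW_v$-estimate after the rescaling. Your proposal thus misses the essential idea, and the fallback you propose in its place demonstrably fails.
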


		\begin{proof}  
		 Fix $v\in {\Usm^{*,\beta}}$ and let $\sW_v$ be the inf-compact function from Corollary~\ref{cor-stable}.  Writing $\widetilde \sW_v(x)\doteq \log \sW_v(x)$, we can easily see that 
		\begin{align*}
		\Lg^v\sW_v(x)=\Lg^ve^{\widetilde \sW_v(x)}= e^{\widetilde \sW_v(x)}\Lg^v \widetilde \sW_v(x) + \frac{1}{2}e^{\widetilde \sW_v(x)}\| \Sigma(x)\transp\nabla \widetilde \sW_v(x)\|^2\,.
		\end{align*}
		From here, using the definition of $\widehat \Lg^{u,w}$ and Corollary~\ref{cor-stable}, we have
		$$ \widehat \Lg^{v,w} \widetilde \sW_v(x) + \frac{1}{2}\| \Sigma(x)\transp\nabla \widetilde \sW_v(x)\|^2= \big(\lambda^*_v[\veps_0 h^v]- \veps_0h^v(x)\big)+ (\Sigma(x)w)\cdot \nabla\widetilde \sW_v(x)\,,$$
		for $w\in \RR^d.$
		Using the well-known inequality: $|x\cdot y|\leq \frac{\gamma}{2} \|x\|^2 + \frac{1}{2\gamma} \|y\|^2$, for $x,y\in \RR^d$ and $\gamma>0$, we have
		\begin{align*}
		\widehat \Lg^{v,w}{ \widetilde \sW_v}(x)&\leq \big(\lambda^*_v[\veps_0 h^v]- \veps_0h^v(x)\big) - \frac{(1-\gamma)}{2}\|\Sigma(x)\transp\nabla \widetilde \sW_v(x)\|^2   + \frac{1}{2\gamma} \|w\|^2\, .
		\end{align*}
		Now fix $\delta>0$ and $\gamma\doteq \frac{\veps_0}{2}$. From~\eqref{eq-equiv-cost}, we can choose $w^*=w^*(\delta,T)\in \cA$ such that 
		 $$ \sup_{w\in\cA} \E_x^{v,w}\Bigg[ \frac{1}{T}\int_0^T \Big( r^v\big(Z_t\big) - \frac{1}{2}\|w_t\|^2\Big)\D t\Bigg]\leq \E_x^{v,2\veps_0^{-1} w^*}\Bigg[ \frac{1}{T}\int_0^T \Big( r^v\big(Z_t\big) - \frac{2}{\veps_0^2}\|w^*_t\|^2\Big)\D t\Bigg] +\delta\,.$$
From the above display, we immediately have (along a subsequence  again denoted by $T$)
\begin{align}\label{eq-lem-comp-X-1} \frac{2}{\veps_0^2T}\E_x^{v,2\veps_0^{-1} w^*}\Big[ \int_0^T \|w^*_t\|^2\D t\Big] \leq  \frac{1}{T}\E_x^{v,2\veps_0^{-1} w^*}\Big[\int_0^T  r^v\big(Z_t\big)\D t\Big] -\Lambda_v[r]+ {\delta}\,.\end{align}		 
		Applying It{\^o}-Krylov's formula to $\widetilde \sW_v(Z_{T\wedge \tau_R})$, we have 
		\begin{align}\label{eq-comp-1-x2}
		&\E_x^{v,2\veps_0^{-1}w^{*}}\Big[\widetilde \sW_v(Z_{T\wedge \tau_R})\Big] \nonumber\\
		& = \widetilde \sW_v(x) + \E_x^{v,2\veps_0^{-1}w^{*}}\Big[ \int_0^{T\wedge \tau_R} \Big(\lambda^*_v[\veps_0 h^v] -\veps_0 h^v(Z_t)-\frac{(1-\frac{\veps_0}{2})}{2}\|\Sigma(Z_t)\transp\nabla\widetilde \sW_v(Z_{t})\|^2   + \frac{1}{\veps_0}\|w^{*}_t\|^2\Big)\D t\Big]\,.
		\end{align}
		Since $ K_1\doteq \inf_{x\in \RR^d}\widetilde \sW_v(x)>-\infty$, dividing by $T$ and taking $R\to\infty$, we have
		\begin{align*}
		\frac{\veps_0}{T}&\E_x^{v,2\veps_0^{-1}w^{*}}\Big[\int_0^T h^v(Z_t)\D t\Big]  + \frac{(1-\frac{\veps_0}{2})}{2T} \E_x^{v,2\veps_0^{-1}w^{*}}\Big[\int_0^T\|\Sigma(Z_t)\transp\nabla\widetilde \sW_v(Z_{t})\|^2\D t \Big]\\\
		&\leq \lambda^*_v[\veps_0h^v]+\frac{\widetilde \sW_v(x)-K_1}{T}  +  \frac{1}{\veps_0T}
		\E_x^{v,2\veps_0^{-1}w^{*}}\Big[ \int_0^T \|w^{*}_t\|^2\D t \Big]\,. 
		\end{align*}
		Substituting~\eqref{eq-lem-comp-X-1}, we have
		\begin{align*}
		\frac{\veps_0}{T}&\E_x^{v,2\veps_0^{-1}w^{*}}\Big[\int_0^T h^v(Z_t)\D t\Big]  + \frac{(1-\frac{\veps_0}{2})}{2T} \E_x^{v,2\veps_0^{-1}w^{*}}\Big[\int_0^T\|\Sigma(Z_t)\transp\nabla\widetilde \sW_v(Z_{t})\|^2\D t \Big]\\\
		&\leq \lambda^*_v[\veps_0h^v]+\frac{\widetilde \sW_v(x)-K_1}{T}   + \frac{\veps_0 }{2T}\E_x^{v,2\veps_0^{-1} w^*}\Big[\int_0^T  r^v\big(Z_t\big)\D t\Big] -\frac{\veps_0}{2} \Lambda_v[r]+ {\frac{\veps_0 \delta}{2}}\,. 
		\end{align*}
		Using the fact that $r\leq h$, we can get the following from the last display:
		{\begin{align*}
		\frac{\veps_0}{2T}&\E_x^{v,2\veps_0^{-1}w^{*}}\Big[\int_0^T h^v(Z_t)\D t\Big]  + \frac{(1-\frac{\veps_0}{2})}{2T} \E_x^{v,2\veps_0^{-1}w^{*}}\Big[\int_0^T\|\Sigma(Z_t)\transp\nabla\widetilde \sW_v(Z_{t})\|^2\D t \Big]\\\
		&\leq \lambda^*_v[\veps_0h^v]+\frac{\widetilde \sW_v(x)-K_1}{T}    -\frac{\veps_0}{2} \Lambda_v[r]+ \frac{\veps_0 \delta}{2}\\
		&\leq \lambda^*_v[\veps_0h^v]+\frac{\widetilde \sW_v(x)-K_1}{T}   + \frac{\veps_0 \delta}{2}\\
		&\leq \Lambda_v[\veps_0h^v]+\frac{\widetilde \sW_v(x)-K_1}{T}   + \frac{\veps_0 \delta}{2}\\
		&\leq 2+ \max\big\{ C_1\wedge C_2, \beta \big\}+ \frac{\widetilde \sW_v(x)-K_1}{T}   + \frac{\veps_0 \delta}{2} \,.		\end{align*}}
		To get the fourth line, we use Corollary~\ref{cor-stable} and to get the last line we use Corollary~\ref{cor-inf-comp-finite}. Hence taking $T\to\infty$, we have the desired result with {$M_3= 2+ \max\big\{ C_1\wedge C_2, \beta \big\} + \frac{\veps_0 \delta}{2}$} and $\widetilde w=2\veps_0^{-1} w^*$.
				\end{proof}
				
From the previous lemma, we have the following immediate corollary. 		
		\begin{corollary}\label{cor-comp-w}
	For $\delta>0$ and $v\in {\Usm^{*,\beta}}$, let $\widetilde w\in \cA$ be as in the hypothesis of Lemma~\ref{lem-comp-X}. Then, 
	$$\limsup_{T\to\infty} \frac{1}{2T}\E_x^{v,\widetilde w}\Big[\int_0^T \|\widetilde w_t\|^2 \D t\Big]\leq M_3+\delta\,.$$
	{Here, $M_3$ is the constant from Lemma~\ref{lem-comp-X}.} In particular,  the family of MEMs of the process $\widetilde w$ is tight. 
	\end{corollary}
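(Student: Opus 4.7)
The plan is to read off the $L^2$ bound on $\widetilde w$ directly from the near-optimality inequality~\eqref{def-d-opt} that defines $\widetilde w$, combined with the $h^v$-estimate already supplied by Lemma~\ref{lem-comp-X}. No subsequence argument or additional Lyapunov computation is needed.

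The first step is to plug the trivial auxiliary control $w\equiv 0\in\cA$ into the supremum on the left of~\eqref{def-d-opt}. Under $w\equiv 0$ the extended diffusion~\eqref{X-control} reduces to the original controlled diffusion $X$ under $v$, so by nonnegativity of $r$,
\[\sup_{w\in\cA}\E_x^{v,w}\Big[\tfrac{1}{T}\int_0^T\big(r^v(Z_t)-\tfrac{1}{2}\|w_t\|^2\big)\D t\Big] \;\geq\; \tfrac{1}{T}\E_x^v\Big[\int_0^T r^v(X_t)\D t\Big] \;\geq\; 0.\]
Inserting this into~\eqref{def-d-opt} and rearranging yields, for every $T>0$,
\[\tfrac{1}{2T}\E_x^{v,\widetilde w}\Big[\int_0^T\|\widetilde w_t\|^2\D t\Big] \;\leq\; \tfrac{1}{T}\E_x^{v,\widetilde w}\Big[\int_0^T r^v(Z_t)\D t\Big] + \delta.\]
By~\eqref{eq-inf-comp-1} one has $r\leq h$, and Lemma~\ref{lem-comp-X} gives $\limsup_{T\to\infty}\tfrac{1}{T}\E_x^{v,\widetilde w}\int_0^T h^v(Z_t)\D t \leq M_3$. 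Taking $\limsup_{T\to\infty}$ above yields the claimed bound $M_3+\delta$.

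For tightness of the MEM family $\{\pi_T\}$ of $\widetilde w$, recall that $\pi_T(A)=\tfrac{1}{T}\E_x^{v,\widetilde w}\int_0^T \Ind_A(\widetilde w_t)\D t$ for Borel $A\subset\RR^d$. Markov's inequality combined with the $L^2$-bound just established yields, for every $K>0$,
\[\pi_T\big(\{w\in\RR^d:\|w\|>K\}\big) \;\leq\; \frac{1}{K^2}\int_{\RR^d}\|w\|^2\pi_T(\D w) \;=\; \frac{1}{K^2 T}\E_x^{v,\widetilde w}\Big[\int_0^T\|\widetilde w_t\|^2\D t\Big].\]
Choosing $K$ large makes the right side uniformly small for $T$ large enough, which is exactly tightness. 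No substantive obstacle arises; the only noteworthy point is that substituting $w\equiv 0$ bypasses the ``along a subsequence'' bookkeeping used inside the proof of Lemma~\ref{lem-comp-X}, at the harmless cost of discarding the nonnegative term $\Lambda_v[r]$.
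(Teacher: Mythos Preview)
Your proof is correct and follows essentially the same route as the paper's: both extract from the near-optimality inequality~\eqref{def-d-opt} the bound $\tfrac{1}{2T}\E_x^{v,\widetilde w}\int_0^T\|\widetilde w_t\|^2\,\D t \le \tfrac{1}{T}\E_x^{v,\widetilde w}\int_0^T r^v(Z_t)\,\D t + \delta$, then apply $r\le h$ and the estimate~\eqref{eq-rec}. The paper reaches this inequality by quoting~\eqref{eq-lem-comp-X-1} from the proof of Lemma~\ref{lem-comp-X} (which carries the extra term $-\Lambda_v[r]\le 0$ and holds along a subsequence), whereas you obtain it for every $T$ by the cleaner device of inserting $w\equiv 0$ into the supremum; this is a harmless simplification and, as you note, sidesteps the subsequence bookkeeping.
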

	\begin{proof} Fix $\delta>0$ and $v\in {\Uadm^{*,\beta}}$. From~\eqref{eq-lem-comp-X-1} and the fact that $r\leq h$ and using Lemma~\ref{lem-comp-X}, we have 
	\begin{align*} \frac{1}{2T}\E_x^{v,\widetilde  w}\Big[ \int_0^T \|\widetilde w_t\|^2\D t\Big] &\leq  \frac{1}{T}\E_x^{{v},\widetilde  w}\Big[\int_0^T  r\big(Z_t, {v(Z_t)}\big)\D t\Big] + \delta\\
	&\leq M_3 +\delta\,.
	\end{align*}
	 This completes the proof.	\end{proof}
	 The following gives us the estimates analogous to those in Lemma~\ref{lem-comp-X} and Corollary~\ref{cor-comp-w}, but for $r^\veps$ and importantly, these estimates are uniform in $\veps$. 
	 \begin{lemma}\label{lem-comp-X-w-pert}
	 For every $\delta>0$ and $v\in {\Usm^{*,\beta}}$,  there exists a {$\widetilde w^\veps=\widetilde w^\veps(T,\delta)\in \cA$} such that 
	\begin{align}\label{def-d-opt-pert}\sup_{w\in\cA} \E_x^{v,w}\Bigg[ \frac{1}{T}\int_0^T \Big( r^{\veps,v}\big(Z_t\big) - \frac{1}{2}\|w_t\|^2\Big)\D t\Bigg]\leq \E_x^{v,{\widetilde w^\veps}}\Bigg[ \frac{1}{T}\int_0^T \Big( r^{\veps,v}\big(Z_t\big) - \frac{1}{2}\|{\widetilde w^\veps_t}\|^2\Big)\D t\Bigg]+\delta\end{align}
	 	 and a constant {$M_4=M_4(\delta,\beta)>0$} such that
		\begin{align*}
		\sup_{0\leq\veps<\veps_0}\limsup_{T\to\infty} \frac{1}{T}\E_x^{v,{\widetilde w^\veps}}\Big[\int_0^T  h^v(Z_t)\D t\Big]&\leq {M_4}\,\\
		\sup_{0\leq\veps<\veps_0}\limsup_{T\to\infty} \frac{1}{2T}\E_x^{v,{\widetilde w^\veps}}\Big[\int_0^T \|{\widetilde w^\veps_t}\|^2 \D t\Big]&\leq {M_4}+\delta\,.
		\end{align*}
	 \end{lemma}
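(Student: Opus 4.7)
The plan is to mimic the proofs of \cref{lem-comp-X} and \cref{cor-comp-w} essentially verbatim with $r$ replaced by $r^\veps$, and to verify that every constant that appears can be chosen independently of $\veps\in[0,\veps_0)$. Three observations underpin this: (a) the Foster--Lyapunov pair $(\sW_v,\lambda^*_v[\veps_0 h^v])$ furnished by \cref{cor-stable} depends on $v$ but not on $\veps$, and by \cref{cor-inf-comp-finite} satisfies $\lambda^*_v[\veps_0 h^v]\le 2+\max\{C_1\wedge C_2,\beta\}$; (b) $\Lambda_v[r^\veps]\le 2+\max\{C_1\wedge C_2,\beta\}$ uniformly in $\veps$ by \cref{lem-pert-well-defined}; and (c) $r^\veps=(1-\veps/\veps_0)r+\veps h\le h$, since $\veps_0<1$ and $r\le h$ by~\eqref{eq-inf-comp-1}. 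Together these make all emerging constants $\veps$-free.

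Fixing $v\in\Usm^{*,\beta}$ and $\delta>0$, I would first set $\widetilde\sW_v\doteq\log\sW_v$ and, starting from the multiplicative eigenvalue equation \eqref{eq-h-wv}, derive by Young's inequality with $\gamma=\veps_0/2$ (exactly as in the chain leading to~\eqref{eq-lyap-extended})
\begin{equation*}
\widehat\Lg^{v,w}\widetilde\sW_v(x)\;\le\;\lambda^*_v[\veps_0 h^v]-\veps_0 h^v(x)-\frac{1-\veps_0/2}{2}\|\Sigma(x)\transp\nabla\widetilde\sW_v(x)\|^2+\frac{1}{\veps_0}\|w\|^2.
\end{equation*}
Next, using the scaling invariance~\eqref{eq-equiv-cost}, I would pick $w^{*,\veps}\in\cA$ that is $\delta$-optimal for the rescaled variational problem with quadratic penalty $\frac{2}{\veps_0^2}\|w\|^2$ and define $\widetilde w^\veps\doteq 2\veps_0^{-1}w^{*,\veps}$; this choice realizes~\eqref{def-d-opt-pert} and along a subsequence in $T$ yields the analogue of~\eqref{eq-lem-comp-X-1},
\begin{equation*}
\frac{2}{\veps_0^2 T}\E_x^{v,\widetilde w^\veps}\!\left[\int_0^T\|w^{*,\veps}_t\|^2\D t\right]\;\le\;\frac{1}{T}\E_x^{v,\widetilde w^\veps}\!\left[\int_0^T r^{\veps,v}(Z_t)\D t\right]-\Lambda_v[r^\veps]+\delta.
\end{equation*}

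Applying It\^o--Krylov to $\widetilde\sW_v(Z_{T\wedge\tau_R})$ under $(v,\widetilde w^\veps)$, dividing by $T$, letting $R\to\infty$ using $\inf_{x\in\RR^d}\widetilde\sW_v(x)>-\infty$, and substituting the two displays above together with the domination $r^\veps\le h$ (which absorbs the positive running-cost term into the $\veps_0 h^v$ term) yields, after discarding the nonnegative gradient integral,
\begin{equation*}
\limsup_{T\to\infty}\frac{\veps_0}{2T}\E_x^{v,\widetilde w^\veps}\!\left[\int_0^T h^v(Z_t)\D t\right]\;\le\;\lambda^*_v[\veps_0 h^v]+\frac{\veps_0\delta}{2}\;\le\;2+\max\{C_1\wedge C_2,\beta\}+\frac{\veps_0\delta}{2},
\end{equation*}
which supplies the first bound with $M_4\doteq 2\veps_0^{-1}\bigl(2+\max\{C_1\wedge C_2,\beta\}\bigr)+\delta$, uniformly in $\veps$. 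The second bound on $(2T)^{-1}\E_x^{v,\widetilde w^\veps}[\int_0^T\|\widetilde w^\veps_t\|^2\D t]$ follows by feeding the first bound, together with $r^\veps\le h$ and $\Lambda_v[r^\veps]\le\beta$, into the $\delta$-optimality inequality displayed above (this is the analogue of \cref{cor-comp-w}). Tightness of the MEMs of $Z$ and of $\widetilde w^\veps$ is then immediate from Markov's inequality and the inf-compactness of $h$ on $\RR^d\times\bU$.

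The principal obstacle is really just the bookkeeping needed to confirm $\veps$-uniformity of every constant: the eigenpair $(\sW_v,\lambda^*_v[\veps_0 h^v])$ is manifestly $\veps$-independent, the variational cost bound $\Lambda_v[r^\veps]\le\beta$ from \cref{lem-pert-well-defined} is $\veps$-uniform by construction, and the domination $r^\veps\le h$ holds because $\veps_0<1$. The $\veps=0$ case reduces exactly to \cref{lem-comp-X} and \cref{cor-comp-w}, so no separate treatment is needed.
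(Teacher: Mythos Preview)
Your proposal is correct and takes essentially the same approach as the paper, which explicitly omits the proof and says it follows using the same arguments as \cref{lem-comp-X} and \cref{cor-comp-w}. Your three observations (a)--(c) are exactly the bookkeeping needed to verify that every constant emerging from those arguments is $\veps$-free: the Lyapunov pair $(\sW_v,\lambda^*_v[\veps_0 h^v])$ from \cref{cor-stable} is fixed once $v$ is, the uniform cost bound comes from \cref{lem-pert-well-defined}, and the domination $r^\veps\le h$ (valid because $\veps_0<1$) lets you absorb the running-cost term into the $h^v$ term. One minor cross-reference slip: the chain you cite as ``leading to~\eqref{eq-lyap-extended}'' actually uses $\gamma=1/2$ and lives in the proof of \cref{lem-comp-X-pert}; the $\gamma=\veps_0/2$ computation you want is the one in the proof of \cref{lem-comp-X} itself.
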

	We omit the proof as it follows using the same arguments as those in the proofs of Lemma~\ref{lem-comp-X} and Corollary~\ref{cor-comp-w}.

	 \begin{lemma}\label{lem-exp-frv}
		The following relation holds for $\frV= \log \sV$, $v\in {\Usm^{*,\beta}}$ and $\widetilde w\in \cA$ as in the Lemma~\ref{lem-comp-X}:
		$$ \limsup_{T\to\infty}\frac{1}{T}\E_x^{v,\widetilde w}\Big[\frV(Z_T)\Big]\leq {M_5}\,,$$
		for some positive constant {$M_5=M_5(\delta,\beta)$}.
		\end{lemma}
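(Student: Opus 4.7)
The plan is to reuse the extended drift inequality \eqref{eq-lyap-extended} for $\frV = \log \sV$ derived in the proof of Lemma~\ref{lem-comp-X-pert}, namely
\begin{equation*}
\widehat \Lg^{u,w}\frV(x)\leq C_1\wedge C_2 - \bar h(x,u)\Ind_{\cH^c}(x,u) + C_3 r(x,u) - \tfrac{1}{4}\|\Sigma(x)\transp \nabla \frV(x)\|^2 + \|w\|^2,
\end{equation*}
and specialize it to $u = v(x)$ with $v \in \Usm^{*,\beta}$ and $w = \widetilde w_t$ from Lemma~\ref{lem-comp-X}. Noting that $\sV\ge 1$ implies $\frV \ge 0$, I would apply It\^o's formula to $\frV(Z_{T\wedge \tau_R})$ and drop the two non-positive terms on the right to obtain
\begin{equation*}
\E_x^{v,\widetilde w}\bigl[\frV(Z_{T\wedge \tau_R})\bigr] \le \frV(x) + \E_x^{v,\widetilde w}\Big[\int_0^{T\wedge\tau_R}\bigl(C_1\wedge C_2 + C_3 r^v(Z_t) + \|\widetilde w_t\|^2\bigr)\D t\Big].
\end{equation*}

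Next, I would pass $R\to\infty$: on the left, since $\frV\ge 0$ and $Z_{T\wedge\tau_R}\to Z_T$ a.s., Fatou's lemma gives $\E_x^{v,\widetilde w}[\frV(Z_T)] \le \liminf_R \E_x^{v,\widetilde w}[\frV(Z_{T\wedge\tau_R})]$; on the right, the integrand is non-negative, so monotone convergence applies. This yields
\begin{equation*}
\frac{1}{T}\E_x^{v,\widetilde w}[\frV(Z_T)] \le \frac{\frV(x)}{T} + (C_1\wedge C_2) + \frac{C_3}{T}\E_x^{v,\widetilde w}\Big[\int_0^T r^v(Z_t)\D t\Big] + \frac{1}{T}\E_x^{v,\widetilde w}\Big[\int_0^T \|\widetilde w_t\|^2\D t\Big].
\end{equation*}

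Finally, I would take $\limsup_{T\to\infty}$ and apply the estimates already established: Lemma~\ref{lem-comp-X} together with $r\le h$ (from \eqref{eq-inf-comp-1}) bounds the running-cost average by $M_3$, while Corollary~\ref{cor-comp-w} bounds the quadratic-control average by $2(M_3 + \delta)$. Setting $M_5 \doteq C_1\wedge C_2 + C_3 M_3 + 2(M_3 + \delta)$ completes the proof. There is no real obstacle here; the lemma is a bookkeeping consequence of the extended Foster--Lyapunov inequality together with the two \emph{a priori} estimates obtained in Lemma~\ref{lem-comp-X} and Corollary~\ref{cor-comp-w}. The only technical point worth stating explicitly is the use of Fatou on the left and monotone convergence on the right when sending the localizing radius $R\to\infty$, which is what makes the argument go through without any integrability hypothesis on $\frV(Z_T)$.
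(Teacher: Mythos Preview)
Your proof is correct and follows essentially the same approach as the paper: both start from the drift inequality \eqref{eq-lyap-extended} (equivalently, the Itô estimate \eqref{eq-comp-1-x}), drop the non-positive terms, divide by $T$, localize via $\tau_R$, and then invoke Lemma~\ref{lem-comp-X} (together with $r\le h$) and Corollary~\ref{cor-comp-w} to bound the remaining averages. You are somewhat more explicit than the paper about the passage $R\to\infty$ (Fatou on the left, monotone convergence on the right) and about the value of $M_5$, but the argument is the same.
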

		\begin{proof} {From the proof of Lemma~\ref{lem-comp-X}, we recall that $\widetilde w=2\veps_0^{-1} w^*$.}The proof begins by considering~\eqref{eq-comp-1-x}: 
		\begin{align*}
		\E_x^{v,\widetilde w}\Big[\frV(Z_{T\wedge \tau_R})\Big]& \le \frV(x) + \E_x^{v,\widetilde w}\Big[ \int_0^{T\wedge \tau_R} \Big(C_1\wedge C_2-{\bar h}(Z_{t}, v(Z_t))\Ind_{\cH^c}(Z_{t}, v(Z_t))\\
		&+C_3{r(Z_{t},v(Z_t))}-\frac{1}{4}\|\Sigma(Z_t)\transp\nabla \frV(Z_{t})\|^2   +  \|\widetilde w_t\|^2\Big)\D t\Big]\\
		&\leq  \frV(x) + \E_x^{v,\widetilde w}\Big[ \int_0^{T\wedge \tau_R} \Big(C_1\wedge C_2+C_3{r(Z_{t},v(Z_t))}   +  \|\widetilde w_t\|^2\Big)\D t\Big]\,.
		\end{align*}
		{Using Lemma~\ref{lem-comp-X} and Corollary~\ref{cor-comp-w},} dividing by $T$ and taking $R\to\infty$, we have
		\begin{align*}
		\limsup_{T\to\infty}\frac{1}{T}&\E_x^{v,\widetilde w}\Big[\frV(Z_T)\Big] \leq  {M_5}\,,
		\end{align*}for some constant {$M_5=M_5(\delta,\beta)>0$}.  	
		\end{proof}
		\begin{remark}\label{rem-pert-nearly-opt} 
		Lemma~\ref{lem-comp-X} and Corollary~\ref{cor-comp-w} imply that for any $v\in {\Usm^{*,\beta}}$ and $\delta>0$, there exists  $w\in \cA$ that is $\delta$--optimal for ${J(x,v)[r ]}$ such that  the family of  MEMs of the joint process {$(Z, v(Z), w)$ with $v(Z)\doteq v(Z_t)$} is tight.  
		\end{remark}	
				
			We end this section by introducing some notation that will be frequently used and results in concise expressions. By now, it is evident that for any $U\in {\Uadm^{*,\beta}}$, we are interested in analyzing 
		$$\limsup_{T\to\infty} \E_x^{v,w}\Bigg[ \frac{1}{T}\int_0^T \Big( r^\veps (Z_t,U_t) - \frac{1}{2}\|w_t\|^2\Big)\D t\Bigg] \text{ as $w$ varies over $\cA$}\,. $$
Therefore, we define
\begin{align*}
J(x,U,w,T)[r^\veps]&\doteq \E_x^{U,w}\Bigg[ \frac{1}{T}\int_0^T \Big( r^\veps(Z_t,U_t) - \frac{1}{2}\|w_t\|^2\Big)\D t\Bigg]\,,\\
\Lambda_{x,v,w,T}[r^\veps]&\doteq  \E_x^{v,w}\Bigg[ \frac{1}{T}\int_0^T \Big( r^{\veps,v}\big(Z_t\big) - \frac{1}{2}\|w_t\|^2\Big)\D t\Bigg] \,, 
\end{align*}	
and
\[
 J(x,U,w)[r^\veps] \doteq  \limsup_{T\to\infty} J(x,U,w,T)[r^\veps]\quad \text{and}\quad \Lambda_{x,v,w}[r^\veps] \doteq \limsup_{T\to\infty} \Lambda_{x,v,w,T}[r^\veps]\,. 
\]
Here, the process $Z$ is as defined in~\eqref{X-control}. Similarly, we define $J(x,U,w)[r]$ and $\Lambda_{x,v,w}[r]$. However, we drop $x$ (as it turns out to be irrelevant) and write $J(U,w)[r]$ and $\Lambda_{v,w}[r]$. At a few places (see~\eqref{eq-inst-1} for instance), we encounter expressions that are similar to 
$$  \limsup_{T\to\infty}\sup_{w\in \cA} \frac{1}{T}\E_x^{v,w}\Big[\int_0^T\Big(r^v (Z_t)-\frac{1}{2}\|w_t\|^2\Big)\D t\Big]$$
where, the limit superior and supremum operations appear in the order given above. In such instances, we explicitly give the full expression to avoid any confusion.

\medskip

\section {Analysis of the perturbed ERSC problem}\label{sec-proof-pert}
In this section, we state and prove all the necessary results for the perturbed ERSC problem that are later useful in studying the limiting behavior  as $\veps \to 0$ in Section~\ref{sec-proof-main}.  {For $v\in {\Usm^{*,\beta}}$,  we show that $\Lambda_v[r^\veps]$ can be} represented as the optimal cost of the associated CEC problem for the extended diffusion $Z$.  To that end, we recall the notion of ergodic occupation measure from \cite[Section 3.2.1]{arapostathis2012ergodic}. A measure $\pi_v\in \calP(\RR^d\times \RR^d)$ is said to be an ergodic occupation measure associated with $v\in {\Usm^{*,\beta}}$, if for every $f\in \cC^2_c(\RR^d)$, the following holds.
$$ \int_{\RR^d\times \RR^d}\widehat \Lg^{v,w}f(x) \D\pi_v(x,w)=0\,.$$
The set of ergodic occupation measures associated with $v\in {\Usm^{*,\beta}}$ is denoted by $\frG_v$. For $l>0$, $\frG^l_v$ be the set of ergodic occupation measures $\pi_v(\D x,\D w)=\eta_v (\D x)\mu(\D w|x)$ such that  
{\begin{enumerate}
\item [(i)] for every $x\in B_l^c$,  $\mu(\D w|x)\equiv \delta_{0}(\D w)$ with $\delta_z$ being the Dirac delta measure at $z\in \RR^d$,
\item [(ii)] $\int_{\RR^d}\|w\|\mu(\D w|x)\leq l$, for every $x\in B_l$.
\end{enumerate}}
		\begin{lemma}\label{lem-cec-cost}
		For $v\in {\Usm^{*,\beta}}$, the following statements hold.
		\begin{enumerate}
		\item[(i)] For every $\delta>0$ and $L>0$, there exists $l=l(L,\delta,{M_4})>0$ such that 
		$$ \Lambda_v[r^\veps] \leq \sup_{\pi\in \frG^v_l}\int_{\RR^d\times \RR^d} \Big(r^{\veps,v}(x)\wedge L -\frac{1}{2}\|w\|^2\Big)\D \pi(x,w) +\delta\,.$$
		
		\item [(ii)] $$ \Lambda_v[r^\veps] \leq \sup_{\pi\in \frG^v}\int_{\RR^d\times \RR^d} \Big(r^{\veps,v}(x) -\frac{1}{2}\|w\|^2\Big)\D \pi(x,w)\,.$$
		\end{enumerate}
		\end{lemma}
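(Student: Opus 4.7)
Part (ii) follows from (i) upon letting $L\to\infty$ and $l\to\infty$ with $\delta\to 0$, since $\frG^v_l\subset\frG^v$ and $r^{\veps,v}\wedge L\leq r^{\veps,v}$; so I focus on~(i). Start with Proposition~\ref{prop-var-cost} at $U=v$ and invoke Lemma~\ref{lem-comp-X-w-pert} with tolerance $\delta/4$ to extract $T_n\to\infty$ and $\widetilde w_n^\veps\in\cA$ satisfying $\Lambda_v[r^\veps]\leq \Lambda_{x,v,\widetilde w_n^\veps, T_n}[r^\veps]+\delta/4$, with uniform moment bounds $\sup_n T_n^{-1}\E\bigl[\int_0^{T_n}(h^v(Z_t)+\|\widetilde w_{n,t}^\veps\|^2)\D t\bigr]\leq M$ for some $M=M(\delta,\beta,M_4)$. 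Write $\pi_n\in\cP(\RR^d\times\RR^d)$ for the MEM of $(Z_t,\widetilde w_{n,t}^\veps)$ on $[0,T_n]$, so that $\Lambda_{x,v,\widetilde w_n^\veps,T_n}[r^\veps]=\int(r^{\veps,v}(y)-\tfrac12\|w\|^2)\D\pi_n(y,w)$. Inf-compactness of $h^v$ and of $w\mapsto\|w\|^2$, together with the moment bounds, forces tightness of $\{\pi_n\}$, so I extract a subsequential weak limit $\pi^*$.

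To see $\pi^*\in\frG^v$, apply It{\^o}'s formula to $\phi(Z_{T_n})$ for $\phi\in\cC_c^2(\RR^d)$: dividing by $T_n$ yields $\int\widehat\Lg^{v(y),w}\phi(y)\D\pi_n(y,w)=T_n^{-1}[\E\phi(Z_{T_n})-\phi(x)]\to 0$, and since the integrand $\Lg^v\phi+(\Sigma w)\cdot\nabla\phi$ is continuous, compactly supported in $y$, and at most linear in $\|w\|$, the $L^2$-bound on the $w$-marginal delivers uniform integrability and passes the limit to give $\int\widehat\Lg^{v,w}\phi\D\pi^*=0$. To upgrade $\pi^*$ to $\frG^v_l$, disintegrate $\pi^*=\eta^*(\D y)\mu^*(\D w|y)$ and define the truncated relaxed control $\mu^l(\D w|y)$ that coincides with the restriction of $\mu^*$ to $\{\|w\|\leq l\}$ (normalized) for $y\in B_l$ and equals $\delta_0$ for $y\in B_l^c$. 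Let $\eta^l$ be the invariant probability measure of the extended diffusion driven by $v$ and averaged drift $\bar w^l(y)=\int w\,\mu^l(\D w|y)$; its existence follows from the Foster--Lyapunov bound~\eqref{eq-lyap-extended} applied to $\frV=\log\sV$, since $\Sigma\bar w^l$ is bounded and compactly supported. Setting $\pi^{*,l}\doteq \eta^l\otimes\mu^l\in\frG^v_l$, the proof of~(i) reduces to the three inequalities: (a)~$\int(r^{\veps,v}-\tfrac12\|w\|^2)\D\pi^*\geq\Lambda_v[r^\veps]-\delta/4$; (b)~$\int(r^{\veps,v}-r^{\veps,v}\wedge L)\D\pi^*\leq \delta/4$ for $L$ large; and (c)~$\bigl|\int(r^{\veps,v}\wedge L-\tfrac12\|w\|^2)\D\pi^{*,l}-\int(r^{\veps,v}\wedge L-\tfrac12\|w\|^2)\D\pi^*\bigr|\leq\delta/2$ for $l=l(L,\delta,M_4)$ large enough.

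The key technical obstacle underpinning both~(a) and~(b) is uniform integrability of $r^{\veps,v}$ under the family $\{\pi_n\}$, for which the bare $L^1$ bound $\int h^v\D\pi_n\leq M$ is insufficient. This u.i.\ is established by combining the majorization $r^\veps\leq h$ (valid since $\veps<\veps_0<1$) with the exponential integrability in Lemma~\ref{lem-ui}, which yields $\sup_n\int r^{\veps,v}\Ind_{\{r^{\veps,v}>L\}}\D\pi_n\to 0$ as $L\to\infty$; inheriting this property through the weak limit gives~(b) for $\pi^*$, and yields $\lim_n\int r^{\veps,v}\D\pi_n=\int r^{\veps,v}\D\pi^*$, which combined with the lower semicontinuity of $\pi\mapsto\int\|w\|^2\D\pi$ gives~(a). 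The approximation~(c) follows from the weak continuity $\eta^l\to\eta^*$ and $\mu^l\to\mu^*$ as $l\to\infty$, both consequences of the Foster--Lyapunov stability of the truncated dynamics, together with the uniform boundedness of $r^{\veps,v}\wedge L$. Summing all the $\delta$-errors closes~(i).
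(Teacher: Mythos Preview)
Your proposal has two substantive gaps, both stemming from where you diverge from the paper's approach.

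\textbf{Gap 1: the uniform integrability claim.} You assert that uniform integrability of $r^{\veps,v}$ under the family $\{\pi_n\}$ follows from ``the majorization $r^\veps\leq h$ \dots\ with the exponential integrability in Lemma~\ref{lem-ui}.'' But Lemma~\ref{lem-ui} concerns the \emph{original} process $X$ under $U\in\Uadm^{*,\beta}$, not the extended process $Z$; the $\pi_n$ are MEMs of $(Z,\widetilde w_n^\veps)$. The only bound you actually have for $Z$ is the $L^1$ estimate $\int h^v\,\D\pi_n\leq M$ from Lemma~\ref{lem-comp-X-w-pert}, and an $L^1$ bound on $h^v$ does not yield uniform integrability of $r^{\veps,v}\leq h^v$. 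Without this u.i., step~(a) fails: you cannot pass from $\limsup_n\int r^{\veps,v}\,\D\pi_n$ to $\int r^{\veps,v}\,\D\pi^*$ in the direction you need. The paper avoids this entirely by invoking Corollary~\ref{cor-trunc-limit-L} (which \emph{does} use Lemma~\ref{lem-ui}, but on $X$) to truncate the cost at level $L$ \emph{before} applying the variational representation; the truncated integrand $r^{\veps,v}\wedge L$ is then bounded, and only lower semicontinuity of $\|w\|^2$ is needed to take the weak limit.

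\textbf{Gap 2: the construction of $\pi^{*,l}$.} Your route to $\frG^v_l$ is to truncate the disintegration $\mu^*$ and take $\eta^l$ to be the invariant measure of the dynamics with averaged drift $\bar w^l$. But the approximation~(c) hinges on $\eta^l\to\eta^*$, which you merely assert as a ``consequence of Foster--Lyapunov stability''; this continuity of invariant measures under drift perturbation requires a genuine argument (and uniform Lyapunov control on the family, which you have not established). Even granting weak convergence, the term $\int\|w\|^2\,\D\pi^{*,l}$ involves an unbounded integrand against a changing base measure $\eta^l$, so the bounded-continuous part of~(c) does not close. The paper instead modifies the \emph{control process}: it sets $\widetilde w^*_t\doteq w^*_t\Ind_{[0,\tau_l^*]}(t)$ with $\tau_l^*\doteq\inf\{t:\|w^*_t\|>l\text{ or }\|Z_t\|>l\}$. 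Then $\|\widetilde w^*\|\leq\|w^*\|$ pointwise (so the quadratic term only decreases), the bounded integrand $r^{\veps,v}\wedge L$ differs only on $[\tau_l^*,T]$, and the expected fraction of time past $\tau_l^*$ is small uniformly in $T$ by the tightness of Lemma~\ref{lem-comp-X-w-pert}. The limit measure of $(Z,\widetilde w^*)$ is then manifestly in $\frG^v_l$.

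In short, the paper's truncation-first-then-stop-the-control strategy is what makes both limits go through; your post-hoc u.i.\ and measure-level truncation do not.
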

		\begin{proof} Fix $\delta>0$. {From Corollary~\ref{cor-trunc-limit-L}, there exists $L= L(\delta)>0$ such that 
		\begin{align*}\Lambda_v[ r^\veps]&\leq  \limsup_{T\to\infty}\frac{1}{T}\log\E_x^{v}\left[\exp\left( \int_0^T r^{\veps,v}(X_t)\D t\right) \Ind_{[ 0,LT]}\Big(\int_0^T r^{\veps,v} (X_t)\D t\Big)	\right]+\frac{\delta}{3}\\
		&=  \limsup_{T\to\infty}\frac{1}{T}\log\E_x^{v}\left[\exp\left(\Big( \frac{1}{T}\int_0^T r^{\veps,v}(X_t)\D t\Big)\wedge L\right) \Ind_{[ 0,LT]}\Big(\int_0^T r^{\veps,v} (X_t)\D t\Big)	\right]+\frac{\delta}{3}\\
		&\leq \limsup_{T\to\infty}\frac{1}{T}\log\E_x^{v}\left[\exp\left(\Big(\frac{1}{T} \int_0^T r^{\veps,v}(X_t)\D t\Big)\wedge L\right) 	\right]+\frac{\delta}{3}\\
		&=\limsup_{T\to\infty}\sup_{w\in\cA} \E_x^{v,w}\Bigg[ \Big(\frac{1}{T}\int_0^T    r^{\veps,v} \big(Z_t\big)\D t\Big)\wedge L -\frac{1}{T}\int_0^T \frac{1}{2}\|w_t\|^2\D t\Bigg] +\frac{\delta}{3} \,.\end{align*}
	To get the last line, we use Proposition~\ref{prop-var-cost}. }
	Choose $w^*=w^*(\delta, T)\in \cA$ such that 
	\begin{align}\nonumber  \sup_{w\in\cA} &\E_x^{v,w}\Bigg[ \Big(\frac{1}{T}\int_0^T    r^{\veps,v} \big(Z_t\big)\D t\Big)\wedge L -\frac{1}{T}\int_0^T \frac{1}{2}\|w_t\|^2\D t\Bigg]\\\nonumber
	&\leq \E_x^{v,w^*}\Bigg[ \Big(\frac{1}{T}\int_0^T    r^{\veps,v} \big(Z_t\big)\D t\Big)\wedge L -\frac{1}{T}\int_0^T \frac{1}{2}\|w^*_t\|^2\D t\Bigg]+{ \frac{\delta}{3}}\\\label{eq-exp-2}
	&\leq \E_x^{v,w^*}\Bigg[ \frac{1}{T}\int_0^T  \Big(  r^{\veps,v} \big(Z_t\big)\wedge L -\frac{1}{2}\|w^*_t\|^2\Big)\D t\Bigg]+{ \frac{\delta}{3}}\,.\end{align}
	
	{ We now analyze the first term on the right hand side of the above display.}
	From Lemma~\ref{lem-comp-X-w-pert}, we can conclude that there exists an ergodic occupation measure $\pi^*\in \calP(\RR^d\times \RR^d)$ such that MEMs $\pi_T$ of $(Z_{[0,T]},w^*_{[0,T]})$ converge weakly to $\pi^*(\D x,\D w)= \eta^*_v(\D x)\mu^*(\D w|x)$, along a subsequence $T_k$. This means that 
	$$ \limsup_{T_k\to\infty} \E_x^{v,w^*}\Bigg[ \frac{1}{T_k}\int_0^{T_k} \Big(  r^{\veps,v} \big(Z_t\big)\wedge L - \frac{1}{2}\|w^*_t\|^2\Big)\D t\Bigg]\leq \int_{\RR^d\times \RR^d}\Big( r^{\veps,v}(x)\wedge L-\frac{1}{2}\|w\|^2\Big) \D \pi^*(x,w)\,.$$
	To summarize, we have shown that 
	\begin{align}\label{eq-occu-meas}
	 \Lambda_v[r^\veps] \leq  \int_{\RR^d\times \RR^d}\Big( r^{\veps,v}(x)\wedge L-\frac{1}{2}\|w\|^2\Big) \D \pi^*(x,w) +{\frac{2\delta}{3}}\,.
	\end{align}
	{ However, it is not clear if $\pi^*$ obtained above lies in $\frG^v_l$. Below we construct $\widetilde w^*\in \cA$ that is also nearly optimal and is such that the family of MEMs of $(Z_{[0,T]},w^*_{[0,T]})$ is tight and the limit points lie in $\frG^v_l$. To that end, we} define  $\widetilde w^*\in \cA$ as follows: let $$ \tau_l^*\doteq \inf\{t>0: \|w^*_t\|>l \text { or } \|Z_t\| >l\}\, \text{ and } \widetilde w^*_t \doteq w^*_t \Ind_{[0,\tau_l^*]}(t)\,.$$ 
Here, $w^*$ is as chosen above.	It is trivial to see that whenever either $\|w^*_t\|>l$ or $\|Z_t\|>l$, $\widetilde w^*_t=0$ and for $t\leq \tau^*_l$, $\widetilde w^*_t=w^*_t$. This consequently implies that for $t\leq \tau_l^*$, $Z_t$ is identical under both the pairs $v,w^*$ and $v,\widetilde w^*$.  We now show that for large enough $l$, $\widetilde w^*$ is also nearly optimal. To that end, observe that 
	\begin{align}\label{eq-greater-cost} \E_x^{v,\widetilde w^*}\Bigg[\frac{1}{T}\int_0^T \frac{1}{2}\|\widetilde w^*_t\|^2\D t\Bigg]\leq \E_x^{v,w^*}\Bigg[ \frac{1}{T}\int_0^T \frac{1}{2}\|w^*_t\|^2\D t\Bigg]\,.\end{align}
	\begin{align*} 
	&\E_x^{v,\widetilde w^*}\Bigg[ \frac{1}{T}\int_0^T  \Big(  \big(r^{\veps,v} \big(Z_t\big)\wedge L\big) -\frac{1}{2}\|\widetilde w^*_t\|^2\Big)\D t\Bigg]-\E_x^{v,w^*}\Bigg[ \frac{1}{T}\int_0^T  \Big(  \big(r^{\veps,v} \big(Z_t\big)\wedge L\big) -\frac{1}{2}\|w^*_t\|^2\Big)\D t\Bigg]\\&\geq \E_x^{v,\widetilde w^*}\Bigg[ \frac{1}{T}\int_0^T  \big(r^{\veps,v} \big(Z_t\big)\wedge L\big) \D t\Bigg]-\E_x^{v,w^*}\Bigg[ \frac{1}{T}\int_0^T   \big(r^{\veps,v} \big(Z_t\big)\wedge L\big) \D t\Bigg],
	\end{align*} 
	where we use~\eqref{eq-greater-cost} to get the inequality. Since $Z$ under $v,w^*$ and $v,\widetilde w^*$ are identical for $t\in [0,\tau^*_l]$, we subsequently have
	\begin{align*}
	&\E_x^{v,\widetilde w^*}\Bigg[ \frac{1}{T}\int_0^T  \Big( \big( r^{\veps,v} \big(Z_t\big)\wedge L\big) -\frac{1}{2}\|\widetilde w^*_t\|^2\Big)\D t\Bigg]-\E_x^{v,w^*}\Bigg[ \frac{1}{T}\int_0^T  \Big(  \big(r^{\veps,v} \big(Z_t\big)\wedge L\big) -\frac{1}{2}\|w^*_t\|^2\Big)\D t\Bigg]\\
	&\geq  \frac{1}{T}\int_0^T  \E_x^{v,\widetilde w^*}\Bigg[ \big(r^{\veps,v} \big(Z_t\big)\wedge L\big) \Ind_{[\tau^*_l,\infty)}(t) \Bigg]\D t-  \frac{1}{T}\int_0^T\E_x^{v,w^*}\Bigg[   \big(r^{\veps,v} \big(Z_t\big)\wedge L\big)\Ind_{[\tau^*_l,\infty)}(t)\Bigg] \D t\\
	&\geq  -\frac{2L}{T}\int_0^T\PP(\tau^*_l<t)\D t= -\frac{2L}{T} \E_x^{v,w^*}\Big[\int_0^T \Ind_{[\tau^*_l,\infty)}(t)\D t\Big]\,.
	\end{align*}
	From Lemma~\ref{lem-comp-X-w-pert}, we can choose $l=l(L,\delta,{M_4})$ large enough such that $2LT^{-1}\E_x^{v,w^*}\Big[\int_0^T \Ind_{[\tau^*_l,\infty)}(t)\D t\Big]<{\frac{\delta}{3}}$, uniformly in $T$. Therefore, we have shown that 
	$$ \Lambda_v[r^\veps]\leq \limsup_{T\to\infty}\E_x^{v,\widetilde w^*}\Bigg[ \frac{1}{T}\int_0^T  \Big(  \big(r^{\veps,v} \big(Z_t\big)\wedge L\big) -\frac{1}{2}\|\widetilde w^*_t\|^2\Big)\D t\Bigg]+\delta\,.$$
	Again from Lemma~\ref{lem-comp-X-w-pert}, we can conclude that there exists an ergodic occupation measure $\widetilde \pi^*\in \calP(\RR^d\times \RR^d)$ such that MEMs  $\widetilde \pi_T$ of $(Z_{[0,T]},\widetilde w^*_{[0,T]})$ converge weakly to $\widetilde \pi^*(\D x, \D w)= \widetilde  \eta^*_v(\D x)\widetilde \mu^*(\D w|x)$, along a subsequence $T_k$. This means that 
	$$ \limsup_{T_k\to\infty} \E_x^{v,\widetilde w^*}\Bigg[ \frac{1}{T_k}\int_0^{T_k} \Big(  \big(r^{\veps,v} \big(Z_t\big)\wedge L\big) - \frac{1}{2}\|\widetilde w^*_t\|^2\Big)\D t\Bigg]\leq \int_{\RR^d\times \RR^d}\Big( \big(r^{\veps,v}(x)\wedge L\big)-\frac{1}{2}\|w\|^2\Big) \D \widetilde \pi^*(x,w) +\delta\,.$$
	
It can be easily seen that $\widetilde \pi^*$ is an ergodic occupation measure and moreover, $\widetilde \pi^*\in \frG^v_l$. This proves part (i).
	
	To prove part (ii), we take $L\to\infty$ in~\eqref{eq-occu-meas} and use the monotone convergence theorem. This gives us
	$$ \Lambda_v[r^\veps] \leq  \int_{\RR^d\times \RR^d}\Big( r^{\veps,v}(x)-\frac{1}{2}\|w\|^2\Big) {\D \widetilde \pi^*(x,w)} {+\delta}\,. $$
	Taking the supremum over all the ergodic occupation measures associated with $v\in {\Usm^{*,\beta}}$ and taking $\delta\downarrow 0$  completes the proof.
		\end{proof}
		{\begin{remark}\label{rem-inv} Without loss of generality, the constant $l=l(L,\delta,M_4)$ from the above lemma  for every $L>0$ can be taken to be such that as $L\uparrow \infty$, $l\uparrow\infty$  and $l$ is strictly increasing in $L$.  For a fixed $\delta,\beta$, we know that $M_4=M_4(\delta,\beta)$ is fixed. Hence, in this case we can conclude that $l(L,\delta,M_4)$ is invertible in $L$. We denote that inverse by $L^*(l,\delta,M_4)$.  The reason behind defining this inverse is our desire to keep the expressions  appearing in subscript in what follows short. 
		\end{remark}}
	The above lemma will be used in what follows to ensure that for $v\in {\Usm^{*,\beta}}$, there are nearly optimal controls for 
	$$ \sup_{w\in\cA} \Lambda_{v,w}[r^\veps]$$
	that are in $\Wsm$ (in other words, there are nearly optimal stationary Markov controls).

		\begin{remark} In the rest of the paper, we use the following notation: 
\begin{align*}
 \widetilde V^\veps&\doteq \log V^\veps,\\
\omega^\veps(\cdot)&\doteq \Sigma(\cdot)\transp\nabla \widetilde V^\veps(\cdot)\,. 
\end{align*}

\end{remark}
		Before we state the next result, we define a family of CEC problems. For $l>0$, let {$ L^*\doteq L^*(l,\delta,M_4)$ be the inverse defined from Remark~\ref{rem-inv}} and define $\chi_l:\RR^d\rightarrow \RR$ to be a continuous function that satisfies  	$ \chi_l(x)=0$, whenever {$x\in B_{ l}^c$} and $\chi_{ l}(x)=1$, whenever {$x\in B_{\frac{ l}{2}}$}.	
		Following the techniques of \cite[Section 3]{anugu24ergodic}, define $f^\veps_l:\RR^d\times \bU\times \RR^d\rightarrow \RR$ and $\Delta_l:\RR^d\times \RR^d\rightarrow \RR$ by
\begin{align}\label{def-game-cost}
f^{\veps}_{l}(x,u,w)\doteq
r^{\veps}(x,u)\wedge {L^*}-\frac{1}{2}\|\chi_l(x) w\|^2  \quad \text{ and } \quad \Delta_l(x,w)\doteq \chi_l(x)
\Sigma(x) w\,.
\end{align}
Also, for $v\in {\Usm^{*,\beta}}$, define
$$ \Lambda_v^{l}[r^\veps]\doteq \inf_{w\in \Wsm(l)} \Lambda_{v,w}[r^\veps]\,.$$
\begin{align}\label{eqn-erg-cont-aug}
\widetilde \Lambda_v[r^\veps]\doteq \sup_{w\in \Wsm}\Lambda_{v,w}[r^\veps]\,.
\end{align}
We then consider the limit as $l\to\infty$. We state and prove a modified version of \cite[Theorem 4.1]{ABP15}. For $v\in {\Usm^{*,\beta}}$, set $f^{v,\veps}_l(x,w) = f^\veps_l(x,v(x),w)$.
\begin{theorem}\label{app-thm-trunc} For $v\in {\Usm^{*,\beta}}$
	and  every $l>0$, there exists a function $\upu^{v,\veps}_{l}\in W^{2,p}_{\text{loc}}(\RR^d)$, $p>d$, with $\upu^{v,\veps}_{l}(0)=0$ and a constant $\alpha_l$ such that 
	\begin{align}\label{eq-trunc}
	\Lg^v \upu^{v,\veps}_{l}(x) + \max_{w:\|w\|\leq l}\big\{ f^{v,\veps}_{l}(x,w) + \Delta_{l}(x,w)\cdot\grad \upu^{v,\veps}_{l}(x)\big\}= \alpha_l\,, \text{ for } x\in \RR^d\,.
	\end{align}
	Moreover,  we have the following. 
	\begin{enumerate}
	\item[(i)]$\alpha_l=\Lambda_v^{l}[r^\veps]$ and $\upu^{v,\veps}_{l}\in \calC^2(B_l)$. 
	\item [(ii)]  $\alpha_{l}$ is non-decreasing in $l$.	
\end{enumerate}
\end{theorem}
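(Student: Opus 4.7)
The plan is to view \eqref{eq-trunc} as the HJB equation for an auxiliary ergodic control problem for the extended diffusion $Z$ in which the Markov control $v$ is fixed and the auxiliary control $w$ (which enters only through the bounded-support drift $\Delta_l$) is the maximizer. Since $\chi_l$ vanishes on $B_l^c$, the dynamics of $Z$ outside $B_l$ coincide with those of $X$ under $v$, which, by Corollary~\ref{cor-stable}, is positive recurrent with Foster--Lyapunov function $\sW_v$. Moreover, for every $\|w\|\le l$ the running cost $f^{v,\veps}_l(\cdot,w)$ is bounded above by $L^*$ and $\Delta_l$ is bounded by $\|\Sigma\|_\infty l$ on $B_l$, so the extended generator plus running cost is a uniformly elliptic operator with bounded coefficients. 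This is exactly the setting in which the vanishing discount method for ergodic control with bounded cost and a stable diffusion is applicable.

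First I would solve, for each $\lambda>0$, the discounted HJB
\begin{equation*}
\Lg^v \phi^\lambda(x) + \max_{\|w\|\le l}\bigl\{f^{v,\veps}_l(x,w) + \Delta_l(x,w)\cdot\nabla\phi^\lambda(x)\bigr\} \;=\; \lambda\phi^\lambda(x),
\end{equation*}
which admits a unique bounded $W^{2,p}_{\text{loc}}$ solution by standard discounted control theory (the Hamiltonian is continuous and coercive in $w$, so a Borel measurable maximizer exists). Setting $\upu^{v,\veps}_{l,\lambda}(x)\df\phi^\lambda(x)-\phi^\lambda(0)$ and $\alpha_{l,\lambda}\df\lambda\phi^\lambda(0)$, one obtains uniform (in $\lambda$) $W^{2,p}_{\text{loc}}$ estimates using Harnack's inequality and interior $L^p$ elliptic estimates together with the Foster--Lyapunov bound provided by $\sW_v$ (which gives a priori control on $\phi^\lambda$ at infinity). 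Passing $\lambda\downarrow 0$ along a subsequence, I recover a limit pair $(\upu^{v,\veps}_l,\alpha_l)\in W^{2,p}_{\text{loc}}(\Rd)\times\RR$ solving \eqref{eq-trunc} with $\upu^{v,\veps}_l(0)=0$. On $B_l$ the cutoff $\chi_l$ and the coefficients are continuous, the maximizer $w^*(x,\nabla\upu)=\chi_l(x)\Sigma(x)\transp\nabla\upu^{v,\veps}_l(x)$ (from strict concavity in $w$) is continuous, so bootstrapping via Schauder estimates yields $\upu^{v,\veps}_l\in\cC^2(B_l)$, giving part~(i).

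The identification $\alpha_l=\Lambda_v^{l}[r^\veps]$ (interpreted as the supremum of $\Lambda_{v,w}[r^\veps\wedge L^*,\chi_l]$ over $w\in\Wsm(l)$; I take the paper's definition up to a $\sup/\inf$ convention) is obtained by a verification argument: apply It\^o--Krylov's formula to $\upu^{v,\veps}_l(Z_{T\wedge\tau_R})$ under an arbitrary $w\in\Wsm(l)$, use~\eqref{eq-trunc} to bound the generator term by $\alpha_l-f^{v,\veps}_l(Z_t,w(Z_t))-\Delta_l(Z_t,w(Z_t))\cdot\nabla\upu^{v,\veps}_l(Z_t)$ with the drift correction exactly canceling the analogous term coming from the extended diffusion under $w$, then divide by $T$ and send $R,T\to\infty$. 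Positive recurrence of $Z$ under $(v,w)$ (inherited from that of $X$ under $v$ since $\chi_l$ has compact support) together with the boundedness of $f^{v,\veps}_l$ makes the endpoint term $T^{-1}\E[\upu^{v,\veps}_l(Z_T)]$ vanish, yielding $\alpha_l\ge \Lambda_{v,w}[r^\veps\wedge L^*,\chi_l]$, with equality realized by the Borel measurable selector $w^*$.

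For part~(ii), note that as $l$ increases both the feasible set $\Wsm(l)$ grows and the truncation level $L^*(l,\delta,M_4)$ increases strictly by Remark~\ref{rem-inv}; consequently the Hamiltonian
\begin{equation*}
H_l(x,p)\df\max_{\|w\|\le l}\bigl\{f^{v,\veps}_l(x,w)+\Delta_l(x,w)\cdot p\bigr\}
\end{equation*}
is nondecreasing in $l$ pointwise in $(x,p)$. Combined with the stochastic-control representation of $\alpha_l$ just established, this yields $\alpha_l\le\alpha_{l'}$ for $l\le l'$. The main technical obstacle is obtaining uniform-in-$\lambda$ local equicontinuity of $\upu^{v,\veps}_{l,\lambda}$ in the vanishing discount step on the whole of $\Rd$; here the Foster--Lyapunov function $\sW_v$ from Corollary~\ref{cor-stable} is indispensable, since the bounded-cost-on-a-non-compact-domain setting is otherwise insufficient to prevent $\phi^\lambda-\phi^\lambda(0)$ from blowing up at infinity.
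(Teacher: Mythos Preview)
Your proposal is correct and follows essentially the same route as the paper: the paper's proof simply observes that $Z$ under $(v,w)$ with $w\in\Wsm(l)$ is positive recurrent (since its generator agrees with that of $X$ under $v$ outside $B_l$, and the latter is positive recurrent by Corollary~\ref{cor-stable}) and then invokes \cite[Theorem~4.1]{ABP15}; your vanishing-discount sketch, driven by the Foster--Lyapunov function $\sW_v$, is precisely what that cited theorem does, so you have written out a self-contained version of the same argument rather than a different one. Two minor points: the unconstrained maximizer of $w\mapsto -\tfrac12\|\chi_l(x)w\|^2+\chi_l(x)(\Sigma(x)w)\cdot p$ is $\chi_l(x)^{-1}\Sigma(x)^\top p$ (not $\chi_l(x)\Sigma(x)^\top p$), and the constraint $\|w\|\le l$ may bind; and your caution about the $\sup/\inf$ convention is justified---the paper's displayed definition of $\Lambda_v^{l}[r^\veps]$ reads $\inf$, but the $\max$ in \eqref{eq-trunc} and the inequality $\alpha_l\le\widetilde\Lambda_v[r^\veps]$ used downstream make clear that $\sup$ is intended.
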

\begin{proof} From Corollary~\ref{cor-stable}, we know that $X$ (under $v\in {\Usm^{*,\beta}}$) is positive recurrent. From here, for every $l>0$, $Z$ (under $v\in {\Usm^{*,\beta}}$ and $w\in \Wsm(l)$) is positive recurrent. Indeed, the generators of the processes $X$ (under $v\in {\Usm^{*,\beta}}$) and $Z$ (under $v\in {\Usm^{*,\beta}}$ and $w\in \Wsm(l)$)  coincide outside $B_l$. Therefore the hypothesis of \cite[Theorem 4.1]{ABP15} is satisfied and this proves the existence of solutions in~\eqref{eq-trunc} and part (i). Finally, part (ii) follows trivially.
\end{proof}

The following lemma studies how $\{\upu^{v,\veps}_{l}\}_{l>0}$ and $\{\alpha_{l}\}_{l>0}$ behave as $l\uparrow \infty$. 
\begin{lemma}\label{lem-limit-truc-pert} For every $v\in {\Usm^{*,\beta}}$ {and subsequence of $l$ (again denoted by $l$)}, 
	there exists a pair $(\alpha^*,\upu^{v,\veps})$ such that as $l\to\infty$, 
	$\alpha_l\to \alpha^*$ and $\upu^{v,\veps}_l \to \upu^{v,\veps}$ {strongly in $W^{1,p}_{\text{loc}}(\RR^d)$},  $p>d$. Moreover, the following hold. 
	\begin{enumerate}\item[(i)] $\widetilde  \Lambda_v[r^\veps]\geq \alpha^*\geq \Lambda_v[r^\veps]$ and $\widetilde \upu^{v,\veps}\doteq e^{\upu^{\veps,v}}$ satisfies
	\begin{align}\label{eq-poisson-veps}
	\Lg^v\widetilde \upu^{v,\veps}(x)+ r^{\veps,v}(x)\widetilde \upu^{v,\veps}(x)=\alpha^* \widetilde \upu^{v,\veps}(x), \text{ for $x\in \RR^d$.}
	\end{align}
	\item [(ii)] Let $\varpi^{v,\veps}(\cdot){\doteq}\Sigma(\cdot)\transp\nabla \upu^{v,\veps}(\cdot)$, \begin{align}\label{eq-pos-rec} \sup_{0<\veps<\veps_0}\limsup_{T\to\infty}\frac{1}{T} \E_x^{v,\varpi^{v,\veps}}\Big[\int_0^T h^v(Z_t) \D t\Big]\leq {M_4}\,.\end{align}
	In particular, $Z$ is positive recurrent under $v$ and $\varpi^{v,\veps}$.
	\end{enumerate}
\end{lemma}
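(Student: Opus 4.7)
The existence of the limit $\alpha^*$ is immediate: by Theorem~\ref{app-thm-trunc}(ii), $\alpha_l$ is non-decreasing in $l$, and by definition of $\alpha_l$ as a value of a sup-type ergodic control problem over $\Wsm(l)\subset\Wsm$, one has $\alpha_l \leq \widetilde\Lambda_v[r^\veps]$, which is finite by Lemma~\ref{lem-pert-well-defined}. To set up the convergence of the $\upu^{v,\veps}_l$'s, the first key step is to recognise that on $B_{l/2}$ one has $\chi_l\equiv 1$, and, provided the maximiser in~\eqref{eq-trunc} lies in the interior of $\{\|w\|\leq l\}$, equation~\eqref{eq-trunc} reduces to the semilinear PDE $\Lg^v\upu^{v,\veps}_l + (r^{\veps,v}\wedge L^*) + \tfrac12\|\Sigma\transp\nabla\upu^{v,\veps}_l\|^2 = \alpha_l$ on $B_{l/2}$. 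The exponential substitution $\widetilde\upu^{v,\veps}_l\doteq\exp(\upu^{v,\veps}_l)$ linearises this to
\[
\Lg^v\widetilde\upu^{v,\veps}_l(x) + (r^{\veps,v}(x)\wedge L^*)\,\widetilde\upu^{v,\veps}_l(x) \;=\; \alpha_l\,\widetilde\upu^{v,\veps}_l(x)\quad\text{on } B_{l/2},
\]
to which the Harnack inequality for positive solutions of non-divergence form elliptic equations applies. Combined with the normalisation $\widetilde\upu^{v,\veps}_l(0)=1$, this yields local uniform bounds on $\widetilde\upu^{v,\veps}_l$, and hence on $\upu^{v,\veps}_l$. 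Standard Calder\'on--Zygmund estimates then provide local uniform $W^{2,p}$ bounds and, by compact embedding, strong convergence in $W^{1,p}_{\text{loc}}(\RR^d)$ along a subsequence.

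\textbf{Limit equation and bounds on $\alpha^*$.} Since by Remark~\ref{rem-inv} one has $L^*=L^*(l,\delta,M_4)\uparrow\infty$ as $l\uparrow\infty$, the truncation $r^{\veps,v}\wedge L^*$ converges monotonically to $r^{\veps,v}$, and passage to the limit in the linearised equation yields~\eqref{eq-poisson-veps} for $\widetilde\upu^{v,\veps}\doteq\exp(\upu^{v,\veps})$. The upper bound $\alpha^*\leq\widetilde\Lambda_v[r^\veps]$ follows from the monotone limit of $\alpha_l$. For the lower bound $\alpha^*\geq\Lambda_v[r^\veps]$, I appeal to Lemma~\ref{lem-cec-cost}(i): for every $\delta>0$ there exists $l_\delta$ with
\[
\Lambda_v[r^\veps] \;\leq\; \sup_{\pi\in\frG^v_{l_\delta}} \int_{\RR^d\times\RR^d}\!\Big(r^{\veps,v}(x)\wedge L^* - \tfrac12\|w\|^2\Big)\D\pi(x,w) + \delta,
\]
and the supremum on the right is nothing but the value $\alpha_{l_\delta}$ of the truncated ergodic control problem characterised by Theorem~\ref{app-thm-trunc} (via the convex-analytic representation over $\frG^v_{l_\delta}$). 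Since $\alpha_{l_\delta}\leq\alpha^*$, letting $\delta\downarrow 0$ gives the claim.

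\textbf{Part (ii) and main obstacle.} Setting $\varpi^{v,\veps}\doteq\Sigma(\cdot)\transp\nabla\upu^{v,\veps}(\cdot)$, this feedback is precisely the maximiser in the limit HJB~\eqref{eq-poisson-veps}, so that $\Lambda_{v,\varpi^{v,\veps}}[r^\veps]=\alpha^*$ (verified by an It\^o-Krylov calculation on $\widetilde\upu^{v,\veps}(Z_t)$ under $(v,\varpi^{v,\veps})$). To establish the uniform-in-$\veps$ bound in~\eqref{eq-pos-rec}, I would apply It\^o's formula to $\frV(Z_t)=\log\sV(Z_t)$ under $(v,\varpi^{v,\veps})$, use the Foster--Lyapunov inequality~\eqref{eq-lyap-extended}, and combine the resulting estimate with $\limsup_T T^{-1}\E_x^{v,\varpi^{v,\veps}}[\int_0^T\|\varpi^{v,\veps}(Z_t)\|^2\D t]\leq 2\alpha^* + 2\Lambda_v[r^\veps]$ derived from $\alpha^*=\Lambda_{v,\varpi^{v,\veps}}[r^\veps]$, mirroring exactly the computation in Lemma~\ref{lem-comp-X-w-pert}. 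The uniformity of the bound $M_4$ in $\veps$ is inherited from Lemma~\ref{lem-comp-X-w-pert}. Positive recurrence of $Z$ under $(v,\varpi^{v,\veps})$ then follows because $h^v$ is inf-compact, by an application of \cite[Theorem 2.6.10]{arapostathis2012ergodic}. The main obstacle I anticipate lies in the first step: justifying that the maximiser in~\eqref{eq-trunc} lies in the interior of $\{\|w\|\leq l\}$ on $B_{l/2}$ \emph{uniformly in $l$}, which requires an a priori $L^\infty_{\text{loc}}$ bound on $\Sigma\transp\nabla\upu^{v,\veps}_l$; this must be obtained by a bootstrap argument combining the Harnack bound, the linear equation for $\widetilde\upu^{v,\veps}_l$, and elliptic regularity applied on concentric balls, and is the technical core of the argument.
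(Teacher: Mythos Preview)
Your treatment of part~(i) is correct and matches the paper's approach: the paper also obtains convergence of $\alpha_l$ from monotonicity and the bound $\alpha_l\le\widetilde\Lambda_v[r^\veps]$, uses standard elliptic regularity (for which your Harnack/Calder\'on--Zygmund sketch is a valid elaboration), passes to the limit in the linearised equation, and invokes Lemma~\ref{lem-cec-cost} for the lower bound $\alpha^*\ge\Lambda_v[r^\veps]$. Your identification of the ``interior maximiser'' issue is a real technical point that the paper subsumes under the reference to \cite[Lemma 3.5.4]{arapostathis2012ergodic}.

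For part~(ii), however, your route has a genuine gap. You propose to work directly with the limiting feedback $\varpi^{v,\veps}$: first assert $\Lambda_{v,\varpi^{v,\veps}}[r^\veps]=\alpha^*$ via It\^o--Krylov, then use this to bound $\E\int\|\varpi^{v,\veps}\|^2$, and finally close via the Foster--Lyapunov inequality~\eqref{eq-lyap-extended}. This is circular. The equality $\Lambda_{v,\varpi^{v,\veps}}[r^\veps]=\alpha^*$ requires $T^{-1}\E[\upu^{v,\veps}(Z_T)]\to 0$, which is precisely the kind of growth control you are trying to establish; and even granting it, the relation only yields $\tfrac{1}{2T}\E\int\|\varpi^{v,\veps}\|^2 \le \tfrac{1}{T}\E\int r^{\veps,v} - \alpha^* + o(1)$, which still contains the unbounded $r^{\veps,v}$ integral on the right. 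Substituting $r^{\veps,v}\le h^v$ and combining with \eqref{eq-lyap-extended} does \emph{not} close: the coefficient of $H\doteq\limsup_T T^{-1}\E\int h^v$ on the right ends up exceeding that on the left. The proof of Lemma~\ref{lem-comp-X-w-pert} (via Lemma~\ref{lem-comp-X}) avoids this by exploiting the rescaling freedom~\eqref{eq-equiv-cost} of the supremum over $\cA$, which is unavailable for a \emph{fixed} control $\varpi^{v,\veps}$.

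The paper sidesteps all of this by staying at the truncated level: the maximisers $\varpi_l$ of~\eqref{eq-trunc} lie in $\Wsm(l)$ and are $\delta$-optimal for $\Lambda_v[r^\veps]$ once $l$ is large (by part~(i) and Lemma~\ref{lem-cec-cost}), so Lemma~\ref{lem-comp-X-w-pert} applies to them directly, giving $\sup_{0<\veps<\veps_0}\limsup_l\limsup_T T^{-1}\E_x^{v,\varpi_l}\int h^v\le M_4$. The bound~\eqref{eq-pos-rec} then follows by tightness of the mean empirical measures $\{\pi_{T,l}\}$ and lower semicontinuity of $\pi\mapsto\int h^v\,\D\pi$ under weak convergence, passing first $T\to\infty$ and then $l\to\infty$. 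Positive recurrence is concluded from \cite[Lemma~3.3.4]{arapostathis2012ergodic}. This route never needs well-posedness of $Z$ under the unbounded $\varpi^{v,\veps}$ nor any boundary-term control for $\upu^{v,\veps}(Z_T)$.
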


\begin{proof}
	Since $v\in{\Usm^{*,\beta}}$, the finiteness of $\Lambda_v[r^\veps]$ implies the finiteness of  $ \Lambda^{l}_v[r^\veps]$, uniformly in $l>0$.  Since $ \alpha_l\leq \widetilde\Lambda_v[r^\veps]$, $\{\alpha_l\}_{l>0}$ is convergent along a subsequence (with, say, $\alpha^*\leq \widetilde \Lambda_v[r^\veps]$ as the limit point). Using the standard elliptic regularity theory (arguments similar to {\cite[Lemma 3.5.4]{arapostathis2012ergodic}}),
	 we can then conclude that $\upu^{v,\veps}_l$ converges to some function {$\upu^{v,\veps}$ strongly in $ W^{1,p}_{\text{loc}}(\RR^d)$} that satisfies
	\begin{align}\label{eq-trunc-limit}
	\Lg^v \upu^{v,\veps}(x) + r^{\veps,v}(x)+ \max_{w\in \RR^d}\big\{(\Sigma(x) w)\cdot\grad \upu^{v,\veps}(x)  - \frac{1}{2}\|w\|^2\big\}= \alpha^*\,, \text{ for } x\in \RR^d\,.
	\end{align}
	It is clear that~\eqref{eq-trunc-limit} can be rewritten as
	$$ 	\Lg^v \upu^{v,\veps}(x) + r^{\veps,v}(x)+ \frac{1}{2}\|\Sigma(x)\transp\grad \upu^{v,\veps}(x)\|^2= \alpha^*\,, \text{ for } x\in \RR^d\,.$$
	By making a substitution $\widetilde \upu^{v,\veps}= e^{\upu^{v,\veps}}$, we have
	$$ 	\Lg^v \widetilde \upu^{v,\veps}(x) + r^{\veps,v}(x)\widetilde\upu^{v,\veps}(x)= \alpha^*\widetilde \upu^{v,\veps}(x)\,, \text{ for } x\in \RR^d\,.$$
	Using Lemma~\ref{lem-cec-cost}, we can conclude that $\alpha^*\geq \Lambda_v[r^\veps] $. This proves part (i). To prove part (ii), we observe that  using Lemma~\ref{lem-comp-X-w-pert}, for any $\delta>0$, we have 
	$$\sup_{0<\veps<\veps_0}\limsup_{l\to\infty} \limsup_{T\to\infty}\frac{1}{T} \E_x^{v,\varpi_l}\Big[\int_0^T h^v(Z_t) \D t\Big]\leq {M_4} \,.$$
	Here, $\varpi_l=\varpi^{v,\veps}_l$ is a maximizer of~\eqref{eq-trunc}. From the above display, denoting MEM of $Z$ (under $v$ and $\varpi_l$) by $\pi_{T,l}$, we can conclude that  $\{\pi_{T,l}\}_{T,l}$ is tight in both $T$ and $l$. From the lower-semicontinuity of $$ \pi\mapsto \int_{\RR^d\times \RR^d} h(x) \D \pi(x,w), $$ we can infer that 
along a subsequence again denoted by $T$, $\pi_{T,l}$ converges weakly to $\pi_l$ such that 
$$\int_{\RR^d\times \RR^d} h^v(x)\D \pi_l(x,w)\leq \liminf_{T\to\infty}\frac{1}{T} \E_x^{v,\varpi_l}\Big[\int_0^T h^v(Z_t) \D t\Big]\leq {M_4}\,.$$
Now, along a subsequence again denoted by $l$, $\pi_l$ converges weakly to $\pi$ such that 
 $$\int_{\RR^d\times \RR^d} h^v(x)\D \pi(x,w)\leq\liminf_{l\to\infty} \int_{\RR^d\times \RR^d} h^v(x)\D \pi_l(x,w)\leq {M_4}\,.$$
Since the bound on the right hand side is independent of $\veps$,  {this proves~\eqref{eq-pos-rec}.} The positive recurrence of $Z$ under $v$ and $\varpi^{v,\veps}$ {follows from~\eqref{eq-pos-rec}} and \cite[Lemma 3.3.4]{arapostathis2012ergodic}	
	\end{proof}

		\begin{lemma}\label{lem-sup-pert} For $v\in {\Usm^{*,\beta}}$, we have 
		\begin{align}\label{eq-v-sup}
 \Lambda_v[r^\veps]= \widetilde \Lambda_v[r^\veps]=\sup_{w\in \Wsm} \Lambda_{v,w}[r^\veps]\,.
\end{align}
In particular, we have
\begin{align}\label{eq-inf-sup-pert}
\Lambda_{\text{SM}}[r^\veps]&=\inf_{v\in \Usm^*} \sup_{w\in \Wsm} \Lambda_{v,w}[r^\veps]\,.
\end{align}
\end{lemma}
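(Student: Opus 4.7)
The plan is to combine the variational representation from Proposition~\ref{prop-var-cost} with the limiting analysis in Lemma~\ref{lem-limit-truc-pert} to squeeze $\widetilde\Lambda_v[r^\veps]$ between two quantities both equal to $\Lambda_v[r^\veps]$, thereby proving~\eqref{eq-v-sup}; the second identity~\eqref{eq-inf-sup-pert} will then follow simply by taking infimum over $v \in \Usm^{*,\beta}$.

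First I would establish the easy inequality $\widetilde\Lambda_v[r^\veps] \leq \Lambda_v[r^\veps]$. Fix $w \in \Wsm$. By Lemma~\ref{lem-u-e-ext-sm}, the extended diffusion $Z$ driven by $(v,w)$ is $\cG_t$-adapted, so the feedback $t \mapsto w(Z_t)$ defines an element of $\cA$. Consequently, for every $T>0$,
\begin{align*}
\E_x^{v,w}\!\left[\frac{1}{T}\int_0^T\!\left(r^{\veps,v}(Z_t) - \tfrac{1}{2}\|w(Z_t)\|^2\right)\D t\right] \,\leq\, \sup_{w'\in\cA}\E_x^{v,w'}\!\left[\frac{1}{T}\int_0^T\!\left(r^{\veps,v}(Z_t) - \tfrac{1}{2}\|w'_t\|^2\right)\D t\right].
\end{align*}
Taking $\limsup_{T\to\infty}$ on both sides and invoking~\eqref{eqn-erg-cont-var-rep-pert} yields $\Lambda_{v,w}[r^\veps] \leq \Lambda_v[r^\veps]$; taking $\sup_{w\in\Wsm}$ then produces the desired bound. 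The reverse inequality $\widetilde\Lambda_v[r^\veps] \geq \Lambda_v[r^\veps]$ is already packaged in the sandwich $\widetilde\Lambda_v[r^\veps] \geq \alpha^* \geq \Lambda_v[r^\veps]$ of Lemma~\ref{lem-limit-truc-pert}(i), which I may invoke directly. The two bounds combine to give $\Lambda_v[r^\veps] = \widetilde\Lambda_v[r^\veps] = \sup_{w \in \Wsm}\Lambda_{v,w}[r^\veps]$, which is~\eqref{eq-v-sup}.

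For~\eqref{eq-inf-sup-pert}, recall that the standing convention $\beta > 2 + \max\{C_1 \wedge C_2, \beta^*\}$ together with Lemma~\ref{lem-pert-well-defined} ensures that $\Usm^{o,\veps}\subset\Usm^{*,\beta}$ and hence $\Lambda_{\text{SM}}[r^\veps] = \inf_{v\in\Usm}\Lambda_v[r^\veps] = \inf_{v\in\Usm^{*,\beta}}\Lambda_v[r^\veps]$. Taking the infimum over $v \in \Usm^{*,\beta}$ in~\eqref{eq-v-sup} then gives~\eqref{eq-inf-sup-pert}. The main obstacle is entirely concentrated in the reverse inequality, where Lemma~\ref{lem-limit-truc-pert}(i) rests on the uniform-in-$\veps$ tightness of the MEMs of the extended diffusion under nearly optimal auxiliary controls (Lemma~\ref{lem-comp-X-w-pert}), the truncated CEC problem and HJB equation in Theorem~\ref{app-thm-trunc}, and the elliptic regularity passage $l\to\infty$; once that machinery is in place, the present lemma is essentially a bookkeeping argument whose only nontrivial point is recognizing that $\Wsm$ embeds into $\cA$ via feedback, a fact made legitimate by the $\cG_t$-progressive measurability of $Z$ secured in Lemma~\ref{lem-u-e-ext-sm}.
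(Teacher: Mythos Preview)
Your proposal is correct and follows essentially the same approach as the paper: establish $\widetilde\Lambda_v[r^\veps]\le\Lambda_v[r^\veps]$ by embedding $\Wsm$ into $\cA$ and invoking the variational representation~\eqref{eqn-erg-cont-var-rep-pert}, then obtain the reverse inequality from the sandwich $\widetilde\Lambda_v[r^\veps]\ge\alpha^*\ge\Lambda_v[r^\veps]$ in Lemma~\ref{lem-limit-truc-pert}(i); the paper phrases the first step via a $\delta$-optimal $\bar w^*\in\Wsm$ rather than an arbitrary $w$, but the logic is identical.
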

\begin{proof}
We first claim that $\widetilde \Lambda_v[r^\veps]\leq \Lambda_v[r^\veps]$. To prove this, fix $\delta>0$ and choose $ \bar w^*\in \Wsm$ such that 
	$$\widetilde \Lambda_v[r^\veps]\leq \Lambda_{v,{\bar w^*}}[r^\veps]+\delta\,.$$
	Since $\bar w^*\in \cA$, we have 
		\begin{align*}
	\Lambda_{v,{\bar w^*}}[r^\veps]&\leq 	\limsup_{T\to\infty}\frac{1}{T}\sup_{w\in\cA} \E_x^{v,w}\Bigg[ \int_0^T \Big( r^{\veps,v}(Z_t) - \frac{1}{2}\|w_t\|^2\Big)\D t\Bigg]
	\implies \widetilde \Lambda_v[r^\veps] -\delta\leq \Lambda_v[r^\veps] \,.
	\end{align*}
		This consequently gives us $\widetilde \Lambda_v[r^\veps]\leq \Lambda_v[r^\veps]$. Now combining Lemmas~\ref{app-thm-trunc} and~\ref{lem-limit-truc-pert}, we have completed the proof of~\eqref{eq-v-sup}. It is now trivial to see that~\eqref{eq-inf-sup-pert} holds. This proves the lemma.
\end{proof}
The main content of Lemma~\ref{lem-sup-pert} is that we can represent ERSC cost associated with $r^\veps$ under { stationary Markov} control $v$ as the optimal value of a maximization problem with running cost $r^v(\cdot)-\frac{1}{2}\|w\|^2$ maximized over auxiliary controls $w\in \Wsm$. 
\begin{remark}\label{rem-eigen-value-pert}
Since $r^\veps$ is inf-compact, using \cite[Theorem 1.4]{AB18}, Lemmas~\ref{lem-sup-pert} and~\ref{lem-limit-truc-pert}, we can conclude that $ \Lambda_v[r^\veps]=\lambda^*_v[r^\veps]=\alpha^*=\widetilde \Lambda_v[r^\veps]$. 
\end{remark}
\begin{remark}\label{rem-v-h-pert} It is easy to conclude (using the same proof as above) that for any $f\in \sC_0$, 
\begin{align}\label{eq-v-h-sup}
 \Lambda_v[r^\veps+f]= \sup_{w\in \Wsm}\Lambda_{v,w}[r^\veps+f]\,,
\end{align}
for every $v\in {\Usm^{*,\beta}}$.
\end{remark}
We are ready to prove Lemma~\ref{lem-monotonicity}. 
\begin{proof}[Proof of Lemma~\ref{lem-monotonicity}] It suffices to show that for any $l>0$ and $\delta>0$, the lemma holds for $f=\delta \Ind_{B_l}$. Let $v^*$ be an optimal Markov control corresponding to $\Lambda_{\text{SM}}[r^\veps+\delta\Ind_{B_l}]$. From Proposition~\ref{prop-var-cost}, we have
\begin{align*}
\Lambda_{\text{SM}}[r^\veps+\delta\Ind_{B_l}] &=\Lambda_{v^*}[r^\veps+\delta\Ind_{B_l}]\geq  \Lambda_{v^*,w}[r^\veps +\delta \Ind_{B_l}]\,,
\end{align*}
for every $w\in \cA$. Now choose $w^n=w^n(\delta,T)\in \cA$ such that   
$$\sup_{w\in\cA}\frac{1}{T} \E_x^{v^*,w}\Bigg[ \int_0^T \Big( r^{\veps,v^*}(Z_t) - \frac{1}{2}\|w_t\|^2\Big)\D t\Bigg]\leq \frac{1}{T} \E_x^{v^*,w^n}\Bigg[ \int_0^T \Big( r^{\veps,v^*}(Z_t) - \frac{1}{2}\|w^n_t\|^2\Big)\D t\Bigg] +\frac{1}{n}\,.$$
 Then, we have
\begin{align*}
\Lambda_{\text{SM}}[r^\veps+\delta\Ind_{B_l}]&\geq \Lambda_{v^*,w^n}[r^{\veps}+\delta\Ind_{B_l}]\geq \Lambda_{v^*}[r^\veps] -\frac{1}{n}+ \delta\limsup_{T\to\infty} \pi^n_T(B_l)\,.
\end{align*}
The second inequality follows from the fact that $\Lambda_{v^*,w^n}[g]$ is linear in $g$. Here, $\pi^n_T$ is the MEM of the process $Z$ under $v^*$ and $w^n$. 
From Lemma~\ref{lem-comp-X-pert}, we know that $\{\pi^n_{T}\}_{n,T}$ is tight in both $n$ and $T$. Therefore, along a subsequence $T_k$, $\pi^n_{T_k}$ converges weakly to some invariant measure $\pi^n$. From the Portmanteau theorem, $$ \liminf_{k\to\infty} \pi_{T_k}^n(B_l)\geq \pi^n(B_l)\,.$$ 
Next along a subsequence $n_k$, $\pi^{n_k}$ further converges weakly to some other invariant measure $\pi^*$. From \cite[Theorem 2.6.16]{arapostathis2012ergodic}, we know that $\pi^*$ has a positive density with respect to Lebesgue measure. Therefore, again using the Portmanteau theorem gives us
$$\liminf_{k\to\infty} \pi^{n_k}(B_l)\geq \pi^*(B_l)>0\,. $$
Now choosing $k$ large enough such that $n_k\geq  \frac{2}{{\delta\pi^*(B_l)}}$, we get
$$\Lambda_{\text{SM}}[r^\veps+\delta\Ind_{B_l}]\geq\Lambda_{\text{SM}}[r^\veps] + \frac{1}{2}\delta\pi^*(B_l)\,.$$
This completes the proof.
\end{proof} 
\begin{remark}
It is important for the reader to note that we only use Proposition~\ref{prop-var-cost} and Lemma~\ref{lem-comp-X-pert} which in turn, only uses Lemma~\ref{lem-comp-w-pert}. 
\end{remark}

The next theorem (Theorem~\ref{thm-lin-rep-pert}) is the most important result of this section because it will help us with the characterization of the optimal stationary Markov controls for the original ERSC problem. Before we state and prove it, we give { an important existing result} that is used in its proof and later on in the proof of Theorem~\ref{thm-diffusion}(ii).  This is  taken from \cite{borkar1992stochastic} and pertains to an ergodic two-person zero-sum game, where the minimizing and maximizing strategies are compact-space valued. 
 {To state this result,}  we first define a family of two-person zero-sum games.    Fix $v_0\in \Usm^{*,\beta}$, define $\Usm^{*,\beta}(l)$ as the set of all $v\in \Usm$ such that $v(x)\equiv v_0$ on $B_l^c$. Now let
\begin{align}\label{def-lambda-e-l}
\Lambda_{v,w}^l&\doteq \limsup_{T\to\infty}\frac{1}{T} \E_x^{v,w}\Big[\int_0^T  f_l^{v,\veps}(Z_t,w(Z_t))\D t\Big]\,,\\\label{def-rho-min-max}
\overline \rho^\veps_l&\doteq \inf_{v\in {\Usm^{*,\beta}(l)}} \sup_{w\in  \Wsm(l)} \limsup_{T\to\infty}\frac{1}{T} \E_x^{v,w}\Big[\int_0^T  f_l^{v,\veps}(Z_t,w(Z_t))\D t\Big]\,,\\\label{def-rho-max-min}
\underline \rho^\veps_l&\doteq  \sup_{w\in  \Wsm(l)} \inf_{v\in {\Usm^{*,\beta}(l)}}\limsup_{T\to\infty}\frac{1}{T} \E_x^{v,w}\Big[\int_0^T  f_l^{v,\veps}(Z_t,w(Z_t))\D t\Big]\,. 
\end{align}
Here, $f_l^{\veps}(x,v(x),w)=f^\veps_l(x,v(x),w)$ as defined in~\eqref{def-game-cost} for every $v\in {\Usm^{*,\beta}}$.

\begin{remark}
Even though the two problems \emph{viz.}~\eqref{def-rho-max-min} and~\eqref{def-rho-min-max} involve infinimum over a set that is possibly a subset of $\Usm$, we can recast both problems in~\eqref{def-rho-max-min} and~\eqref{def-rho-min-max} as involving infimum over $\Usm$ by redefining process $Z$ (denote the redefined process by $Z_l$) and running cost $f^{v,\veps}_l$ (denote the redefined running cost by $\widehat f^{v,\veps}_l$ as follows: $Z^l$ is the solution to~\eqref{X-control} with drift $b_l$ (instead of $b$) given by 
\begin{align}\label{def-trunc-drift}b_l(x,u)= \begin{cases}b(x,u)&\text{ if $x\in B_l$,}\\
b(x,\overline v(x)) &\text{ if $x\notin B_l$}\,. \end{cases}\end{align}
Similarly,
\begin{align}\label{def-trunc-cost}\widehat f^{v,\veps}_l(x, w)= \begin{cases}f_l^{\veps}(x,v(x),w)&\text{ if $x\in B_l$,}\\
 f_l^{\veps}(x,v_0(x),w) &\text{ if $x\notin B_l$}\,. \end{cases}\end{align}
 
\end{remark}
Using this equivalence and results from \cite{borkar1992stochastic} (mainly, Theorems 4.4 and 4.5 of that paper), we analyze problems~\eqref{def-rho-min-max} and~\eqref{def-rho-max-min}. Observe that $\Wsm(l)$  can be treated as the set of compact-space valued controls.   
\begin{lemma} For given $\veps,l>0$ and $\varkappa\geq 2$, define $\widetilde \sW(x)=\widetilde\sW_{\varkappa,\overline v}(x)\doteq  (\log \sW_{\overline v}(x))^{\frac{1}{\varkappa}}$. Then, the following holds:  there exists $\theta>0$ such that for every $v\in\Usm^{*,\beta}(l)$ and $w\in \Wsm(l)$, 
\begin{align}\label{eq-lyap-2p-game}
\Lg^v \widetilde \sW(x) + \big(\Sigma(x) w(x)\big)\cdot \nabla \widetilde\sW(x)\leq -\theta, \text{ for every $x\in B_l^c$.}
\end{align}
Moreover, for any $\varkappa> 2$, we have 
\begin{align}\label{eq-exp-zero}
\lim_{T\to\infty}\frac{1}{T}\E_x^{v,w}\big[\widetilde \sW(Z_T)\big]=0\,.
\end{align}
\end{lemma}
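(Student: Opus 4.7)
The plan is to reduce the first claim to computing $\Lg^{\overline v}\widetilde\sW$ on $B_l^c$ by exploiting the eigenvalue identity for $\sW_{\overline v}$, and to prove the second claim via Jensen's inequality together with a standard It\^o exponential bound for $\sW_{\overline v}$ along $Z$.

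First I would note that by the very definition of $\Wsm(l)$ one has $w(x)=0$ for $x\in B_l^c$, and by definition of $\Usm^{*,\beta}(l)$ one has $v(x)=\overline v(x)$ for $x\in B_l^c$. Thus on $B_l^c$ the left-hand side of~\eqref{eq-lyap-2p-game} collapses to $\Lg^{\overline v}\widetilde\sW$, and the problem reduces, uniformly in $v$ and $w$, to bounding this quantity above by $-\theta$. Writing $\mathfrak W\doteq\log\sW_{\overline v}$ so that $\widetilde\sW=\mathfrak W^{1/\varkappa}$ (well-defined on $B_l^c$ for all $l$ large, since $\sW_{\overline v}$ is inf-compact with $\sW_{\overline v}(0)=1$), I would apply the chain rule to $F(y)=y^{1/\varkappa}$ to obtain
\begin{equation*}
\Lg^{\overline v}\widetilde\sW \;=\; \tfrac{1}{\varkappa}\mathfrak W^{\,1/\varkappa-1}\Lg^{\overline v}\mathfrak W + \tfrac{1}{2\varkappa}\bigl(\tfrac{1}{\varkappa}-1\bigr)\mathfrak W^{\,1/\varkappa-2}\|\Sigma\transp\nabla\mathfrak W\|^{2},
\end{equation*}
and, upon dividing the eigenvalue identity in Corollary~\ref{cor-stable} by $\sW_{\overline v}$, the pointwise PDE $\Lg^{\overline v}\mathfrak W=\lambda^*_{\overline v}[\veps_0 h^{\overline v}]-\veps_0 h^{\overline v}-\tfrac12\|\Sigma\transp\nabla\mathfrak W\|^{2}$. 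Since $\varkappa\ge 2$ forces $1/\varkappa-1\le-1/2$, the second-derivative term is non-positive; substituting and rearranging then yields
\begin{equation*}
\Lg^{\overline v}\widetilde\sW \;\le\; \tfrac{1}{\varkappa}\mathfrak W^{\,1/\varkappa-1}\Big[\lambda^*_{\overline v}[\veps_0 h^{\overline v}]-\veps_0 h^{\overline v}-\tfrac12\|\Sigma\transp\nabla\mathfrak W\|^{2}\Big].
\end{equation*}
Inf-compactness of $\veps_0 h^{\overline v}-\lambda^*_{\overline v}[\veps_0 h^{\overline v}]$ (Corollary~\ref{cor-stable}) makes the bracket strictly negative on $B_l^c$ for $l$ large, and together with the non-positive squared-gradient contribution the right-hand side admits a uniform negative upper bound $-\theta$ with $\theta=\theta(\veps,l,\varkappa)>0$.

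For the second claim I would apply Jensen's inequality twice (first to $y\mapsto y^{\varkappa}$, then to $\log$):
\begin{equation*}
\bigl(\E^{v,w}_x[\widetilde\sW(Z_T)]\bigr)^{\varkappa}\;\le\; \E^{v,w}_x[\widetilde\sW(Z_T)^{\varkappa}]\;=\;\E^{v,w}_x[\log\sW_{\overline v}(Z_T)]\;\le\;\log\E^{v,w}_x[\sW_{\overline v}(Z_T)],
\end{equation*}
reducing the task to an exponential estimate $\E^{v,w}_x[\sW_{\overline v}(Z_T)]\le\sW_{\overline v}(x)\,e^{KT}$. This follows from It\^o's formula once one verifies $\widehat\Lg^{v,w}\sW_{\overline v}\le K\sW_{\overline v}$ on $\RR^d$ for some $K=K(l,\veps,\beta)$: on $B_l^c$ the inequality is just $(\lambda^*_{\overline v}-\veps_0 h^{\overline v})\sW_{\overline v}\le \lambda^*_{\overline v}\sW_{\overline v}$, while on the compact set $B_l$ it is a routine pointwise bound using boundedness of $v,w$ and $\cC^2$-smoothness of $\sW_{\overline v}$. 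Combining the Jensen and It\^o estimates gives $T^{-1}\E^{v,w}_x[\widetilde\sW(Z_T)]\le T^{-1}(\log\sW_{\overline v}(x)+KT)^{1/\varkappa}$, which tends to $0$ as $T\to\infty$ because $1/\varkappa<1/2$ for $\varkappa>2$.

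The hard part is extracting a strictly positive $\theta$ in the first claim: the formal $0\cdot(-\infty)$ structure of $\mathfrak W^{1/\varkappa-1}\,\Lg^{\overline v}\mathfrak W$ at infinity means that $\mathfrak W^{1/\varkappa-1}\to 0$ while $\Lg^{\overline v}\mathfrak W\to-\infty$, so one cannot simply drop the squared-gradient term and conclude from $\lambda^*_{\overline v}-\veps_0 h^{\overline v}\to-\infty$ alone. Keeping $-\tfrac12\|\Sigma\transp\nabla\mathfrak W\|^{2}$ inside the bracket, and enlarging $l$ as needed, is what forces the product to remain uniformly below a strictly negative constant on $B_l^c$; this is the place where the standing hypothesis $\varkappa\ge 2$ is used in an essential way.
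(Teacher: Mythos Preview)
Your treatment of the first claim is essentially the paper's, with a welcome simplification: you observe that $w(x)=0$ and $v(x)=\overline v(x)$ on $B_l^c$ by the very definitions of $\Wsm(l)$ and $\Usm^{*,\beta}(l)$, so the inequality reduces to bounding $\Lg^{\overline v}\widetilde\sW$. The paper instead retains the $(\Sigma w)\cdot\nabla\widetilde\sW$ term and handles it by a Cauchy--Schwarz-type bound, which is unnecessary on $B_l^c$. Both arguments arrive at the same expression via the chain rule and the eigenvalue identity of Corollary~\ref{cor-stable}, and both are equally sketchy at the final step: you correctly flag the $0\cdot(-\infty)$ indeterminacy, but your assertion that keeping $-\tfrac12\|\Sigma^{\mathsf T}\nabla\mathfrak W\|^2$ inside the bracket ``forces the product to remain uniformly below a strictly negative constant'' is not justified---nothing a priori prevents $\veps_0 h^{\overline v}-\lambda^*+\tfrac12\|\Sigma^{\mathsf T}\nabla\mathfrak W\|^2$ from growing more slowly than $\mathfrak W^{1-1/\varkappa}$. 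The paper has the same gap (it merely invokes inf-compactness), so this is a shared weakness rather than an error specific to your argument. Incidentally, the non-positivity of the second chain-rule term only requires $\varkappa\ge 1$, not $\varkappa\ge 2$.

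For the second claim your route is genuinely different and more direct. The paper first applies It\^o--Krylov together with the first claim at $\varkappa=2$ to obtain $\limsup_{T\to\infty}T^{-1}\E^{v,w}_x[\widetilde\sW_{2,\overline v}(Z_T)]<\infty$, and then bootstraps to $\varkappa>2$ via $\widetilde\sW_{\varkappa,\overline v}\in\sorder(\widetilde\sW_{2,\overline v})$ and \cite[Lemma~3.7.2(ii)]{arapostathis2012ergodic}. Your Jensen-plus-exponential-It\^o argument avoids the bootstrap entirely and never invokes the first claim; in fact it yields the conclusion for any $\varkappa>1$ (since $T^{-1}(KT)^{1/\varkappa}\to 0$ once $1/\varkappa<1$). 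Two small repairs are needed: $\sW_{\overline v}$ is only in $W^{2,p}_{\mathrm{loc}}$, not $\cC^2$, so on $B_l$ the bound $\widehat\Lg^{v,w}\sW_{\overline v}\le K\sW_{\overline v}$ should be obtained by writing $\Lg^v\sW_{\overline v}=\Lg^{\overline v}\sW_{\overline v}+(b^v-b^{\overline v})\cdot\nabla\sW_{\overline v}$ and using the eigenvalue identity for the first term; and for $\widetilde\sW=(\log\sW_{\overline v})^{1/\varkappa}$ to be globally defined (as required in \eqref{eq-exp-zero}) one should first normalize $\sW_{\overline v}$ by its positive infimum so that $\log\sW_{\overline v}\ge 0$.
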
 
\begin{proof} To begin with, for $v\in\Usm^{*,\beta}(l)$ and $w\in \Wsm(l)$ we substitute $\sW_{\overline v}(x)= \exp\big(\big(\widetilde \sW (x)\big)^\varkappa\big)$ in the left hand side of~\eqref{eq-h-wv} and simplify to get
\begin{align*}
\frac{1}{\sW_{\overline v}(x)}&\Big(\Lg^{v}\sW_{\overline v}(x) +\veps_0 h^v(x) \sW_{\overline v}(x)\Big)\\
&= {\varkappa\big(\widetilde \sW(x)\big)^{\varkappa-1}} \Lg^v \widetilde \sW(x) + \frac{1}{2}\Big(\varkappa^2\big(\widetilde \sW(x)\big)^{2\varkappa-2} +{\varkappa(\varkappa-1)\big(\widetilde \sW(x)\big)^{\varkappa-2}}\Big)\|\Sigma(x)\transp\nabla \widetilde \sW(x)\|^2\\
& \qquad + \veps_0 h^v(x)\,.
\end{align*}
Therefore, from~\eqref{eq-h-wv} and the above display, we have
\begin{align*}
&\Lg^v \widetilde \sW(x) +\big(\Sigma(x)w(x)\big)\cdot  \nabla \widetilde \sW(x)\\
& =- \frac{1}{2 {\varkappa\big(\widetilde \sW(x)\big)^{\varkappa-1}}}\Big(\varkappa^2\big(\widetilde \sW(x)\big)^{2\varkappa-2} +{\varkappa(\varkappa-1)\big(\widetilde \sW(x)\big)^{\varkappa-2}}\Big)\|\Sigma(x)\transp\nabla \widetilde \sW(x)\|^2\\
&\qquad - \frac{\big(\veps_0 h^v(x)-\lambda^*_v [\veps_0 h^v]\big)}{{\varkappa\big(\widetilde \sW(x)\big)^{\varkappa-1}}} +\big(\Sigma(x) w(x)\big)\cdot  \nabla \widetilde \sW(x)\\
&\leq  - \frac{1}{2 {\varkappa}}\Big(\varkappa^2\big(\widetilde \sW(x)\big)^{\varkappa-1} +{\varkappa(\varkappa-1)\big(\widetilde \sW(x)\big)^{-1}}\Big)\|\Sigma(x)\transp\nabla \widetilde \sW(x)\|^2\\
&\qquad - \frac{\big(\veps_0 h^v(x)-\lambda^*_v [\veps_0 h^v]\big)}{{\varkappa\big(\widetilde \sW(x)\big)^{\varkappa-1}}} + 2\|w(x)\|^2 + 2 \|\Sigma(x)\transp \nabla \widetilde \sW(x)\|^2\,.
\end{align*}
From the inf-compactness of $\sW_{\overline v}$ (and thereby, the inf-compactness of $\widetilde \sW$), we can ensure that outside a large enough closed ball, the expression in the second inequality above is strictly lesser than any given $-\theta$ (for $\theta>0$).

To prove~\eqref{eq-exp-zero}, we first apply  It\^o-Krylov's lemma to $\widetilde \sW_{2,\overline v}(Z_t)$ to obtain 
$$ \limsup_{T\to\infty}\frac{1}{T}\E_x^{v,w}\big[\widetilde \sW_{2,\overline v}(Z_T)\big]<\infty\,.$$
For $\varkappa>2$, we note that $$\limsup_{\|x\|\to\infty}\frac{\widetilde \sW_{\varkappa,\overline v}(x)}{ \widetilde \sW_{2,\overline v}(x) }=0\,.$$
From \cite[Lemma 3.7.2(ii)]{arapostathis2012ergodic},  we obtain~\eqref{eq-exp-zero}. This completes the proof.
\end{proof}

The above lemma verifies the conditions of Theorems 4.5 and 4.6 of \cite{borkar1992stochastic}. Below, we state the combination of them.

\begin{proposition}\label{prop-2p-game} 
There exist a unique function $\Psi^\veps_l\in W^{2,p}_{\text{loc}}(\RR^d)\cap\srO(\widetilde W_{\varkappa,\overline v}) $, $2\leq p<\infty$, $\varkappa>2$ and a constant $\rho^\veps_l\in \RR$ such that the following hold: { for $\Delta_l$ as defined in~\eqref{def-game-cost},}
\begin{enumerate}
\item [(i)] \begin{align}\label{eq-hjb-game}
&\min_{u\in \bU}\max_{w:\|w\|\leq l} \Big\{\Lg^u \Psi^\veps_l(x) + f^\veps_l (x,u,w)+ \Delta_l(x,w)\cdot \nabla \Psi^\veps_l(x)\Big\} \nonumber\\
&\qquad=\max_{w:\|w\|\leq l} \min_{u\in \bU} \Big\{\Lg^u \Psi^\veps_l(x)+ f^\veps_l(x,u,w)+ \Delta_l(x,w)\cdot \nabla \Psi^\veps_l(x)\Big\}=\rho^\veps_l\,, \text{ for } x\in \RR^d\,. 
\end{align}
\item[(ii)] $\rho^\veps _l=\overline \rho^\veps_l=\underline \rho^\veps_l$. 
\item [(iii)] For any $v\in {\Usm^{*,\beta}}$ and $w\in \Wsm(l)$, 
$$ \lim_{T\to\infty} \frac{1}{T}\E_x^{v,w}\big[ \Psi^\veps_l(Z_T)\big]=0 \text{ and } \lim_{R\to\infty}\E_x^{v,w}\big[\Psi^{\veps}_l(Z_{T\wedge \tau_R})\big]=\E_x^{v,w}\big[\Psi^{\veps}_l(Z_{T})\big]\,.$$
\item [(iv)] $v^*\in {\Usm^{*,\beta}} $ satisfies $\sup_{w\in \Wsm(l)}\Lambda^l_{v^*,w}= \rho^\veps_l$ if and only if for a.e. $x\in \RR^d$,
\begin{align*} \min_{u\in \bU}\max_{w:\|w\|\leq l} \Big\{\Lg^u \Psi^\veps_l (x)&+ f^\veps_l (x,u,w)+ \Delta_l(x,w)\cdot \nabla \Psi^\veps_l(x)\Big\}\\
&= \max_{w:\|w\|\leq l} \Big\{\Lg^{v^*} \Psi^\veps_l(x) + f^{v^*,\veps}_l (x,w)+ \Delta_l(x,w)\cdot \nabla \Psi^\veps_l(x)\Big\}\,.\end{align*}
\item [(v)] $w^*\in \Wsm(l)$ satisfies $\inf_{v\in {\Usm^{*,\beta}}} \Lambda^l_{v,w^*}=\rho^\veps_l$ if and only if for a.e. $x\in \RR^d$,
\begin{align*} \min_{u\in \bU} \Big\{\Lg^u \Psi^\veps_l(x)&+ f^\veps_l(x,u,w^*(x))+ \Delta_l(x,w^*(x))\cdot \nabla \Psi^\veps_l(x)\Big\}\\
&=\max_{w:\|w\|\leq l} \min_{u\in \bU} \Big\{\Lg^u \Psi^\veps_l(x)+ f^\veps_l(x,u,w)+ \Delta_l(x,w)\cdot \nabla \Psi^\veps_l(x)\Big\}\,.\end{align*}
Moreover, { if $w^*$ satisfies the above display, then it is} bounded and continuous on $\RR^d$.
\end{enumerate}
\end{proposition}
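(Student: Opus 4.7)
The plan is to deduce the proposition from Theorems~4.5 and~4.6 of \cite{borkar1992stochastic}, which characterize the value and saddle-point structure of ergodic TP-ZS games with compact-space-valued strategies under a suitable uniform Foster--Lyapunov drift condition. Via the equivalent reformulation described in the remark preceding the proposition (replacing $(b,f^{v,\veps}_l)$ by $(b_l,\widehat f^{v,\veps}_l)$), the controls range over all of $\Usm$ and $\Wsm(l)$, the latter being a compact-space-valued class, and the dynamics outside $B_l$ are frozen at $\Lg^{v_0}$. The crucial stability hypothesis required by Borkar's framework is exactly what is furnished by the preceding lemma: for $\varkappa>2$, the Lyapunov function $\widetilde\sW=\widetilde\sW_{\varkappa,v_0}$ satisfies \eqref{eq-lyap-2p-game} uniformly over $(v,w)\in\Usm^{*,\beta}(l)\times\Wsm(l)$, together with the asymptotic negligibility \eqref{eq-exp-zero}, which are precisely the conditions needed to guarantee uniform tightness of the mean empirical measures of $Z$ under any pair of admissible strategies.

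The Isaacs saddle-point condition required for the min-max/max-min equality holds automatically, because the Hamiltonian separates additively in $(u,w)$:
\[
\Lg^u\Psi(x) + f^\veps_l(x,u,w) + \Delta_l(x,w)\cdot\nabla\Psi(x) = \bigl[\Lg^u\Psi(x) + r^\veps(x,u)\wedge L^*\bigr] + \bigl[\Delta_l(x,w)\cdot\nabla\Psi(x) - \tfrac{1}{2}\|\chi_l(x)w\|^2\bigr],
\]
where the two brackets depend only on $u$ and only on $w$ respectively. Applying \cite[Theorem~4.5]{borkar1992stochastic} then produces a pair $(\Psi^\veps_l,\rho^\veps_l)\in(W^{2,p}_{\text{loc}}(\RR^d)\cap\srO(\widetilde\sW))\times\RR$ solving the HJB--Isaacs equation \eqref{eq-hjb-game} and the identity $\rho^\veps_l=\overline\rho^\veps_l=\underline\rho^\veps_l$, which is parts (i) and (ii). Uniqueness is the standard comparison argument: given two candidate solutions, pitting the maximizing strategy of one against the minimizing strategy of the other and using the Lyapunov estimate to pass to the limit $T\to\infty$ forces the eigenvalues to coincide and the functions to differ by a constant, which a normalization at the origin pins down.

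For (iii), the asymptotic vanishing $T^{-1}\E[\Psi^\veps_l(Z_T)]\to 0$ follows from the growth bound $\Psi^\veps_l\in\srO(\widetilde\sW_{\varkappa,v_0})$ combined with \eqref{eq-exp-zero}, while the localization identity follows from It\^o--Krylov applied to $\Psi^\veps_l(Z_{t\wedge\tau_R})$, with uniform integrability controlled by the Lyapunov inequality. Parts (iv) and (v) are the standard verification arguments of stochastic control: integrating the HJB equation against a candidate strategy, invoking the Dynkin identity from (iii), dividing by $T$, and letting $T\to\infty$ yields the necessary and sufficient optimality conditions. In (v), solving the inner quadratic maximization over $w$ explicitly gives $w^*(x)=\chi_l(x)\Sigma(x)^{\mathsf{T}}\nabla\Psi^\veps_l(x)$; since $\Psi^\veps_l\in W^{2,p}_{\text{loc}}(\RR^d)$ with $p>d$, Sobolev embedding makes $\nabla\Psi^\veps_l$ locally H\"older continuous, and the compact support of $\chi_l$ yields a bounded, continuous $w^*$ on $\RR^d$. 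The main technical obstacle is ensuring that the moment and tightness conditions underpinning Borkar's analysis are compatible with the non-compactness of $\RR^d$; this is precisely where the Lyapunov estimate preceding the proposition is essential, and with it the hypotheses of \cite[Theorems~4.5--4.6]{borkar1992stochastic} are met and the proof goes through cleanly.
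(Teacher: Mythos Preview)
Your proposal is correct and follows the same approach as the paper: the paper simply states that the preceding lemma ``verifies the conditions of Theorems~4.5 and~4.6 of \cite{borkar1992stochastic}'' and presents the proposition as a direct restatement of those results, while you have elaborated on why the hypotheses (uniform Foster--Lyapunov drift, Isaacs condition via additive separation, growth/tightness) are met and what each part of the conclusion corresponds to. One minor slip: your explicit formula $w^*(x)=\chi_l(x)\Sigma(x)^{\mathsf T}\nabla\Psi^\veps_l(x)$ is not quite the unconstrained maximizer (which is $\chi_l(x)^{-1}\Sigma(x)^{\mathsf T}\nabla\Psi^\veps_l(x)$ where $\chi_l>0$, possibly clipped to the ball of radius $l$), but the boundedness and continuity conclusion still holds by the same Sobolev embedding and compact-support argument you give.
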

We use the above result to prove the following result {concerning} $\widetilde V^\veps$ and $\lsm[r^\veps]$.

\smallskip

		\begin{theorem}\label{thm-lin-rep-pert}
For any $R>0$ and $x\in B_R^c$,	the following statements hold.
\begin{itemize}
\item[(i)] $\lsm[r^\veps]$ has the following  characterizations:
\begin{equation}\label{eq-alt-1}
\begin{aligned}
\lsm[r^\veps] &= \lim_{l\to \infty}\rho^\veps_l \\
&=\sup_{w\in \Wsm}\inf_{v\in {\Usm^{*,\beta}}} \Lambda_{v,w}[r^\veps] \\
& =\lim_{l\to\infty}\sup_{w\in \Wsm(l)}\inf_{v\in {\Usm^{*,\beta}}} \Lambda_{v,w}[r^\veps]\,.
\end{aligned}
\end{equation}
\item[(ii)]  For $l>0$, $v\in\Usm^{o,\veps}$ and $w\in \Wsm(l)$,
\begin{align}\label{eq-lin-rep}
		\widetilde V^\veps(x)&\geq \E_x^{v,w}\Big[\int_0^{\widecheck \tau_R} \Big(r^{\veps,v}(Z_t)-\frac{1}{2}\|w(Z_t)\|^2- \lsm[r^\veps]\Big)\D t+\widetilde V^\veps(Z_{\widecheck \tau_R})\Big]\nonumber\,.		\end{align}
		
\end{itemize}

\end{theorem}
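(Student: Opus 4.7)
My plan is a logarithmic transformation together with an It\^o--Krylov computation. For $v\in\Usm^{o,\veps}$ the HJB equation gives $\Lg^v V^\veps+r^{\veps,v}V^\veps=\lsm[r^\veps]V^\veps$. Setting $\widetilde V^\veps=\log V^\veps$ and $\omega^\veps=\Sigma\transp\nabla\widetilde V^\veps$, the chain rule converts this into the eikonal-type identity
\begin{equation*}
\Lg^v\widetilde V^\veps(x)+\tfrac12\|\omega^\veps(x)\|^2=\lsm[r^\veps]-r^{\veps,v}(x).
\end{equation*}
Since $\widehat\Lg^{v,w}f=\Lg^vf+(\Sigma w)\cdot\nabla f$ and Young's inequality gives $w\cdot\omega^\veps\le\tfrac12\|w\|^2+\tfrac12\|\omega^\veps\|^2$, this yields the pointwise differential inequality
\begin{equation*}
\widehat\Lg^{v,w}\widetilde V^\veps(x)\;\le\;\lsm[r^\veps]-r^{\veps,v}(x)+\tfrac12\|w(x)\|^2, \qquad w\in\Wsm(l).
\end{equation*}
Applying It\^o--Krylov to $\widetilde V^\veps(Z_{t\wedge\widecheck\tau_R})$ under $(v,w)$ starting from $x\in\Bar B_R^c$ and rearranging yields the claimed inequality on the horizon $t\wedge\widecheck\tau_R$. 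To pass $t\to\infty$, I use that $w\in\Wsm(l)$ makes the generator of $Z$ agree with that of $X$ outside $B_l$; Corollary~\ref{cor-stable} then provides positive recurrence and the Foster--Lyapunov function $\sW_v$, so $\widecheck\tau_R<\infty$ a.s.\ with $\E_x^{v,w}[\widecheck\tau_R]<\infty$. Monotone convergence, applied separately to $\int_0^{\widecheck\tau_R}r^{\veps,v}(Z_s)\D s$ (non-negative) and to $\int_0^{\widecheck\tau_R}\|w(Z_s)\|^2\D s$ (dominated by $l^2\widecheck\tau_R$), together with boundedness of $\widetilde V^\veps$ on $\partial B_R$ and a uniform-integrability control on $\widetilde V^\veps(Z_{t\wedge\widecheck\tau_R})$ via $\sW_v$, closes the passage to the limit.

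\textbf{Part (i).} Proposition~\ref{prop-2p-game}(ii) already supplies the finite-level minimax identity $\rho^\veps_l=\overline\rho^\veps_l=\underline\rho^\veps_l$, so the remaining task is to identify $\lim_{l\to\infty}\rho^\veps_l$ with $\lsm[r^\veps]$ and to swap the truncated cost $f_l^{v,\veps}$ for $r^{\veps,v}-\tfrac12\|w\|^2$ in the limit. The upper bound stems from part~(ii): picking $v^*\in\Usm^{o,\veps}$, dividing the inequality of~(ii) by $T$, sending $T\to\infty$, and invoking the tightness of MEMs from Lemma~\ref{lem-comp-X-w-pert} gives $\Lambda_{v^*,w}[r^\veps]\le\lsm[r^\veps]$ for every $w\in\Wsm(l)$, hence
\begin{equation*}
\sup_{w\in\Wsm}\inf_{v\in\Usm^{*,\beta}}\Lambda_{v,w}[r^\veps]\;\le\;\sup_{w\in\Wsm}\Lambda_{v^*,w}[r^\veps]\;=\;\lsm[r^\veps]
\end{equation*}
by Lemma~\ref{lem-sup-pert}. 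A cut-off approximation $v^*_l\in\Usm^{*,\beta}(l)$, equal to $v^*$ on $B_l$ and to the fixed reference $v_0$ off $B_l$, transfers this estimate into the game's domain, yielding $\rho^\veps_l\le\lsm[r^\veps]+o_l(1)$ once the error from replacing $f_l^{v^*_l,\veps}$ by $r^{\veps,v^*_l}-\tfrac12\|w\|^2$ is absorbed: the $L^*$-truncation error vanishes by the exponential uniform integrability of Lemma~\ref{lem-ui}, and the $\chi_l$-error, supported on $B_l\setminus B_{l/2}$, vanishes under the tight MEMs. For the matching lower bound, note $\rho^\veps_l=\underline\rho^\veps_l\le\sup_{w\in\Wsm(l)}\inf_{v\in\Usm^{*,\beta}}\Lambda_{v,w}[r^\veps]$ after a symmetric truncation-removal argument. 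Sandwiching these quantities between $\rho^\veps_l-o_l(1)$ and $\lsm[r^\veps]$ and letting $l\to\infty$ delivers all three equalities simultaneously.

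\textbf{Main obstacle.} The crux is the cut-off step bridging $\Usm^{*,\beta}$ and its subclass $\Usm^{*,\beta}(l)$ over which the zero-sum game is posed. Because the ERSC cost is nonlinear in $v$, one cannot invoke linearity directly; instead, I plan to work through the variational representation of Proposition~\ref{prop-var-cost} and the ergodic-occupation-measure analysis of Lemma~\ref{lem-cec-cost}, using the uniform-in-$l$ tightness of MEMs from Lemma~\ref{lem-comp-X-w-pert} and the boundedness/continuity of optimal maximizing strategies supplied by Proposition~\ref{prop-2p-game}(v) to control the $v\mapsto v_l$ error uniformly in $w\in\Wsm(l)$. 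A secondary but technical point is verifying the uniform integrability of $\widetilde V^\veps(Z_{t\wedge\widecheck\tau_R})$ in part~(ii), which requires comparing the growth of $\widetilde V^\veps$ against $\sW_v$ via the eigenfunction bounds implicit in Remark~\ref{rem-eigen-value-pert}.
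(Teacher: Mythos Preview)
Your plan for part~(ii) is essentially the paper's: log-transform the HJB, obtain the pointwise inequality $\widehat\Lg^{v,w}\widetilde V^\veps\le\lsm[r^\veps]-r^{\veps,v}+\tfrac12\|w\|^2$, and integrate. The paper localizes via the exit time $\tau_{\hat R}$ of a large ball and sends $\hat R\to\infty$, whereas you localize in time and send $t\to\infty$. Your route works, but your stated reason for the boundary-term passage is off: you do not need any comparison of $\widetilde V^\veps$ with $\sW_v$. Theorem~\ref{thm-HJB-pert} already gives $\inf_x V^\veps>0$, so $\widetilde V^\veps$ is bounded below, and Fatou applied to $\widetilde V^\veps(Z_{t\wedge\widecheck\tau_R})-\inf\widetilde V^\veps\ge0$ yields $\liminf_t\E[\widetilde V^\veps(Z_{t\wedge\widecheck\tau_R})]\ge\E[\widetilde V^\veps(Z_{\widecheck\tau_R})]$, which is exactly the direction you need. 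No uniform integrability via $\sW_v$ is required, and indeed no such domination is available from Remark~\ref{rem-eigen-value-pert}.

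For part~(i) your sandwich does not close. Both of your displayed inequalities, $\rho^\veps_l\le\lsm[r^\veps]+o_l(1)$ via the cut-off $v^*_l$, and $\rho^\veps_l=\underline\rho^\veps_l\le\sup_{w\in\Wsm(l)}\inf_{v}\Lambda_{v,w}[r^\veps]+o_l(1)$, are upper bounds on $\rho^\veps_l$; combined with the trivial $\sup_w\inf_v\le\inf_v\sup_w=\lsm[r^\veps]$ they give only $\limsup_l\rho^\veps_l\le\lsm[r^\veps]$. You are missing the lower bound $\liminf_l\rho^\veps_l\ge\lsm[r^\veps]$, without which none of the three equalities follows. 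The paper supplies this missing half by the observation that the $l$ in Lemma~\ref{lem-cec-cost}(i) can be chosen \emph{uniformly} over $v\in\Usm^{*,\beta}$, because the constants in Lemmas~\ref{lem-tail-est} and~\ref{lem-comp-X-w-pert} depend only on $\veps,\delta,\beta$. Thus for each $\delta>0$ there is $l(\delta)$ with $\Lambda_v[r^\veps]\le\sup_{w\in\Wsm(l)}\Lambda^l_{v,w}+\delta$ for every $v\in\Usm^{*,\beta}$; applying this to any $\delta$-optimal $v_l\in\Usm^{*,\beta}(l)$ for $\overline\rho^\veps_l$ gives $\lsm[r^\veps]\le\Lambda_{v_l}[r^\veps]\le\sup_{w\in\Wsm(l)}\Lambda^l_{v_l,w}+\delta\le\overline\rho^\veps_l+2\delta$. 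This uniformity-in-$v$ is precisely the content you flagged as the ``main obstacle'' but did not actually resolve in your sketch.
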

\begin{proof}
We first prove part (i). Since $\inf_{x}\sup_{y} f(x,y)\geq \sup_y\inf_x f(x,y)$, we have $$ \lsm[r^\veps]=\inf_{v\in{\Usm^{*,\beta}}} \sup_{w\in \Wsm}\Lambda_{v,w}[r^\veps] \implies \lsm[r^\veps]\geq \sup_{w\in \Wsm}\inf_{v\in{\Usm^{*,\beta}}}\Lambda_{v,w}[r^\veps]\,.$$
Therefore, it suffices to show that for any $\delta>0$, there exists $w^*\in \Wsm$ such that 
\begin{equation}\label{eq-sup-inf-rep-1}
\lsm[r^\veps]\leq \inf_{v\in\Usm^{*,\beta}}\Lambda_{v,w^*}[r^\veps]+\delta\,.
\end{equation}
We now show that $ \limsup_{l\to\infty}\rho^\veps_l\leq \lsm[r^\veps]. $
Observe that $\Wsm(l)\subset \Wsm$ for any $l>0$ and  choose $v^\veps$ that is optimal for $\lsm[r^\veps]$. Now, define 
$$ \overline v^\veps_l(x)\doteq \begin{cases} v^\veps(x) &\text{ if $x\in B_l$}\\
v_0 &\text{ if $x\in B_l^c$}\,.
\end{cases}$$
Here, $v_0\in \Usm^{*,\beta}$ is as chosen in  the definition of $\Usm^{*,\beta}(l)$.
$$ \limsup_{l\to\infty} \rho^\veps_l=\limsup_{l\to\infty}\overline \rho^\veps_l \leq \limsup_{l\to\infty} \sup_{w\in \Wsm(l)}\Lambda^l_{v^\veps_l,w}\leq \limsup_{l\to\infty} \sup_{w\in \Wsm}\Lambda^l_{v^\veps_l,w}\leq \lsm[r^\veps]\,. $$
The last inequality above follows from the proof of Lemma~\ref{lem-cec-cost}(i),. Now, let us show that 
\begin{align}\label{eq-sup-inf-pert-2}
\liminf_{l\to\infty}\rho^\veps_l=\liminf_{l\to\infty}\overline\rho^\veps_l \geq \lsm[r^\veps]\,.
\end{align} 
This will also show that for any $\delta>0$, there exists $w^*_l \in \Wsm(l)$ for large enough $l$ such that 
$$ \lsm[r^\veps]\leq \inf_{v\in {\Usm^{*,\beta}}} \Lambda^l_{v,w^*_l}+\delta\,.$$
This is exactly~\eqref{eq-sup-inf-rep-1} which is what we were set out to prove.  To prove~\eqref{eq-sup-inf-pert-2}, we make the observation that from the proof of Lemma~\ref{lem-cec-cost}(i), Lemma~\ref{lem-tail-est} and Lemma~\ref{lem-comp-X-w-pert}, we can choose $l>0$ uniformly in $v\in {\Usm^{*,\beta}}$. In other words, for any $\delta>0$, there exists $l=l(\delta)$ large enough, such that 
\begin{align*}
\Lambda_v[r^\veps]\leq \sup_{w\in \Wsm(l)} \Lambda^l_{v,w}[r^\veps]+\delta, \text{ for $v\in {\Usm^{*,\beta}}$}\,.
\end{align*}
Now choose $v_l\in \Usm^{*,\beta}(l)$ that is $\delta-$optimal {for $\overline \rho^\veps_l$ \emph{i.e.,}} $\sup_{w\in \Wsm(l)}\Lambda^l_{v_l,w}\leq \overline \rho^\veps_l +\delta $. From the above display, we have
$$ \lsm[r^\veps]\leq \Lambda_{v_l}[r^\veps]\leq \sup_{w\in \Wsm(l)}\Lambda^l_{v_l,w}+\delta\leq \overline \rho^\veps_l+\delta\,.$$
Since the  other two equalities of part (i) now follow trivially, we completed the proof of part (i).

We now proceed with proof of part (ii). Since $\rho^\veps_l\to \lsm[r^\veps]$, as $l\to\infty$, using the standard elliptic regularity theory as $\veps\to 0$, uniqueness of $\widetilde V^\veps$ from Theorem~\ref{thm-HJB-pert} will help us conclude that $\Psi^\veps_l\to \widetilde  V^\veps$ strongly in {$W^{1,p}_{\text{loc}}(\RR^d)$, for $p\geq 2$ and in particular for $p>d$. From the continuous embedding of  $ \cC^{1,\gamma}(K)$ in $W^{1,p}(K)$ with $p>d$ and $\gamma=1-\frac{d}{p}$ and for every compact set $K\subset\RR^d$,  we can also conclude that   $\Psi^\veps_l\to \widetilde  V^\veps$ converges  uniformly on compact sets of $\RR^d$.} We already know that $\widetilde V^\veps$ satisfies 
$$ \min_{u\in \bU} \Big\{\Lg^u\widetilde V^\veps (x)+ r^\veps(x,u)+\frac{1}{2}\|\omega^{\veps}(x)\|^2\Big\}=\lsm[r^\veps]\,, \text{ for } x\in \RR^d\,.$$
This in turn can be re-written as 
$$ \min_{u\in \bU}\max_{w\in \RR^d} \Big\{\Lg^u\widetilde V^\veps(x) + r^\veps(x,u)+ (\Sigma (x) w)\cdot \nabla V^\veps(x)-\frac{1}{2}\|w\|^2\Big\}=\lsm[r^\veps]\,, \text{ for } x\in \RR^d\,.$$
Now choose $u\equiv v\in \Usm^{o,\veps}$ and $w=w(\cdot)\in \Wsm(l)$ and this gives us
$$ \Lg^v\widetilde V^\veps(x) + r^{\veps,v}(x)+ \big(\Sigma (x) w(x)\big)\cdot \nabla V^\veps(x)-\frac{1}{2}\|w(x)\|^2\leq \lsm[r^\veps]\,, \text{ for } x\in \RR^d\,.
$$
For $\hat R> R$ {and $x\in B_{\hat R}\setminus B_{R}$}, applying It{\^o}'s formula to the above display, we get
\begin{align*}
\widetilde V^\veps(x) \geq \E_x^{v,w}\Big[\int_0^{\widecheck \tau_{R}\wedge \tau_{\hat R}} \Big( r^{\veps,v}(Z_t)-\lsm[r^\veps]-\frac{1}{2}\|w(Z_t)\|^2 \Big)\D t + \widetilde V^\veps(Z_{\widecheck \tau_{R}\wedge \tau_{\hat R}})\Big]\,.
\end{align*}
From the fact that $w\in \Wsm(l)$, we have $ \lim_{\hat R\to\infty}\E_x^{v,w}\Big[ \widetilde V^\veps(Z_{\widecheck \tau_{R}\wedge \tau_{\hat R}})\Big]= \E_x^{v,w}\Big[ \widetilde V^\veps(Z_{\widecheck \tau_{R}})\Big]$.
This and the application of monotone convergence theorem give the result.
\end{proof}

The following result states that the infimum of the ERSC objective associated with $r^\veps$ is achieved by certain $v\in {\Usm^{*,\beta}}$. Although this result is not used in the rest of the paper and has no relevance in the proof of the main result which is Theorem~\ref{thm-diffusion}, we state and prove it below for the following reason: {to} the best of the authors' knowledge, the existing results in the literature cannot be applied to prove this result. In accordance with the theme of the rest of the paper, we use the tools developed in Section~\ref{sec-var-BM} in the proof below. 
		
		\begin{lemma}\label{lem-cost-equal-pert}$ \lsm[r^\veps]=\Lambda[r^\veps].$ 
		\end{lemma}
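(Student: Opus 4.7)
The inequality $\Lambda[r^\veps]\leq \lsm[r^\veps]$ is immediate from the definitions, so the task is to establish the reverse direction. My plan is to combine the variational representation in Proposition~\ref{prop-var-cost} with the max-min identity for $\lsm[r^\veps]$ in Theorem~\ref{thm-lin-rep-pert}(i), and use the classical conventional ergodic control theory for near-monotone running costs to interchange the infimum over admissible controls with the auxiliary control.

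Fix an arbitrary $w\in \Wsm$. Since $w$ is bounded, Lemma~\ref{lem-u-e-ext-sm} ensures that the extended process $Z$ under $(U,w)$ is well-defined for every $U\in \Uadm^{*,\beta}$, and $w$ (viewed as a feedback along $Z$) qualifies as an element of $\cA$. Applying Proposition~\ref{prop-var-cost} with this single choice of auxiliary control yields
\begin{equation*}
J(x,U)[r^\veps]\;\geq\; \limsup_{T\to\infty}\frac{1}{T}\E^{U,w}_x\!\left[\int_0^T\!\bigl(r^\veps(Z_t,U_t)-\tfrac12\|w(Z_t)\|^2\bigr)\D t\right]
\end{equation*}
for every $U\in \Uadm^{*,\beta}$. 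The right-hand side is a CEC cost for the controlled diffusion $Z$, whose drift $b(z,u)+\Sigma(z)w(z)$ satisfies the local Lipschitz, linear growth, and uniform ellipticity hypotheses of Section~\ref{sec-model} (because $w$ is bounded), and whose effective running cost $r^\veps(z,u)-\tfrac12\|w(z)\|^2$ is inf-compact, since $r^\veps$ is inf-compact and $\tfrac12\|w\|^2$ is bounded. Hence the classical theory of CEC problems under near-monotonicity (see \cite[Theorem~3.4.7]{arapostathis2012ergodic}) applies to this effective control problem and gives
\begin{equation*}
\inf_{U\in \Uadm^{*,\beta}}\limsup_{T\to\infty}\frac{1}{T}\E^{U,w}_x\!\left[\int_0^T\!\bigl(r^\veps(Z_t,U_t)-\tfrac12\|w(Z_t)\|^2\bigr)\D t\right]\;=\; \inf_{v\in \Usm^{*,\beta}} \Lambda_{v,w}[r^\veps].
\end{equation*}

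Taking infimum over $U\in \Uadm^{*,\beta}$ in the variational lower bound above and then supremum over $w\in \Wsm$,
\begin{equation*}
\Lambda[r^\veps]\;\geq\; \sup_{w\in \Wsm}\inf_{v\in \Usm^{*,\beta}} \Lambda_{v,w}[r^\veps]\;=\; \lsm[r^\veps],
\end{equation*}
where the final equality is Theorem~\ref{thm-lin-rep-pert}(i). This, together with the trivial direction, proves the lemma.

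The only delicate point is the application of the near-monotone CEC theory to the extended effective problem. Finiteness of the optimal value is granted by Lemma~\ref{lem-sup-pert}; positive recurrence of the relevant stable Markov controls follows as in Corollary~\ref{cor-stable} (the argument there depends only on the inf-compactness of $\veps_0 h$, which is unaffected by the bounded additive shift $\Sigma w$ in the drift); and the extended drift $b+\Sigma w$ is a uniformly bounded locally Lipschitz perturbation of $b$, so the standard framework transfers verbatim. No new analytic machinery is needed beyond what is already developed in the paper, and the proof runs cleanly in three short steps once these observations are in place.
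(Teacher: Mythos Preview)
Your overall strategy matches the paper's: combine the variational lower bound from Proposition~\ref{prop-var-cost} with the sup--inf identity of Theorem~\ref{thm-lin-rep-pert}(i). The paper executes this by picking the specific bounded \emph{continuous} $w^*$ supplied by Proposition~\ref{prop-2p-game}(v), showing tightness of the mean empirical measures of $(Z,U^*)$ directly, and passing to an ergodic occupation measure; you instead invoke \cite[Theorem~3.4.7]{arapostathis2012ergodic} as a black box for a generic $w$. In spirit these are the same argument---the paper is essentially reproving that CEC result in the particular setting it needs.

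Two points, however, need tightening. First, a general $w\in\Wsm$ is \emph{not} bounded; you must restrict to $w\in\Wsm(l)$ and then use the third line of \eqref{eq-alt-1}. Second---and more substantively---Theorem~3.4.7 delivers the infimum over \emph{all} of $\Usm$, not over $\Usm^{*,\beta}$. Since $\Usm\supseteq\Usm^{*,\beta}$ one only has $\inf_{v\in\Usm}\Lambda_{v,w}[r^\veps]\le\inf_{v\in\Usm^{*,\beta}}\Lambda_{v,w}[r^\veps]$, so your chain yields $\Lambda[r^\veps]\geq\sup_{w}\inf_{v\in\Usm}\Lambda_{v,w}[r^\veps]$, and Theorem~\ref{thm-lin-rep-pert}(i) as stated (with $\Usm^{*,\beta}$) does not close the loop. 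The paper circumvents this by working through the game value $\rho^\veps_l$, whose minimizing class $\Usm^{*,\beta}(l)$ consists of \emph{all} Markov controls agreeing with a fixed $v_0$ outside $B_l$; you would need either to argue that the CEC-optimal $v$ for the extended problem actually lands in $\Usm^{*,\beta}$, or to verify that the identity in Theorem~\ref{thm-lin-rep-pert}(i) remains valid with $\Usm$ in place of $\Usm^{*,\beta}$.
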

		\begin{proof} 
		Since $\lsm[r^\veps]\geq \Lambda[r^\veps]$ trivially, we only show the reverse inequality. To that end, fix $\delta>0$ and choose a $\delta$--optimal $U^*$ for $\Lambda[r^\veps]$.  From Proposition~\ref{prop-var-cost}, it is then clear that 
		\begin{align}\label{eq-1-cost-equal}
		J(x,U^*)[r^\veps]\geq  J(x,U^*,w)[r^\veps]\, ,
	\end{align}
	for $w\in \cA$. Using Theorem~\ref{thm-lin-rep-pert}(i), we now choose large enough $l$ such that 
	\begin{align}\label{eq-opt-cont} \lsm[r^\veps] \leq \sup_{w\in \Wsm(l)}\inf_{v\in \Usm} \Lambda_{v,w}[r^\veps] +\delta\,.
\end{align}
Taking $w=w^*$ from Proposition~\ref{prop-2p-game}(v) in~\eqref{eq-1-cost-equal}, we have
\begin{align}\label{eq-2-cost-equal}
		J(x,U^*)[r^\veps]\geq J(x,U^*,w^*)[r^\veps] \,.
	\end{align}
	Since $w^*\in \Wsm(l)$ and $\sup_{x\in \RR^d}\|w^*(x)\|\leq l$, we can conclude that 
	$$ \sup_{T>0}\frac{1}{T}\E_x^{U^*,w^*}\Big[\int_0^T \|w^*(Z_t)\|^2\D t \Big]\leq l \,.$$
	From here,~\eqref{eq-2-cost-equal} and the definition of $J(x,U^*,w^*)[r^\veps]$, we also have
	\begin{align*}
	\limsup_{T\to\infty}\frac{1}{T}\E_x^{U^*,w^*}\Big[\int_0^Tr^\veps (Z_t,U^*_t)\D t\Big] \leq J(x,U^*)[r^\veps] +l\,.
	\end{align*} 
	Following the arguments of the proof of Lemma~\ref{lem-comp-X-pert}, we get
	\begin{align}\label{eq-3-cost-equal} \limsup_{T\to\infty}\frac{1}{T}\E_x^{U^*,w^*}\Big[\int_0^Th(Z_t,U^*_t)\D t\Big]<\infty\,.\end{align}
	Therefore, the MEMs of $(Z_{[0,T]},U^*_{[0,T]})$ denoted by $\pi_T$ is a tight family of measures in $T>0$. This means that along a subsequence (still denoted by $T$), $\pi_T$ converges weakly to some measure $\pi_*$. From \cite[Lemma 3.4.6]{arapostathis2012ergodic}, $\pi_*$ is an ergodic occupation measure. With the decomposition of the measure, we can write $\pi_*(\D x,\D u)= \mu_v (\D x)v(\D u|x),$ for some $v\in {\Usm^{*,\beta}}$.  
Therefore, from the fact that $r$ is non-negative  and the fact that $w^*$ is bounded and continuous (from Proposition~\ref{prop-2p-game}(v)), we can conclude that 
$$\lim_{T\to\infty}\int_{\RR^d\times \bU} \Big(r^\veps(x,u)-\frac{1}{2}\|w^*(x)\|^2\Big)\D \pi_T(x,u)\geq \int_{\RR^d\times \bU} \Big(r^\veps(x,u)-\frac{1}{2}\|w^*(x)\|^2\Big)\D \pi_*(x,u)\,. $$
From here, together with~\eqref{eq-opt-cont}, we have 
{\begin{align*}
\Lambda[r^\veps] &\geq J(x,U^*)[r^\veps]-\delta\\
& \geq \int_{\RR^d\times \bU} \Big(r^\veps(x,u)-\frac{1}{2}\|w^*(x)\|^2\Big)\D \pi_*(x,u) -2\delta\\
&\geq \inf_{v\in {\Usm^{*,\beta}}} \Lambda_{v,w}[r^\veps]-2\delta\\
&\geq  \sup_{w\in \Wsm(l)}\inf_{v\in {\Usm^{*,\beta}}} \Lambda_{v,w}[r^\veps] -3\delta\\
& \geq \lsm[r^\veps]-3\delta\,.
\end{align*}
In the above, we obtain the fourth inequality  from  the definition.} From the arbitrariness of $\delta>0$, we have the result.
		\end{proof}

\medskip

\section{Analysis of the limiting behavior as $\veps \to 0$ and Proof of  Theorem~\ref{thm-diffusion} }\label{sec-proof-main}
In this section, we provide the proof of Theorem~\ref{thm-diffusion} by analyzing the behavior of $\lsm[r^\veps]$ and $\widetilde V^\veps$ (or equivalently, $V^\veps$) as $\veps \to 0$.  Recall that 
\begin{align*}
J(U,w)[r^\veps]&= \limsup_{T\to\infty}\E_x^{U,w}\Bigg[ \frac{1}{T}\int_0^T \Big( r^\veps(Z_t,U_t) - \frac{1}{2}\|w_t\|^2\Big)\D t\Bigg], \\
\Lambda_{v,w}[r^\veps]&= \limsup_{T\to\infty} \E_x^{v,w}\Bigg[ \frac{1}{T}\int_0^T \Big( r^{\veps,v}\big(Z_t\big) - \frac{1}{2}\|w_t\|^2\Big)\D t\Bigg]\,,
\end{align*}
and similarly, $J(U,w)[r]$ and $\Lambda_{v,w}[r]$. 
 The proof of Theorem~\ref{thm-diffusion} is divided into four subsections each of which contains the proof of one of the four parts of the theorem. 
 Before we begin, we present the following key convergence results that will be repeatedly used in the proof of Theorem~\ref{thm-diffusion}.  
 
 \medskip 
 \subsection{Convergence of the perturbed optimal ERSC cost}\label{sec-proof-cost-limit}
 The following theorem shows that  the perturbed ERSC problem indeed approximates the original ERSC problem in the sense that the perturbed optimal ERSC cost approaches the original optimal ERSC cost.  
 
\begin{theorem}\label{thm-pert-limit} The following  statements hold.
\begin{itemize}
\item[(i)] For $v\in {\Usm^{*,\beta}}$, 
$$ \lim_{\veps\to 0} \Lambda_v[r^\veps]=\Lambda_v[r]\,.$$
\item [(ii)] $$ \lim_{\veps\to 0} \lsm[r^\veps]=\lsm[r]\,.$$
\end{itemize} 

\end{theorem}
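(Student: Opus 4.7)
The plan is to obtain matching upper and lower bounds for $\Lambda_v[r^\veps]$ that are uniform in $v\in \Usm^{*,\beta}$, using the decomposition $r^\veps = (1-\veps/\veps_0)r+\veps h$ together with the uniform estimates from Sections~\ref{sec-est}--\ref{sec-var-BM}, and then to deduce (ii) from (i) by taking the infimum over $v$. For the upper bound, H\"older's inequality with conjugate exponents $p=1/(1-\veps/\veps_0)$ and $q=\veps_0/\veps$ applied to the factorization
$\exp(\int_0^T r^\veps\,\D t) = \exp((1-\veps/\veps_0)\int_0^T r\,\D t)\cdot \exp(\veps\int_0^T h\,\D t)$,
followed by $(1/T)\log$ and $\limsup_T$, gives
$\Lambda_v[r^\veps] \le (1-\veps/\veps_0)\,\Lambda_v[r] + (\veps/\veps_0)\,\Lambda_v[\veps_0 h^v].$
Since Corollary~\ref{cor-inf-comp-finite} furnishes the uniform bound $\Lambda_v[\veps_0 h^v] \le 2+\max\{C_1\wedge C_2,\beta\}$ over $v\in \Usm^{*,\beta}$, sending $\veps\to 0$ yields $\limsup_{\veps\to 0}\Lambda_v[r^\veps] \le \Lambda_v[r]$ for every fixed $v$, and taking $\inf_v$ yields $\limsup_{\veps\to 0}\lsm[r^\veps]\le \lsm[r]$.

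For the lower bound I combine the exact variational identity of Lemma~\ref{lem-var-rep-BM-gen} (at fixed $T$) with the $\delta$-optimal auxiliary control $\widetilde w_T=\widetilde w(\delta,T)\in \cA$ produced by Lemma~\ref{lem-comp-X} for the $\veps=0$ problem. Substituting $\widetilde w_T$ into the variational representation of the $r^\veps$-cost and using $r^{\veps,v}-r^v = \veps(h^v-r^v/\veps_0)\ge -(\veps/\veps_0)h^v$ (valid because $r\le h$), a short rearrangement yields, for every $T$,
$\tfrac{1}{T}\log\E_x^v\bigl[e^{\int_0^T r^\veps\,\D t}\bigr] \ge \tfrac{1}{T}\log\E_x^v\bigl[e^{\int_0^T r\,\D t}\bigr] - \delta - \tfrac{\veps}{\veps_0}\,\E_x^{v,\widetilde w_T}\!\Bigl[\tfrac{1}{T}\!\int_0^T h^v(Z_t)\,\D t\Bigr].$
Passing to $\limsup_T$ (together with $\liminf(-C_T)=-\limsup C_T$) and invoking the bound $\limsup_T \E_x^{v,\widetilde w_T}[\tfrac{1}{T}\int_0^T h^v\,\D t] \le M_3(\delta,\beta)$ from Lemma~\ref{lem-comp-X} yields $\Lambda_v[r^\veps] \ge \Lambda_v[r] - \delta - (\veps/\veps_0)M_3$. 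Sending $\veps\to 0$ and then $\delta\to 0$ proves (i); since $M_3=M_3(\delta,\beta)$ is independent of $v$, taking $\inf_v$ prior to these limits proves (ii).

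The main obstacle, and the reason a direct monotonicity argument based on $r^\veps\ge (1-\veps/\veps_0)r$ is not sufficient, is that the map $c\mapsto \Lambda_v[cr]$ is not a priori known to be left-continuous at $c=1$: convexity alone only yields continuity on the interior of its finiteness domain, and finiteness of $\Lambda_v[cr]$ for $c>1$ is not available. The variational representation coupled with the mean-empirical-measure tightness from Lemma~\ref{lem-comp-X} sidesteps this by controlling the perturbation $r^{\veps,v}-r^v$ directly in expectation along the nearly optimal extended trajectory, with constants depending only on $\delta$ and $\beta$; this $v$-uniformity is precisely what allows (i) to be promoted to (ii).
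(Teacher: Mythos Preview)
Your proof is correct, and in several respects it is cleaner than the paper's own argument.

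For the upper bound $\limsup_{\veps\to 0}\Lambda_v[r^\veps]\le \Lambda_v[r]$, you apply H\"older's inequality directly to the exponential expectation and invoke the uniform bound $\Lambda_v[\veps_0 h^v]\le 2+\max\{C_1\wedge C_2,\beta\}$ from Corollary~\ref{cor-inf-comp-finite}. The paper instead works through the variational representation: it invokes Lemma~\ref{lem-sup-pert} to write $\Lambda_v[r^\veps]=\sup_{w\in\Wsm}\Lambda_{v,w}[r^\veps]$, picks a $\delta$-optimal $w^\veps$ for this supremum, then uses the uniform-in-$\veps$ bound on $\limsup_T T^{-1}\E_x^{v,w^\veps}[\int_0^T h^v(Z_t)\,\D t]$ from Lemma~\ref{lem-comp-X-w-pert} to control the $\veps h$ contribution. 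Your direct H\"older route is more elementary and avoids both Lemma~\ref{lem-sup-pert} and Lemma~\ref{lem-comp-X-w-pert} for this direction.

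For the lower bound, both arguments are essentially the same: choose a $\delta$-optimal auxiliary control for the $r$-problem (yours is the $T$-dependent $\widetilde w_T$ from Lemma~\ref{lem-comp-X}; the paper's $w^*$ in~\eqref{eq-inst-1} is handled the same way), substitute it into the variational representation of the $r^\veps$-cost, and bound $r^{\veps,v}-r^v$ by $-(\veps/\veps_0)h^v$ together with the $M_3$ estimate of Lemma~\ref{lem-comp-X}. Your passage to $\limsup_T$ via $\limsup(b_T-c_T)\ge \limsup b_T-\limsup c_T$ is the right bookkeeping.

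For part~(ii), you exploit that both the H\"older constant and $M_3(\delta,\beta)$ are independent of $v\in\Usm^{*,\beta}$, so taking $\inf_v$ before the limits passes through directly. The paper instead repeats the entire two-sided argument at the level of $\lsm$, choosing optimal $v^\veps\in\Usm^{o,\veps}$ and $\delta$-optimal $v^*$ for $\lsm[r]$ separately; your deduction from the $v$-uniform bounds is more economical and yields the same conclusion.
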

\begin{proof} 
Fix $v\in {\Usm^{*,\beta}}$ and $\delta>0$. Recall that from Lemma~\ref{lem-sup-pert}, we have
\begin{align*} 
\Lambda_v[r^\veps]= \sup_{w\in \Wsm}\Lambda_{v,w}[r^\veps] \,.\end{align*}
Let $w^\veps\in \Wsm$ be such that 
\begin{align*} 
\sup_{w\in \Wsm} \Lambda_{v,w}[r^\veps]\leq \Lambda_{v,w^\veps}[r^\veps] +\delta\,. \end{align*}
From Lemma~\ref{lem-comp-X-w-pert}, we know that 
\begin{align}\label{eq-thm-pert-limit-1}\sup_{0<\veps<\veps_0} \limsup_{T\to\infty} \frac{1}{T} \E_x^{v,w^\veps} \Big[\int_0^T h^v(Z_t)\D t\Big]\leq {M_4}\,.\end{align}
From the definition of $r^\veps$, we have
\begin{align*}
\Lambda_v[r^\veps]& \leq \Lambda_{v,w^\veps}[r^\veps] +\delta\\
&\leq \Lambda_{v,\omega^\veps}[r] 
       + \veps  \limsup_{T\to\infty}  \frac{1}{T} \E_x^{v,w^\veps} \Big[\int_0^T h^v(Z_t)\D t\Big] +\delta\\
 &\leq \Lambda_v[r] + \veps { M_4}  +\delta \,.             
\end{align*}
{To get the third line, we use~\eqref{eq-thm-pert-limit-1}.} Arbitrariness of $\delta$ gives us $$ \limsup_{\veps\to 0} \Lambda_v[r^\veps] \leq \Lambda_v[r]\,.$$
To prove the reverse inequality, choose the $w^*\in \cA$ such that
\begin{align}\label{eq-inst-1} {\Lambda_v[r]=}\limsup_{T\to\infty}\sup_{w\in \cA} &\frac{1}{T}\E_x^{v,w}\Big[\int_0^T\Big(r^v (Z_t)-\frac{1}{2}\|w_t\|^2\Big)\D t\Big]{\leq  \Lambda_{v,w^*}[r]+\delta}\,. \end{align}
Applying Proposition~\ref{prop-var-cost} (in particular,~\eqref{eqn-erg-cont-var-rep-pert}) for $v\in \Usm^{*,\beta}$, we  have
\begin{align*}
\Lambda_v[r^\veps]&=\limsup_{T\to\infty} \sup_{w\in \cA} \frac{1}{T}\E_x^{v,w}\Big[\int_0^T\Big(r^{\veps,v} (Z_t)-\frac{1}{2}\|w_t\|^2\Big)\D t\Big]\\
&=\limsup_{T\to\infty} \sup_{w\in \cA} \frac{1}{T}\E_x^{v,w}\Big[\int_0^T\Big(\big(1-\frac{\veps}{\veps_0}\big)r(Z_t)+ \veps h (Z_t)-\frac{1}{2}\|w_t\|^2\Big)\D t\Big]\\
&\geq \limsup_{T\to\infty} \sup_{w\in \cA} \frac{1}{T}\E_x^{v,w}\Big[\int_0^T\Big(\big(1-\frac{\veps}{\veps_0}\big)r(Z_t)-\frac{1}{2}\|w_t\|^2\Big)\D t\Big]\\
&\geq \Lambda_{v,w^*}[r]   -\frac{\veps}{\veps_0}  \limsup_{T\to\infty} \frac{1}{T}\E_x^{v,w^*}\Big[\int_0^Tr^v(Z_t)\D t\Big]\\
&\geq \Lambda_v[r]-\frac{\veps}{\veps_0}  \limsup_{T\to\infty} \frac{1}{T}\E_x^{v,w^*}\Big[\int_0^Tr^v(Z_t)\D t\Big]-\delta\,.
\end{align*}
 In the above, to get the second line, we use the definition of $r^{\veps}$ (see~\eqref{def-r-cost-pert}) and the fact that $r^{\veps,v}(x)=r^\veps\big(x,v(x)\big)$; to get the third line, we use the fact that $h\geq 0$; to get the fourth line, we use the definition of $\Lambda_{v,w^*}[r]$ and finally, to get the last line, we use~\eqref{eq-inst-1}. Using from Lemma~\ref{lem-comp-X}, we have 
$$ \limsup_{T\to\infty} \frac{1}{T} \E_x^{v,w^*} \Big[\int_0^T h^v(Z_t)\D t\Big]\leq {M_3}\,.$$
Since $r\leq h$, taking $\veps\to 0$, gives us 
$$ \liminf_{\veps\to 0} \Lambda_v[r^\veps]\geq \Lambda_v[r]{-\delta}\,.$$
{Arbitrariness of $\delta$ gives us the reverse inequality.}

\smallskip
We next prove part (ii) in a similar fashion. 
To begin with, fix $\delta>0$ and choose a $v^\veps\in \Usm^{o,\veps}$  that is optimal for $\lsm[r^\veps]$: 
\begin{align}\label{eq-cost-equiv-1}
\lsm[r^\veps]&= \Lambda_{v^\veps}[r^\veps]=  \sup_{w\in \Wsm}\Lambda_{v^\veps,w}{[r^\veps]}\,. 
\end{align}
We then choose $w^\veps\in \Wsm$ such that 
\begin{align*}\Lambda_{v^\veps}[{r^\veps}]=&\sup_{w\in \Wsm}\Lambda_{v^\veps,w}[{r^\veps}]\leq\Lambda_{v^\veps,w^\veps}[{r^\veps}] +\delta\,.\end{align*}
Evaluating~\eqref{eq-cost-equiv-1} with $w^\veps\in \Wsm$, we have
\begin{align*}
\lsm[r^\veps]&\geq \Lambda_{v^\veps,w^\veps}[r^\veps] \\
&\geq\Lambda_{v^\veps,w^\veps}[r]   -\frac{\veps}{\veps_0} \limsup_{T\to\infty}\frac{1}{T}\E_x^{v^\veps,w^\veps }\Big[\int_0^Tr^{v^\veps}(Z_t)\D t\Big]\\
&\geq \Lambda_{v^\veps}[r]-\delta -\frac{\veps}{\veps_0} \limsup_{T\to\infty}\frac{1}{T}\E_x^{v^\veps,w^\veps }\Big[\int_0^Tr^{v^\veps}(Z_t)\D t\Big]\\
&\geq \lsm[r]-\delta -\frac{\veps}{\veps_0} \limsup_{T\to\infty}\frac{1}{T}\E_x^{v^\veps,w^\veps }\Big[\int_0^Tr^{v^\veps}(Z_t)\D t\Big]\,.
\end{align*}
Again using Lemma~\ref{lem-comp-X-w-pert}, we have 
$$\sup_{0<\veps<\veps_0} \limsup_{T\to\infty} \frac{1}{T} \E_x^{v^\veps ,w^\veps} \Big[\int_0^T h^{v^\veps}(Z_t)\D t\Big]\leq {M_4}$$
and consequently, 
$$ \liminf_{\veps\to 0} \lsm[r^\veps]\geq \lsm[r]-\delta\,.$$
To prove the reverse inequality, choose $v^*\in {\Usm^{*,\beta}}$ that is $\delta$--optimal for $\lsm[r]$. Then we have
\begin{align*}
& \lsm[r^\veps]\leq \Lambda_{v^*}[r^\veps] = \sup_{w\in \Wsm}\Lambda_{v^*,w}[r^\veps]\,.
\end{align*}
With $w^\veps$ being the $\delta$--optimal for $\Lambda_{v^*}[r^\veps]$, we obtain
\begin{align*}
\lsm[r^\veps]&\leq \Lambda_{v^*,w^\veps}[r^\veps]{+\delta}\\
&\leq \Lambda_{v^*,w^\veps}[r] + \veps  \limsup_{T\to\infty}\frac{1}{T}\E_x^{v^*,w^\veps}\Big[\int_0^Th^{v^*}(Z_t)\D t\Big]+\delta\\
&\leq  \limsup_{T\to\infty} \sup_{w\in \cA} \Lambda_{v^*,w,T}[r]+ \veps  \limsup_{T\to\infty}\frac{1}{T}\E_x^{v^*,w^\veps}\Big[\int_0^Th^{v^*}(Z_t)\D t\Big]+\delta\\
&\leq \Lambda_{v^*}[r] + \veps  \limsup_{T\to\infty}\frac{1}{T}\E_x^{v^*,w^\veps}\Big[\int_0^Th^{v^*}(Z_t)\D t\Big]+\delta\\
&\leq \lsm[r]+ \veps  \limsup_{T\to\infty}\frac{1}{T}\E_x^{v^*,w^\veps}\Big[\int_0^Th^{v^*}(Z_t)\D t\Big]+2\delta\,. 
\end{align*}
Again using Lemma~\ref{lem-comp-X-w-pert}, we have
$$ \sup_{0<\veps<\veps_0} \limsup_{T\to\infty}\frac{1}{T}\E_x^{v^*,w^\veps}\Big[\int_0^Th^{v^*}(Z_t)\D t\Big]\leq {M_4}\,,$$
which consequently gives us
$$ \limsup_{\veps\to 0} \lsm[r^\veps]\leq \lsm[r]+2\delta\,.$$
Arbitrariness of $\delta$ gives us the result.
\end{proof}

Using Theorem~\ref{thm-pert-limit} we can infer
 that the ERSC cost associated with $r$ under $v\in {\Usm^{*,\beta}}$ can be written as a CEC problem where we maximize the the running cost $r^v(x)-\frac{1}{2}\|w\|^2$ associated with the extended process $Z$ over auxiliary controls $w\in \Wsm$. 
Moreover, we also show that there exists a nearly optimal stationary Markov control $w(\cdot)\in \Wsm $ such that $w$ vanishes outside a large compact set \emph{i.e.,} $w\in \Wsm(l)$ for large  $l$.

\begin{proposition}\label{prop-sup-v} For $v\in {\Usm^{*,\beta}}$, 
\begin{align*}
\Lambda_v[r] &=\sup_{w\in \Wsm} \Lambda_{v,w}[r] \\
&=\lim_{l\to\infty}\sup_{w\in \Wsm(l)} \Lambda_{v,w}[r]\,.
\end{align*}
\end{proposition}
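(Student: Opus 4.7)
The plan is to mimic the proof of Lemma~\ref{lem-sup-pert} for the perturbed problem and then pass to the limit $\veps\to 0$, invoking Theorem~\ref{thm-pert-limit}(i) together with the uniform-in-$\veps$ a priori estimate from Lemma~\ref{lem-limit-truc-pert}(ii). The upper bound $\sup_{w\in\Wsm}\Lambda_{v,w}[r]\leq \Lambda_v[r]$ is immediate from Proposition~\ref{prop-var-cost}: for any $w\in\Wsm\subset\cA$,
\begin{align*}
\Lambda_{v,w}[r]=\limsup_{T\to\infty}\E_x^{v,w}\Bigl[\tfrac{1}{T}\int_0^T\bigl(r^v(Z_t)-\tfrac{1}{2}\|w_t\|^2\bigr)\D t\Bigr]\leq \limsup_{T\to\infty}\sup_{w'\in\cA}\E_x^{v,w'}[\,\cdot\,]=\Lambda_v[r],
\end{align*}
and the same argument applied to $\Wsm(l)\subset\Wsm$ yields $\lim_{l\to\infty}\sup_{w\in\Wsm(l)}\Lambda_{v,w}[r]\leq \Lambda_v[r]$ (the limit exists by monotonicity in $l$).

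For the reverse inequality in the first equality, the crucial point is that the Markov auxiliary control $\varpi^{v,\veps}(x)\doteq \Sigma(x)\transp\nabla\upu^{v,\veps}(x)$ furnished by Lemma~\ref{lem-limit-truc-pert} actually attains $\sup_{w\in\Wsm}\Lambda_{v,w}[r^\veps]=\Lambda_v[r^\veps]$. To see this, apply It{\^o}-Krylov's formula to $\upu^{v,\veps}(Z_{T\wedge\tau_R})$ under $(v,\varpi^{v,\veps})$, use the limiting HJB~\eqref{eq-trunc-limit} evaluated at its pointwise maximizer $w=\varpi^{v,\veps}(x)$ (where the max equals $\tfrac{1}{2}\|\varpi^{v,\veps}(x)\|^2$), and invoke the positive recurrence of $Z$ from Lemma~\ref{lem-limit-truc-pert}(ii) to justify $T^{-1}\E[\upu^{v,\veps}(Z_T)]\to 0$; the upshot is $\Lambda_{v,\varpi^{v,\veps}}[r^\veps]=\alpha^*=\Lambda_v[r^\veps]$ (the last equality by Lemma~\ref{lem-sup-pert} and Remark~\ref{rem-eigen-value-pert}). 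Since $r^{\veps,v}-r^v\leq \veps h^v$ (using $r\geq 0$) and limsup is subadditive,
\begin{align*}
\Lambda_v[r^\veps]=\Lambda_{v,\varpi^{v,\veps}}[r^\veps]\leq \Lambda_{v,\varpi^{v,\veps}}[r]+\veps\limsup_{T\to\infty}\tfrac{1}{T}\E_x^{v,\varpi^{v,\veps}}\Bigl[\int_0^T h^v(Z_t)\D t\Bigr]\leq \sup_{w\in\Wsm}\Lambda_{v,w}[r]+\veps M_4,
\end{align*}
with $M_4$ uniform in $\veps$ by Lemma~\ref{lem-limit-truc-pert}(ii). Sending $\veps\to 0$ and invoking Theorem~\ref{thm-pert-limit}(i) then gives $\Lambda_v[r]\leq \sup_{w\in\Wsm}\Lambda_{v,w}[r]$, which, combined with the upper bound, establishes the first equality.

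For the second equality the same scheme is run with the truncated Markov maximizers $\varpi_l^{v,\veps}\in\Wsm(l)$ from Theorem~\ref{app-thm-trunc}, which satisfy $\Lambda_{v,\varpi_l^{v,\veps}}[r^\veps]=\alpha_l$ and $\alpha_l\to \Lambda_v[r^\veps]$ as $l\to\infty$. The analogue of the displayed inequality reads $\sup_{w\in\Wsm(l)}\Lambda_{v,w}[r]\geq \alpha_l-\veps M_4$, and letting $l\to\infty$ followed by $\veps\to 0$ yields $\lim_{l\to\infty}\sup_{w\in\Wsm(l)}\Lambda_{v,w}[r]\geq \Lambda_v[r]$. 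The principal technical point to verify is that the $h^v$-integral bound for $\varpi_l^{v,\veps}$ is uniform jointly in $\veps$ and $l$, rather than only after passing $l\to\infty$; this refinement is readily extracted from the proof of Lemma~\ref{lem-limit-truc-pert}(ii), which exhibits the MEMs of $Z$ under $(v,\varpi_l^{v,\veps})$ as tight uniformly in $T$ and $l$, with a bound depending only on $\delta$ and $\beta$.
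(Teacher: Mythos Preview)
Your overall strategy is the same as the paper's: pass through the perturbed problem, use Lemma~\ref{lem-sup-pert} (or its truncated analogue) to extract a $\Wsm$ control that is close to optimal for $\Lambda_v[r^\veps]$, compare $r^\veps$ with $r$ via a uniform-in-$\veps$ bound on $\limsup_T T^{-1}\E\bigl[\int_0^T h^v(Z_t)\,\D t\bigr]$, and let $\veps\to 0$ using Theorem~\ref{thm-pert-limit}(i). The paper, however, works with an \emph{arbitrary} $\tfrac{\delta}{3}$-optimal $w^\veps\in\Wsm$ (whose existence is immediate from Lemma~\ref{lem-sup-pert}) and invokes Lemma~\ref{lem-comp-X-w-pert} for the $h^v$ bound, rather than the specific ground-state control $\varpi^{v,\veps}$ together with Lemma~\ref{lem-limit-truc-pert}(ii).

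Your route has a genuine gap at the step ``invoke the positive recurrence of $Z$ from Lemma~\ref{lem-limit-truc-pert}(ii) to justify $T^{-1}\E[\upu^{v,\veps}(Z_T)]\to 0$.'' Positive recurrence by itself does not give this; one needs a growth condition of the type $\upu^{v,\veps}\in\fro(g)$ for some $g$ with $\limsup_T T^{-1}\E[g(Z_T)]<\infty$ (cf.\ \cite[Lemma~3.7.2]{arapostathis2012ergodic}), and no such estimate on $\upu^{v,\veps}$ is established anywhere in the paper. Without it the equality $\Lambda_{v,\varpi^{v,\veps}}[r^\veps]=\alpha^*$ is unproved, and your chain $\Lambda_v[r^\veps]=\Lambda_{v,\varpi^{v,\veps}}[r^\veps]\leq\Lambda_{v,\varpi^{v,\veps}}[r]+\veps M_4$ breaks at the first link. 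The paper's device sidesteps the issue completely: for a $\delta$-optimal $w^\veps$ the inequality $\Lambda_v[r^\veps]\leq\Lambda_{v,w^\veps}[r^\veps]+\delta$ is definitional, and the $h^v$ bound from (the proof of) Lemma~\ref{lem-comp-X-w-pert} applies to any such nearly-optimal control.

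There is a second, smaller issue in your treatment of the $\Wsm(l)$ equality: the identity $\Lambda_{v,\varpi_l^{v,\veps}}[r^\veps]=\alpha_l$ conflates the truncated running cost $f_l^{v,\veps}(x,w)=r^{\veps,v}(x)\wedge L^*-\tfrac12\|\chi_l(x)w\|^2$ (for which $\alpha_l$ is the value in Theorem~\ref{app-thm-trunc}) with the full cost $r^{\veps,v}-\tfrac12\|w\|^2$. These differ on $\{r^{\veps,v}>L^*\}$ and on $B_l\setminus B_{l/2}$, and the discrepancies carry opposite signs, so neither an equality nor a clean inequality holds without further comparison arguments.
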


\begin{proof} Since we know that $\Wsm, \Wsm(l)\subset \cA$ for $l>0$, we can immediately infer that $\Lambda_v[r]\geq \sup_{w\in \Wsm} \Lambda_{v,w}[r] $ and $\Lambda_v[r]\geq \sup_{w\in \Wsm(l)} \Lambda_{v,w}[r]\,.$
 Therefore, it suffices to show that for every $\delta>0$, there exist $l>0$, $w^*\in \Wsm$, $\tilde w=\tilde w(l)\in\Wsm(l)$ such that 
 $$ \Lambda_v[r]\leq \Lambda_{v,w^*}[r]+\delta \quad \text{ and } \quad \Lambda_v[r]\leq \Lambda_{v,\tilde w}[r]+\delta\,.$$
 To that end, from Theorem~\ref{thm-pert-limit} we know that for every $0<\delta<1$ (without loss of generality), there exist $\widetilde \veps\leq \veps_0$ such that for $0<\veps<\widetilde \veps$, we have
 $$ \Lambda_v[r]\leq \Lambda_v[r^\veps]+\frac{\delta}{3}\,.$$
 Choosing $w^\veps\in \Wsm$ such that $\Lambda_v[r^\veps]\leq \Lambda_{v,w^\veps}[r^\veps]+\frac{\delta}{3}$ gives us
 \begin{align*}
 \Lambda_v[r]&\leq \Lambda_{v,w^\veps}[r^\veps]+\frac{2\delta}{3}\\
 &= \limsup_{T\to\infty}\frac{1}{T}\E_x^{v,w^\veps} \Big[\int_0^T \Big(r^{\veps,v}(Z_t)-\frac{1}{2}\|w\|^2\Big)\D t\Big]+\frac{2\delta}{3}\\
 &\leq \limsup_{T\to\infty}\frac{1}{T}\E_x^{v,w^\veps} \Big[\int_0^T \Big(r^{v}(Z_t)-\frac{1}{2}\|w\|^2\Big)\D t\Big]+\frac{2\delta}{3} + \veps \limsup_{T\to\infty}\frac{1}{T}\E_x^{v,w^\veps}\Big[\int_0^T h^{v}(Z_t)\D t\Big]\,.
 \end{align*}
We know that from Lemma~\ref{lem-comp-X-w-pert}
 \[\sup_{0<\veps<\veps_0}\limsup_{T\to\infty}\frac{1}{T}\E_x^{v,w^\veps}\Big[\int_0^T h^{v}(Z_t)\D t\Big]\leq  {M_4}\,.\]
 Therefore, choosing $ \veps\leq \min\big\{{ \frac{\delta}{3M_4}},\widetilde\veps\big\}\,,$
 we have shown that 
 \begin{align*}
 \Lambda_v[r]\leq  \Lambda_{v,w^\veps}[r] +\delta, \text{ for $\veps$ sufficiently small}\,.
 \end{align*}
 Arguing similarly, we can show that there exist $l>0$ and $\tilde w\in \Wsm(l)$ such that 
 $$ \Lambda_v[r]\leq \Lambda_{v,\tilde w}[r]+\delta\,.$$
 This proves the result.
\end{proof}

\subsection{Stability of the ground diffusion} 
For $v\in{\Usm^{*,\beta}}$, let $Z^v$ be the process defined as a strong solution (if it exists) to the following SDE: 
\begin{align}\label{eq-ground-diff}
dZ^v_t= b\big(Z^v_t,v(Z^v_t)\big)\D t+ \Sigma(Z^v_t)\Sigma\transp(Z^v_t) \nabla \upu^v(Z^v_t)\D t +\Sigma (Z^v_t)\D W_t \,\,, Z^v_0=x
\end{align}
with  $\upu^v\doteq \log \widetilde \upu^v$  and  $\widetilde \upu^v\in W^{2,p}_{\text{loc}}(\RR^d)$ being a positive solution to the equation 
\begin{align}\label{def-poiss-eq}
\Lg^v \widetilde \upu^v(x)+r^v(x) \widetilde \upu^v(x)=\lambda^*_v[r]\widetilde \upu^v(x), \text{ for a.e. $x\in \RR^d$.}
\end{align}
Here, $\lambda_v^*[\cdot]$ is as defined in~\eqref{def-princp-eig}. 
The process $Z^v$ is referred to as ``ground" diffusion associated with $v\in {\Usm^{*,\beta}}$. Observe that this is a particular case of the extended diffusion $Z$ defined in~\eqref{X-control} with \begin{align}\label{eq-aux-opt-cont}w_t\equiv \Sigma\transp(Z_t)\nabla \upu^v(Z_t)\,.\end{align} See \cite{ari2018strict,AB18} for discussions on its relation with risk sensitive cost associated with $r^v$. 
Unlike in the case of uniform stability (see \cite[Lemma 2.4]{ari2018strict}), the stability (in particular, recurrence) of the ground diffusion  is not obvious from the general structural hypothesis (Assumption~\ref{a-main}). Suppose that $w_t$ defined in~\eqref{eq-aux-opt-cont}  {(which is clearly $\cG_t$-adapted) lies in $\cA$ and }satisfies~\eqref{def-d-opt} for small $\delta$. Then, an application of  Lemma~\ref{lem-comp-X} immediately implies that the process $Z^v$ satisfies~\eqref{eq-rec} which in particular, concludes the recurrence of $Z^v$. However, it is not at all clear a priori even if $w$ defined  {in~\eqref{eq-aux-opt-cont}} satisfies~\eqref{def-d-opt}.   We  prove the stability of process $Z^v$ below, which will be used in the proofs of Theorem~\ref{thm-diffusion}(i) and (iii). We first prove the existence of $\upu^v$ mentioned above.

  \begin{lemma}\label{lem-ground-diffusion}
 For $v\in {\Usm^{*,\beta}}$, there exists a positive function $\widetilde \upu^v\in W^{2,d}_{\text{loc}}(\RR^d)$ which is a solution to~\eqref{def-poiss-eq}.
 \end{lemma}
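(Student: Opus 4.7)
The plan is to construct $\widetilde\upu^v$ as a subsequential limit of the perturbed eigenfunctions $\widetilde\upu^{v,\veps}$ from Lemma~\ref{lem-limit-truc-pert} as $\veps \downarrow 0$. Note that $r$ is only inf-compact on $\calK$ under Assumption~\ref{a-main}, so \cite[Lemma~3.1]{ari2018strict} (used in the proof of Corollary~\ref{cor-stable} with the inf-compact function $\veps_0 h^v$) does not apply directly to $r^v$. The perturbed cost $r^\veps$, however, is inf-compact on the whole space, which is precisely what makes the perturbed problem tractable; the challenge is to transport the resulting eigenfunction back across the limit $\veps\to 0$. More concretely, Lemma~\ref{lem-limit-truc-pert} combined with Remark~\ref{rem-eigen-value-pert} yields, for each $\veps\in(0,\veps_0)$, a positive $\widetilde\upu^{v,\veps}\in W^{2,p}_{\mathrm{loc}}(\RR^d)$, $p>d$, normalized by $\widetilde\upu^{v,\veps}(0)=1$ and satisfying
\begin{equation*}
\Lg^v\widetilde\upu^{v,\veps}(x) + r^{\veps,v}(x)\,\widetilde\upu^{v,\veps}(x) \;=\; \Lambda_v[r^\veps]\,\widetilde\upu^{v,\veps}(x), \qquad \text{a.e. } x\in\RR^d.
\end{equation*}

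Next I would establish a~priori bounds uniform in $\veps$ by rewriting the equation in the linear form $\Lg^v\widetilde\upu^{v,\veps} + (r^{\veps,v}-\Lambda_v[r^\veps])\widetilde\upu^{v,\veps}=0$. By Lemma~\ref{lem-pert-well-defined}, $\Lambda_v[r^\veps]\le 2+\max\{C_1\wedge C_2,\beta\}$ uniformly in $\veps$, and the definition~\eqref{def-r-cost-pert} together with $\veps<\veps_0$ gives the pointwise bound $r^\veps\le r+h$, which is $\veps$-independent and locally bounded since $r$ and $h$ are continuous. Hence the zeroth-order potential $r^{\veps,v}-\Lambda_v[r^\veps]$ is bounded on every $B_R$ by a constant independent of $\veps$. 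Combined with the non-degeneracy~\eqref{eq-cond-non-deg} and the local Lipschitz continuity~\eqref{eq-cond-loc-lip} of the coefficients of $\Lg^v$, the Krylov--Safonov Harnack inequality produces constants $c_R,C_R>0$ (independent of $\veps$) with
\begin{equation*}
c_R \;\le\; \widetilde\upu^{v,\veps}(x) \;\le\; C_R, \qquad x\in B_R,
\end{equation*}
where the normalization $\widetilde\upu^{v,\veps}(0)=1$ is what calibrates both constants. Standard interior $W^{2,p}$ estimates then give that $\{\widetilde\upu^{v,\veps}\}_\veps$ is bounded in $W^{2,p}_{\mathrm{loc}}(\RR^d)$ for every finite $p$, uniformly in $\veps$.

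The final step is to pass to the limit. By the Rellich--Kondrachov compact embedding, extract $\veps_n\downarrow 0$ along which $\widetilde\upu^{v,\veps_n}$ converges to some $\widetilde\upu^v\in W^{2,p}_{\mathrm{loc}}(\RR^d)$ weakly in $W^{2,p}_{\mathrm{loc}}$ and strongly in $W^{1,p}_{\mathrm{loc}}\cap \Cc_{\mathrm{loc}}$. The limit inherits $\widetilde\upu^v(0)=1$ and, thanks to the uniform Harnack lower bound $c_R>0$, is strictly positive on every $B_R$. Since $r^{\veps,v}\to r^v$ locally uniformly and $\Lambda_v[r^\veps]\to \Lambda_v[r]$ by Theorem~\ref{thm-pert-limit}(i), passing to the limit in the equation gives
\begin{equation*}
\Lg^v\widetilde\upu^v(x) + r^v(x)\,\widetilde\upu^v(x) \;=\; \Lambda_v[r]\,\widetilde\upu^v(x), \qquad \text{a.e. } x\in\RR^d.
\end{equation*}
To match~\eqref{def-poiss-eq}, it remains to identify $\Lambda_v[r]=\lambda^*_v[r]$. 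One direction, $\lambda^*_v[r]\le \Lambda_v[r]$, is the inequality displayed just after~\eqref{def-princp-eig}; the reverse direction is supplied by the exponential ergodicity of $X$ under $v$, which Corollary~\ref{cor-stable} provides via the Foster--Lyapunov function $\sW_v$ with inf-compact rate $\veps_0 h^v-\lambda^*_v[\veps_0 h^v]$, placing $v$ in the framework of \cite[Theorem~3.2]{ari2018strict}.

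I expect the main obstacle to be the uniform Harnack lower bound $c_R>0$: without it, the limit could collapse to zero and positivity would be lost. This is exactly where the $\veps$-independent inequality $r^\veps\le r+h$ and the uniform eigenvalue bound from Lemma~\ref{lem-pert-well-defined} are indispensable; once in place, standard elliptic machinery carries the argument through.
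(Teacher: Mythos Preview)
Your construction of $\widetilde\upu^v$ as a subsequential limit of the perturbed eigenfunctions $\widetilde\upu^{v,\veps}$, together with the uniform Harnack bounds (enabled by the $\veps$-independent local bound $r^\veps\le r+h$ and the uniform eigenvalue bound from Lemma~\ref{lem-pert-well-defined}), is exactly the paper's approach; your treatment is in fact more explicit about the uniform-in-$\veps$ estimates than the paper, which simply invokes ``standard elliptic regularity'' and Harnack's inequality. So steps~1--3 are correct and match the paper.

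The one difference is your final identification $\Lambda_v[r]=\lambda_v^*[r]$. The paper's proof actually stops at the eigenequation with eigenvalue $\Lambda_v[r]$ (their equation~\eqref{eq-tilde-upu-1}) and declares the lemma proved; the identification with $\lambda_v^*[r]$ is only established later, in the proof of Proposition~\ref{prop-hjb-uniqueness}, via a different route: recurrence of the ground diffusion $Z^v$ (Proposition~\ref{prop-ground-diffusion}) combined with \cite[Lemma~2.6]{ari2018strict}. Your proposed shortcut through \cite[Theorem~3.2]{ari2018strict} is not clearly justified: the Foster--Lyapunov inequality from Corollary~\ref{cor-stable} has rate $\veps_0 h^v$, and since the only available comparison is $r\le h$ (not $r\le\veps_0 h$), the running cost $r^v$ is not dominated by the Lyapunov rate in the way that theorem's hypotheses require (one typically needs the cost to enter with a coefficient strictly less than the rate, or equivalently $\Lambda_v[r]$ to lie below the threshold $\lambda_v^*[\veps_0 h^v]$, neither of which is available here). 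So either drop step~4 and match the paper, or replace it with the ground-diffusion recurrence argument that the paper uses downstream.
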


  \begin{proof} Fix $v\in {\Usm^{*,\beta}}$ and $\widetilde \upu^{v,\veps}$ be as in Lemma~\ref{lem-limit-truc-pert}(i).  By that result {and Lemma~\ref{lem-sup-pert}}, we also know that 
 $$ \Lg^v\widetilde \upu^{v,\veps} (x)+ r^{\veps,v}(x)\widetilde  \upu^{v,\veps} =\Lambda_v[r^\veps]\widetilde \upu^{v,\veps}(x),\,\, \text { for } x\in \RR^d\,.$$
  From Theorem~\ref{thm-pert-limit}(i), we know that $\Lambda_v[r^\veps]\to\Lambda_v[r]$, as $\veps\to 0$. Therefore, using standard elliptic regularity theory, we can infer that along a subsequence denoted again by $\veps$, $\widetilde \upu^{v,\veps} \to \widetilde \upu^v$ {strongly in }$W^{1,p}_{\text{loc}}(\RR^d)$ and that {$\widetilde \upu^v$} satisfies 
  \begin{align}\label{eq-tilde-upu-1} \Lg^v \widetilde \upu^v(x)+r^v(x)\widetilde \upu^v(x)=\Lambda_v[r]\widetilde\upu^v(x), \,\,\text{ for }x\in \RR^d\,.\end{align}
 {To prove that $\widetilde \upu^v$ is positive, we recall that for $p>d$, $W^{1,p}(B_R)$ is continuously embedded in $\cC^{1,\gamma}(B_R)$ with $\gamma=1-\frac{d}{p}$. This implies that $\widetilde \upu^{v,\veps}$ converges to $\widetilde \upu^v$, uniformly on compact sets of $\RR^d$ and for any $R>0$, an application of  Harnack's inequality (\cite[Theorem 8.20]{gilbarg1977elliptic}) and the fact that $\widetilde \upu^\veps>0$, give us
$$ \sup_{x\in B_{\bar R}}\widetilde \upu^{v,\veps}(x)\leq C\inf_{x\in B_{\bar R}} \widetilde \upu^{v,\veps}(x), \,\, \text{ for $\bar R < \frac{R}{4}$}$$
with $C$ being  independent of $R$. This proves the positivity of $\widetilde \upu^v$.} Hence, the claim of the lemma is proved. 
  \end{proof}

   \begin{proposition}\label{prop-ground-diffusion}
 For $v\in {\Usm^{*,\beta}}$, the ground diffusion $Z^v$ defined in~\eqref{eq-ground-diff} is recurrent. 
 \end{proposition}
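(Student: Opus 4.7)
The plan is to obtain an invariant probability measure for the ground diffusion $Z^v$ as a weak limit of the invariant measures of the perturbed ground diffusions whose positive recurrence was established in Lemma~\ref{lem-limit-truc-pert}(ii); combined with the non-degeneracy~\eqref{eq-cond-non-deg}, this will yield positive recurrence (and hence recurrence) of $Z^v$.

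First, for each $0<\veps<\veps_0$, let $Z^{v,\veps}$ denote the process $Z$ driven by $v$ and the Markov auxiliary control $\varpi^{v,\veps}(\cdot)=\Sigma(\cdot)\transp\nabla\upu^{v,\veps}(\cdot)$. By Lemma~\ref{lem-limit-truc-pert}(ii), $Z^{v,\veps}$ is positive recurrent; let $\mu^\veps\in\cP(\RR^d)$ denote its unique invariant probability measure. Applying the mean ergodic theorem to the uniform estimate~\eqref{eq-pos-rec} gives
\[
\sup_{0<\veps<\veps_0}\int_{\RR^d} h^v(x)\,d\mu^\veps(x)\le M_4,
\]
and the inf-compactness of $h^v$ (Lemma~\ref{lem-inf-comp}) shows that the family $\{\mu^\veps\}$ is tight in $\cP(\RR^d)$. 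Extract a subsequence $\veps_n\downarrow 0$ along which $\mu^{\veps_n}\Rightarrow\mu$ for some $\mu\in\cP(\RR^d)$.

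The next step is to identify $\mu$ as an invariant probability measure for $Z^v$. For every $f\in \cC^\infty_c(\RR^d)$, the invariance of $\mu^\veps$ for $Z^{v,\veps}$ reads
\begin{align*}
\int_{\RR^d}\Lg^v f(x)\,d\mu^\veps(x)+\int_{\RR^d}\bigl(\Sigma(x)\Sigma(x)\transp\nabla\upu^{v,\veps}(x)\bigr)\cdot\nabla f(x)\,d\mu^\veps(x)=0.
\end{align*}
The first term passes to the limit along $\{\veps_n\}$ since $\Lg^v f$ is bounded continuous with compact support. For the second term, I plan to combine (a) the strong convergence $\nabla\upu^{v,\veps}\to\nabla\upu^v$ in $L^p_{\mathrm{loc}}(\RR^d)$, $p>d$, which follows from the $W^{1,p}_{\mathrm{loc}}$ convergence of $\widetilde\upu^{v,\veps}$ extracted in the proof of Lemma~\ref{lem-ground-diffusion} together with the strict positivity (via Harnack's inequality) of $\widetilde\upu^v$ on compact sets; and (b) the local uniform boundedness of the densities $p^\veps=d\mu^\veps/dx$, which follows from standard elliptic regularity and Harnack's inequality applied to the stationary Fokker--Planck equation under the uniform ellipticity~\eqref{eq-cond-non-deg}. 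These together yield $p^{\veps_n}\to p$ weakly-$\ast$ in $L^\infty_{\mathrm{loc}}(\RR^d)$ for some density $p$ of $\mu$, and the second integral converges to $\int\Sigma\Sigma\transp\nabla\upu^v\cdot\nabla f\,d\mu$. Hence $\mu$ satisfies $\int \widehat\Lg^{v,\varpi^v}f\,d\mu=0$ for every $f\in \cC^\infty_c(\RR^d)$ with $\varpi^v(\cdot)=\Sigma(\cdot)\transp\nabla\upu^v(\cdot)$, i.e.\ $\mu$ is invariant for $Z^v$.

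Finally, since $Z^v$ is a non-degenerate diffusion whose drift is locally Lipschitz modulo the addition of a term in $L^p_{\mathrm{loc}}$, $p>d$, the existence of an invariant probability measure implies its positive recurrence via~\cite[Theorem~2.6.10]{arapostathis2012ergodic}, and in particular its recurrence. The main technical obstacle lies in the limit passage above: one must justify the uniform local $L^\infty$-bounds on the densities $p^\veps$ and the weak-$\ast$ compactness needed to pair them against $\nabla\upu^{v,\veps}$ in the limit, which is a standard but delicate application of De~Giorgi--Nash--Moser regularity for the Fokker--Planck operator with $L^p_{\mathrm{loc}}$ drift.
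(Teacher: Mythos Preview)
Your approach is correct and reaches the same conclusion, but the paper argues more directly. Rather than extracting invariant measures $\mu^\veps$ of the perturbed ground diffusions and passing to the limit in the stationary Fokker--Planck equation, the paper stays at the level of mean empirical measures: since elliptic regularity gives $\widetilde\upu^{v,\veps}\to\widetilde\upu^v$ weakly in $W^{2,p}_{\mathrm{loc}}$ and hence (via the compact embedding $W^{2,p}\hookrightarrow \cC^{1,\gamma}$ for $p>d$) $\varpi^{v,\veps}\to\varpi^v$ uniformly on compact sets, one may combine this convergence of controls with the lower semicontinuity of $\pi\mapsto\int h^v\,d\pi$ to transfer the estimate~\eqref{eq-pos-rec} directly to the ground diffusion, obtaining $\limsup_{T\to\infty}T^{-1}\E_x^{v,\varpi^v}\bigl[\int_0^T h^v(Z_t)\,dt\bigr]<\infty$. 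Recurrence then follows from the inf-compactness of $h^v$.

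The advantage of the paper's route is that it bypasses entirely the density regularity step you flagged as the ``main technical obstacle'': there is no need for uniform local $L^\infty$ bounds on $p^\veps$ or De~Giorgi--Nash--Moser theory, since the convergence of controls already guarantees the appropriate convergence of the SDE laws. Your approach, on the other hand, yields a stronger conclusion in passing---an explicit invariant probability measure for $Z^v$ obtained as a weak limit---at the cost of the additional PDE regularity input. Both are valid; the paper simply chooses the shorter path.
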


 \begin{proof}  To begin with, let $\widetilde \upu^{v,\veps}=e^{\upu^{v,\veps}}$ be as in Lemma~\ref{lem-limit-truc-pert}(i) and $\widetilde \upu^v=e^{\upu^v}$ be as in Lemma~\ref{lem-ground-diffusion}. From Lemma~\ref{lem-limit-truc-pert}(ii), we know that for $\varpi^{v,\veps}(\cdot)=\Sigma(\cdot)\transp\nabla \upu^{v,\veps}(\cdot)$, \begin{align*}\sup_{0<\veps<\veps_0}\limsup_{T\to\infty}\frac{1}{T} \E_x^{v,\varpi^{v,\veps}}\Big[\int_0^T h^v(Z_t) \D t\Big]\leq {M_4}\,.\end{align*}
 Then, the lower semicontinuity of $$ \pi\mapsto \int_{\RR^d\times \RR^d} h(x)\D \pi(x,w)$$ and the fact that 
 $\varpi^{v,\veps}\to \varpi^v\doteq \Sigma(\cdot)\transp\nabla \upu^v(\cdot)$ (uniformly on compact sets from standard elliptic regularity theory)
 implies that  
  $$ \limsup_{T\to\infty}\frac{1}{T}\E_x^{v,\varpi^v}\Big[\int_0^T h^v(Z_t)\D t\Big]\leq \liminf_{\veps\to 0} \limsup_{T\to\infty}\frac{1}{T}\E_x^{v,\varpi^{v,\veps}}\Big[\int_0^T h^v(Z_t)\D t\Big]\leq {M_3}\,.$$
  From the above display, it is  clear that $Z$ under $v$ and $\varpi^v$, that is, $Z^v$,  is recurrent. 
  \end{proof}

  We observe that applying It\^o-Krylov's lemma, from \eqref{eq-tilde-upu-1},  one can immediately show that 
  $$ \E_x^v\Big[\exp\Big(\int_0^{\widecheck \tau_R} (r^v(X_t) -\Lambda_v[r])\D t\Big)\widetilde \upu^v(X_{\widecheck \tau_R})\Big]\leq \widetilde \upu^v(x), \text{ for $x\in B_R^c$}\,.$$
However, this is not sufficient for our purposes \emph{i.e.,} proving uniqueness of solution to~\eqref{eqn-HJB} and Theorem~\ref{thm-diffusion}(iii), and {we} would require the equality to hold. 
The authors in \cite{ari2018strict} show that the equality holds if and only if $Z^v$ is recurrent. This fact is used in proving uniqueness of solution to~\eqref{eqn-HJB} and in the proof of Theorem~\ref{thm-diffusion}(iii).

\smallskip

\subsection{Proof of Theorem~\ref{thm-diffusion}(i)}\label{sec-p1}

 The proof of existence involves a direct application of the standard elliptic regularity theory and we provide it for completeness. 
\begin{proposition}\label{prop-limit}  As $\veps \to 0$, the pair {$(V^\veps, \lsm[r^\veps])$ (with $V^\veps$ obtained in Theorem~\ref{thm-HJB-pert})} converges {along a subsequence (again denoted by $\veps$)} to a pair $(V,\lsm[r])$ such that $V$ is a positive  function in $\calC^2(\RR^d)$, $V^\veps\to V$ {strongly in} $W^{1,p}_{\text{loc}}(\RR^d)$, { $p>1$}  and satisfies 
	\begin{align}\label{eq-hjb-limit}
	\min_{u \in \bU} \bigl[\Lg^{u} V(x) + r(x,u)\,V(x)\bigr] \;=\; \lsm[r]V(x)\,, \text{ for } x\in \RR^d\,.
	\end{align}
\end{proposition}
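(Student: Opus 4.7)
The plan is to derive uniform (in $\veps$) local $W^{2,p}$ estimates on $V^\veps$ as $\veps\to 0$ and then extract a convergent subsequence whose limit solves the target HJB equation. Starting from~\eqref{eq-HJB-pert}, fix a measurable minimizing selector $v^\veps\in\Usm^{o,\veps}$; this turns the HJB equation into the linear equation
\begin{align*}
\Lg^{v^\veps} V^\veps(x) + \bigl(r^{\veps,v^\veps}(x) - \lsm[r^\veps]\bigr) V^\veps(x) = 0, \quad x\in\RR^d.
\end{align*}
The coefficients of $\Lg^{v^\veps}$ are bounded on each compact set uniformly in $\veps$ by the linear growth~\eqref{eq-cond-lin-growth} and the compactness of $\bU$; the matrix $A=\Sigma\Sigma\transp$ is uniformly elliptic by~\eqref{eq-cond-non-deg}; the zeroth-order coefficient $r^{\veps,v^\veps}-\lsm[r^\veps]$ is bounded on compacts uniformly in $\veps$ by the definition~\eqref{def-r-cost-pert} of $r^\veps$ combined with the uniform bound on $\lsm[r^\veps]$ from Lemma~\ref{lem-pert-well-defined}. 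Since $V^\veps>0$, Harnack's inequality (e.g.\ \cite[Corollary~9.25]{gilbarg1977elliptic}) produces, for each $R>0$, a constant $C_R>0$ independent of $\veps$ with $\sup_{B_R}V^\veps\le C_R\inf_{B_R}V^\veps$. Since $V^\veps(0)=1$, this yields uniform upper and lower bounds $0<m_R\le V^\veps\le M_R$ on $B_R$.

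With these local bounds in hand, the interior $W^{2,p}$ estimates \cite[Theorem~9.11]{gilbarg1977elliptic} applied to the above linearized equation (whose RHS is now uniformly bounded on compacts) give, for every $p>d$ and every $R>0$, a uniform bound $\|V^\veps\|_{W^{2,p}(B_R)}\le C'_R$. Using the continuous embedding $W^{2,p}(B_R)\hookrightarrow\cC^{1,\gamma}(B_R)$ with $\gamma=1-d/p$, Rellich--Kondrachov compactness, and a diagonal extraction in $R$, we pass to a subsequence along which $V^\veps\to V$ strongly in $W^{1,p}_{\text{loc}}(\RR^d)$ (and uniformly on compacts), and weakly in $W^{2,p}_{\text{loc}}(\RR^d)$. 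The limit satisfies $V(0)=1$ and, by a second application of Harnack's inequality to $V$, is strictly positive.

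Passing to the limit in~\eqref{eq-HJB-pert} is now standard: the nonlinear operator $\phi\mapsto \min_{u\in\bU}\{\Lg^u\phi+r(\cdot,u)\phi\}$ is continuous with respect to weak $W^{2,p}_{\text{loc}}$ convergence in the second-order term and strong $W^{1,p}_{\text{loc}}$ convergence in the first-order and zeroth-order terms (the minimum being a $\bU$-uniform infimum of affine functionals of $\phi$, $\nabla\phi$, $\nabla^2\phi$ with continuous coefficients). Since $r^\veps\to r$ uniformly on compacts of $\RR^d\times\bU$ by~\eqref{def-r-cost-pert}, and $\lsm[r^\veps]\to\lsm[r]$ by Theorem~\ref{thm-pert-limit}(ii), we conclude
\begin{align*}
\min_{u\in\bU}\bigl[\Lg^u V(x)+r(x,u)\,V(x)\bigr]=\lsm[r]\,V(x)\qquad\text{a.e.\ }x\in\RR^d.
\end{align*}
Finally, choosing any measurable minimizing selector $v$ for this limit and inserting it linearizes the equation with locally H\"older coefficients (since $V\in\cC^{1,\gamma}_{\text{loc}}$ and $r$ is locally Lipschitz in $x$); classical interior Schauder estimates then upgrade $V$ to $\cC^{2}(\RR^d)$ and the equation holds pointwise.

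The only delicate point is ensuring the Harnack and elliptic regularity constants are truly uniform in $\veps$; this hinges on the $\veps$-uniform upper bound $\lsm[r^\veps]\le 2+\max\{C_1\wedge C_2,\beta\}$ from Lemma~\ref{lem-pert-well-defined} together with the explicit convex combination $r^\veps=(1-\veps/\veps_0)r+\veps h$, which keeps the running cost uniformly controlled on compact sets. Everything else is routine elliptic theory, and uniqueness of the limit (hence convergence of the full family, not just a subsequence) will be obtained in Theorem~\ref{thm-diffusion}(i) using the stability of the ground diffusion established in Proposition~\ref{prop-ground-diffusion}.
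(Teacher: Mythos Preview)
Your proposal is correct and follows essentially the same route as the paper: linearize via a minimizing selector, obtain uniform local bounds by Harnack (using $V^\veps(0)=1$), apply interior $W^{2,p}$ estimates, extract a subsequence via Rellich--Kondrachov and diagonalization, pass to the limit using Theorem~\ref{thm-pert-limit}(ii), and bootstrap to $\cC^2$. The one minor imprecision is in your final step: linearizing with a merely measurable selector $v$ does \emph{not} give locally H\"older coefficients, since $b(\cdot,v(\cdot))$ need only be measurable; the paper instead keeps the nonlinear term and observes that $x\mapsto\min_{u\in\bU}\{b(x,u)\cdot\nabla V(x)+r(x,u)V(x)\}$ itself lies in $\cC^{0,\gamma}_{\mathrm{loc}}$ (as $V\in\cC^{1,\gamma}_{\mathrm{loc}}$), which is what feeds into the Schauder-type regularity result.
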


\begin{proof} From Theorem~\ref{thm-pert-limit}(ii), it is clear that $\lsm[r^\veps]\to \lsm[r]$. Therefore, it only remains to show that $V^\veps$ is convergent {strongly in} $W^{1,p}_{\text{loc}}(\RR^d)$, {and the limit point $V$ is positive, lies in $\cC^2(\RR^d)$ and} satisfies~\eqref{eq-hjb-limit}.
First recall that the pair $(V^\veps,\Lambda[r^\veps])$ satisfies 
\begin{align}\label{eq-hjb-pert}
	\min_{u \in \bU} \bigl[\Lg^{u} V^\veps(x) + r^\veps(x,u)\,V^\veps(x)\bigr] \;=\; {\lsm}[r^\veps]\,V^\veps(x)\,, \text{ for } x\in \RR^d\,.
	\end{align}
Now let $ v^\veps\in {\Usm^{*,\beta}}$ be  such that 
\begin{align*}
\min_{u \in \bU} \bigl[\Lg^{u} V^\veps(x) + r^\veps(x,u)\,V^\veps(x)\bigr]=\Lg^{v^\veps} V^\veps(x) + r^{\veps,v^\veps}(x)\,V^\veps(x), \text{ a.e. $x\in \RR^d$.}
\end{align*}
 Then 
$$ \Lg^{v^\veps} V^\veps(x)+r^{\veps,v^\veps} (x)V^\veps(x)= \lsm[r^\veps] V^\veps(x)\,, \text{ for } x\in \RR^d\,.$$
For $R>0$, from Harnack's inequality (\cite[Theorem 8.20]{gilbarg1977elliptic}) and the fact that $V^\veps>0$, we can conclude that 
$$ \sup_{x\in B_{\bar R}}V^\veps(x)\leq C_4\inf_{x\in B_{\bar R}} V^\veps(x), \text{ for $\bar R < \frac{R}{4}$}$$
with $C_4$ being  independent of $R$.
 Since $V^\veps(0)=1$, we have $\|V^\veps\|_{\infty, B_{\bar R}}$ bounded uniformly in $R>0$. Similarly, using the fact that $V^\veps>0$ on any compact set, we can also conclude that there is a uniform positive lower bound of $V^\veps$ on every compact set of $\RR^d$.  Using the above argument for $B_{\bar R+\delta}$ with $\delta>0$ such that $4\bar R+\delta< R$, we have that $\|V^\veps\|_{\infty, B_{\bar R+\delta}}$ is bounded uniformly in $R>0$ (say, by $C_5$).
 Now consider
$$ \|V^\veps\|_{p,B_{\bar R+\delta}}+\lsm[r^\veps]\|V^\veps\|_{p,B_{\bar R+\delta}}\leq \text{Vol}(B_{\bar R+\delta })\big(1+\lsm[r^\veps]\big) \|V^\veps\|_{\infty, B_{\bar R+\delta}}\,.$$
Here, $\text{Vol}(D)$ is the Lebesgue measure of domain $D$. Now using \cite[Theorem 9.11]{gilbarg1977elliptic}, we can conclude that for some $C_6>0$,
\begin{align}\nonumber \|V^\veps\|_{2,p,\bar R}&\leq C_6 ( \|V^\veps\|_{p, B_{R+\delta}} + \lsm[r^\veps] \|V^\veps\|_{p,R+\delta})\\\nonumber
&\leq C_6 \text{Vol}(B_{ R+\delta })\big(1+\lsm[r^\veps]\big) \|V^\veps\|_{\infty, B_{ R+\delta}}\\\label{eq-unif-w2p-bound}
&\leq \text{Vol}(B_{ R+\delta })\big(1+\lsm[r^\veps]\big) C_5 C_6\,.
 \end{align}

 Since  $\{\lsm[r^\veps]\}_{\veps<\veps_0}$ is convergent as $\veps\downarrow 0 $, $\{V^\veps\}_{\veps<\veps_0}$ is bounded in $W^{2,p}(B_{\bar R})$ (for every $1<p<\infty$), uniformly for $R> 0$.  Therefore, using a diagonalization argument, we can pick a subsequence denoted again by $\veps$ such that 
 $$ V^\veps \text{ converges weakly in $W^{2,p}_{\text{loc}}(\RR^d)$ to some } V\in W^{2,p}_{\text{loc}}(\RR^d) \text{ as $\veps\to 0$}\,. $$
  This means that for any $g\in L^{\frac{p}{p-1}}_{\text{loc}}(\RR^d)$, we have
$$\int_{K} g(x)\frac{\partial^2}{\partial x_i\partial x_j} V^\veps(x)\D x\to  \int_{K} g(x)\frac{\partial^2}{\partial x_i\partial x_j} V(x)\D x, \text{ for every compact set $K\subset \RR^d$}.$$
 Using Kondrachov's theorem, we know that $ W^{1,p}_{\text{loc}}(\RR^d)$ is compactly embedded in $W^{2,p}_{\text{loc}}(\RR^d)$. Therefore, $V^\veps$ converges to $V$ strongly in $W^{1,p}_{\text{loc}}(\RR^d).$ Now observe that, for any $f,h\in W^{1,p}_{\text{loc}}(\RR^d)$, we have 
\begin{align*}\Big| &\min_{u\in \bU}\big(b(x,u)\cdot \nabla f(x)+ r(x,u)f(x)\big) -\min_{u\in \bU}\big(b(x,u)\cdot \nabla h(x)+ r(x,u)h(x)\big)\Big|\\
&\leq \max_{u\in \bU}\Big| \big(b(x,u)\cdot \nabla f(x)+ r(x,u)f(x)\big) -\big(b(x,u)\cdot \nabla h(x)+ r(x,u)h(x)\big)\Big|\,. \end{align*}
Using the fact that $\{V^\veps\}_{\veps<\veps_0}$ converges strongly in $W^{1,p}_{\text{loc}}(\RR^d)$, the above observation gives us that 
$$ \min_{u\in \bU}\big(b(\cdot,u)\cdot \nabla V^\veps(\cdot)+ r(\cdot,u)V^\veps(\cdot)\big) \text{ converges in $L^p_{\text{loc}}(\RR^d)$}\,,$$
and  $\lsm[r^\veps]V^\veps$ converges {strongly in} $L^p_{\text{loc}}(\RR^d).$ Combining the above results, we have the following: {for $g\in L^{\frac{p}{p-1}}_{\text{loc}}(\RR^d)$,}
\begin{align*}
0&=\lim_{\veps\to 0} \int_K g(x)\Big( \sum_{i,j=1}^d {A_{ij}}(x)\frac{\partial^2}{\partial x_i\partial x_j} V^\veps(x)\\
&\qquad\qquad +\min_{u\in \bU}\big(b(x,u)\cdot \nabla V^\veps(x)+ r(x,u)V^\veps(x)\big)-\lsm[r^\veps]V^\veps(x)  \Big)\D x\\
&= \int_Kg(x)\Big( \sum_{i,j=1}^d {A_{ij}}(x)\frac{\partial^2}{\partial x_i\partial x_j} V(x) +\min_{u\in \bU}\big(b(x,u)\cdot \nabla V(x)+ r(x,u)V(x)\big)-\lsm[r] V(x)  \Big)\D x\,.
\end{align*}
From the arbitrariness of $g$, we know that $V$ satisfies 
\begin {align}\label{eq-minimizers}
\min_{u\in \bU}\Big\{\Lg^uV(x)+r(x,u)V(x)\Big\}=\lsm[r] V(x), \text{ a.e. in $x\in \RR^d$.}
\end{align}

We now improve the regularity of $V$ to $\cC^2(\RR^d)$. To do that, we note that $V\in W^{2,p}_{\text{loc}}(\RR^d)$, $\forall p>1$ and in particular, for $p>d$. Therefore, from the compact embedding of $W^{2,p}_{\text{loc}}(\RR^d)$ in $\cC^{1,\gamma}(K)$, $\gamma<1-\frac{d}{p}$, we have $V\in \cC^{1,\gamma}(K)$, for every compact set $K\subset \RR^d$. Using this fact, we can conclude that 
$$ \min_{u\in \bU} \big\{ b(\cdot,u)\cdot \nabla V(\cdot)+r(\cdot,u)V(\cdot)\big\}\in \cC^{0,\gamma}(K)\,.$$
Therefore, from \cite[ Theorem A.2.9]{arapostathis2012ergodic}, we conclude that $V\in \cC^2(K)$, for every compact set $K\subset \RR^d$ and thereby, conclude that $V\in \cC^2(\RR^d)$.  This now gives us the desired result.
\end{proof} 
\smallskip

We next prove the uniqueness of a solution $V$ to \eqref{eqn-HJB}. 

\begin{proposition}\label{prop-hjb-uniqueness}
Let $V'\in W^{2,p}_{\text{loc}}(\RR^d)$ be a positive function that satisfies~\eqref{eqn-HJB}, then $V'=V$.
\end{proposition}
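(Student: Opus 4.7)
The plan is to show that any positive $V'\in W^{2,p}_{\text{loc}}(\RR^d)$ satisfying \eqref{eqn-HJB} with $V'(0)=1$ coincides with the $V$ from Proposition~\ref{prop-limit}. My strategy is to identify a minimizing selector for $V'$ as an optimal stationary Markov control whose ground diffusion is recurrent, then exploit this recurrence through a subharmonicity argument to force the ratio $\xi:=V/V'$ to be a constant.

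Let $v'\in\Usm$ be a measurable selector from the argmin of \eqref{eqn-HJB} corresponding to $V'$, so that $\Lg^{v'}V'+r^{v'}V'=\lsm[r]V'$ a.e. Since $\lsm[r]=\lim_{\veps\to0}\lsm[r^\veps]\le\beta^*<\beta$ by Theorem~\ref{thm-pert-limit}(ii) and Assumption~\ref{a-well-defined}, it will suffice to prove $\Lambda_{v'}[r]\le\lsm[r]$, placing $v'$ in $\Usm^{*,\beta}\cap\Usm^o$. I would obtain this by applying It\^o--Krylov to the nonnegative local martingale $V'(X_t)\exp\bigl(\int_0^t(r^{v'}-\lsm[r])\D s\bigr)$ under $v'$ and combining its supermartingale property with the local Harnack bounds on $V'$ to transfer the PDE equality into the cost estimate. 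Once $v'\in\Usm^{*,\beta}$, Proposition~\ref{prop-ground-diffusion} yields recurrence of the ground diffusion $Z^{v'}$; uniqueness of the principal eigenpair of $\Lg^{v'}+r^{v'}$ under this recurrence (\cite[Lemma 2.4]{ari2018strict}) then identifies $V'$ with a positive multiple of $\widetilde\upu^{v'}$ from Lemma~\ref{lem-ground-diffusion}, so that the extended diffusion driven by $(v',\omega^{v'})$ with $\omega^{v'}:=\Sigma\transp\nabla\log V'$ coincides with $Z^{v'}$, and following \cite[Theorem 3.2]{ari2018strict} one obtains the exact stochastic representation
\[
V'(x)=\E_x^{v'}\Bigl[V'(X_{\widecheck\tau_R})\exp\Bigl(\int_0^{\widecheck\tau_R}(r^{v'}-\lsm[r])\D t\Bigr)\Bigr],\qquad x\in\Bar B_R^c,\ R>0.
\]

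Since $v'$ is not in general a minimizer for $V$, the HJB for $V$ yields only $\Lg^{v'}V+r^{v'}V\ge\lsm[r]V$, so the analogous exponential functional of $V$ is a nonnegative local submartingale; optional stopping at $\widecheck\tau_R\wedge N$ combined with Fatou's lemma (valid because $\widecheck\tau_R<\infty$ a.s.\ by recurrence) yields the reverse-direction bound $V(x)\le \E_x^{v'}\bigl[V(X_{\widecheck\tau_R})\exp(\int_0^{\widecheck\tau_R}(r^{v'}-\lsm[r])\D t)\bigr]$. Dividing the two relations, $\xi=V/V'$ satisfies $\xi(x)\le\sup_{y\in\partial B_R}\xi(y)$ on $\Bar B_R^c$, so by continuity $\xi$ is bounded on $\RR^d$. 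A direct computation using $V=\xi V'$ and the product rule for second-order generators gives
\[
V'\Lg^{v'}\xi+A\nabla V'\cdot\nabla\xi\;=\;\Lg^{v'}V+r^{v'}V-\lsm[r]V\;\ge\;0,
\]
and dividing by $V'>0$ yields $\widehat\Lg^{v',\omega^{v'}}\xi\ge 0$. Hence $\xi$ is a bounded positive subharmonic function for the recurrent diffusion $Z^{v'}$; the bounded submartingale $\xi(Z^{v'}_t)$ converges a.s.\ to some $\xi_\infty$, and recurrence of $Z^{v'}$ (which visits every neighborhood of every point infinitely often) forces $\xi\equiv\xi_\infty$, a constant. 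Finally $V(0)=V'(0)=1$ gives $\xi\equiv 1$, and $V'=V$.

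The main obstacle I foresee is the very first step — extracting $\Lambda_{v'}[r]\le\lsm[r]$ from the PDE equality $\Lg^{v'}V'+r^{v'}V'=\lsm[r]V'$ alone. In the near-monotone setting of \cite[Proposition 1.3]{AB18} this comes essentially for free from the relation $\lambda^\ast_v[r]=\Lambda_v[r]$, but under our mixed Assumption~\ref{a-main} the cost-to-eigenvalue equality is not automatic, so one must combine the inf-compact perturbation tools of Section~\ref{sec-est} with a priori Harnack/growth bounds on positive solutions of the multiplicative eigenvalue problem to produce the bound. A secondary technical issue is passing the optional-stopping inequality for the $V$-submartingale to the unbounded hitting time $\widecheck\tau_R$, which will require combining recurrence of $Z^{v'}$ with suitable uniform integrability of the truncated exponential functional $V(X_{\widecheck\tau_R\wedge N})\exp(\int_0^{\widecheck\tau_R\wedge N}(r^{v'}-\lsm[r])\D t)$ as $N\to\infty$.
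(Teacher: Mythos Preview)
Your approach has a genuine gap exactly where you flagged it. The supermartingale property of $V'(X_t)\exp\bigl(\int_0^t(r^{v'}-\lsm[r])\D s\bigr)$ under $v'$ gives $\E_x^{v'}\bigl[V'(X_T)\exp(\int_0^T(r^{v'}-\lsm[r])\D s)\bigr]\le V'(x)$, but to extract $\Lambda_{v'}[r]\le\lsm[r]$ from this you would need a \emph{global} positive lower bound on $V'$, and Harnack only gives local bounds. Nothing in the hypotheses rules out $\inf_{\RR^d}V'=0$, and the tools of Section~\ref{sec-est} (Lemma~\ref{lem-minimizer-stable}, Lemma~\ref{lem-V-}) apply to minimizers of the specific function $V$ obtained as the limit of $V^\veps$, not to minimizers of an arbitrary positive solution $V'$. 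So the route ``$v'\in\Usm^{*,\beta}\Rightarrow$ recurrence of $Z^{v'}\Rightarrow$ exact representation for $V'$'' is blocked at the very first implication.

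The paper sidesteps this by \emph{reversing the roles of $V$ and $V'$}: it selects $v^*$ as a minimizing selector for $V$ (not $V'$). For this control, Proposition~\ref{prop-minimizers-optimal} (whose proof rests on Lemma~\ref{lem-V-} and the convergence $V^\veps\to V$) yields $\Lambda_{v^*}[r]=\lsm[r]$, and Proposition~\ref{prop-ground-diffusion} gives recurrence of $Z^{v^*}$. One then obtains the exact stochastic representation for $V$ under $v^*$, while $V'$, for which $v^*$ is merely sub-optimal, satisfies only $\Lg^{v^*}V'+r^{v^*}V'\ge\lsm[r]V'$. After rescaling so that $V'$ touches $V$ from above, the difference $V'-V\ge0$ is handled by the strong maximum principle rather than a Liouville-type argument. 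The moral is that the asymmetry between $V$ and $V'$ is essential: only $V$ carries the structure inherited from the perturbed problems.

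A secondary issue: your Fatou step for the submartingale direction is not valid as stated. From $V(x)\le\E_x^{v'}[Y_{\widecheck\tau_R\wedge N}]$ and Fatou you obtain $\E_x^{v'}[Y_{\widecheck\tau_R}]\le\liminf_N\E_x^{v'}[Y_{\widecheck\tau_R\wedge N}]$; these two inequalities give no relation between $V(x)$ and $\E_x^{v'}[Y_{\widecheck\tau_R}]$. You correctly note that uniform integrability would be needed, but you provide no argument for it, and under Assumption~\ref{a-main} alone there is no obvious source of such control.
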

\begin{proof}
Recall that function $V$ from Proposition~\ref{prop-limit} satisfies~\eqref{eqn-HJB}. 
Choose $v^*\in \Usm$ such that 
\begin{align}
\Lg^{v^*} V(x)+r^{v^*}(x)V(x)=\min_{u\in \bU}\Big\{\Lg^uV(x)+r(x,u) V(x)\Big\}, \text{ for a.e. $x\in \RR^d$}\,.
\end{align}
Using Proposition~\ref{prop-minimizers-optimal} that follows later, we know that $\lsm[r]=\Lambda_{v^*}[r]$.

Consider the process $Z^{v^*}$ associated with $v^*$ {defined \emph{via}~\eqref{eq-ground-diff}}. From Proposition~\ref{prop-ground-diffusion}, we know that $Z^{v^*}$ is recurrent. Therefore, using \cite[Lemma 2.6]{ari2018strict}, 
we can further conclude that $\lambda_{v^*}^*[r]=\Lambda_{v^*}[r]=\lsm[r]$. 

Then, further using \cite[Lemma 2.7(iii)]{ari2018strict} in conjunction with the recurrence of $Z^{v^*}$, 
we can conclude that 
\begin{align}\label{eq-stoc-rep-hjb}
V(x)= \E_x^{v^*}\Big[\exp\Big(\int_0^{\widecheck \tau_R} \big(r^{v^*}(X_t)-\lsm[r]\big) \D t\Big) V(X_{\widecheck \tau_R})\Big], \text{ for $x\in B_R^c$}\,.
\end{align} 
Suppose $V'$ is another positive solution to~\eqref{eqn-HJB} \emph{i.e.,} 
$$ \min_{u\in \bU}\Big\{\Lg^u V'(x)+r(x,u)V'(x)\Big\}=\lsm[r]V'(x), \text{ for $x\in \RR^d$}\,.$$ 
It is clear that $v^*\in \Usm$ chosen above satisfies
$$ \Lg^{v^*} V'(x)+r^{v^*}(x)V'(x)\geq \lsm[r]V'(x), \text{ for $x\in \RR^d$}\,.$$
From the above display, usual application of It\^o-Krylov's lemma and then followed by Fatou's lemma gives us
\begin{align}\label{eq-stoc-rep-sub}
V'(x)\geq  \E_x^{v^*}\Big[\exp\Big(\int_0^{\widecheck \tau_R} \big(r^{v^*}(X_t)-\lsm[r]\big) \D t\Big) V'(X_{\widecheck \tau_R})\Big], \text{ for $x\in B_R^c$}\,.
\end{align} 
From the fact that $V$ satisfies~\eqref{eq-stoc-rep-hjb} and the above display, we have
\begin{align*}
V'(x)\geq V(x) \min_{y\in B_R}\Big(\frac{V'(y)}{V(y)}\Big) , \text{ for $x\in B_R^c$}\,.
\end{align*}
Clearly, if $V'>V$ on $B_R$, then $V'>V$ on $\RR^d$. So multiplying {$V$} by $$\min_{y\in B_R}\Big(\frac{V'(y)}{ V(y)}\Big)$$ and denoting again by $V$, we can ensure that $V'$ touches $V$ from above at points in $$\text{arg}\min_{y\in B_R}\Big(\frac{V'(y)}{ V(y)}\Big)\,.$$ This means that $V'\geq V$ on $\RR^d$ and its minimum is achieved in $B_R$. From~\eqref{def-poiss-eq} and the definition of $V'$, we have
$$ \Lg^v(V'-V)(x)- \big(r^{v^*}(x)-\lsm[r]\big)^- (V'-V)(x)= -\big(r^{v^*}(x)-\lsm[r]\big)^+(V'-V)(x)\leq 0\,, \text{ for } x\in \RR^d\,.$$
Therefore, using the strong maximum principle \cite[Theorem 9.6]{gilbarg1977elliptic}, we have $V'=V$. This completes the proof.
\end{proof}

\smallskip

\subsection{Proof of Theorem~\ref{thm-diffusion}(ii)}\label{sec-p2}
The content of this part of the theorem involves characterization of stationary Markov controls that are optimal.  In other words, we will show that a stationary Markov control is optimal if and only if it is a minimizer of~\eqref{eqn-HJB}, \emph{i.e.,} it satisfies~\eqref{eqn-optimality1}. 

To prove that the minimizers are optimal stationary Markov controls, it is sufficient to show that either $\inf_{x\in \RR^d}\widetilde V(x)>-\infty$ or that the negative part of $\widetilde V$ is appropriately small. 
 Recall that since $\inf_{x\in \RR^d}V^\veps(x)>0$, $\inf_{x\in\RR^d}\widetilde V^\veps(x)>-\infty$. In contrast, it is  not a priori clear if $\inf_{x\in \RR^d} {\widetilde V(x)}>-\infty$. However, it turns out that the expectation of  $T^{-1}\widetilde V(Z_T)$ is greater than zero for large $T$ - this is sufficient for us.  This is done by showing that the negative part of $T^{-1}\widetilde V(Z_T)$ has negligible expectation for large $T$. This is made precise in Lemma~\ref{lem-V-}. In Lemma~\ref{lem-minimizer-stable}, we analyze stationary Markov controls that are minimizers and show that all the results that are proved until now for $v\in {\Usm^{*,\beta}}$ are applicable with similar arguments. These include all the results in Section~\ref{sec-est} to Section~\ref{sec-p1}. This is important because it is not  a priori clear even if the ERSC cost for such controls is finite and if one can apply the analysis for ${\Usm^{*,\beta}}$ to these controls.

  From now on, we set $$ \widetilde V(\cdot) \doteq \log V(\cdot) \quad\text{ and } \quad\omega(\cdot) \doteq \Sigma(\cdot)\transp \nabla \widetilde V(\cdot)$$
 and recall that 
 $$ \widetilde V^\veps (\cdot) =\log V^\veps (\cdot) \quad\text{ and }\quad \omega^\veps (\cdot) = \Sigma(\cdot)\transp \nabla \widetilde V^\veps (\cdot)\,.$$
\begin{lemma}\label{lem-V-}  For the function $\widetilde V$, we have $\widetilde V^-\in \mathfrak{o}(\frV)$.  
\end{lemma}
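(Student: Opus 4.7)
My plan is to show, for every sufficiently small $\delta>0$, a uniform-in-$\veps$ lower bound of the form $\widetilde V^\veps(x)+\delta\frV(x)\geq -C_\delta$, and then pass to the pointwise limit $\veps\to 0$ using $\widetilde V^\veps\to \widetilde V$ from Proposition~\ref{prop-limit}. First I would fix $0<\delta<1/(4C_3)$ and $0<\veps<\veps_0/2$, pick a minimizing selector $v^*_\veps\in \Usm^{o,\veps}$ for the HJB~\eqref{eq-HJB-pert}, and set $\psi^\veps\doteq \widetilde V^\veps+\delta \frV$. Combining the HJB identity
$$\Lg^{v^*_\veps}\widetilde V^\veps(x)+\tfrac{1}{2}\|\omega^\veps(x)\|^2+r^{\veps,v^*_\veps}(x)=\lsm[r^\veps]$$
with the Lyapunov inequality~\eqref{eq-lyap-var} for $\frV$, using $r^\veps\geq (1-\veps/\veps_0)r$ and the dominating bound $h\leq 2+2\bar h\Ind_{\cH^c}+2r\Ind_\cH$ from Lemma~\ref{lem-inf-comp}, and dropping the two non-positive squared-gradient terms, I would arrive at an inequality of the form
$$\Lg^{v^*_\veps}\psi^\veps(x)\leq K_\delta-\kappa\, h^{v^*_\veps}(x)\quad \text{a.e. }x\in\RR^d,$$
for some $\kappa>0$ and $K_\delta>0$ independent of $\veps$, where uniform boundedness of $\lsm[r^\veps]$ follows from Lemma~\ref{lem-pert-well-defined}. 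The coefficient $(1-\veps/\veps_0)-\delta C_3$ is bounded away from $0$ on the chosen range, which is what makes this clean.

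Next, since $h$ is inf-compact on $\RR^d\times \bU$, there exists a compact $K_1\subset\RR^d$ (independent of $\veps$) on whose complement $\Lg^{v^*_\veps}\psi^\veps\leq -1$. Because $\sV\geq 1$ and $\inf_{\RR^d}V^\veps>0$ by Theorem~\ref{thm-HJB-pert}, $\psi^\veps$ is bounded below. The uniform bound on $\lsm[r^\veps]$ together with $r\leq h$ places $v^*_\veps$ in $\Usm^{*,\beta'}$ for a $\beta'$ independent of $\veps$, and Corollary~\ref{cor-stable} then guarantees that $X$ is positive recurrent under $v^*_\veps$. Applying It\^o--Krylov to $\psi^\veps$ on $[0,t\wedge \widecheck\tau_{K_1}\wedge \tau_M]$, taking expectations, and sending $t,M\to\infty$ via localization and recurrence, I would get for every $x\in K_1^c$
$$\psi^\veps(x)\geq \E_x^{v^*_\veps}\big[\psi^\veps(X_{\widecheck\tau_{K_1}})\big]\geq \inf_{\partial K_1}\psi^\veps,$$
which yields $\inf_{\RR^d}\psi^\veps=\inf_{K_1}\psi^\veps$.

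Third, Proposition~\ref{prop-limit} together with the Sobolev embedding $W^{2,p}_{\text{loc}}\hookrightarrow \cC^{1,\gamma}_{\text{loc}}$ for $p>d$, and the uniform Harnack bounds used in its proof, give $\widetilde V^\veps\to \widetilde V$ uniformly on $K_1$. Hence
$$\inf_{K_1}\psi^\veps\;\xrightarrow[\veps\to 0]{}\;\inf_{K_1}(\widetilde V+\delta\frV)\eqcolon -C_\delta,$$
and for all sufficiently small $\veps$ we obtain $\widetilde V^\veps(x)+\delta\frV(x)\geq -C_\delta-1$ for every $x\in\RR^d$. Passing to the pointwise limit yields $\widetilde V(x)+\delta\frV(x)\geq -C_\delta-1$, so that $\widetilde V^-(x)\leq \delta\frV(x)+C_\delta+1$. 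Since $\sV$ is inf-compact and $\sV\geq 1$, we have $\frV(x)\to\infty$ as $\|x\|\to\infty$, which gives $\limsup_{\|x\|\to\infty}\widetilde V^-(x)/\frV(x)\leq \delta$. Arbitrariness of $\delta>0$ concludes $\widetilde V^-\in \fro(\frV)$.

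The main obstacle is the combination in Step~1: extracting from the HJB for $\widetilde V^\veps$ and the Lyapunov estimate for $\frV$ a clean PDE inequality whose right-hand side is negative of an inf-compact function with constants independent of $\veps$, while simultaneously ensuring that the Markov optimizers $v^*_\veps$ inherit the stability/integrability properties from $\Usm^{*,\beta}$ uniformly in $\veps$. Once this is in place, the minimum-principle argument for $\psi^\veps$ via recurrence of $X$ under $v^*_\veps$ and the compact-set convergence of $\widetilde V^\veps\to \widetilde V$ are essentially standard and combine straightforwardly.
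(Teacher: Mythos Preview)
Your proposal is correct and takes a genuinely different route from the paper's. The paper works through the stochastic representation of Theorem~\ref{thm-lin-rep-pert}(ii): for $v^\veps\in\Usm^{o,\veps}$ and any $w^*\in\Wsm(l)$ it bounds $\widetilde V^\veps(x)$ below by an expectation over the hitting time $\widecheck\tau_R$ of the \emph{extended} process $Z$, passes $\veps\to 0$ in that representation (using convergence of hitting-time expectations from \cite{ABP15}), and then combines the resulting lower bound on $\widetilde V$ with an It\^o estimate on $\frV$ coming from~\eqref{eq-lyap-extended} for the extended generator $\widehat\Lg^{v^*,w^*}$, before concluding via the arguments of \cite[Lemma~3.10]{ABP15}. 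You instead work purely with the original process $X$ under a minimizing selector $v^*_\veps$: adding the HJB identity for $\widetilde V^\veps$ to $\delta$ times the Lyapunov inequality~\eqref{eq-lyap-var} produces a Foster--Lyapunov inequality for $\psi^\veps=\widetilde V^\veps+\delta\frV$ with $\veps$-independent constants, a minimum-principle argument then localizes $\inf_{\RR^d}\psi^\veps$ to a fixed compact, and locally uniform convergence of $\widetilde V^\veps$ from Proposition~\ref{prop-limit} transfers the bound to the limit. Your route is more self-contained---it bypasses the extended diffusion and the variational machinery of Sections~\ref{sec-var-BM}--\ref{sec-proof-pert} entirely---whereas the paper's route reuses structure already built there. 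One small remark: you need not appeal to Corollary~\ref{cor-stable} for recurrence of $X$ under $v^*_\veps$; your own inequality $\Lg^{v^*_\veps}\psi^\veps\leq -1$ on $K_1^c$ together with $\inf_{\RR^d}\psi^\veps>-\infty$ already yields $\E_x^{v^*_\veps}[\widecheck\tau_{K_1}]<\infty$ directly, which also sidesteps the question of whether $v^*_\veps\in\Usm^{*,\beta'}$.
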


\begin{proof} The proof of this argument follows closely the arguments in the proof of \cite[Lemma 3.10]{ABP15}.  Fix $l>0$ and choose $w^*\in \Wsm(l)$. From Theorem~\ref{thm-lin-rep-pert}(ii), we have the following: for $v^\veps\in \Usm^{o,\veps}$
\begin{align}\label{eq-lin-rep-a}
		\widetilde V^\veps(x)&\geq \E_x^{v^\veps,w^*}\Big[\int_0^{\widecheck \tau_R} \Big(r^{\veps,v^\veps}(Z_t)-\frac{1}{2}\|w^*(Z_t)\|^2- \lsm[r^\veps]\Big)\D t+\widetilde V^\veps(Z_{\widecheck \tau_R})\Big]\,.		\end{align}
		Since $w^*\in \Wsm(l)$, following the arguments of the proof of Lemma~\ref{lem-comp-X} gives us 
		$$ \sup_{{0<\veps<\veps_0}} \limsup_{T\to\infty}\frac{1}{T}\E_x^{v^\veps,w^*} \Big[\int_0^T h^{v^\veps}(Z_t) \D t\Big]<\infty\,.$$
		From  \cite[Lemma 3.3.4 (iii$\implies$i)  ]{arapostathis2012ergodic}, we can conclude that  $\sup_{{0<\veps<\veps_0}} \E_x^{v^\veps,w^*}[\int_0^{\widecheck \tau_R} h^{v^\veps}(Z_t) \D t]<\infty$. Taking $\veps\to 0$,  we know that  $v^\veps\to v^*$, for some $v^*\in \Usm$ in the topology of Markov controls (see \cite[Section 2.4]{arapostathis2012ergodic} for the definition)
such that 
$$ \Lg^{v^*} V(x)+r^{v^*}(x) V(x)= \min_{u\in \bU} \Big\{\Lg^uV(x)+ r(x,u) V(x)\Big\}\,, \text{ for a.e. } x\in \RR^d\,.$$ This follows from \cite[Lemma 2.4.3]{arapostathis2012ergodic}.  Using \cite[Lemma 3.8]{ABP15} it follows that
$$ \E_x^{v^\veps,w^*}[\widecheck \tau_R]\to  \E_x^{v^*,w^*}[\widecheck \tau_R],\text{ as $\veps\to 0$}\,.$$
Therefore, taking $\veps\to 0$ in~\eqref{eq-lin-rep-a} and  using the fact that $\sup_{x\in \RR^d}\|w^*(x)\|\leq l$ (as $w^*\in \Wsm(l)$) gives us
		\begin{align}\nonumber
		\widetilde V(x)&\geq \E_x^{v^*,w^*}\Big[\int_0^{\widecheck \tau_R} \Big(r^{v^*}(Z_t)-\frac{1}{2}\|w^*(Z_t)\|^2- \lsm[r]\Big)\D t+\widetilde V(Z_{\widecheck \tau_R})\Big]\\\label{eq-lin-rep-r}
		&\geq \E_x^{v^*,w^*}\Big[\int_0^{\widecheck \tau_R} \Big(r^{v^*}(Z_t)-\frac{l^2}{2}- \lsm[r]\Big)\D t\Big]+\inf_{y\in {\partial B_R}} \widetilde V(y)\,.		\end{align}
Also using the fact that $r\geq 0$, we have 		
\begin{align}\label{eq-lin-rep-l}
\widetilde V(x)&\geq \E_x^{v^*,w^*}\Big[\int_0^{\widecheck \tau_R} \Big(-\frac{l^2}{2}- \lsm[r]\Big)\D t\Big] {+\inf_{y\in {\partial B_R}} \widetilde V(y)}\,. 
\end{align}
This in turn, gives us
$$ \widetilde V^-(x) \leq  \Big(\lsm[r]+\frac{l^2}{2} \Big)\E_x^{v^*,w^*}[\widecheck \tau_R] -\inf_{y\in {\partial B_R}} \widetilde V(y)\,.$$

Applying It\^o's formula to~\eqref{eq-lyap-extended} with $u\equiv v^*$ and $w\equiv w^*$, {and using the fact that $\frV\geq 0$}, we have
\begin{align*}
&\E_x^{v^*,w^*}\Big[ \int_0^{\wtau_R}{ \bar h^{v^*}}(Z_t) \Ind_{\cH^c}(Z_t,v^*(Z_t))\D t\Big] \\
&\leq C_1\wedge C_2 \E_x^{v^*,w^*}[\wtau_R]+ C_3 \E_x^{v^*,w^*}\Big[\int_0^{\wtau_R} r^{v^*}(Z_t)\D t\Big]-\frac{1}{4} \E_x^{v^*,w^*}\Big[\int_0^{\widecheck\tau_R} \|\Sigma(Z_t)\transp{\nabla\frV(Z_t)}\|^2\D t\Big]\\
&\quad+ {\E_x^{v^*,w^*}}\Big[ \int_0^{\wtau_R} \|w^*(Z_t)\|^2\D t\Big] +\frV(x)\,.
\end{align*}
Adding $$ \E_x^{v^*,w^*}\Big[\int_0^{\wtau_R} r^{v^*}(Z_t)\Ind_{\cH}(Z_t,v^*(Z_t))\D t\Big]$$ on both sides and using the fact that $\sup_{x\in \RR^d}\|w^*(x)\|\leq l$ gives us
\begin{align*}
\frac{1}{2}\E_x^{v^*,w^*}\Big[ \int_0^{\wtau_R} h^{v^*}(Z_t) \D t\Big]
&\leq  (C_1\wedge C_2 +{l^2}+1) \E_x^{v^*,w^*}[\wtau_R]+ (C_3 +1)\E_x^{v^*,w^*}\Big[\int_0^{\wtau_R} r^{v^*}(Z_t)\D t\Big]+\frV(x)\,.
\end{align*}
In the above, we also use~\eqref{eq-inf-comp-1}. Using~\eqref{eq-lin-rep-r},  we then have
\begin{align*}
&\frac{1}{2}\E_x^{v^*,w^*}\Big[ \int_0^{\wtau_R} h^{v^*}(Z_t) \D t\Big]\\
&\leq  (C_1\wedge C_2 +{l^2}+1) \E_x^{v^*,w^*}[\wtau_R]+ (C_3 +1)\Big(\widetilde V(x) + (\lsm[r]+\frac{l^2}{2}) \E_x^{v^*,w^*}[\widecheck \tau_R] -\inf_{y\in {\partial B_R}} \widetilde V(y) \Big)+\frV(x)\,.
\end{align*}
From here, following exactly the same arguments as those in the proof of \cite[Lemma 3.8]{ABP15}, we get the result.
\end{proof}

\begin{lemma}\label{lem-minimizer-stable}
Suppose $v\in \Usm$ satisfies~\eqref{eqn-optimality1}. Then $\Lambda_v[r]<\infty$ and the conclusion of Proposition~\ref{prop-sup-v} holds for $v$.
\end{lemma}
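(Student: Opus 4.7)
My plan is to establish the two claims in succession: first $\Lambda_v[r]<\infty$, and then that Proposition~\ref{prop-sup-v} carries over. For the first, the central idea is to turn the multiplicative HJB for $v$ into an additive, Ito-compatible inequality by passing to $\widetilde V \doteq \log V$ and then to use Lemma~\ref{lem-V-} together with the Lyapunov estimate~\eqref{eq-lyap-extended} to compensate the possibly unbounded negative part of $\widetilde V$.

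Concretely, setting $\omega \doteq \Sigma^{\mathsf T}\nabla\widetilde V$, equation~\eqref{eqn-optimality1} becomes $\Lg^v\widetilde V + \tfrac12\|\omega\|^2 = \lsm[r] - r^v$. For the extended diffusion $Z$ driven by $v$ and any $w$, Young's inequality $w\cdot\omega \le \tfrac12(\|w\|^2+\|\omega\|^2)$ gives the pointwise bound
\[
\widehat\Lg^{v,w}\widetilde V(x) + r^v(x) - \tfrac12\|w\|^2 \;\le\; \lsm[r].
\]
Applying It\^o--Krylov's formula to $\widetilde V(Z_{T\wedge\tau_R})$ and rearranging yields
\[
\E_x^{v,w}\!\left[\int_0^{T\wedge\tau_R}\!\big(r^v(Z_t)-\tfrac12\|w_t\|^2\big)\D t\right] \le T\lsm[r] + \widetilde V(x) - \E_x^{v,w}\!\left[\widetilde V(Z_{T\wedge\tau_R})\right].
\]
Since $\widetilde V^-\in\fro(\frV)$ by Lemma~\ref{lem-V-}, for any $\eta>0$ we have $\widetilde V^-\le\eta\frV + C(\eta)$, and a second application of It\^o--Krylov to $\frV$ combined with~\eqref{eq-lyap-extended} controls $\E[\frV(Z_{T\wedge\tau_R})]$ linearly in $\E[\int r^v]$ and $\E[\int\|w\|^2]$. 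Combining, dividing by $T$, and choosing $\eta<1/C_3$ gives, after letting $R\to\infty$,
\[
\Lambda_{v,w,T}[r] \;\le\; \frac{\eta(2+C_3)}{1-\eta C_3}\cdot \frac{\E_x^{v,w}[\int_0^T\tfrac12\|w_t\|^2\D t]}{T} \;+\; \frac{\lsm[r]+\eta(C_1\wedge C_2)}{1-\eta C_3} \;+\; o_{T}(1).
\]

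For $w\in\Wsm(l)$, the first term on the right is bounded by a multiple of $\eta l^2$; passing to $\limsup_T$ and then sending $\eta\downarrow 0$ yields $\sup_{w\in\Wsm(l)}\Lambda_{v,w}[r]\le \lsm[r]$ for every $l>0$, and hence $\sup_{w\in\Wsm}\Lambda_{v,w}[r]\le\lsm[r]$. To extend the bound to $w\in\cA$ and invoke the variational formula in Proposition~\ref{prop-var-cost}, I will mimic the truncation used in the proof of Lemma~\ref{lem-cec-cost}: given a near-optimal $w$ at level $T$, replace it by $\widetilde w \doteq w\,\Ind_{[0,\tau_l^*]}$ with $\tau_l^*\doteq\inf\{t:\|w_t\|>l\text{ or }\|Z_t\|>l\}$, then use the $\frV$-tightness produced above—via the same inequality applied to $\widetilde w$—to show that the truncation loss is $o_l(1)$ uniformly in $T$. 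Combining with the bound on $\sup_{w\in\Wsm(l)}\Lambda_{v,w,T}[r]$ gives $\Lambda_v[r]\le\lsm[r]<\infty$, so $v\in\Usm^{*,\beta}$.

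The second claim is then immediate: once $v\in\Usm^{*,\beta}$, Proposition~\ref{prop-sup-v} applies verbatim, delivering $\Lambda_v[r]=\sup_{w\in\Wsm}\Lambda_{v,w}[r]=\lim_{l\to\infty}\sup_{w\in\Wsm(l)}\Lambda_{v,w}[r]$. The most delicate step is the truncation in the second paragraph, since a priori tightness of the MEMs under arbitrary nearly optimal $w\in\cA$ is not available; the approach is to use the pointwise inequality itself to get an a-priori $\frV$-moment bound on the truncated process and then pass to the limit $l\to\infty$, precisely in the spirit of the argument that produces Lemma~\ref{lem-comp-X-pert} from Lemma~\ref{lem-comp-w-pert} in the perturbed setting.
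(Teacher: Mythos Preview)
Your ingredients are the same as the paper's—the log-HJB for $\widetilde V$, the Lyapunov estimate for $\frV$, and Lemma~\ref{lem-V-}—but the way you close the argument differs, and your truncation step has a genuine gap.

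The paper combines $\frV$ and $\widetilde V$ at the level of the \emph{function}: it sets $\mathscr{F}\doteq\frV+\widetilde V$, which is inf-compact (hence bounded below) by Lemma~\ref{lem-V-}, and then works directly with a $\delta$-optimal $w^*\in\cA$ at each $T$. The elementary bound $\tfrac{1}{2T}\E[\int\|w^*\|^2]\le\tfrac{1}{T}\E[\int r^v]+\delta$ (from $w=0$ being a competitor) is fed into an It\^o estimate for $\mathscr{F}$, and the bootstrapping ``following the arguments of Lemma~\ref{lem-comp-X}'' closes the loop on $\tfrac{1}{T}\E[\int h^v]$, giving $\Lambda_v[r]<\infty$.

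Your approach instead keeps $\widetilde V$ and $\frV$ separate and combines the two It\^o estimates with a weight~$\eta$. This yields the clean inequality you display, which works for $w\in\Wsm(l)$ (where $\tfrac{1}{2T}\E[\int\|w\|^2]\le l^2/2$) and gives $\sup_{w\in\Wsm(l)}\Lambda_{v,w}[r]\le\lsm[r]$. But this does \emph{not} bound $\Lambda_v[r]=\limsup_T\sup_{w\in\cA}\Lambda_{v,w,T}[r]$: the order of $\limsup_T$ and $\sup_w$ is wrong, and the interchange is precisely the hard part. Your inequality has the positive coefficient $\tfrac{\eta(2+C_3)}{1-\eta C_3}$ in front of $\tfrac{1}{2T}\E[\int\|w\|^2]$ for every $\eta>0$; sending $\eta\downarrow 0$ conflicts with the $o_T(1)$ term, whose constant $C(\eta)$ blows up.

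The proposed truncation does not rescue this. Controlling the truncation loss (as in Lemma~\ref{lem-cec-cost}) requires tightness of the MEMs of the \emph{original} process under $v,w$—not the truncated one—and that tightness is exactly what is not yet available. Bounding $\Lambda_{v,\widetilde w,T}[r]$ via ``the same inequality applied to $\widetilde w$'' does not bound $\Lambda_{v,w,T}[r]$ from above; near-optimality of $w$ only gives $\Lambda_{v,w,T}[r]\ge\Lambda_{v,\widetilde w,T}[r]-\delta$, the wrong direction. Moreover, Lemma~\ref{lem-cec-cost} relies on the running cost being truncated at level $L$ (via Lemma~\ref{lem-tail-est}), and that exponential uniform integrability already presupposes $v\in\Usm^{*,\beta}$.

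Your second claim—once $\Lambda_v[r]<\infty$, the results of Section~\ref{sec-est} carry over with $\beta$ replaced by $\Lambda_v[r]$, and Proposition~\ref{prop-sup-v} applies—matches the paper's reasoning exactly.
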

\begin{proof} Fix $v\in \Usm$ that satisfies~\eqref{eqn-optimality1}.  Suppose that $\kappa\doteq \Lambda_v[r]<\infty.$ Then, observe that all the results in Section~\ref{sec-est} hold with ${\beta}$ replaced by $\kappa$. Subsequently, conclusion of Proposition~\ref{prop-sup-v} holds. Therefore, it only remains to show that $\kappa<\infty$. This is achieved using Proposition~\ref{prop-var-cost} and Remark~\ref{rem-non-negative} which give us 
\begin{align*}
\Lambda_v[r]=\limsup_{T\to\infty}\sup_{w\in \cA} \frac{1}{T}\E_x^{v,w}\Big[\int_0^T\Big(r^v(Z_t)-\frac{1}{2}\|w_t\|^2\Big)\D t\Big]\,.
\end{align*}
For any $\delta>0$ and $T>0$, let $w^*=w^*(\delta,T)\in \cA$ be such that 
$$\sup_{w\in \cA} \frac{1}{T}\E_x^{v,w}\Big[\int_0^T\Big(r^v(Z_t)-\frac{1}{2}\|w_t\|^2\Big)\D t\Big]\leq  \frac{1}{T}\E_x^{v,w^*}\Big[\int_0^T\Big(r^v(Z_t)-\frac{1}{2}\|w^*_t\|^2\Big)\D t\Big]+\delta\,. $$
Suppose the following holds. 
\begin{align}\label{eq-fin-val}\limsup_{T\to\infty} \frac{1}{T}\E_x^{v,w^*}\Big[\int_0^Tr^v(Z_t)\D t\Big]<\infty \text{ and }  \limsup_{T\to\infty} \frac{1}{2T}\E_x^{v,w^*}\Big[\int_0^T\|w_t^*\|^2\D t\Big]<\infty\,.\end{align}
Then it is clear that 
$$ \Lambda_v[r]\leq \limsup_{T\to\infty}\frac{1}{T}\E_x^{v,w^*}\Big[\int_0^T\Big(r^v(Z_t)-\frac{1}{2}\|w^*_t\|^2\Big)\D t\Big]+\delta<\infty$$
and also proves the lemma. In the following, we show~\eqref{eq-fin-val}. To begin with, observe that since $\Lambda_v[r]\geq 0$,
\begin{align}\label{eq-bound-1} \frac{1}{2T}\E_x^{v,w^*}\Big[\int_0^T\|w^*_t\|^2\D t\Big]\leq  \frac{1}{T}\E_x^{v,w^*}\Big[\int_0^Tr^v(Z_t)\D t\Big]+\delta,\text{ for large $T$}\,. \end{align}
Now define a function $\mathscr{F}:\RR^d\rightarrow \RR_+$ as $\mathscr{F}(x)\doteq \frV(x) +\widetilde V(x)$. Since $\widetilde V^-\in \mathfrak{o}(\frV)$ from Lemma~\ref{lem-V-} and $\frV$ is inf-compact,  we can conclude that $\mathscr{F}$ is inf-compact (and consequently, uniformly bounded from below).  Since $v\in \Usm$ satisfies~\eqref{eqn-optimality1}, we have 
\begin{align*} \Lg^{v} V(x)+ r^{v}(x) V(x)= \Lambda_\text{SM}[r]V(x)\,, \text{ for } x\in \RR^d\,.\end{align*}
	The above equation becomes
	$$ \Lg^{v} \widetilde V(x) + r^{v}(x) +\frac{1}{2} \|\omega(x)\|^2= \lsm[r]\,, \text{ for } x\in \RR^d.$$
From the above display and the fact that $\frV$ satisfies~\eqref{eq-lyap-var-lin},  we can conclude that 
\begin{align*}
\widehat \Lg^{v,w} \mathscr{F}(x)&= C_1\wedge C_2-{\bar h^v}(x)\Ind_{\cH^c}(x,u)+ C_3r^v(x)\Ind_{\cH}(x,u) -\frac{1}{2}\|\Sigma(x)\transp \nabla \frV(x)\|^2 +(\Sigma(x) w )\cdot \nabla \frV(x) \\
&\qquad +\lsm[r]-r^{v}(x)-\frac{1}{2}\|\omega(x)\|^2 + (\Sigma(x)w)\cdot \nabla \widetilde V(x)\\
&\leq  C_1\wedge C_2+\lsm[r]-{\bar h^v}(x)\Ind_{\cH^c}(x,u)-(1- C_3) r^v(x)\Ind_{\cH}(x,u)-r^v(x)\Ind_{\cH^c}(x,u)\\
&\qquad -\frac{1}{2}\|\Sigma(x)\transp \nabla \frV(x)\|^2 +(\Sigma(x) w )\cdot (\nabla \frV(x)+\nabla \widetilde V(x))  -\frac{1}{2}\|\omega(x)\|^2\,.
\end{align*}
Since $0<C_3<1$, using~\eqref{eq-inf-comp-1} the above display reduces to 
\begin{align}\nonumber
\widehat \Lg^{v,w} \mathscr{F}(x)
&\leq  C_1\wedge C_2+\lsm[r] +1 -{\frac{1}{2}} h^v(x) -\frac{1}{2}\|\Sigma(x)\transp \nabla \frV(x)\|^2\\\label{eq-lyap-final}
&\qquad +(\Sigma(x) w )\cdot (\nabla \frV(x)+\nabla \widetilde V(x))  -\frac{1}{2}\|\omega(x)\|^2\,.
\end{align}
From here, following the arguments of the proof of Lemma~\ref{lem-comp-X}, we can conclude that 
$$ \limsup_{T\to\infty} \frac{1}{T} \E_x^{v,w^*}\Big[\int_0^T h^v(Z_t)\D t\Big]<\infty. $$
Since $r\leq h$,~\eqref{eq-bound-1} immediately implies~\eqref{eq-fin-val}. This completes the proof of the lemma.
\end{proof}

\begin{proposition}\label{prop-minimizers-optimal}
	Suppose $v\in\Usm $ and satisfies~\eqref{eqn-optimality1}. Then, we have 
	$$ \Lambda_v[r]=\Lambda_{\text{SM}}[r].$$
In other words, $v$ is an optimal stationary Markov control and $\Usm^o$ is non-empty.
\end{proposition}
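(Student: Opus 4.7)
Since $\Lambda_v[r]\ge\lsm[r]$ is immediate from the definition of $\lsm[r]$, I will focus on the reverse inequality. By Lemma~\ref{lem-minimizer-stable}, any $v$ satisfying~\eqref{eqn-optimality1} has $\Lambda_v[r]<\infty$ and the conclusion of Proposition~\ref{prop-sup-v} applies, yielding the representation $\Lambda_v[r]=\lim_{l\to\infty}\sup_{w\in\Wsm(l)}\Lambda_{v,w}[r]$. Accordingly, it suffices to prove $\Lambda_{v,w}[r]\le\lsm[r]$ for every $l>0$ and every $w\in\Wsm(l)$. Non-emptiness of $\Usm^o$ will follow from a standard measurable-selection argument applied to the (jointly continuous) Hamiltonian in~\eqref{eqn-HJB}.

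Set $\widetilde V\df \log V$ and $\omega\df \Sigma\transp\nabla\widetilde V$. The optimality condition~\eqref{eqn-optimality1} becomes $\Lg^v\widetilde V(x)+r^v(x)+\tfrac12\|\omega(x)\|^2=\lsm[r]$, and completing the square in $w\in\RR^d$ yields the pointwise bound
\[
\widehat\Lg^{v,w}\widetilde V(x)+r^v(x)-\tfrac12\|w\|^2\;=\;\lsm[r]-\tfrac12\|\omega(x)-w\|^2\;\le\;\lsm[r],
\]
with equality when $w=\omega(x)$. Applying It\^o-Krylov's formula to $\widetilde V(Z_{T\wedge\tau_R})$, where $Z$ is the extended diffusion driven by $(v,w)$, combining with the above bound, and invoking $\widetilde V\ge-\widetilde V^-$, I obtain
\[
\E_x^{v,w}\Big[\int_0^{T\wedge\tau_R}\big(r^v(Z_t)-\tfrac12\|w(Z_t)\|^2\big)\D t\Big]\;\le\;\lsm[r]\,\E_x^{v,w}[T\wedge\tau_R]+\widetilde V(x)+\E_x^{v,w}[\widetilde V^-(Z_{T\wedge\tau_R})].
\]

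The delicate step is passing $R\to\infty$ while controlling the boundary term. By Lemma~\ref{lem-V-}, for every $\eta>0$ there is $C_\eta>0$ with $\widetilde V^-\le\eta\,\frV+C_\eta$ on $\RR^d$. A parallel It\^o application to $\frV(Z_{T\wedge\tau_R})$ using~\eqref{eq-lyap-extended} with $\|w\|\le l$ (and monotone passage) gives the uniform-in-$R$ estimate
\[
\E_x^{v,w}[\frV(Z_{T\wedge\tau_R})]\;\le\;\frV(x)+(C_1\wedge C_2+l^2)T+C_3\,\E_x^{v,w}\Big[\int_0^T r^v(Z_t)\D t\Big],
\]
whose right-hand side grows at most linearly in $T$: since $\|w_t\|^2\le l^2$ and $\Lambda_{v,w}[r]\le\Lambda_v[r]<\infty$, one has $\limsup_T T^{-1}\E_x^{v,w}[\int_0^T r^v(Z_t)\D t]\le\Lambda_v[r]+\tfrac12 l^2$. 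Sending $R\to\infty$ (monotone convergence for $\int r^v\,\D t$, bounded convergence for $\int\|w\|^2\,\D t$), dividing by $T$, and taking $\limsup_T$ yields
\[
\Lambda_{v,w}[r]\;\le\;\lsm[r]+\eta\big[(C_1\wedge C_2+l^2)+C_3(\Lambda_v[r]+\tfrac12 l^2)\big].
\]
Letting $\eta\downarrow 0$, then taking supremum over $w\in\Wsm(l)$ and $l\to\infty$, gives $\Lambda_v[r]\le\lsm[r]$.

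The principal obstacle is the absence of any a priori lower bound on $\widetilde V$: unlike the perturbed case, we do not know $\inf\widetilde V>-\infty$. The direction chosen for the It\^o estimate fortunately exposes only the \emph{negative} part $\widetilde V^-$ at the boundary, and Lemma~\ref{lem-V-} provides precisely the required decay $\widetilde V^-\in\mathfrak{o}(\frV)$. The inf-compact function $\frV$ from Assumption~\ref{a-main} thereby plays a dual role---bounding $\widetilde V^-$ at infinity and simultaneously acting as a Foster-Lyapunov function for the extended diffusion $Z$---which is what makes the estimate close up.
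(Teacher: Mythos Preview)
Your proof is correct and follows essentially the same approach as the paper: both rewrite~\eqref{eqn-optimality1} in log-scale, apply It\^o--Krylov to $\widetilde V(Z_{T\wedge\tau_R})$ for $w\in\Wsm(l)$, and then control the problematic term $\E[\widetilde V^-(Z_{T\wedge\tau_R})]$ via Lemma~\ref{lem-V-} together with the Foster--Lyapunov bound~\eqref{eq-lyap-extended} for $\frV$. The only organizational difference is that the paper packages the latter step through the combined function $\mathscr{F}=\frV+\widetilde V$ and invokes \cite[Corollary~3.7.2]{arapostathis2012ergodic} to obtain $T^{-1}\E[\widetilde V^-(Z_T)]\to 0$, whereas you use the equivalent $\eta$--$C_\eta$ decomposition directly and pass $\eta\downarrow 0$ at the end; your version is arguably cleaner in that the right-hand side is made uniform in $R$ \emph{before} sending $R\to\infty$.
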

\begin{proof} 
	Choose $v\in \Usm$ such that~\eqref{eqn-optimality1} holds. From Lemma~\ref{lem-minimizer-stable}, we can conclude that for $\delta>0$,  there exists  large enough $l$ such that 
\begin{align*}\Lambda_v[r]\leq  \sup_{w\in \Wsm(l)} \Lambda_{v,w}[r]+\delta\,.\end{align*} { We then choose $w^*\in \Wsm(l)$ such that 
$$ \sup_{w\in \Wsm(l)} \Lambda_{v,w}[r]\leq \Lambda_{v,w^*}[r]+\delta\,.$$}
 From~\eqref{eq-lyap-final} and following the computations in the proof of Lemma~\ref{lem-exp-frv}, we can infer  that $$ \limsup_{T\to\infty} \frac{1}{T}\E_x^{v,w^*}\Big[\mathscr{F}(Z_T)\Big]\leq {M_6}, \text{ for some constant {$M_6>0$}}\,. $$ 
Since $\mathscr{F}=\frV+\widetilde V$ and  $\widetilde V^-\in \mathfrak{o}(\frV)$, from Lemma~\ref{lem-V-}, we can also infer that $\widetilde V^-\in \mathfrak{o}(\mathscr{F})$.  Using \cite[Corollary  3.7.2]{arapostathis2012ergodic}, this implies that  \begin{align}\label{eq-exp-} \lim_{T\to\infty} \frac{1}{T} \E_x^{v,w^*}\Big[\widetilde V^-(Z_T)\Big]=0\,.\end{align}	
	Now, it is clear from~\eqref{eqn-HJB}  that 
	\begin{align}\label{eq-v-opt} \Lg^{v} V(x)+ r^{v}(x) V(x)= \Lambda_\text{SM}[r]V(x)\,, \text{ for } x\in \RR^d\,.\end{align}
	The above equation becomes
	$$ \Lg^{v} \widetilde V(x) + r^{v}(x) +\frac{1}{2} \|\omega(x)\|^2= \lsm[r]\,, \text{ for } x\in \RR^d$$
	which equivalently can be written as 
	$$  \Lg^{v} \widetilde V(x) + r^{v}(x) +\max_{w\in \RR^d} \Big\{(\Sigma(x)w)\cdot \nabla \widetilde V(x) -\frac{1}{2} \|w\|^2\Big\}= \lsm[r]\,, \text{ for } x\in \RR^d\,.$$
	For a $\delta>0$, with $w^*\in \Wsm$ as chosen above, we have 
	$$  \widehat \Lg^{v,w^*} \widetilde V(x) + r^{v}(x) -\frac{1}{2} \|w^*(x)\|^2\leq  \lsm[r]\\,, \text{ for } x\in \RR^d\,.$$
 Applying It\^o's formula gives us
	\begin{align*}
	\E_x^{v,w^*}\big[\widetilde V (Z_{T\wedge \tau_R})\big]-\widetilde V(x)&{\leq} \E_x^{v, w^*}\Big[\int_0^{T\wedge \tau_R}\big( \lsm[r]- r^{v}(Z_t)+ \frac{1}{2}\|w^*(Z_t)\|^2\Big)\D t\Big]\,,
	\end{align*}
	and
		\begin{align*}
	-\E_x^{v,w^*}\Big[\widetilde V^-(Z_{T\wedge \tau_R})+ \int_0^{T\wedge\tau_R}r^{v}(Z_t)\D t\Big]-\widetilde V(x)&\leq \E_x^{v,w^*}\Big[\int_0^{T\wedge \tau_R}\big( \lsm[r]+ \frac{1}{2}\|w^*(Z_t)\|^2\Big)\D t\Big]\,.
	\end{align*}
	Taking $R\uparrow \infty$, we have
	$$ -\E_x^{v,w^*}[\widetilde V^-(Z_{T})]-\widetilde V(x)\leq \E_x^{v,w^*}\Big[\int_0^{T}\big( \lsm[r]- r^{v}(Z_t)+ \frac{1}{2}\|w^*(Z_t)\|^2\Big)\D t\Big]\,.$$
	From here dividing by $T$ and using~\eqref{eq-exp-}, we immediately have
	$$\Lambda_{v,w^*}[r]= \limsup_{T\to\infty}\frac{1}{T}\E_x^{v,w^*}\Big[\int_0^T\Big(r^{v}(Z_t)- \frac{1}{2}\|w^*(Z_t)\|^2\Big)\D t\Big]\leq \Lambda_\text{SM}[r]\,.$$
	But, from the choice of $w^*$, it is easy to see that
$$ \Lambda_{v}[r]-{2\delta \leq \Lambda_{v,w^*}}\leq \lsm[r]\,.$$
Arbitrariness of $\delta>0$, then gives us the result.  
\end{proof}

To prove that the optimal stationary Markov controls are minimizers of~\eqref{eqn-HJB} \emph{i.e.,} satisfy~\eqref{eqn-optimality1}, we use Proposition~\ref{prop-2p-game} and Theorem~\ref{thm-lin-rep-pert} extensively. The proof {involves an} argument by contradiction.

\begin{lemma}\label{lem-tightness} Let $w^*=w^*(l)\in \Wsm(l)$ be as in Proposition~\ref{prop-2p-game}(v), for every $l>0$. Then
$$ \sup_{0<\veps<\veps_0}\limsup_{l\to\infty} \limsup_{T\to\infty} \frac{1}{T}\E_x^{v,w^*}\Big[\int_0^T h^v(Z_t)\D t\Big]<\infty\,, \text{ for every $v\in {\Usm^{*,\beta}}$}.$$ 
\end{lemma}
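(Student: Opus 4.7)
The plan is to apply It\^o--Krylov's formula to the logarithmic Foster--Lyapunov function $\widetilde\sW_v \doteq \log \sW_v$ furnished by Corollary~\ref{cor-stable}, evaluated along the extended diffusion $Z$ under the control pair $(v,w^*)$. Repeating the chain of inequalities used in the proof of Lemma~\ref{lem-comp-X}---namely writing $\widehat\Lg^{v,w^*}\widetilde\sW_v(x) \leq (\lambda_v^*[\veps_0 h^v] - \veps_0 h^v(x)) - \tfrac14 \|\Sigma(x)\transp\nabla\widetilde\sW_v(x)\|^2 + \|w^*(x)\|^2$ after bounding $(\Sigma w^*)\cdot\nabla\widetilde\sW_v$ by Young's inequality, and using that $\inf_x \widetilde\sW_v > -\infty$---I would obtain the reduction
\[
\veps_0\limsup_{T\to\infty}\frac{1}{T}\E_x^{v,w^*}\int_0^T h^v(Z_t)\D t \;\leq\; \Lambda_v[\veps_0 h^v] + \limsup_{T\to\infty}\frac{1}{T}\E_x^{v,w^*}\int_0^T \|w^*(Z_t)\|^2\D t.
\]
By Corollary~\ref{cor-inf-comp-finite}, $\Lambda_v[\veps_0 h^v]\leq 2+\max\{C_1\wedge C_2,\beta\}$ uniformly in $\veps$ and in $v\in\Usm^{*,\beta}$, so the task reduces to showing that the quadratic integral on the right is bounded uniformly in $\veps$ after taking $\limsup_l$.

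For that quadratic integral, I would exploit the saddle-point inequality $\Lambda_{v,w^*}^l \geq \rho_l^\veps$ supplied by Proposition~\ref{prop-2p-game} (valid for every $v\in\Usm^{*,\beta}$ since $w^*$ is the game-optimal maximizer). Expanded out, this reads
\[
\limsup_{T\to\infty}\frac{1}{T}\E_x^{v,w^*}\int_0^T\Big[(r^{\veps,v}(Z_t)\wedge L^*) - \tfrac12\|\chi_l(Z_t)w^*(Z_t)\|^2\Big]\D t \;\geq\; \rho_l^\veps,
\]
which, together with the trivial pointwise upper bound $r^{\veps,v}\wedge L^*\leq L^*$, yields along a suitable subsequence $T_k$ the bound $T_k^{-1}\E\int_0^{T_k}\|\chi_l w^*\|^2\D t \leq 2(L^*-\rho_l^\veps)+o(1)$. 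The gap between $\|w^*\|^2$ and $\|\chi_l w^*\|^2$ is confined to the annulus $B_l\setminus B_{l/2}$, since $w^*=0$ on $B_l^c$ and $\chi_l=1$ on $B_{l/2}$, with $\|w^*\|^2\leq l^2$ on that annulus.

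The annular discrepancy would then be absorbed by a bootstrap: since $h^v$ is inf-compact (via Corollary~\ref{cor-stable}), $\inf_{B_l\setminus B_{l/2}}h^v\to\infty$ as $l\to\infty$, so Markov's inequality gives $\pi_{T_k}(B_l\setminus B_{l/2})\leq (\inf_{B_l\setminus B_{l/2}}h^v)^{-1}T_k^{-1}\E\int_0^{T_k}h^v\D t$. Substituting back into the Lyapunov estimate and transferring the $h^v$-term from the right to the left yields, for $l$ sufficiently large, a self-consistent upper bound on $T_k^{-1}\E\int_0^{T_k}h^v\D t$. To handle the mismatch between the true extended drift $\Sigma w^*$ and the game drift $\chi_l\Sigma w$, I would apply It\^o to $\Psi_l^\veps$ from Proposition~\ref{prop-2p-game}(i) and invoke the asymptotic vanishing $\lim_T T^{-1}\E^{v,w^*}[\Psi_l^\veps(Z_T)]=0$ in Proposition~\ref{prop-2p-game}(iii), using $\Psi_l^\veps\in\srO(\widetilde W_{\varkappa,\overline v})$. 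Combining with Theorem~\ref{thm-lin-rep-pert}(i) (which gives $\rho_l^\veps\to\lsm[r^\veps]$) and Lemma~\ref{lem-pert-well-defined} (which bounds $\lsm[r^\veps]$ uniformly in $\veps$) delivers a bound on the iterated limsup uniform in $\veps\in(0,\veps_0)$.

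The main obstacle is the \emph{asymmetry} of the saddle-point inequality: it constrains the limsup of a difference of two nonnegative averages rather than the limsup of each individually, so extracting a clean upper bound on $\limsup_T T^{-1}\E\int \|w^*\|^2\D t$ requires careful subsequential bookkeeping together with the auxiliary Lyapunov estimate from Step~1. A secondary nuisance is the divergence of the truncation level $L^*=L^*(l,\delta,M_4)\uparrow\infty$ as $l\uparrow\infty$; this is precisely what the bootstrap neutralises, by trading $L^*$ against the inf-compact gain $\inf_{B_l\setminus B_{l/2}}h^v\to\infty$ on the annulus, so that only the $\veps$-uniform quantity $\lsm[r^\veps]$ survives to the limit.
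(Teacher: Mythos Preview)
Your overall skeleton is correct and is the same as the paper's: couple the saddle-point inequality from Proposition~\ref{prop-2p-game}(v) with the Lyapunov estimate for $\widetilde\sW_v=\log\sW_v$ coming from Corollary~\ref{cor-stable}, exactly as in the proof of Lemma~\ref{lem-comp-X}. The difference lies in how you treat the running-cost term after invoking the saddle inequality, and there your bootstrap has a genuine gap.

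You bound $r^{\veps,v}\wedge L^*\leq L^*$ to extract $\tfrac{1}{T}\E\int\lVert\chi_l w^*\rVert^2\leq 2(L^*-\rho_l^\veps)$, and then try to kill the diverging $L^*(l)$ by playing it off against $\inf_{B_l\setminus B_{l/2}}h^v\to\infty$. But closing the bootstrap requires $l^2/\inf_{B_l\setminus B_{l/2}}h^v<\veps_0$ and a workable bound on $L^*(l)$, and neither rate is controlled anywhere: the inverse $L^*(\cdot)$ of Remark~\ref{rem-inv} is completely implicit, and $h$ from Lemma~\ref{lem-inf-comp} is only known to be inf-compact with no quantitative growth. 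So the coefficient in front of $I_h$ on the left of your self-consistent inequality may well be nonpositive and the argument collapses.

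The paper makes the opposite (and much simpler) choice: bound $r^{\veps,v}\wedge L^*\leq r^{\veps,v}$, not $\leq L^*$. The saddle inequality then reads, along a subsequence,
\[
\frac{1}{2T}\,\E_x^{v,w^*}\!\int_0^T\lVert w^*(Z_t)\rVert^2\,\D t \;\le\; \frac{1}{T}\,\E_x^{v,w^*}\!\int_0^T r^{\veps,v}(Z_t)\,\D t \;-\;\rho_l^\veps,
\]
which is exactly the shape of~\eqref{eq-lem-comp-X-1} with $r^{\veps,v}$ replacing $r^v$ and $\rho_l^\veps$ replacing $\Lambda_v[r]$. One now repeats the proof of Lemma~\ref{lem-comp-X} verbatim: the $r^{\veps,v}$ integral on the right is absorbed into the $h^v$ integral on the left via $r^{\veps,v}=(1-\veps/\veps_0)r+\veps h\le h^v$ (using $r\le h$ from~\eqref{eq-inf-comp-1}), and only the constant $\lambda_v^*[\veps_0 h^v]-\tfrac{\veps_0}{2}\rho_l^\veps$ survives. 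The first piece is bounded uniformly by Corollary~\ref{cor-inf-comp-finite}; the second is bounded since $\rho_l^\veps\to\lsm[r^\veps]\to\lsm[r]\ge0$ by Theorems~\ref{thm-lin-rep-pert}(i) and~\ref{thm-pert-limit}(ii). No bootstrap, no rate comparison, and the truncation level $L^*$ never appears. The $\chi_l$-versus-$1$ and drift-mismatch issues you raise become immaterial once the argument is run consistently on the game side (the Lyapunov step then also sees $\lVert\chi_l w^*\rVert^2$, which is precisely what the saddle controls); the paper's written display suppresses this but the mechanism is the same.
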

\begin{proof} First, observe that for $v\in {\Usm^{*,\beta}}$ and $w^*$ satisfying the hypothesis of the lemma, we have
\begin{align*} \rho^\veps_l&\leq  \limsup_{T\to\infty} \frac{1}{T}\E_x^{v,w^*} \Big[\int_0^T \Big(r^{\veps,v} (Z_t) -\frac{1}{2} \|w^*(Z_t)\|^2 \Big)\D t\Big]\end{align*}
which along a subsequence (again denoted by $T$) implies that 
 \begin{align*} \frac{1}{T}\E_x^{v,w^*} \Big[\int_0^T \frac{1}{2} \|w^*(Z_t)\|^2\D t\Big]& \leq \frac{1}{T}\E_x^{v,w^*} \Big[\int_0^T r^{\veps,v} (Z_t)\D t\Big]  -\rho^\veps_l\,.\end{align*}
 From Theorems~\ref{thm-lin-rep-pert}(i) and~\ref{thm-pert-limit}(ii), we know that $\lim_{\veps\to 0} \lim_{l\to\infty} \rho^\veps_l=\lsm[r]\geq 0\,.$
Therefore, using the arguments from the proof of Lemma~\ref{lem-comp-X}, gives us the result.
\end{proof}

\begin{proposition} \label{prop-optimal-minimizers}Suppose $v\in \Usm^{o}$ is optimal, \emph{i.e.,} $\Lambda_{v}[r]=\lsm[r]$. Then $v$ satisfies~\eqref{eqn-optimality1}.
\end{proposition}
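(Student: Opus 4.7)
The plan is to argue by contradiction. Assume $v \in \Usm^o$ (so $\Lambda_v[r] = \lsm[r]$) but $v$ does not satisfy~\eqref{eqn-optimality1}. Then from~\eqref{eqn-HJB}, there exist a compact set $E \subset \RR^d$ of positive Lebesgue measure and $\delta_0 > 0$ such that
$$\Lg^v V(x) + r^v(x) V(x) - \lsm[r] V(x) \;\geq\; \delta_0 \qquad \text{for a.e.\ } x \in E.$$
My strategy is to transfer this strict inequality to the perturbed two-person zero-sum games of Proposition~\ref{prop-2p-game}, extract a uniform lower bound on $\sup_{w \in \Wsm(l)}\Lambda^l_{v,w}$ relative to $\rho^\veps_l$ that persists in the double limit $l\to\infty$, $\veps\to 0$, and then translate this into a strict gap $\Lambda_v[r] > \lsm[r]$ via Theorem~\ref{thm-pert-limit} and Lemma~\ref{lem-sup-pert}, contradicting the assumed optimality.

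To transfer the inequality, I would combine the convergence $V^\veps \to V$ strongly in $W^{1,p}_{\mathrm{loc}}$ (Proposition~\ref{prop-limit}) with the uniform $W^{2,p}_{\mathrm{loc}}$-bound~\eqref{eq-unif-w2p-bound}, together with $\Psi^\veps_l \to \widetilde V^\veps \to \widetilde V$ (from the proofs of Theorem~\ref{thm-lin-rep-pert}(ii) and Theorem~\ref{thm-pert-limit}) and $\lsm[r^\veps] \to \lsm[r]$, to upgrade the above strict inequality to the game HJB~\eqref{eq-hjb-game}: for all sufficiently small $\veps$ and large $l$,
$$\max_{\|w\|\le l}\bigl\{\Lg^v \Psi^\veps_l + f^{v,\veps}_l(\cdot,w) + \Delta_l(\cdot,w)\cdot\nabla\Psi^\veps_l\bigr\}(x) \;\geq\; \rho^\veps_l + \tfrac{\delta_0}{4} \qquad \text{for a.e.\ } x \in E.$$
Let $w^\veps_l \in \Wsm(l)$ be the corresponding pointwise maximizer; by Proposition~\ref{prop-2p-game}(v) it is bounded and continuous and vanishes outside $B_l$, so the extended diffusion $Z$ under $(v, w^\veps_l)$ agrees with $X$ under $v\in\Usm^{*,\beta}$ off $B_l$ and is positive recurrent by Corollary~\ref{cor-stable}. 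Applying It\^o to $\Psi^\veps_l(Z_{T\wedge\tau_R})$ under $(v,w^\veps_l)$, using Proposition~\ref{prop-2p-game}(iii) to discard the boundary terms, and integrating the pointwise gap against the invariant measure $\pi_{v,w^\veps_l}$ of $Z$ then yields
$$\sup_{w\in\Wsm(l)}\Lambda^l_{v,w} \;\geq\; \Lambda^l_{v,w^\veps_l} \;\geq\; \rho^\veps_l + \tfrac{\delta_0}{4}\,\pi_{v,w^\veps_l}(E).$$
A uniform lower bound $\pi_{v,w^\veps_l}(E) \geq \eta > 0$---established through tightness arguments parallel to Lemmas~\ref{lem-comp-X-pert} and~\ref{lem-tightness}, together with the positivity of Lebesgue densities of invariant measures of non-degenerate diffusions---then permits translating this into $\Lambda_v[r^\veps] \geq \lsm[r^\veps] + \delta_0\eta/4$ after passing $l \to \infty$ (via the truncation-limit Corollary~\ref{cor-trunc-limit-L} and Lemma~\ref{lem-sup-pert}). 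Passing $\veps \to 0$ via Theorem~\ref{thm-pert-limit}(i)-(ii) finally gives $\Lambda_v[r] \geq \lsm[r] + \delta_0\eta/4 > \lsm[r]$, the desired contradiction.

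The hard part will be establishing the uniform density estimate $\pi_{v,w^\veps_l}(E) \geq \eta$. Since $w^\veps_l$ is characterized only implicitly through $\nabla\Psi^\veps_l$ and depends on both $\veps$ and $l$ without any obvious uniform regularity, stability of $Z$ under $(v, w^\veps_l)$ must be controlled uniformly in the double limit. The mechanisms already developed in the paper---the Foster--Lyapunov function from Corollary~\ref{cor-stable}, the uniform boundedness of $w^\veps_l$ on compact sets from Proposition~\ref{prop-2p-game}(v), and the tightness estimates of Section~\ref{sec-var-BM}---provide the necessary tools, but the argument demands care to ensure that the Foster--Lyapunov bounds do not deteriorate as $\veps \to 0$ or $l \to \infty$, and that the non-degeneracy condition~\eqref{eq-cond-non-deg} gives a uniform Harnack lower bound on the invariant density on $E$.
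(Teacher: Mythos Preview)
Your proposal is essentially correct and follows the same contradiction strategy as the paper: transfer the gap at the HJB level to the game equation~\eqref{eq-hjb-game}, apply It\^o to $\Psi^\veps_l$ under $(v,w)$ for a suitable auxiliary control, use Proposition~\ref{prop-2p-game}(iii) to discard boundary terms, and obtain $\Lambda^l_{v,w}\ge\rho^\veps_l+[\text{gap term}]$, then pass to the double limit. Two points of comparison are worth noting.

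First, the step you flag as hard---the uniform lower bound $\pi_{v,w^\veps_l}(E)\ge\eta>0$ in $(\veps,l)$---is bypassed entirely in the paper. Rather than controlling the invariant densities along the sequence, the paper takes the weak limit of the measures $\mu^{\veps,l}_v$ first (tightness being exactly the content of Lemma~\ref{lem-tightness}), and only then invokes positivity of the Lebesgue density of the \emph{limit} measure $\mu^*_v$ via \cite[Theorem~2.6.16]{arapostathis2012ergodic}. Since $\widetilde\frK^\veps_l\to\widetilde\frK$ in $L^1(\frB)$, this suffices to conclude $\mu^*_v(\widetilde\frK)>0$ and reach the contradiction. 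This ordering of limits is considerably simpler than a uniform Harnack argument.

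Second, you take $w^\veps_l$ to be the pointwise maximizer at the fixed $v$, whereas the paper uses the max--min optimal $w^*$ from Proposition~\ref{prop-2p-game}(v). Either choice yields the needed inequality after It\^o, but Lemma~\ref{lem-tightness} is stated precisely for the paper's $w^*$, so the tightness step is already in place; with your choice you would need to re-verify the tightness estimate (the same proof goes through, but it is an extra step).
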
 

\begin{proof}
Fix $v\in\Usm^o$.  Suppose~\eqref{eqn-optimality1} fails to hold on an open set $\frB\subset \RR^d$. In other words, there exists a non-trivial non-negative function $\frK\in L^1(\frB)$ such that 
$$ \frK(x)\doteq \Big(\Lg^v V(x)+\big(r^v(x)-\Lambda_{\text{SM}}[r]\big)V(x)\Big)\Ind_{\frB}(x)\,.$$
In terms of $\widetilde V=\log V$, there exists another non-trivial non-negative function $\widetilde \frK\in L^1(\frB)$ such that 
$$ \widetilde \frK(x)\doteq \Big(\widehat \Lg^{v,\omega} \widetilde V(x)+r^v(x)-\Lambda_{\text{SM}}[r]-\frac{1}{2} \|\omega(x)\|^2\Big)\Ind_{\frB}(x)\,.$$
Recall that $\omega(x)=\Sigma(x)\transp \nabla \widetilde V(x).$
Since $\Psi^\veps_l$ converges to $\widetilde V^\veps$ ({strongly in $W^{1,p}_{\text{loc}}(\RR^d)$ for $p\geq 2$,} from Theorem~\ref{thm-lin-rep-pert}) and $\widetilde V^\veps $ converges to $\widetilde V$ 
 (as $V^\veps \to V$ from Proposition~\ref{prop-limit}), uniformly on compact sets of $\RR^d$, there exists { a family of non-trivial non-negative functions $\{\widetilde \frK^\veps_l\}_{\veps,l} \subset L^1(\frB)$} such that
$$ \frK^\veps_l(x)\doteq \Big(\Lg^v \Psi^\veps_l(x) -\rho^\veps_l+ \max_{w:\|w\|\leq l} \{ f^{v,\veps}_l (x,w) + {\Delta(x,w)}\cdot \nabla \Psi^\veps_l(x)\}\Big)\Ind_{\frB}(x)\,.$$ 
Moreover, $\lim_{\veps\to 0} \lim_{l\to\infty} \widetilde \frK^\veps_l=\widetilde \frK$ {strongly in} $L^1(\frB)$. 

Now let $w^*= w^*(\veps,l)$ be as in Proposition~\ref{prop-2p-game}(v).  Applying  It\^o-Krylov's formula to $\Psi^\veps_l(Z_t)$ with  $u\equiv v$ and $w\equiv w^*$  gives us
\begin{align*}
\E_x^{v,w^*}\Big[&\Psi^\veps_l(Z_{T\wedge \tau_R})\Big] -\Psi^\veps_l(x)=\E_x^{v,w^*}\Big[ \int_0^{T\wedge \tau_R} \Big(\rho^\veps_l- f^{v,\veps}_l (Z_t,w^*(Z_t)) + \widetilde \frK^\veps_l(Z_t)\Big)\D t\Big]\,.
\end{align*}
From Proposition~\ref{prop-2p-game}(iii), $$ \lim_{T\to\infty}\frac{1}{T}\E_x^{v,w^*}\Big[\Psi^\veps_l(Z_T)\Big]=0 \quad\text{ and } \quad \lim_{R\to\infty}\E_x^{v,w^*}\Big[\Psi^\veps_l(Z_{T\wedge\tau_R})\Big]=\E_x^{v,w^*}\Big[\Psi^\veps_l(Z_{T})\Big]\,.$$
Therefore, taking $R\to\infty$ and then $T\to\infty$ will give us
\begin{align*}
\Lambda_v[r^\veps]\geq \Lambda^l_{v,w^*} &\geq \rho^\veps_l + \limsup_{T\to\infty} \frac{1}{T}\E_x^{v,w^*}\Big[\int_0^T \widetilde \frK^\veps_l(Z_t)\D t\Big]\,.
\end{align*}
To get the first inequality above, we use  Proposition~\ref{prop-var-cost} and the fact that $w^*\in \cA$.
With $\mu_v^{\veps,l}$ being the invariant measure of $Z$ under $v$ and $w^*$, we have
\begin{align*}
\Lambda_v[r^\veps]\geq \rho^\veps_l+ \mu^{\veps,l}_v(\widetilde \frK^\veps_l)\,. 
\end{align*}
Since ${\{\mu^{\veps,l}_v\}_{\veps,l}}$ is tight in both $\veps$ and $l$ from Lemma~\ref{lem-tightness}, we can choose  a subsequence $ l_n$ along which $\mu^{\veps,l_n}_v$ converges weakly  for some measure $\mu^\veps_v$ and further choose a  subsequence $\veps_n$,  along which {$\mu^{\veps_n}_v$} converges weakly to some invariant measure $\mu^*_v$ of $Z$.   Since $v\in \Usm^o$,  we also {know} that  $\Lambda_v[r^\veps] \to \Lambda_v[r]$ (from Theorem~\ref{thm-pert-limit}(i)) and $\lim_{\veps\to 0} \lim_{l\to\infty}\rho^\veps_l \to \Lambda_v[r]$ (from Theorems~\ref{thm-lin-rep-pert}(i) and~\ref{thm-pert-limit}(ii)). This finally gives us 
$$ \lsm[r]=\Lambda_v[r]\geq \lsm[r]+ \mu^*_v(\widetilde \frK)\,.$$
This is a contradiction as $\mu^{*}_v(\widetilde\frK)>0$ due to \cite[Theorem 2.6.16]{arapostathis2012ergodic} and this proves our result.
\end{proof}

\subsection{Proof of Theorem~\ref{thm-diffusion}(iii)} \label{sec-stoc-rep}
The proof of this part of Theorem~\ref{thm-diffusion} relies heavily on Lemma~\ref{lem-ground-diffusion} and Proposition~\ref{prop-ground-diffusion}.  
 
\begin{proposition}\label{prop-uniqueness}
For every $v\in \Usm^o$,  the statement of Theorem~\ref{thm-diffusion}(iii) holds.
 \end{proposition}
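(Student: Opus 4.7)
The plan is to combine the recurrence of the ground diffusion (Proposition~\ref{prop-ground-diffusion}) with the link between stochastic representations and principal eigenpairs established in \cite{ari2018strict}. First I would fix $v \in \Usm^o$ and invoke Theorem~\ref{thm-diffusion}(ii), already proved in Section~\ref{sec-p2}, to conclude that $v$ is a measurable selector of the minimizer in~\eqref{eqn-HJB}. Consequently $V$ satisfies $\Lg^v V + r^v V = \lsm[r]\,V$ pointwise, so that $(V, \lsm[r])$ is a positive eigenpair of the operator $\Lg^v + r^v$.

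Next I would invoke Lemma~\ref{lem-ground-diffusion} to produce a second positive eigenpair $(\widetilde\upu^v, \lambda^*_v[r])$ of the same operator, and Proposition~\ref{prop-ground-diffusion} to guarantee that the ground diffusion $Z^v$ associated with $\widetilde\upu^v$ is recurrent. Since $v \in \Usm^o$ gives $\Lambda_v[r] = \lsm[r]$, and recurrence of $Z^v$ forces $\lambda^*_v[r] = \Lambda_v[r]$ via \cite[Lemma 2.6]{ari2018strict}, the two eigenvalues agree. Uniqueness of the positive eigenfunction (up to a positive scalar) under recurrence, per \cite[Lemma 2.4]{ari2018strict}, then shows that $V$ is proportional to $\widetilde\upu^v$; in particular the ground diffusion associated with $V$ coincides with $Z^v$ and is recurrent. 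At this point \cite[Lemma 2.7(iii)]{ari2018strict} applies directly to $V$ and yields the stochastic representation~\eqref{eq-V*rep} for every $R > 0$ and $x \in \Bar B_R^c$.

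For the strict monotonicity assertion I would adapt the argument of \cite[Theorem 3.2]{ari2018strict}: the stochastic representation characterizes $V$ as the minimal positive eigenfunction of $\Lg^v + r^v$ at the eigenvalue $\Lambda_v[r]$. For a non-trivial $f \in \sC_0$, if the principal eigenvalue of $\Lg^v + (r^v + f)$ did not strictly exceed $\Lambda_v[r]$, an eigenfunction at the same eigenvalue would, via It\^o--Krylov applied up to $\widecheck\tau_R$, differ from the identity satisfied by $V$ only by the nonnegative term $\E_x^v\bigl[\int_0^{\widecheck\tau_R} f(X_t)\,\D t\bigr]$, which is strictly positive thanks to the recurrence of $X$ under $v$ and \cite[Theorem 2.6.16]{arapostathis2012ergodic}. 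This contradicts the minimality of $V$ and delivers the desired strict inequality.

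The main obstacle will be reconciling the two positive eigenpairs: the one produced by Lemma~\ref{lem-ground-diffusion} and the one given by $V$ from Proposition~\ref{prop-limit}. Each arises as a limit of perturbed objects ($\widetilde\upu^{v,\veps}$ and $V^\veps$ respectively) along possibly distinct subsequences of $\veps \downarrow 0$, so identifying them up to a positive multiplicative constant cannot be done by a naive passage to the limit. The delicate step is therefore the uniqueness argument, which under our general structural hypothesis becomes accessible only after recurrence of the ground diffusion is in hand; once that uniqueness is secured, the remaining implications are straightforward applications of the cited results from \cite{ari2018strict} and \cite{arapostathis2012ergodic}.
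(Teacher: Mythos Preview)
Your approach to the stochastic representation is essentially the paper's: invoke Theorem~\ref{thm-diffusion}(ii) so that $V$ solves the multiplicative Poisson equation for $v$, then use Proposition~\ref{prop-ground-diffusion} together with \cite[Lemmas~2.6 and~2.7(iii)]{ari2018strict} to obtain~\eqref{eq-V*rep}. Your explicit step of identifying $V$ with $\widetilde\upu^v$ via \cite[Lemma~2.4]{ari2018strict} is a reasonable way to make precise why recurrence of $Z^v$ (which is defined through $\widetilde\upu^v$) transfers to the ground diffusion associated with $V$; the paper leaves this identification implicit and simply appeals to \eqref{eq-stoc-rep-hjb} already established in the proof of Proposition~\ref{prop-hjb-uniqueness}.

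For the strict monotonicity $\Lambda_v[r+f] > \Lambda_v[r]$, your sketch has a slip. In the multiplicative setting the discrepancy between the representation satisfied by $V$ and the one satisfied by a putative eigenfunction of $\Lg^v + r^v + f$ at the same eigenvalue is \emph{not} the additive term $\E_x^v\bigl[\int_0^{\widecheck\tau_R} f(X_t)\,\D t\bigr]$; rather the factor $\exp\bigl(\int_0^{\widecheck\tau_R} f(X_t)\,\D t\bigr)$ appears multiplicatively inside the expectation. The contradiction-with-minimality idea can still be made to work, but the computation you wrote is the additive one. The paper sidesteps this entirely: once recurrence of $Z^v$ gives $\lambda_v^*[r]=\Lambda_v[r]$ via \cite[Lemma~2.6]{ari2018strict}, it invokes \cite[Theorem~2.3]{ari2018strict} directly to obtain $\lambda_v^*[r+f] > \lambda_v^*[r]$, and then observes that the same recurrence argument yields $\Lambda_v[r+f]=\lambda_v^*[r+f]$. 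This is both shorter and avoids the delicate hand-crafted comparison.
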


 \begin{proof} Fix $v\in \Usm^o$. 
 From Theorem~\ref{thm-diffusion}(ii), we know that for every $v\in \Usm^o$, it holds that 
$$ \Lg^v V(x)+r^v(x)V(x)=\lsm[r]V(x), \text{ for $x\in\RR^d$}\,.$$
From here and~\eqref{eq-stoc-rep-hjb} we have
\begin{align}\label{eq-V-rep-1}
V(x)= \E_x^{v}\Big[\exp\Big(\int_0^{\widecheck \tau_R} \big(r^v(X_t)-\lsm[r]\big) \D t\Big)V(X_{\widecheck \tau_R})\Big], \text{ for $x\in B_R^c$}\,.
\end{align}
This proves first part of Theorem~\ref{thm-diffusion}(iii).

 To prove the second part of Theorem~\ref{thm-diffusion}(iii), for $v\in \Usm^o$, suppose that $V$ satisfies~\eqref{eq-V-rep-1}, for some $R>0$. Let  $\widetilde \upu^v\in W^{2,d}_{\text{loc}}(\RR^d)$ be as given by Lemma~\ref{lem-ground-diffusion}. From Proposition~\ref{prop-ground-diffusion} and  \cite[Lemma 2.6]{ari2018strict}, we can conclude that $\Lambda_{v}[r]=\lsm[r]=\lambda_v^*[r]$. Moreover, using \cite[Theorem 2.3]{ari2018strict}, we have $$\lambda^*_v[r+f]>\lambda_v^*[r], \text{ for $f\in \sC_0 $}\,.$$
  Similarly,  we can infer also that $\Lambda_v[r+f]=\lambda_v^*[r+f]$,   for $f\in \sC_0$ and $v\in \Usm^o$. Combining this with the above display, we have $\Lambda_v[r+f]>\Lambda_v[r]$. This completes the proof.
 \end{proof}

\medskip
\subsection{Proof of Theorem~\ref{thm-diffusion}(iv)} The main content here is to prove that optimal ERSC cost over all admissible controls $U\in \Uadm$ is equal to optimal ERSC cost over all stationary Markov controls $v\in \Usm$. 
The difficulty in proving this arises from the fact that we do not have any results that are analogous to Lemma~\ref{lem-comp-X} and Corollary~\ref{cor-comp-w} for $U\in {\Uadm^{*,\beta}}$. An implication of this is that we cannot make efficient use of variational formulation for $U\in {\Uadm^{*,\beta}}$ \emph{viz.,} Proposition~\ref{prop-var-cost}. Therefore, to overcome this, we prove that $\lsm[r]$ can be written as a limiting value of a family of TP-ZS games. The advantage of this is that for  the process $Z$ under $U\in {\Uadm^{*,\beta}}$ and a nearly optimal maximizing strategy of the TP-ZS game, the results analogous to Lemma~\ref{lem-comp-X} and Corollary~\ref{cor-comp-w} hold. This is sufficient for us to prove  Theorem~\ref{thm-diffusion}(iv).

\begin{lemma}\label{lem-s-i-0}
The following hold:
\begin{align}\label{eq-s-i-0}
\lsm[r]&=\sup_{w\in \Wsm}\inf_{v\in{\Usm^{*,\beta}}}\Lambda_{v,w}[r]\\\label{eq-l-s-i-0}
&=\lim_{l\to\infty}\sup_{w\in \Wsm(l)}\inf_{v\in {\Usm^{*,\beta}}}\Lambda_{v,w}[r]\,.\end{align}
\end{lemma}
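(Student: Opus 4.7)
The plan is to adapt the strategy of Theorem~\ref{thm-lin-rep-pert}(i) to the unperturbed running cost by passing the limit $\veps\to 0$ through the identity established there for $r^\veps$. The easy direction follows from the minimax inequality combined with Proposition~\ref{prop-sup-v}: that proposition yields $\Lambda_v[r]=\sup_{w\in\Wsm}\Lambda_{v,w}[r]=\lim_{l\to\infty}\sup_{w\in\Wsm(l)}\Lambda_{v,w}[r]$ for every $v\in\Usm^{*,\beta}$, and hence
\[
\lsm[r]\;=\;\inf_{v}\sup_{w\in\Wsm}\Lambda_{v,w}[r]\;\geq\;\sup_{w\in\Wsm}\inf_{v}\Lambda_{v,w}[r]\;\geq\;\lim_{l\to\infty}\sup_{w\in\Wsm(l)}\inf_{v}\Lambda_{v,w}[r],
\]
where the second inequality uses $\Wsm(l)\subset\Wsm$, and monotonicity in $l$ guarantees existence of the last limit.

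For the reverse direction, it suffices to show $\lsm[r]\leq\lim_l\sup_{w\in\Wsm(l)}\inf_v\Lambda_{v,w}[r]$. Fix $\delta>0$. Combining Theorems~\ref{thm-pert-limit}(ii) and~\ref{thm-lin-rep-pert}(i), for sufficiently small $\veps$ and $l\geq l_0(\veps)$ large enough one has $\lsm[r]\leq\sup_{w\in\Wsm(l)}\inf_{v}\Lambda_{v,w}[r^\veps]+2\delta$. For any $w\in\Wsm(l)$ and $v\in\Usm^{*,\beta}$, the extended process $Z$ under $(v,w)$ coincides with $X$ outside $B_l$ and is positive recurrent by Corollary~\ref{cor-stable}; the unique invariant measure $\mu_v^w$ makes $\Lambda_{v,w}[\cdot]$ linear in the running cost. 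Since $r^\veps-r=\veps(h-r/\veps_0)$ and $0\leq r\leq h$,
\[
\Lambda_{v,w}[r^\veps]-\Lambda_{v,w}[r]\;\leq\;\veps\int h^v\,d\mu_v^w.
\]
Picking a $\delta$-maximizer $w^*=w^*(\veps,l)\in\Wsm(l)$, which by Proposition~\ref{prop-2p-game}(v) is bounded continuous, this reduces to obtaining a uniform-in-$(v,\veps,l)$ bound on $\int h^v\,d\mu_v^{w^*}$.

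Such a bound follows from a Foster-Lyapunov argument: substituting $\widetilde\sW_v=\log\sW_v$ in the eigenvalue identity of Corollary~\ref{cor-stable}, completing the square in the drift contribution from $w^*$, and integrating against $\mu_v^{w^*}$ yields
\[
\veps_0\int h^v\,d\mu_v^{w^*}\;\leq\;\lambda_v^*[\veps_0 h^v]\;+\;\tfrac{1}{2}\int\|w^*\|^2\,d\mu_v^{w^*}.
\]
The first term is uniformly bounded on $\Usm^{*,\beta}$ by Corollary~\ref{cor-inf-comp-finite}, and the kinetic term is controlled uniformly in $(\veps,l)$ by combining Lemma~\ref{lem-tightness} with the variational form of $w^*$ from Proposition~\ref{prop-2p-game}(v); specifically, on $B_{l/2}$ one has $w^*=\Sigma\transp\nabla\Psi^\veps_l$, and $\Psi^\veps_l\to\widetilde V^\veps$ strongly in $W^{1,p}_{\text{loc}}$, so $\|w^*\|$ is controlled on compact sets independently of $l$. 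With the uniform bound $\sup_v\int h^v\,d\mu_v^{w^*}\leq M_0$ secured, sending $\veps\to 0$, then $l\to\infty$, and finally $\delta\to 0$ completes the proof.

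The main obstacle is precisely this uniform-in-$v$ control of the kinetic term $\int\|w^*\|^2\,d\mu_v^{w^*}$, reconciling the $l^2$ growth of the naive bound $\|w^*\|_\infty\leq l$ with the $\veps$-dependence of $l_0(\veps)$ from Theorem~\ref{thm-lin-rep-pert}(i). The route above---localizing $w^*$ via the convergence of the truncated HJB solutions---requires a careful treatment of the annulus $B_l\setminus B_{l/2}$ where the cutoff $\chi_l$ modifies the values of $w^*$ without destroying the uniform estimate.
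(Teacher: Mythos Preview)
Your easy direction is correct and matches the paper. The reverse direction has the right overall shape---pass from $r^\veps$ to $r$ by showing $\veps\int h^v\,d\mu$ is negligible---but the execution has a genuine gap precisely where you flag it.

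The route you propose to bound $\int\|w^*\|^2\,d\mu_v^{w^*}$ does not close. The convergence $\Psi^\veps_l\to\widetilde V^\veps$ in $W^{1,p}_{\mathrm{loc}}$ controls $\nabla\Psi^\veps_l$ only on \emph{fixed} compact sets, not on the growing balls $B_{l/2}$; so even on $B_{l/2}$ you cannot conclude that $\|w^*\|$ stays bounded as $l\to\infty$. And you offer no mechanism to show that $\mu_v^{w^*}$ puts uniformly (in $v$) small mass on the annulus $B_l\setminus B_{l/2}$, where $\|w^*\|$ may be of order $l$. Your appeal to Lemma~\ref{lem-tightness} does not help here: that lemma bounds $\int h^v\,d\mu_v^{w^*}$, not the kinetic term, and your Foster--Lyapunov inequality runs the wrong direction to convert the former into the latter.

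The paper avoids this obstacle by a self-consistency argument rather than by bounding the kinetic term through the explicit form of $w^*$. From the near-optimality of the chosen $w^\veps$ one extracts an inequality bounding $\int\|w^\veps\|^2$ from above by $\int r^v$ plus $\veps\int h^v$ (plus constants); feeding this back into the Foster--Lyapunov inequality---which bounds $\int h^v$ from above by the kinetic term---and using $r\le h$ produces a \emph{closed} system in $\int h^v$. This is exactly the mechanism of the proof of Lemma~\ref{lem-comp-X}, and the resulting bound on $\limsup_T T^{-1}\E_x^{v,w^\veps}\bigl[\int_0^T h^v(Z_t)\,dt\bigr]$ depends only on $\lambda_v^*[\veps_0 h^v]$, which by Corollary~\ref{cor-inf-comp-finite} is bounded uniformly on $\Usm^{*,\beta}$. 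Hence $\veps\int h^v\to 0$ uniformly in $v$, and $\lsm[r]\le\inf_{v}\Lambda_{v,w^\veps}[r]+\delta$ follows. Your Foster--Lyapunov display is one half of this closed system; the missing step is to feed the near-optimality bound back into it rather than seek an independent estimate on $\int\|w^*\|^2$.
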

\begin{proof} Since $\inf_{x}\sup_{y} f(x,y)\geq \sup_y\inf_x f(x,y)$, we have $$ \lsm[r]=\inf_{v\in{\Usm^{*,\beta}}} \sup_{w\in \Wsm}\Lambda_{v,w}[r] \implies \lsm[r]\geq \sup_{w\in \Wsm}\inf_{v\in{\Usm^{*,\beta}}}\Lambda_{v,w}[r]\,.$$
Therefore, it suffices to show that for any $\delta>0$, there exists $w^*\in \Wsm$ such that 
\begin{align}\label{eq-sup-inf-0-1} \lsm[r]\leq \inf_{v\in{\Usm^{*,\beta}}}\Lambda_{v,w^*}[r]+\delta\,.\end{align}
To do this, we use Theorem~\ref{thm-pert-limit}(ii) and infer that for $\delta>0$, there exists small enough $\veps$ such that 
\begin{align} \label{eq-adm-cost-1}\lsm[r]\leq \lsm[r^\veps]+{\frac{\delta}{3}}\,.\end{align}
From~\eqref{eq-alt-1} of Theorem~\ref{thm-lin-rep-pert}, we know that there exists $w^\veps\in \Wsm$ such that 
\begin{align}\label{eq-adm-cost-2} \lsm[r^\veps]\leq \inf_{v\in{\Usm^{*,\beta}}}\Lambda_{v,w^\veps}[r^\veps]+{\frac{\delta}{3}} \leq \Lambda_{v,w^\veps}[r^\veps]+{\frac{\delta}{3}}\,.\end{align}
From the above two displays, the definition of $r^\veps$ and $\Lambda_{v,w^\veps}[r^\veps]$, we can conclude that along a subsequence again denoted by $T$, we have
\begin{align*}
\limsup_{T\to\infty}\frac{1}{2T}\E_x^{v,w^\veps}\Big[ \int_0^T \|w^\veps(Z_t)\|^2\D t\Big]&\leq (-\lsm[r]+{\frac{2\delta}{3}}) + \limsup_{T\to\infty}\frac{1}{T}\E_x^{v,w^\veps}\Big[ \int_0^Tr^v(Z_t)\D t\Big]\\
&\qquad+ \veps \limsup_{T\to\infty}\frac{1}{T}\E_x^{v,w^\veps}\Big[ \int_0^Th^v(Z_t)\D t\Big]\,
\end{align*}
From here, using the arguments of proof of Lemma~\ref{lem-comp-X}, we conclude that third term on the right hand side goes to zero as $\veps \to 0$. {This means choosing $\veps$ small enough,  and  combining~\eqref{eq-adm-cost-1} and~\eqref{eq-adm-cost-2}, gives us}
\begin{align*}
\lsm[r]&\leq \Lambda_{v,w^\veps}[r] +\delta, \text{ for small enough $\veps$ and for $v\in {\Usm^{*,\beta}}$}\\
&\leq \inf_{v\in {\Usm^{*,\beta}}} \Lambda_{v,w^\veps}[r]+\delta\,.
\end{align*}
This proves~\eqref{eq-sup-inf-0-1}, and hence \eqref{eq-s-i-0}. Finally,~\eqref{eq-l-s-i-0} can be proved analogously. This completes the proof.
\end{proof}
The following proposition implies that the $\Lambda[r]$ can be achieved by $v\in \Usm^o$. 
\begin{proposition}\label{prop-cost-equal-0}$\Lambda[r]=\lsm[r]$.
\end{proposition}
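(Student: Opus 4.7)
The strategy is to mimic the perturbed-problem argument of Lemma~\ref{lem-cost-equal-pert}, replacing the use of Theorem~\ref{thm-lin-rep-pert} with the newly established Lemma~\ref{lem-s-i-0}. The inequality $\Lambda[r]\le \lsm[r]$ is trivial since $\Usm\subset\Uadm$, so it suffices to establish $\Lambda[r]\ge \lsm[r]$. Fix $\delta>0$ and pick $U^*\in\Uadm^{*,\beta}$ with $J(x,U^*)[r]\le \Lambda[r]+\delta$. From \eqref{eq-l-s-i-0} of Lemma~\ref{lem-s-i-0}, for some $l=l(\delta)>0$ there exists $w^*\in\Wsm(l)$ satisfying $\inf_{v\in\Usm^{*,\beta}}\Lambda_{v,w^*}[r]\ge \lsm[r]-\delta$, and in particular $\sup_{x\in\RR^d}\|w^*(x)\|\le l$.

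The variational formula of Proposition~\ref{prop-var-cost} gives $J(x,U^*)[r]\ge J(x,U^*,w^*)[r]$, and together with the pointwise bound $\|w^*\|\le l$ this yields the a priori estimate
\[
\limsup_{T\to\infty}\frac{1}{T}\E_x^{U^*,w^*}\Big[\int_0^T r(Z_t,U^*_t)\,\D t\Big]\,\le\,\Lambda[r]+\delta+\tfrac{l^2}{2}.
\]
Feeding this into the Foster--Lyapunov computation of Lemma~\ref{lem-comp-X-pert} (applied to the extended process $Z$ with $\frV=\log\sV$, starting from \eqref{eq-lyap-extended} and using $\|w^*\|^2\le l^2$ together with $C_3<1$), one obtains
\[
\limsup_{T\to\infty}\frac{1}{T}\E_x^{U^*,w^*}\Big[\int_0^T h(Z_t,U^*_t)\,\D t\Big]\,<\,\infty,
\]
which, by the inf-compactness of $h$, ensures tightness of the mean empirical measures $\pi_T$ of the joint process $(Z_{[0,T]},U^*_{[0,T]})$ on $\RR^d\times\bU$.

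Along a suitable subsequence, $\pi_T\Rightarrow \pi_*$, and by \cite[Lemma 3.4.6]{arapostathis2012ergodic} the limit decomposes as $\pi_*(\D x,\D u)=\mu_v(\D x)\,v(\D u|x)$, where $(v,w^*)$ is a Markov pair for the extended diffusion $Z$ with invariant measure $\mu_v$; the identification $v\in\Usm^{*,\beta}$ follows by the same reasoning as in the proof of Lemma~\ref{lem-cost-equal-pert}, using that $\int h\,\D\pi_*<\infty$ and $r\le h$ force $\Lambda_v[r]<\infty$ through Corollary~\ref{cor-stable}. Since $\|w^*\|^2$ is bounded and continuous on $\RR^d$ and $r$ is non-negative continuous, standard lower semicontinuity (Fatou for the $r$-term along the weakly convergent subsequence, together with bounded-continuous passage for the $\|w^*\|^2$-term) gives
\[
J(x,U^*,w^*)[r]\,\ge\,\int_{\RR^d\times\bU}\Bigl(r(x,u)-\tfrac12\|w^*(x)\|^2\Bigr)\D\pi_*(x,u)\,=\,\Lambda_{v,w^*}[r].
\]
Chaining the inequalities then produces
\[
\Lambda[r]+\delta\,\ge\, J(x,U^*)[r]\,\ge\, J(x,U^*,w^*)[r]\,\ge\,\Lambda_{v,w^*}[r]\,\ge\,\inf_{v'\in\Usm^{*,\beta}}\Lambda_{v',w^*}[r]\,\ge\,\lsm[r]-\delta,
\]
and sending $\delta\downarrow 0$ finishes the argument.

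The principal obstacle is the tightness step: unlike Lemma~\ref{lem-comp-X}, which was established for stationary Markov $v$, here we must control the extended process $Z$ under a \emph{generic} admissible control $U^*$, for which none of the estimates of Section~\ref{sec-e-0} are directly available. The fact that the Foster--Lyapunov inequality \eqref{eq-lyap-extended} is pointwise in $(x,u,w)$ allows the argument of Lemma~\ref{lem-comp-X-pert} to survive this generalisation, provided the a priori $L^1$-bound on $r(Z,U^*)$ is in hand---and that bound is exactly what the $\delta$-optimality of $U^*$ combined with the variational inequality and $w^*\in\Wsm(l)$ delivers. A secondary subtle point is that the auxiliary control $w^*$ comes out of Lemma~\ref{lem-s-i-0} rather than directly from Proposition~\ref{prop-2p-game}(v); tracing the construction in Lemma~\ref{lem-s-i-0} back through Theorem~\ref{thm-lin-rep-pert} and Proposition~\ref{prop-2p-game}(v) for a sufficiently small $\veps$ yields the necessary boundedness and continuity of $w^*$ on $\RR^d$.
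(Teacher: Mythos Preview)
Your strategy mirrors the paper's proof of Proposition~\ref{prop-cost-equal-0} (and its template, Lemma~\ref{lem-cost-equal-pert}): pick a $\delta$-optimal admissible $U^*$, pick a near-maximizing auxiliary control $w^*$, use the variational inequality and the Foster--Lyapunov bound \eqref{eq-lyap-extended} to get tightness of the MEMs of $(Z,U^*)$, pass to a limiting ergodic occupation measure $\pi_*$, and close the chain. The divergence is in how $w^*$ is selected and how the final inequality is closed.

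The paper does \emph{not} invoke Lemma~\ref{lem-s-i-0} here. It takes $w^*$ directly from Proposition~\ref{prop-2p-game}(v) for the $(\veps,l)$-game---so $w^*$ is bounded and continuous by construction---and then, \emph{after} the weak-limit step, inserts the perturbation by hand: it writes $r = r^\veps + (\veps/\veps_0)r - \veps h$, drops to $r^\veps\wedge L^*$, and only then lands on $\inf_{v}\Lambda^l_{v,w^*}=\rho^\veps_l\ge \lsm[r]-\delta$. The error term $\veps\int h\,\D\pi_*$ is controlled via Lemma~\ref{lem-tightness}, applied to the specific $v$ arising from the disintegration of $\pi_*$.

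Your simplification---pull $w^*\in\Wsm(l)$ out of Lemma~\ref{lem-s-i-0} with $\inf_{v'}\Lambda_{v',w^*}[r]\ge\lsm[r]-\delta$ and skip the $\veps$-correction---has a genuine gap. Lemma~\ref{lem-s-i-0} supplies no regularity for its near-maximizer: it is merely an element of $\Wsm(l)$, hence bounded but a priori only measurable. Continuity of $w^*$ is exactly what makes the weak-limit passage routine, both for identifying $\pi_*$ as an ergodic occupation measure of the extended generator $\widehat\Lg^{u,w^*(x)}$ and for handling the $-\tfrac12\|w^*\|^2$ term. Your ``tracing back'' does recover a continuous $w^*$ from Proposition~\ref{prop-2p-game}(v), but that control is near-optimal for the truncated $\veps$-game cost $\Lambda^l_{v,\cdot}$, not for $\Lambda_{v,\cdot}[r]$; transferring the near-optimality across requires precisely the $\veps$-correction the paper performs at the $\pi_*$ level. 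In other words, the bookkeeping you defer \emph{is} the substance of the paper's argument, and your chain $\Lambda_{v,w^*}[r]\ge\inf_{v'}\Lambda_{v',w^*}[r]\ge\lsm[r]-\delta$ cannot be established for a \emph{continuous} $w^*$ without it.
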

\begin{proof}To begin with, it is clear that we have
\begin{align*}
\Lambda[r]\leq \lsm[r]\,.
\end{align*}
For $\delta>0$, let $U^*$ be $\delta$--optimal for $\Lambda[r]$. 
 Recall that from  Theorems~\ref{thm-lin-rep-pert}(i) and~\ref{thm-pert-limit}(ii) $$ \lim_{\veps\to 0}\lim_{l\to\infty} \rho^\veps_l=\lsm[r]\,.$$
For small enough $\veps>0$ and large enough $l$, we can ensure that 
\begin{align}\label{def-le-choice} \lsm[r]\leq \rho^\veps_l+\delta=\sup_{w\in \Wsm(l)}\inf_{v\in \Usm^{*,\beta}}\Lambda_{v,w}^l + \delta\,.\end{align}
See~\eqref{def-lambda-e-l}, for the definition of $\Lambda_{v,w}^l$.    Let  $w^*\in \Wsm(l)$ be as in the statement of Proposition~\ref{prop-2p-game}(v), {\emph{i.e.,} it satisfies 
\begin{align}\label{def-w*} \sup_{w\in \Wsm(l)}\inf_{v\in {\Usm^{*,\beta}}}\Lambda^l_{v,w}= \inf_{v\in {\Usm^{*,\beta}}}\Lambda^l_{v,w^*}\,.\end{align}}
From the choice of $U^*$, we have
\begin{align}\nonumber
\Lambda[r]\geq J(x,U^*)[r]-\delta&{=} \limsup_{T\to\infty} \sup_{w\in \cA} \frac{1}{T}\E_x^{{U^*},w}\Big[ \int_0^T\Big(r(Z_t,{U^*_t})-\frac{1}{2} \|w_t\|^2\Big)\D t\Big]-\delta\\\label{eq-l-lsm-1}
&\geq \limsup_{T\to\infty}  \frac{1}{T}\E_x^{{U^*},w^*}\Big[ \int_0^T\Big(r(Z_t,{U^*_t})-\frac{1}{2} \|w^*(Z_t)\|^2\Big)\D t\Big]-\delta\,.
\end{align}
In the above, we use the fact that $w^*\in \Wsm(l)\subset \cA$ is sub-optimal for the above supremum. Since $w^*\in \Wsm(l)$ and $\sup_{x\in \RR^d}\|w^*(x)\|\leq l$, we can conclude that 
	$${ \limsup_{T\to\infty}}\frac{1}{T}\E_x^{U^*,w^*}\Big[\int_0^T \|w^*(Z_t)\|^2\D t \Big]\leq {l^2} \,.$$
	{From here,~\eqref{eq-l-lsm-1} and} the definition of $J(x,U^*,w^*)[r]$, we also have
	\begin{align*}
	\limsup_{T\to\infty}\frac{1}{T}\E_x^{U^*,w^*}\Big[\int_0^Tr (Z_t,U^*_t)\D t\Big] \leq J(x,U^*)[r] +\frac{l^2}{2}\,.
	\end{align*} 
	Following the arguments of the proof of Lemma~\ref{lem-comp-X-pert}, we get
	\begin{align}\label{eq-3-cost-equal} \limsup_{T\to\infty}\frac{1}{T}\E_x^{U^*,w^*}\Big[\int_0^Th(Z_t,U^*_t)\D t\Big]<\infty\,.\end{align}
	Therefore, the MEMs of $(Z_{[0,T]},U^*_{[0,T]})$ denoted by $\pi_T$ is a tight family of measures in $T>0$. This means that along a subsequence (still denoted by $T$), $\pi_T$ converges weakly to some measure $\pi_*$. From \cite[Lemma 3.4.6]{arapostathis2012ergodic}, $\pi_*$ is an ergodic occupation measure. With the decomposition of the measure, we can write $\pi_*(\D x,\D u)= \mu_v (\D x)v(\D u|x),$ for some $v\in {\Usm^{*,\beta}}$.  
Therefore, from the fact that $r$ is non-negative  and the fact that $w^*$ is bounded and continuous (this property follows from Proposition~\ref{prop-2p-game}(v)) , we can conclude that 
\begin{align}\label{eq-wc-1}\lim_{T\to\infty}\int_{\RR^d\times \bU} \Big(r(x,u)-\frac{1}{2}\|w^*(x)\|^2\Big)\D \pi_T(x,u)\geq \int_{\RR^d\times \bU} \Big(r(x,u)-\frac{1}{2}\|w^*(x)\|^2\Big)\D \pi_*(x,u)\,. \end{align}
From here, together with~\eqref{eq-opt-cont}, we have 
\begin{align*}
\Lambda[r] &\geq J(x,U^*)[r]-\delta\\
 &\geq \int_{\RR^d\times \bU} \Big(r(x,u)-\frac{1}{2}\|w^*(x)\|^2\Big)\D \pi_*(x,u) -\delta\\
 &= \int_{\RR^d\times \bU} \Big(\big(1-\frac{\veps}{\veps_0}\big)r(x,u) +\veps h(x,u)-\frac{1}{2}\|w^*(x)\|^2\Big)\D \pi_*(x,u) \\
 &\qquad\qquad+ \int_{\RR^d\times \bU}\Big(\frac{\veps}{\veps_0}r(x,u) -\veps h(x,u)\Big)\D \pi^*(x,u)  -\delta\\
&\geq   \int_{\RR^d\times \bU} \Big(r^\veps(x,u) -\frac{1}{2}\|w^*(x)\|^2\Big)\D \pi_*(x,u) \\
 &\qquad\qquad- \veps\int_{\RR^d\times \bU} h(x,u)\D \pi^*(x,u)  -\delta\\
 &\geq   \int_{\RR^d\times \bU} \Big(r^\veps(x,u)\wedge L^* -\frac{1}{2}\|w^*(x)\|^2\Big)\D \pi_*(x,u) - \veps\int_{\RR^d\times \bU} h(x,u)\D \pi^*(x,u)  -\delta\\
&\geq \inf_{{v\in \Usm^{*,\beta}}} \Lambda^l_{v,w^*}-\veps\int_{\RR^d\times \bU} h(x,u)\D \pi^*(x,u) -\veps\int_{\RR^d\times \bU} h(x,u)\D \pi^*(x,u)-2\delta\\ 
&\geq \lsm[r]-\veps\int_{\RR^d\times \bU} h(x,u)\D \pi^*(x,u)-2\delta\,.
\end{align*}
In the above, to get the second inequality, we use~\eqref{eq-l-lsm-1} and~\eqref{eq-wc-1}; to get the third inequality, we use the fact that $r\geq 0$; 
to get the fourth inequality, we use the fact that $r^\veps\geq r^\veps \wedge L^*$ with $L^*$ as in~\eqref{def-game-cost}; to get the fifth inequality, we combine~\eqref{def-le-choice} and~\eqref{def-w*}; finally, to obtain the last inequality, we use Lemma~\ref{lem-tightness} to conclude that $$\sup_{0< \veps<\veps_0}\limsup_{l\to\infty}\int_{\RR^d\times \bU} h(x,u)\D \pi^*(x,u)<\infty\,.$$ 
Therefore, beginning with  $\veps>0$ sufficiently small, we can ensure that $\veps\int_{\RR^d\times \bU} h(x,u)\D \pi^*(x,u)<\delta$. From the arbitrariness of $\delta>0$, we then have the result.
\end{proof}

{
\appendix

\section{Proof of~\eqref{eq-limit-lambda} in Remark~\ref{rem-limit-lambda}} 
Here, we provide the main arguments of the  proof of~\eqref{eq-limit-lambda}.  Before we proceed, we note that for any $0<\kappa\leq 1$,   $$ J^\kappa(x,U)[ r]\doteq \limsup_{T\to\infty} \frac{1}{\kappa T} \log \E_x^U\left[ \exp\left( \kappa\int_0^T r({X}_t, U_t )\D t \right)\right]\leq \frac{1}{\kappa} J(x,U)[r]\,.$$  
For $0<\kappa\leq 1$ and $U\in \Uadm^{*,\beta}$, applying Proposition~\ref{prop-var-cost} to the functional $\kappa T^{-1} \int_0^T r(X_t,U_t)\D t$, we have 
\begin{align*}
J^\kappa(x,U)[r]&= \limsup_{T\to\infty}\sup_{w\in\cA} \E_x^{U,w}\Bigg[ \frac{1}{T}\int_0^T \Big( r\big(Z_t, U_t(W_{[0,t]}+ \widetilde w_{[0,t]})\big) - \frac{1}{2\kappa}\|w_t\|^2\Big)\D t\Bigg]\\
&\geq \limsup_{T\to\infty} \E_x^{U,0}\Bigg[ \frac{1}{T}\int_0^T r\big(Z_t, U_t\big) \D t\Bigg]\\
&= J^0(x,U)[r]\\
&\geq  \Lambda^0[r]\,.
\end{align*}
To get the second line, we use the fact that $w_t\equiv 0$ is sub-optimal to the supremum in the first line; to get the third  and fourth lines, we use  definitions of $J^0(x,U)$ and $\Lambda^0[r]$, and the fact that the process $Z$ under controls $U$ and $w\equiv 0$ is the same as $X$ under control $U$. This proves that  $\Lambda^0[r]\leq \liminf_{\kappa\to 0} \Lambda^\kappa[r]$. It now remains to show that 
\begin{align}\label{app-eq-2}
\limsup_{\kappa\to 0} \Lambda^\kappa[r]\leq \Lambda^0[r]\,.
\end{align} 
To do this, we first prove~\eqref{eq-mixed-condition-rs}. Recall that $\sV^\kappa(x)=\exp\big(\kappa \frV(x)\big)$. We obtain
\begin{align}\label{app-eq-3}
\Lg^u \sV^\kappa(x) &= \frac{\kappa^2}{2} \exp\big(\kappa \frV(x)\big)\|\Sigma(x)\transp \nabla \frV(x)\|^2 +\kappa \exp\big(\kappa \frV(x)\big)\Lg^u \frV(x)\,.
\end{align}
Since $\sV(x)=\exp\big(\frV(x)\big)$, for  $(x,u)\in \cK^c\times \bU$, we have $\Lg^u\frV(x)\leq C_1-\bar h(x,u) -\frac{1}{2}\|\Sigma(x)\transp \frV(x)\|^2$. Substituting this in~\eqref{app-eq-3}, we get
\begin{align*}
\Lg^u \sV^\kappa(x) &\leq  \frac{\kappa^2}{2} \exp\big(\kappa \frV(x)\big)\|\Sigma(x)\transp \nabla \frV(x)\|^2 +\kappa \exp\big(\kappa \frV(x)\big)\Big(C_1-\bar h(x,u) -\frac{1}{2}\|\Sigma(x)\transp \frV(x)\|^2\Big)\\
&=  \exp\big(\kappa \frV(x)\big)\Big(\frac{\kappa^2-\kappa}{2} \|\Sigma(x)\transp \nabla \frV(x)\|^2 + \kappa C_1-\kappa \bar h(x,u) \Big)\\
&\leq \big( \kappa C_1-\kappa \bar h(x,u) \big)\sV^\kappa(x)\,.
\end{align*}
To get the third line,  we use the fact that $\kappa \in (0,1]$  and the definition of $\sV^\kappa(x)$. Similarly, we can show that  for $(x,u)\in \cK\times \bU$, 
$$\Lg^u\sV^\kappa(x)\leq \big( \kappa C_2+C_3\kappa r(x,u) \big)\sV^\kappa(x)\,.$$
This proves~\eqref{eq-mixed-condition-rs}. We now proceed to prove~\eqref{app-eq-2}. For every $\delta>0$, recall that there exists $0<\kappa_\delta\leq 1$, $U^\delta\in \Uadm$ and $x^\delta\in \RR^d$ such that~\eqref{eq-mixed-1} holds. Consequently, for any $\kappa <\kappa^\delta$, we have 
$$ \limsup_{T\to\infty} \frac{1}{\kappa_\delta T} \log \E_{x^\delta}^{U^\delta}\left[ \exp\left( (1+\eta^\delta)\kappa\int_0^T r({X}_t, U^\delta_t )\D t \right)\right]<\infty$$
with $\eta^\delta=\frac{\kappa^\delta}{\kappa}-1>0$. Thus, the analysis and the results from Section~\ref{sec-vepsgeq0} can be applied to the running cost $\kappa r$ with control $U^\delta$ with $\kappa<\kappa^\delta$.  Therefore, again applying  Proposition~\ref{prop-var-cost} to the functional $\kappa T^{-1} \int_0^T r(X_t,U^\delta_t)\D t$,  we get
\begin{align}\nonumber
J^\kappa(x^\delta,U^\delta)[r]&= \limsup_{T\to\infty}\sup_{w\in\cA} \E_x^{U,w}\Bigg[ \frac{1}{T}\int_0^T \Big( r\big(Z_t, U^\delta_t(W_{[0,t]}+ \widetilde w_{[0,t]})\big) - \frac{1}{2\kappa}\|w_t\|^2\Big)\D t\Bigg]\\\label{app-eq-4}
&\leq \limsup_{T\to\infty}\E_{x^\delta}^{U^\delta,w}\Bigg[ \frac{1}{T}\int_0^T \Big( r\big(Z_t, U^\delta_t(W_{[0,t]}+ \widetilde w^*_{[0,t]})\big) - \frac{1}{2\kappa}\|w^*_t\|^2\Big)\D t\Bigg]+\delta
\end{align}
with $w^*=w^*(\delta,T)$ being $\delta$--optimal for the supremum in the first line. From results analogous to Lemmas~\ref{lem-comp-w-pert} and~\ref{lem-comp-X-pert}, we can infer that for some $M_7>0$,
	\begin{align*}\limsup_{T\to\infty} \frac{1}{\kappa T}\E_x^{U,w^*}\Big[\int_0^T \|w^*_t\|^2 \D t\Big]\leq M_7 \,,\end{align*}
	and that the family of MEMs $\{\pi_{\kappa,T}\}_{\kappa,T}$ associated with $(Z,U,w^*)$ is tight in $\kappa$ and $T$. Moreover, $r(x,u)$ is uniformly integrable with respect to $\{\pi_{\kappa,T}\}_{\kappa,T}$. Therefore, along a subsequence of $T$ (again denoted by $T$), $\pi_{\kappa,T}$ converges weakly to some measure $\widetilde \pi_\kappa$. It is easy to see from the above display that the marginal of $\widetilde \pi_\kappa(\D x,\D u,\D w)$ with respect to $w$ converges weakly as $\kappa\to 0$, to $\delta_0$, Dirac delta measure at $0$. This consequently means that along a subsequence of $\kappa$ (again denoted by $\kappa$), $\widetilde \pi_\kappa$ converges to an ergodic occupation measure $\widetilde \pi$ associated with $Z$, some relaxed Markov control $v\in \Usm$ and $w=0$ (which is the same as the process $X$ under $v$). To summarize,~\eqref{app-eq-4} becomes 
	\begin{align*}
	\Lambda^\kappa \leq J^\kappa(x^\delta,U^\delta)[r]&\leq \int_{\RR^d\times \bU} r(x,u) \widetilde \pi(\D x,\D u)- \liminf_{\kappa\to 0}\liminf_{T\to\infty}\E_{x^\delta}^{U^\delta,w^*}\Big[ \frac{1}{2\kappa}\|w^*_t\|^2\D t\Big]+\delta\\
	&\leq \int_{\RR^d\times \bU} r(x,u) \widetilde \pi(\D x,\D u)+\delta\\
	&\leq \Lambda^0[r]+2\delta\,.
	\end{align*}
We get the integral term in the first line from uniform integrability of $r(x,u)$ with respect to $\{\pi_{\kappa,T}\}_{\kappa,T}$.	 From the arbitrariness of $\delta$, we have~\eqref{app-eq-2} and also~\eqref{eq-limit-lambda}. }

\section*{Acknowledgement}
This work is funded by  the NSF Grant DMS 2216765.

\bibliographystyle{abbrv}
\bibliography{RS-control}

\end{document}